\numberwithin{equation}{section}
\numberwithin{figure}{section}
\tikzset{whitenoise/.style={radius=0.07, fill=black}}
\tikzset{contract/.style={dashed}}
\renewcommand{\epsilon}{\varepsilon}
\newcommand{\halfspace}{\mathcal D}
\newcommand{\goodregion}{\mathcal B}
\newcommand{\badregion}{\mathcal R}
\newcommand{\pf}{\mathcal Z} 
\newcommand{\Gaussian}{\rho} 
\newcommand{\negpart}[1]{\llbracket #1 \rrbracket}
\newcommand{\besovinfty}{\mathcal C}
\DeclareMathOperator{\Prob}{\mathbb P}
\DeclareMathOperator{\E}{\mathbb E}
\newcommand{\I}{\mathds 1}
\newcommand{\N}{\mathbb N}
\newcommand{\R}{\mathbb R}
\newcommand{\Z}{\mathbb Z}
\newcommand{\W}{\mathbb W}
\newcommand{\Tor}{\mathbb T}
\newcommand{\Laplace}{\Delta}
\newcommand{\inv}{^{-1}}
\newcommand{\abs}[1]{{\left|#1\right|}}
\newcommand{\smallabs}[1]{|#1|}
\newcommand{\norm}[1]{\left\|#1\right\|}
\newcommand{\smallnorm}[1]{\|#1\|}
\newcommand{\inorm}[1]{{\left\langle{#1}\right\rangle}}
\newcommand{\inormm}[1]{{\left\langle{#1}\right\rangle}_-}
\newcommand{\inormp}[1]{{\left\langle{#1}\right\rangle}_+}
\newcommand{\inormc}[1]{{\left\langle{#1}\right\rangle}_\bullet}
\newcommand{\diff}{\mathop{}\!\mathrm{d}}
\newcommand{\dx}{\diff x}
\newcommand{\ds}{\diff s}
\newcommand{\dt}{\diff t}
\newcommand{\dr}{\diff r}
\DeclareMathOperator{\proj}{P}
\DeclareMathOperator{\arcsinh}{arcsinh}
\DeclareMathOperator{\Law}{Law}
\newcommand{\wick}[1]{\mathinner{:\!#1\!:}}
\newcommand{\wickp}[1]{\mathinner{:\!#1\!:_+}}
\newcommand{\wickm}[1]{\mathinner{:\!#1\!:_-}}
\newcommand{\paral}{\prec}
\newcommand{\parar}{\succ}
\newcommand{\reson}{\odot}
\newcommand{\bigO}{\mathcal O}
\newtheorem{theorem}{Theorem}[section]
\newtheorem{lemma}[theorem]{Lemma}
\newtheorem{corollary}[theorem]{Corollary}
\theoremstyle{definition}
\newtheorem{remark}[theorem]{Remark}
\newtheorem{example}[theorem]{Example}
\newtheorem{definition}[theorem]{Definition}
\newtheorem{assumption}[theorem]{Assumption}
\crefname{assumption}{assumption}{assumptions}
\title{Eyring--Kramers law for the hyperbolic $\phi^4$ model}
\author{Nikolay Barashkov%
\footnote{\url{nikolay.barashkov@mis.mpg.de};
Max Planck Institute for Mathematics in the Sciences, Leipzig, Germany},
Petri Laarne%
\footnote{\url{petri.laarne@helsinki.fi};
Department of Mathematics and Statistics, University of Helsinki, Finland}}
\date{} 
\begin{document}

\maketitle

\begin{abstract}

\noindent
We study the expected transition frequency between the two metastable states
of a stochastic wave equation with double-well potential.
By transition state theory, the frequency factorizes into two components:
one depends only on the invariant measure, given by the $\phi^4_d$ quantum field theory,
and the other takes the dynamics into account.
We compute the first component with the variational approach to stochastic quantization when $d = 2, 3$.
For the two-dimensional equation with random data but no stochastic forcing,
we also compute the transmission coefficient.

\medskip\noindent\textbf{Keywords:} Metastability, Eyring--Kramers law, Transition state theory,
    $\phi^4$ measure, Nonlinear wave equation, Boué--Dupuis formula

\medskip\noindent\textbf{MSC (2020):} 60H30, 81S20 (Primary); 35L71, 81T08 (Secondary)
\end{abstract}


%
%
%
\section{Introduction}\label{sec:intro}

Consider the \emph{stochastic damped nonlinear wave equation}
\begin{equation}\label{eq:SdNLW}
\partial_{tt} u(x, t) + \gamma \partial_t u(x,t) - (m^2 + \Laplace) u(x, t)
    = - u(x, t)^3 + \sqrt{\frac{2\gamma}{\beta}} \xi(x,t),
\tag{SdNLW}
\end{equation}
where $\xi$ is the space-time white noise,
$\beta > 0$ is the inverse temperature, $\gamma > 0$ is a damping parameter, 
and $m^2 > 0$ is a mass term.
The $-u^3 + m^2 u$ terms mean that there are symmetric potential wells at $\pm m$.

Since there is stochastic forcing,
a solution $u$ will stay most of the time near either constant function $\pm m$,
but will occasionally jump between the two wells.
The question is: how often will such a jump happen?

This question was originally posed in the context of chemical reaction rates
and for the Langevin dynamics,
which are the finite-dimensional analogue of \eqref{eq:SdNLW}.
The \emph{Eyring--Kramers law},
named after \cite{eyring_activated_1935,kramers_brownian_1940},
states that the expected transition time
is proportional to $\exp(\beta h)$,
where $h$ is the height of the potential barrier (see \Cref{fig:intro potential}).
Our concern is to understand this relation in the low-temperature regime for this particular PDE.

As $\gamma$ approaches either $0$ or $\infty$, \eqref{eq:SdNLW} gives different equations in the limit.
In the overdamped limit $\gamma \to \infty$, one obtains after a suitable rescaling of time
the \emph{stochastic heat equation}
\begin{equation} \label{eq:SHE}
    \partial_t u(x, t) - (m^2 + \Laplace) u(x, t) = - u(x, t)^3 + \sqrt{\frac{2}{\beta}} \xi(x, t),
    \tag{SHE}
\end{equation}
which is the parabolic counterpart of \eqref{eq:SdNLW}.
Recently this equation has been of interest in constructive quantum field theory,
as the $\phi^4$ measure can recovered as the invariant measure of \eqref{eq:SHE}.
When the mass term has negative sign as in here,
this equation is also called the \emph{stochastic Allen--Cahn equation}.
Metastable behaviour for overdamped Langevin dynamics has been extensively studied in the literature;
see Section \ref{sec:intro previous} for an overview. 

Conversely taking $\gamma \to 0$ one obtains the \emph{nonlinear wave equation}
\begin{equation}\label{eq:NLW}
  \partial_{tt} u(x, t) - (m^2 + \Laplace) u(x, t) = - u(x, t)^3.
    \tag{NLW}
\end{equation}
This equation lacks both the stochastic forcing and the $\partial_t u$ damping term.
Yet if the initial data is sampled from the invariant measure of \eqref{eq:SdNLW},
the measure is again invariant under the dynamics and
one can still ask the same question about metastability.
In comparison to the overdamped case, the Langevin dynamics corresponding to \eqref{eq:SdNLW}
and its underdamped limit \eqref{eq:NLW}
have received little attention, especially in the PDE setting.  
Our aim in this article is to make a step in that direction.

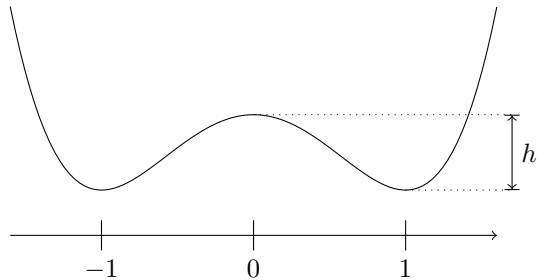
\begin{figure}
\centering
\begin{tikzpicture}[xscale=2, yscale=4]
    \draw[domain=-1.6:1.6, samples=100] plot (\x, {pow(\x,4)/4 - pow(\x,2)/2});
    \draw[->] (-1.6,-0.4) -- (1.6,-0.4);
    \foreach \x in {-1,0,1}
        {\draw (\x,-0.35) -- (\x,-0.45) node[below] {$\x$};}
    \draw[dotted] (0,0) -- (1.7,0);
    \draw[dotted] (1,-0.25) -- (1.7,-0.25);
    \draw[|<->|] (1.7,0) -- (1.7,-0.25) node[pos=0.5,right] {$h$};
\end{tikzpicture}
\caption{The $u^4/4 - u^2/2$ double-well potential in one-dimensional state space.
The height $h$ of the potential barrier is marked.}
\label{fig:intro potential}
\end{figure}

The $d$-dimensional $\phi^4_d$ measure is formally defined as the Gibbs measure
\begin{equation}
\pf\inv \exp\left( -\frac \beta 4 \int_{\Tor^d} \phi(x)^4 \dx \right) \diff\Gaussian(\phi),
\end{equation}
where $\Gaussian$ is a Gaussian measure with (formal) covariance $\beta\inv (-m^2 - \Laplace)\inv$,
and the normalization constant $\pf\inv$ is also called the partition function.

The solutions of \eqref{eq:SdNLW} and \eqref{eq:NLW}
are determined by the pair $(u, \partial_t u)$;
hence their stationary measure is the product of $\phi^4$ and white noise measures.

\begin{remark}
For the rest of the article, we fix $m^2 = 1$.
\end{remark}

\subsection{Previous literature}\label{sec:intro previous}

The Eyring--Kramers law has its roots in the work of van~'t~Hoff and Arrhenius in the 1880s.
Some of the early history of the Eyring--Kramers law in chemistry and physics
is collected e.g.\ in the survey \cite{hanggi_reactionrate_1990}.
Mathematical study of metastability of diffusion processes
goes back to the work of Freidlin and Wentzell;
see e.g.\ \cite{freidlin_random_1984}.

Much of the recent progress in the field has built on
the 2004~article \cite{bovier_metastability_2004} by Bovier, Eckhoff, Gayrard, and Klein.
In it and accompanying articles, the authors developed a potential-theoretic approach
to computing expected transition times of finite-dimensional diffusion processes.

The potential-theoretic method is described in the textbook \cite{bovier_metastability_2015},
and along some other methods in the survey \cite{berglund_kramers_2013}.
The derivation of the result is also further discussed
in e.g.\ the recent article \cite{avelin_geometric_2023} by Avelin, Julin, and Viitasaari.

This approach can be extended to infinite dimensions and thus parabolic PDEs.
The Eyring--Kramers law for \eqref{eq:SHE} in one spatial dimension
and either periodic or Neumann boundary conditions
was derived by Berglund and Gentz in their 2013 article:

\begin{theorem}[{{\cite[Theorem~2.6]{berglund_sharp_2013}}}]
\label{thm:intro she 1d}
Let us consider the flow of \eqref{eq:SHE} on periodic domain $\Tor \coloneqq [0,L]$, where $L < 2\pi$,
and assume small fixed $\delta > 0$ and sufficiently large $\beta > 0$.
We define $\mathcal B_+ \coloneqq \{ u \colon \norm{u - 1}_{L^\infty} \leq \delta \}$
and $\mathcal B_- \coloneqq \{ u \colon \norm{u + 1}_{L^\infty} \leq \delta \}$.
For initial data $u_0 \in \mathcal B_+$,
the expected first hitting time of $\mathcal B_-$ is then
\[
\sqrt 2 \pi \prod_{k \in \Z} \sqrt{\frac{\abs{C_L k^2 - 1}}{C_L k^2 + 2}}
    \exp(-\beta/4) (1 + \bigO(\beta^{-1/2} \abs{\log\beta}^{3/2})).
\]
Here $C_L \coloneqq (2\pi/L)^2$.
\end{theorem}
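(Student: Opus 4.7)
The plan is to use the potential-theoretic approach of Bovier, Eckhoff, Gayrard, Klein, adapted by Berglund--Gentz to the infinite-dimensional reversible diffusion given by \eqref{eq:SHE} on $\Tor = [0,L]$. In one spatial dimension the invariant measure $\mu_\beta \propto \exp(-\beta V(\phi)) \diff\Gaussian(\phi)$ with $V(\phi) = \int_\Tor (\phi^4/4 - \phi^2/2) \dx$ exists without renormalization, and the process is reversible with Dirichlet form $\mathcal E(f,g) = \beta^{-1} \int \langle Df, Dg \rangle \diff\mu_\beta$. The cornerstone identity is that for disjoint closed sets $A,B$, the equilibrium potential $h_{A,B}(\phi) = \Prob_\phi[\tau_A < \tau_B]$ satisfies
\[
\E_{\nu_{B,A}}[\tau_A] = \frac{1}{\mathrm{cap}(A,B)} \int_{B^c} h_{B,A}(\phi) \diff\mu_\beta(\phi),
\]
where $\nu_{B,A}$ is the equilibrium distribution on $\partial B$, and $\mathrm{cap}(A,B) = \mathcal E(h_{B,A}, h_{B,A})$ is given by the Dirichlet principle. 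I would take $A = \mathcal B_-$, $B = \mathcal B_+$, first argue that $\nu$-averaged hitting time matches the deterministic $u_0$-hitting time up to the stated relative error (this uses exponential mixing inside the well, a standard coupling/Poincaré-inequality argument for \eqref{eq:SHE} near a stable equilibrium), and then evaluate numerator and denominator separately by Laplace asymptotics in $\beta$.

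For the numerator, the function $h_{B,A}$ is approximately $\I_{\mathcal B_+^c \setminus \mathcal B_-}$ up to exponentially small corrections away from the saddle manifold, so the integral is asymptotically $\mu_\beta(\text{basin of }+1)$. One expands $V$ around the minimum $\phi \equiv 1$: the Hessian $\mathrm{Hess}\, V(1) = -\Laplace + (3\cdot 1 - 1) = -\Laplace + 2$ has eigenvalues $C_L k^2 + 2$ on $\Tor$. Combining with the Gaussian reference measure (whose covariance inverse also includes a $-\Laplace - 1$ piece, which already lives in $\Gaussian$), an infinite-dimensional Laplace expansion gives
\[
\mu_\beta(\mathcal B_+) \sim \frac{1}{Z_\beta} \exp(\beta/4) \prod_{k \in \Z} (C_L k^2 + 2)^{-1/2}
\]
up to a universal factor from the Gaussian normalization; making this rigorous requires a cutoff and controlling the cubic and higher terms in the Taylor expansion, which is where the $\bigO(\beta^{-1/2} \abs{\log\beta}^{3/2})$ error enters.

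The denominator is handled by the Dirichlet variational principle. The capacity concentrates near the saddle, which for $L < 2\pi$ is the constant configuration $\phi \equiv 0$: at $\phi \equiv 0$ the Hessian $-\Laplace - 1$ has eigenvalues $C_L k^2 - 1$, negative only for $k = 0$ (this is precisely why $L < 2\pi$ is imposed). An upper bound follows from plugging in a one-dimensional-test-function $f(\phi) = g(\langle \phi, e_0 \rangle)$ that depends only on the unstable mode and matches the error-function profile in that coordinate; a matching lower bound is obtained by restricting the Dirichlet form to a tube around the minimal-energy path and using the a~priori bound of Berglund--Gentz for infinite-dimensional Laplace integrals. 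The net result is
\[
\mathrm{cap}(\mathcal B_+, \mathcal B_-) \sim \frac{2}{Z_\beta} \cdot \frac{1}{2\pi} \sqrt{1} \cdot \prod_{k \neq 0} (C_L k^2 - 1)^{-1/2}
\]
after accounting for the two symmetric saddle-crossing channels on the circle (the leading factor $2$), which will produce the $\sqrt 2 \pi$ prefactor when divided out.

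Dividing, the $Z_\beta$ and $\exp(\beta/4)$ factors combine into $\exp(\beta/4)$, and the infinite product reorganizes into $\prod_k \sqrt{\abs{C_L k^2 - 1}/(C_L k^2 + 2)}$. The main obstacle is the rigorous justification of the infinite-dimensional Laplace method: both the convergence of the product (which here is only conditional, since the ratio of eigenvalues tends to $1$ at rate $3/(C_L k^2)$, so one needs to interpret the product regularized in a matching way for numerator and denominator, as done by Berglund--Gentz) and the estimation of the contribution of the non-Gaussian part of the measure away from the critical configurations. In $d=1$ no stochastic renormalization is needed, so the technical work is finite-codimensional modes plus tail estimates from Malliavin-type concentration, but the uniform-in-$\beta$ control of the equilibrium potential outside a neighbourhood of the saddle remains the most delicate input.
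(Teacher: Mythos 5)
The paper gives no proof of this theorem: it is quoted verbatim with a citation to \cite[Theorem~2.6]{berglund_sharp_2013}, so what you are reconstructing is the Berglund--Gentz argument rather than anything in this paper. Your outline is structurally the right one: the BEGK identity expressing the mean hitting time as $\int h_{B,A}\,\diff\mu_\beta / \mathrm{cap}(A,B)$, Laplace evaluation of the numerator around the minimum $\phi\equiv 1$ with linearization $-\Laplace+2$ (eigenvalues $C_L k^2+2$), Dirichlet-principle evaluation of the capacity around the saddle $\phi\equiv 0$ with linearization $-\Laplace-1$ (eigenvalues $C_L k^2-1$, a single negative one when $L<2\pi$), a one-dimensional error-function test profile along the unstable mode, and the need to regularize the ratio of infinite Hessian determinants jointly because the individual products diverge. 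This is exactly the scheme of the cited reference, and you correctly identify that the one-dimensional setting avoids Wick renormalization.

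There is, however, a concrete error in your treatment of the prefactor. You attribute a leading factor of $2$ in the capacity to ``two symmetric saddle-crossing channels on the circle.'' For $L<2\pi$ there is only one saddle configuration in function space, namely the constant function $0$; the kink--antikink stationary states that would provide additional transition channels only exist once $L\geq 2\pi$, as the paper itself remarks immediately after the theorem statement. So there is no doubling of channels to invoke. In fact the naive one-saddle Laplace computation you describe would yield a prefactor $2\pi\prod_{k\in\Z}\sqrt{\abs{C_L k^2-1}/(C_L k^2+2)}$, whereas the theorem states $\sqrt 2\,\pi$ in place of $2\pi$; the paper explicitly points out that the parabolic (Theorem~\ref{thm:intro she 1d}) and TST (Corollary~\ref{thm:intro nlw 1d}) prefactors are inverses of each other ``except for a factor of $\sqrt 2$.'' That discrepancy is precisely what your sketch does not account for, and inventing a spurious second channel does not fix it. Separately, your convention double-counts the $-\phi^2/2$ term: if $\Gaussian$ already has covariance $\beta\inv(-1-\Laplace)\inv$, then the density relative to $\Gaussian$ should be $\exp(-\frac{\beta}{4}\int\phi^4)$, not $\exp(-\beta\int(\phi^4/4-\phi^2/2))$; your final Hessian eigenvalues are nevertheless correct, so this is a notational slip rather than a fatal error.
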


\begin{figure}
\centering
\begin{tikzpicture}[xscale=2, yscale=4]
    \fill[black!20, domain=-1.2:-0.8, samples=20] plot (\x, {pow(\x,4)/4 - pow(\x,2)/2})
        -- (-0.8, 0.3584) -- (-1.2, 0.3584) -- cycle;
    \fill[black!20, domain=0.8:1.2, samples=20] plot (\x, {pow(\x,4)/4 - pow(\x,2)/2})
        -- (1.2, 0.3584) -- (0.8, 0.3584) -- cycle;
    \node at (-1, 0.42) {$\mathcal B_-$};
    \node at (1, 0.42) {$\mathcal B_+$};
    \draw[domain=-1.6:1.6, samples=100] plot (\x, {pow(\x,4)/4 - pow(\x,2)/2});
    \draw[->] (-1.6,-0.4) -- (1.6,-0.4) node[right] {$\hat u(0)$};
    \foreach \x in {-1,0,1}
        {\draw (\x,-0.35) -- (\x,-0.45) node[below] {$\x$};}
    \foreach \x/\y in {-1.2/-0.2016, -0.8/-0.2176, 0.8/-0.2176, 1.2/-0.2016}
        {\draw[dashed] (\x, 0.3584) -- (\x,\y);}
\end{tikzpicture}
\caption{To prove \Cref{thm:intro she 1d}, one estimates the capacity between the shaded sets.
The measure concentrates in these sets as $\beta \to \infty$.
The potential is shown with respect to constant functions;
oscillatory modes are not pictured.}
\label{fig:intro parabolic}
\end{figure}
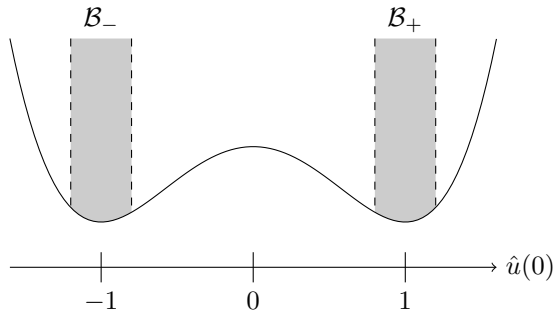

The assumption $L < 2\pi$ simplifies many arguments:
it ensures that the constant function $0$ is the relevant saddle point between the two potential wells.
When $L \geq 2\pi$, the deterministic equation has additional stationary states,
all consisting of kink-antikink pairs (see \cite[Figure~2]{berglund_sharp_2013}).
This changes the metastable behaviour,
and has also been studied in \cite{berglund_sharp_2013}.

\bigskip\noindent
The hyperbolic equation \eqref{eq:NLW}
was studied at around the same time by Newhall and Vanden-Eijnden \cite{newhall_metastability_2017}.
Their work builds on a different definition of metastable transitions.
The so-called \emph{transition state theory} (TST) counts the frequency at which the solution
crosses the $\{ \hat u(0) = 0 \}$ hypersurface (see \Cref{fig:intro hyperbolic}).

TST is closer to the original theory of Eyring \cite{eyring_activated_1935}.
The potential-theoretic approach does not lend itself to hyperbolic equations,
since it assumes both reversibility and strict ellipticity of the associated Markov diffusion process.

Newhall and Vanden-Eijnden consider only \eqref{eq:NLW},
but their argument also applies to \eqref{eq:SdNLW}.
They also state their results for generic boundary conditions.
We summarize the proof of the following theorem in \Cref{sec:tst}.

\begin{figure}
\centering
\begin{tikzpicture}[xscale=2, yscale=4]
    \fill[black!20, domain=-1.6:-0.8, samples=40] plot (\x, {pow(\x,4)/4 - pow(\x,2)/2})
        -- (-0.8, 0.3584) -- (-1.2, 0.3584) -- cycle;
    \fill[black!20, domain=0.8:1.6, samples=40] plot (\x, {pow(\x,4)/4 - pow(\x,2)/2})
        -- (1.2, 0.3584) -- (0.8, 0.3584) -- cycle;
    \node at (-1.15, 0.42) {$\halfspace_-'$};
    \node at (1.15, 0.42) {$\halfspace_+'$};
    \node (A) at (0, 0.42) {$A$};
    \draw[domain=-1.6:1.6, samples=100] plot (\x, {pow(\x,4)/4 - pow(\x,2)/2});
    \draw[->] (-1.6,-0.4) -- (1.6,-0.4) node[right] {$\hat u(0)$};
    \draw[dashed] (0,-0.4) -- (A.south);
    \foreach \x in {-1,0,1}
        {\draw (\x,-0.35) -- (\x,-0.45) node[below] {$\x$};}
    \foreach \x/\y in {-0.8/-0.2176, 0.8/-0.2176}
        {\draw[dashed] (\x, 0.3584) -- (\x,\y);}
\end{tikzpicture}
\caption{In \Cref{thm:eyring-kramers}, the crossing frequency of line $A$ is computed.
The transmission coefficient tells how many of those crossings are
from one shaded set $\halfspace'_\pm$ to the other.
Again, the potential is shown with respect to constant functions only.}
\label{fig:intro hyperbolic}
\end{figure}
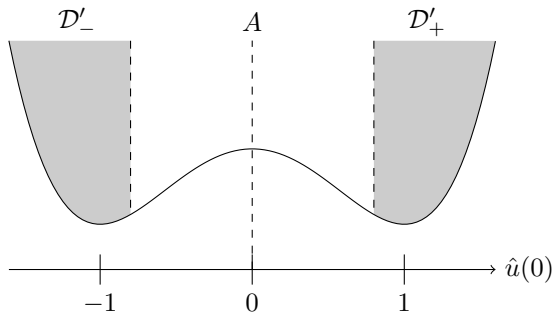

\begin{theorem}\label{thm:eyring-kramers}
Let us consider the flow of \eqref{eq:NLW} or \eqref{eq:SdNLW} on $\Tor$,
with initial data $(u_0, v_0)$ sampled from the ($\phi^4_d$, white noise) product measure.
Let us denote $\halfspace_+ \coloneqq \{ \hat u(0) > 0 \}$
and $\halfspace_- \coloneqq \{ \hat u(0) < 0 \}$.
Then the expected frequency of the flow to cross $\halfspace_+ \leftrightarrow \halfspace_-$ is
equal to
\[
\frac{2}{\sqrt{2\pi \beta L^d}} \frac{\int \delta_0(\hat u(0)) \exp(-\beta H(u)) \diff\Gaussian(u)}
    {\int \exp(-\beta H(u)) \diff\Gaussian(u)}.
\]
This requires that $\Tor$ has period length $L < 2\pi$.
\end{theorem}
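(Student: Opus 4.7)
I plan to apply the classical transition state theory / Kac--Rice flux formula to the scalar observable $\hat u(0)$ on the infinite-dimensional phase space. The argument rests on stationarity of the product $\phi^4_d \otimes$ white noise measure under both \eqref{eq:NLW} and \eqref{eq:SdNLW}, which is taken for granted from earlier in the paper.

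The first step is to observe that $\hat u(0) = L^{-d}\int u\,\dx$ is a well-defined linear functional---the pairing with the constant test function---even when $u$ is distributional, and that along either flow its evolution is governed by the pathwise identity $\partial_t \hat u(0) = \hat v(0)$. This is independent of the nonlinearity and of the stochastic forcing: noise and nonlinearity enter only the $v$-equation, while the $u$-equation reads $\partial_t u = v$. Consequently $t \mapsto \hat u(0)(t)$ is a $C^1$ real-valued process with derivative $\hat v(0)(t)$, placing us in the standard Kac--Rice setting. Applied to a stationary $C^1$ process $X_t$ with derivative $Y_t$, Kac--Rice gives an expected number of zeros per unit time equal to $\E[|Y_0| \mid X_0 = 0] \cdot p_{X_0}(0)$, with $p_{X_0}$ the density of $X_0$.

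The second step exploits the product structure of the invariant measure $\mu$. Since $u$ and $v$ are independent under $\mu$, the conditional expectation collapses to an unconditional Gaussian moment: $\E[|\hat v(0)| \mid \hat u(0) = 0] = \E_\nu[|\hat v(0)|]$, where $\nu$ denotes the velocity marginal. With $v$ distributed as white noise of covariance $\beta^{-1}\mathrm{Id}$, its zero Fourier mode $\hat v(0) = L^{-d}\int v\,\dx$ is a centered Gaussian of variance $(\beta L^d)^{-1}$, so $\E_\nu[|\hat v(0)|] = \sqrt{2/(\pi\beta L^d)} = 2/\sqrt{2\pi\beta L^d}$, which is the prefactor in the claim. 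The density factor $p_{\hat u(0)}(0)$, by Bayes/disintegration of the $\phi^4_d$ measure along the functional $\hat u(0)$, is exactly the ratio of partition functions displayed in the theorem.

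The main obstacle I expect is the rigorous interpretation of the $\delta$-insertion $\delta_0(\hat u(0)) \exp(-\beta H)\diff\Gaussian$ when $d = 2, 3$, since there the field $u$ is distributional and $H$ requires Wick renormalization. Nevertheless $\hat u(0)$ remains a non-degenerate continuous linear functional on the reference Gaussian space in every dimension, with a smooth and strictly positive marginal density at the origin; disintegration along this functional can then be justified by a mollification-and-limit procedure together with the construction of the $\phi^4_d$ measure (indeed, the evaluation of this ratio is the subject of the subsequent sections via the variational approach advertised in the abstract). Finally, note that the hypothesis $L < 2\pi$ plays no role in the Kac--Rice computation itself; it is inherited from the metastability picture, guaranteeing that $\{\hat u(0) = 0\}$ is the correct dividing hyperplane rather than one punctured by kink--antikink saddles.
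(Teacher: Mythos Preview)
Your proposal is correct and follows essentially the same approach as the paper: both derive the crossing frequency via the Kac--Rice/flux formula applied to the scalar process $t\mapsto\hat u(0)(t)$, exploit stationarity, and factor the expectation using the product structure of the invariant measure. The only cosmetic difference is that the paper counts one-directional crossings via $\E\max\{0,\hat v(0)\}$ and then doubles by symmetry, whereas you count all zeros via $\E|\hat v(0)|$ directly; these of course give the same $2/\sqrt{2\pi\beta L^d}$.
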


\begin{corollary}[{{\cite[Section~4.4]{newhall_metastability_2017}}}]
\label{thm:intro nlw 1d}
In the previous setup of \eqref{eq:NLW} or \eqref{eq:SdNLW},
the expected transition frequency is asymptotically equal to
\[
\frac{1}{2\pi} \prod_{k \in \Z} \sqrt{\frac{C_L k^2 + 2}{\abs{C_L k^2 - 1}}}
    \exp(-\beta L / 4)
\quad\text{as } \beta \to \infty.
\]
\end{corollary}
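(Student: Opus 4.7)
The plan is to apply \Cref{thm:eyring-kramers} and then asymptotically evaluate the resulting ratio of Gibbs integrals by an infinite-dimensional Laplace expansion around the critical points of the Hamiltonian
\[
H(u) = \int_{\Tor} \left( \tfrac{1}{2}(\partial_x u)^2 - \tfrac{1}{2} u^2 + \tfrac{1}{4} u^4 \right) \dx.
\]
In spatial dimension $d=1$ this functional is almost surely finite under the free Gaussian $\Gaussian$, so no renormalisation is required and the $\beta \to \infty$ behaviour is driven entirely by the critical points of $H$ (or of $H$ restricted to the hyperplane $\{\hat u(0) = 0\}$ in the numerator).

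For the denominator, $H$ is minimised by the two constants $u \equiv \pm 1$, each with $H(\pm 1) = -L/4$. Expanding $u = \pm 1 + \phi$ produces the positive quadratic form $\phi \mapsto \tfrac{1}{2}\int \phi(-\Laplace + 2)\phi \dx$, whose Fourier eigenvalues are $C_L k^2 + 2$. Gaussian integration gives
\[
\int e^{-\beta H(u)} \diff u \;\sim\; 2\, e^{\beta L/4} \prod_{k \in \Z} \sqrt{\frac{2\pi}{\beta(C_L k^2 + 2)}}.
\]
For the numerator, the assumption $L < 2\pi$ ensures $C_L > 1$, so the Hessian $-\Laplace - 1$ of $H$ at $u \equiv 0$ has eigenvalues $C_L k^2 - 1 > 0$ for every $k \neq 0$; the single unstable mode is exactly the $k=0$ direction, which is killed by $\delta_0(\hat u(0))$. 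Writing $\delta_0(\hat u(0)) = \sqrt{L}\,\delta_0(a_0)$ for the $L^2$-orthonormal $k=0$ coefficient (so that $\hat u(0) = a_0/\sqrt L$ is the spatial mean) and integrating out the remaining Gaussian modes yields
\[
\int \delta_0(\hat u(0))\, e^{-\beta H(u)} \diff u \;\sim\; \sqrt{L} \prod_{k \neq 0} \sqrt{\frac{2\pi}{\beta(C_L k^2 - 1)}}.
\]

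Taking the ratio and combining with the prefactor $2/\sqrt{2\pi\beta L}$ from \Cref{thm:eyring-kramers}, every $\beta$-dependent factor cancels except the universal $e^{-\beta L/4}$. A short computation shows that the remaining numerical coefficient collapses to $1/(2\pi)$, and that the missing $k = 0$ factor $\sqrt{(C_L \cdot 0 + 2)/|C_L \cdot 0 - 1|} = \sqrt{2}$ from the numerator's eigenvalue product repackages $\prod_{k \neq 0}$ as $\prod_{k \in \Z}$, yielding the advertised formula.

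The main obstacle is justifying this formal Laplace expansion rigorously in infinite dimensions. Two points require care: the cubic and quartic remainders in the Taylor expansion of $H$ around each critical point must be shown to yield only a multiplicative $1 + o(1)$ correction, and the infinite product of eigenvalue ratios must converge. Convergence holds because $(C_L k^2 + 2)/(C_L k^2 - 1) = 1 + \bigO(k^{-2})$ as $|k| \to \infty$, so the logarithm is absolutely summable. The remainder control can be carried out by standard Gaussian tail estimates on the $d=1$ free field, which is supported on continuous functions so that $H$ is pointwise well-defined and continuous in the relevant topology. The detailed verification is the one performed in \cite[Section~4.4]{newhall_metastability_2017}.
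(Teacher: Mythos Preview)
Your proposal is correct and matches both the cited reference and the structure of the paper's own higher-dimensional argument: the paper itself does not re-prove this one-dimensional corollary but attributes it to \cite{newhall_metastability_2017}, while the detailed computations it carries out in \Cref{sec:2d tst} (recentering around $\pm 1$, Gaussian normalisation, then showing the remaining integral is $1+o(1)$) specialise exactly to the Laplace expansion you sketch once the renormalisation terms drop out. Your bookkeeping of the $\sqrt{L}$ from the $\delta$-function and the absorption of the leftover $\sqrt{2}$ into the $k=0$ factor of the product is correct and reproduces the stated formula.
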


It is worth noting that the prefactors in \Cref{thm:intro she 1d,thm:intro nlw 1d}
are inverses of each other except for a factor of $\sqrt 2$.

Transition state theory only depends on the invariant measure,
so the expected transition frequencies for \eqref{eq:NLW} and \eqref{eq:SdNLW} are equal.
However, TST overcounts the number of transitions:
it is possible that the solution crosses the dividing surface,
but ``turns back'' into the potential well it came from.
This needs to be accounted for with a \emph{dynamical correction}
\cite{sharia_analytic_2016,tal_transition_2006,vanden-eijnden_transition_2005},
also known as \emph{transition coefficient}:
the proportion of crossings of the dividing surface
that are also crossings from one well to the other. 

The flows of \eqref{eq:NLW} and \eqref{eq:SdNLW} are expected to differ in this respect.
To the knowledge of the present authors,
dynamical corrections for neither model have not been explicitly derived before.

\bigskip\noindent
In two dimensions, the solution theory of our equations gets more challenging.
Due to the irregularity of the space-time white noise,
the solutions are almost surely distribution-valued.
It becomes necessary to renormalize the $u^3$ nonlinearity by Wick ordering,
denoted $\wick{u^3}$.
The system is truncated to wavenumbers of magnitude at most $N$,
which lets us replace the nonlinearity by
\[
\proj_N \wick{u_N^3} = \proj_N (u_N^3) - C_N u_N.
\]
The divergent renormalization constant $C_N$ is given in \Cref{def:wick powers}.
We defer further details to \Cref{sec:wick}.

Berglund, Di Gesù, and Weber were able to compute the expected transition time
for truncated, renormalized \eqref{eq:SHE} in two dimensions:
\begin{theorem}[{{\cite[Theorem~2.3]{berglund_eyringkramers_2017}}}]
\label{thm:intro she 2d}
Given $0 < \delta < 1$ and $\epsilon > 0$, we define
\[
\mathcal B_\pm \coloneqq \Big\{ u \in H^{-\epsilon}(\Tor^2) \colon \abs{\pm 1 - \hat u(0)} < \delta,
    \norm{\proj_\perp u}_{H^{-\epsilon}} \lesssim \beta^{-1/2} \log(\beta) \Big\}.
\]
There exists a sequence $(\mu_N)$ of probability measures concentrated on $\partial \mathcal B_+$
such that the expected hitting time of $\mathcal B_-$
by the \eqref{eq:SHE} flow is
\[
\bigg[ 2\pi \sqrt{\prod_{\abs k \leq N} \frac{\smallabs{C_L \abs k^2 - 1}}{C_L \abs k^2 + 2}}
    \exp(3 C_N L^2 /2) \bigg] \exp(-\beta L^2 / 4) (1 + r(\beta)),
\]
where the error term satisfies $-\bigO(\beta\inv) \leq r(\beta) \leq \bigO(\beta^{-1/2})$.
This estimate holds uniformly in a subsequence $N \to \infty$.
The prefactor in brackets is positive and bounded.
\end{theorem}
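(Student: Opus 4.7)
The plan is to implement the potential-theoretic framework of Bovier, Eckhoff, Gayrard and Klein at the level of a Galerkin truncation of \eqref{eq:SHE} to Fourier modes $\abs{k}\le N$, and then pass to the spectral limit along a subsequence. The truncated system is a reversible SDE on $\R^{(2N+1)^2}$ driven by the Wick-regularised potential
\[
H_N(u) = \tfrac12 \dual{u,(1-\Laplace) u} + \tfrac14 \int_{\Tor^2} \proj_N \wick{u_N^4} \dx,
\]
which for finite $N$ is a smooth Morse function. The hypothesis $L<2\pi$ guarantees that no kink--antikink configurations compete with the origin as a saddle between the two wells, just as in the one-dimensional situation of \cite{berglund_sharp_2013}. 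The theorem then reduces to (i) applying the sharp finite-dimensional Eyring--Kramers formula at each level $N$, and (ii) showing that the resulting expression converges along a subsequence to the displayed asymptotics, uniformly in $N$.

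For (i) I would use the potential-theoretic identity $\E_{\mu_N}[\tau_{\goodregion_-}] \sim \pf_N(\goodregion_+)/\operatorname{cap}_N(\goodregion_+,\goodregion_-)$ and derive matching Dirichlet-form bounds on the capacity. The upper bound comes from an explicit test function interpolating linearly across a narrow slab around the dividing surface $\{\hat u(0)=0\}$, which reduces the Dirichlet integral to a one-dimensional Gaussian integral on the unstable mode times an Ornstein--Uhlenbeck-type integral on the stable modes. The lower bound comes from restricting the equilibrium-potential Dirichlet integral to the same slab, together with pointwise a priori bounds on the equilibrium potential derived from elliptic regularity for the truncated generator. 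The partition function $\pf_N(\goodregion_+)$ is obtained by a Laplace expansion around the minimum, with the Wick counterterms tracked explicitly; combining these ingredients produces an Arrhenius exponential $\exp(\beta[H_N(0)-H_N(m_N)])$ together with the ratio of Hessian determinants at the two critical points.

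For (ii) one must check that once expanded in powers of $C_N$ the resulting prefactor has $N$-dependence that stays bounded. The divergence $\exp(3 C_N L^2 / 2)$ is compensated exactly by the logarithmic decay of the Hessian product $\prod_{\abs{k}\le N}\abs{C_L\abs{k}^2-1}/(C_L\abs{k}^2+2)$, which in two dimensions behaves polynomially in $N$ because $1-3/(C_L\abs{k}^2+2)$ summed over $\Z^2$ produces a logarithm; an Euler--Maclaurin comparison confirms the cancellation. The main obstacle is to make this cancellation rigorous while retaining the desired accuracy of the Eyring--Kramers asymptotics: the a priori control of the equilibrium potential required by the capacity lower bound naturally lives in a coarser topology than the one needed to localize it around the instanton, and the Laplace expansion around the renormalised minimum has to be carried to an order not swamped by the mass renormalization. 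Balancing these competing requirements forces the passage along a subsequence $N\to\infty$, obtained by tightness and a diagonal extraction, and accounts for the asymmetric error bound $-\bigO(\beta\inv) \le r(\beta) \le \bigO(\beta^{-1/2})$, whose two sides come from independently derived upper and lower estimates.
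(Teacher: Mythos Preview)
This theorem is not proved in the present paper; it is quoted from \cite{berglund_eyringkramers_2017} as background in the introduction (note the attribution in the theorem header). The paper gives no argument for it. Your sketch follows broadly the strategy of that reference --- Galerkin truncation to a finite-dimensional reversible diffusion, the potential-theoretic identity $\E_{\mu_N}[\tau]\sim \pf_N/\operatorname{cap}_N$, matching Dirichlet-form upper and lower bounds on the capacity, and tracking the Wick renormalisation through the Laplace expansion to show the prefactor stays bounded --- so as an outline of the cited proof it is plausible. One small correction: with the double-well sign convention of this paper ($m^2=1$, potential $u^4/4-u^2/2$) the quadratic part of $H_N$ should carry a \emph{negative} mass, i.e.\ $\tfrac12\dual{u,(-1-\Laplace)u}$ rather than $(1-\Laplace)$; the origin is a saddle, not a minimum, of the quadratic form. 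But since the paper under review contains no proof of this statement, there is nothing here to compare your proposal against.
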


\begin{remark}
Although the result is stated for a flow started from $\partial \mathcal B_+$,
the details of the initial distribution are expected not to matter very much.
As $\beta \to \infty$, the time spent within a potential well should be large enough
for mixing to occur.
\end{remark}

\begin{remark}
The product over $\abs k \leq N$ vanishes as $N \to \infty$,
but this is balanced by the divergent exponential renormalization term.
See \Cref{thm:prefactor renormalization}.

The renormalization constant $C_N$ depends on the mass of the field.
As noted in \cite[Remark~2.5]{berglund_eyringkramers_2017},
a finite shift $\theta_N$ in the renormalization constant
multiplies the expected transition time by $\exp(3 \theta_N L^2 / 2)$.
In this article, we state all results with respect to the constant
corresponding to $\beta\inv (-1-\Laplace)\inv$ covariance.
\end{remark}

\bigskip\noindent
In three dimensions, a further renormalization is needed to construct the $\phi^4$ measure.
The formal definition of the measure is
\begin{equation}
\lim_{N \to \infty}
\pf_N\inv \exp\left( -\frac \beta 4 \int_{\Tor^d}
    \wick{\phi_N^4} - \gamma_N \wick{\phi_N^2} - \delta_N \dx \right) \diff\Gaussian_N(\phi_N),
\end{equation}
where $\gamma_N$ and $\delta_N$ are certain diverging constants.
The corresponding version of \eqref{eq:NLW} is then
\begin{equation}
\partial_{tt} u(x, t) - (m^2 + \Laplace) u(x, t) = -\wick{u(x, t)^3} + 2\gamma_N u(x,t).
\end{equation}
Here the mass term is effectively $m^2 + C_N + 2 \gamma_N$,
which makes the potential wells deeper and further apart as $N$ grows.
To the knowledge of present authors,
no three-dimensional counterpart of \Cref{thm:intro she 2d} has been proved yet.

The programme of \emph{stochastic quantization} is about constructing the limiting $\phi^4_d$ measure
(where the domain is usually $\Tor^d$, or $\R^d$ equipped with a weight)
as an invariant measure to a stochastic PDE, usually \eqref{eq:SHE}.
In two dimensions, this was achieved by Da~Prato and Debussche
at the turn of the millennium \cite{da_prato_strong_2003}.

The three-dimensional construction came in three approaches in mid-2010s:
Hairer's theory of regularity structures \cite{hairer_theory_2014},
Kupiainen's renormalization group proof \cite{kupiainen_renormalization_2016}, and
Gubinelli, Imkeller, and Perkowski's paracontrolled distributions \cite{gubinelli_paracontrolled_2015}.
In this article, we use Barashkov and Gubinelli's variational approach \cite{barashkov_variational_2020}
that also builds on the latter.
In four and more dimensions the limiting measure is trivial \cite{aizenman_marginal_2021}.

The study of the double-well $\phi^4$ model also has a long tradition.
The model arises as the limiting dynamics of the Ising--Kac model
\cite{grazieschi_dynamical_2023, mourrat_convergence_2017}.
The double-well $\phi^4_2$ measure is further analyzed in \cite{hairer_tightness_2018},
and the $\phi^4_3$ measure in \cite{chandra_phase_2022}.

Studying \eqref{eq:NLW} with $\phi^4$ initial data goes back to works of
Friedlander \cite{friedlander_1985} and McKean and Vaninsky \cite{mckean_1999}, 
and is closely related in spirit to the study of the Schrödinger equation with $\phi^4$ initial data,
undertaken by Lebowitz, Rose and Speer \cite{lebowitz_1988} and Bourgain \cite{bourgain_1994,bourgain_1996}. 
The more difficult two-dimensional case has been studied by Oh and Thomann \cite{oh_invariant_2020}.
Recently the $\phi^4_3$ case was understood in the breakthrough work of
Bringmann, Deng, Nahmod and Yue \cite{bringmann_invariant_2022}.  

\eqref{eq:SdNLW} has also been studied in one dimension by Walsh \cite{walsh1986introduction}
and in two dimensions by Gubinelli, Koch, Oh and Tolomeo \cite{gubinelli2018renormalization, gubinelli_global_2022}.
In three dimensions well-posedness for \eqref{eq:SdNLW} with a quadratic nonlinearity
has been investigated by Gubinelli, Koch, and Oh \cite{gubinelli2023paracontrolled}
and Oh, Okamoto and Tolomeo \cite{oh2021stochastic}.
Well-posedness of the untruncated cubic equation is still to be proven.

Metastability as a study of rare events is closely related to large deviations.
Large deviations for the $\phi^4$ measure have also received some attention in the literature,
see \cite{barashkov2022variational, barashkov_2023, klose_2024}.
A refinement of this is the recent work on low-temperature expansions in the double-well case
on $\Tor^2$ \cite{gess2024low}.
We remark that some of our computations from Section~\ref{sec:3d tst} might be
of independent interest to extend the results of \cite{gess2024low} to three dimensions.

\subsection{Outline of results}\label{sec:intro results}

In this article, we develop a hyperbolic analogue of \Cref{thm:intro she 2d}.
We are also able to produce some initial results on $\Tor^3$.
We extend \Cref{thm:intro nlw 1d} into two and three dimensions as follows:
\begin{theorem}\label{thm:main tst}
The expected hitting frequency of $\{ \hat u(0) = 0 \}$ by the
truncated, renormalized \eqref{eq:NLW} or \eqref{eq:SdNLW} flow
on $\Tor^d$, $d=2,3$, is
\[
\bigg[ \frac{1}{2\pi}
\sqrt{\prod_{\abs k \leq N} \frac{C_L \abs k^2 + 2}{\smallabs{C_L \abs k^2 - 1}}}
\exp(-3 C_N L^d / 2) \bigg] \exp(-\beta L^d / 4) (1 + \bigO(\beta\inv)).
\]
This estimate is uniform in the truncation $N$,
and the prefactor in brackets is positive and bounded.
\end{theorem}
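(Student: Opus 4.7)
The strategy is to extend \Cref{thm:eyring-kramers} to the truncated and renormalized dynamics, reducing \Cref{thm:main tst} to the asymptotic analysis of
\[
\mathcal R_N(\beta) \coloneqq \frac{\int \delta_0(\hat u(0)) \exp(-\beta H_N(u)) \diff \Gaussian_N(u)}{\int \exp(-\beta H_N(u)) \diff \Gaussian_N(u)}
\]
as $\beta \to \infty$, uniformly in the truncation $N$. Here $H_N$ is the Wick-renormalized Hamiltonian on wavenumbers $\abs k \leq N$. The extension of \Cref{thm:eyring-kramers} itself is essentially automatic: its proof uses only the Hamiltonian and symplectic structure of the flow, which survive both spectral truncation and Wick ordering (the latter only adds a quadratic counter-term to the potential).

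\emph{Step 1 (variational representation).} I express both numerator and denominator through the Bou\'e--Dupuis variational formula of \cite{barashkov_variational_2020}, writing $-\log$ of each as an infimum over adapted $L^2$-drifts against a genuine Gaussian reference. The constrained numerator is handled by first inserting the Fourier representation $\delta_0(\hat u(0)) = (2\pi)\inv \int_\R e^{i \lambda \hat u(0)} \diff \lambda$ and applying the variational formula inside the $\lambda$-integral.

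\emph{Step 2 (critical points and Laplace expansion).} The denominator is dominated, as $\beta \to \infty$, by fluctuations around the two deterministic minima $u \equiv \pm 1$, which contribute equally by the $u \mapsto -u$ symmetry. Substituting $u = 1 + v$ and expanding the shifted Wick power via
\[
\wick{(1+v)^4}_N = \wick{v^4}_N + 4 \wick{v^3}_N + 6 \wick{v^2}_N + 4 \wick{v}_N + 1,
\]
I obtain a Gaussian integral around $v \equiv 0$ whose effective Hessian on mode $k$ is $C_L \abs k^2 + V''(1) = C_L \abs k^2 + 2$, perturbed by a subleading nonlinearity. Crucially, inserting $\wick{v^2}_N = v^2 - C_N$ into this expansion generates a $C_N$-dependent constant that integrates to $-6 C_N L^d$ and is responsible for the $e^{-3 C_N L^d/2}$ factor. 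The numerator is analyzed similarly near the saddle $u \equiv 0$: its effective Hessian on mode $k$ is $C_L \abs k^2 + V''(0) = C_L \abs k^2 - 1$, which is strictly positive on non-zero modes under the assumption $L < 2\pi$.

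\emph{Step 3 (assembling the prefactor).} Collecting the Laplace expansion yields: the leading exponential $\exp(-\beta(H(0) - H(\pm 1))) = \exp(-\beta L^d / 4)$ from the free-energy gap; the ratio of Gaussian determinants, giving the product $\prod_{\abs k \leq N} (C_L \abs k^2 + 2)/\smallabs{C_L \abs k^2 - 1}$ under the square root; the renormalization factor $e^{-3 C_N L^d / 2}$ from Step~2, which exactly cancels the divergence of the determinant product as $N \to \infty$; the $\delta_0$-Jacobian contributing $\sqrt{\beta L^d / (2\pi)}$; and a factor of $2$ from the two wells. Combining with the overall prefactor $2/\sqrt{2\pi \beta L^d}$ from the (extended) \Cref{thm:eyring-kramers} produces the stated $\frac{1}{2\pi}$ normalization.

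\emph{Step 4 (error control).} The $(1 + \bigO(\beta\inv))$ remainder is obtained by comparing the full variational problem with its quadratic Gaussian approximation, using the deterministic and stochastic estimates of \cite{barashkov_variational_2020}. In $d=2$ this requires only classical bounds on the Wick powers. The main technical obstacle is $d=3$: the additional $\gamma_N$ and $\delta_N$ counterterms force a paracontrolled decomposition of the minimizing drift, and one needs sharp uniform-in-$N$ control on the minimizer analogous to the two-dimensional low-temperature expansion of \cite{gess2024low}. I would address this by combining the variational machinery of \cite{barashkov2022variational, barashkov_2023} with the low-temperature expansion techniques of \cite{gess2024low}, extending them from $d=2$ to $d=3$.
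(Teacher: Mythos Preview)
Your overall architecture matches the paper's: reduce to the ratio in \Cref{thm:eyring-kramers}, Laplace-expand the denominator around $u\equiv\pm 1$ and the numerator at the saddle, read off the Gaussian-determinant product, and use Bou\'e--Dupuis to control the remainder. The origin of the $e^{-3C_NL^d/2}$ factor is also correctly identified. However, two of your technical choices diverge from the paper and one of them is a real problem.

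First, your Step~1 proposes to handle the saddle integral by writing $\delta_0(\hat u(0))=(2\pi)^{-1}\int_\R e^{i\lambda\hat u(0)}\diff\lambda$ and applying Bou\'e--Dupuis inside the $\lambda$-integral. This does not work as stated: the Bou\'e--Dupuis formula (\Cref{thm:boue-dupuis}) requires a real-valued tame functional, and $e^{i\lambda\hat u(0)-\beta H}$ is complex. The paper avoids this entirely by integrating directly over the codimension-$1$ hypersurface $\{\hat u(0)=0\}$: since the zero mode was the only problematic component of the negative-mass covariance, removing it leaves a perfectly good Gaussian with covariance $(-1-\Laplace)^{-1}$ on nonzero modes, and the variational argument runs on that space with no delta-function gymnastics (\Cref{thm:2d saddle}, \Cref{thm:3d saddle}).

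Second, for the partition function you gloss over two steps that the paper treats carefully. (i) After re-centring at $+1$ you must integrate over a \emph{half-space} $\{\hat\varphi(0)>-\sqrt\beta\}$, but Bou\'e--Dupuis works on the full space; the paper handles this by inserting a penalty $\kappa\negpart{\hat\varphi(0)+(1-\delta')\sqrt\beta}^2$ and separately proving, via a second variational argument, that the remaining ``bad'' strip near the saddle has exponentially small mass (\Cref{sec:2d concentration}). Your proposal has no analogue of this concentration step. (ii) The re-centring shifts the Gaussian covariance from $(-1-\Laplace)^{-1}$ to $(2-\Laplace)^{-1}$, so the Wick ordering must be changed accordingly. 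In $d=2$ this is a bounded correction, but in $d=3$ the change also hits the additional counterterms $\gamma_N,\delta_N$, and showing that $\gamma_{N,+}-\gamma_{N,-}$ and $\delta_{N,+}-\delta_{N,-}-\lambda^2\gamma_{N,+}(C_{N,+}-C_{N,-})$ are bounded uniformly in $N$ requires genuine diagrammatic work (\Cref{sec:3d wick}). Your Step~4 gestures at the $3$D difficulties but does not identify this mass-change issue, which is where most of the new analysis beyond \cite{barashkov_variational_2020} actually lives.
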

The proof of this result is presented in \Cref{sec:2d tst,sec:3d tst}.
It consists of computing the two integrals appearing in \Cref{thm:eyring-kramers}.
We adapt some ideas from previous literature (e.g.\ \cite{berglund_eyringkramers_2017})
to work with the $\phi^4_3$ computations developed in \cite{barashkov_variational_2020,chandra_phase_2022}.

The main ideas of the argument are laid out in \Cref{sec:2d tst},
which shows the two-dimensional case. 
The three-dimensional argument in \Cref{sec:3d tst}
requires more work due to the additional renormalisation constants;
see especially \Cref{sec:3d wick}.

\bigskip\noindent
Our second result is, to the present authors' knowledge,
the first explicit computation of the transmission coefficient of \eqref{eq:NLW}.
It is proved in \Cref{sec:corr}.
Putting it together with Theorem \ref*{thm:main tst} yields the following result.
\begin{theorem}\label{thm:main corr}
For $\epsilon > 0$ set 
\[
    \halfspace'_\pm \coloneqq \Big\{ u \in H^{-\epsilon}(\Tor^2) \colon \abs{\pm 1 - \hat u(0)} < \delta,
        \norm{\proj_\perp u}_{H^{-\epsilon}} \lesssim \beta^{-1/2+\epsilon} \Big\}.
\]
Let $u$ be the solution to \eqref{eq:NLW} on $\mathbb{T}^2$,
with initial data sampled from the invariant measure.
Let $\mathcal N(T)$ be the number of transitions
between $\halfspace'_-$ and $\halfspace'_+$ in the time interval $[0,T]$.
Then $\E \mathcal N(T) / T$ is equal to
\begin{equation*}
\bigg[ \frac{1}{2\pi}
\sqrt{\prod_{\abs k \leq N} \frac{C_L \abs k^2 + 2}{\smallabs{C_L \abs k^2 - 1}}}
\exp(-3 C_N L^d / 2) \bigg] \exp(-\beta L^d / 4) (1 + \bigO(\beta^{-1/4+\epsilon})).
\end{equation*}
\end{theorem}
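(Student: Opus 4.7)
The plan is to combine \Cref{thm:main tst} with a bound on the transmission coefficient. Writing $\nu_{\mathrm{TST}}$ for the crossing frequency of the hyperplane $A = \{\hat u(0) = 0\}$ and $\nu_{\mathrm{trans}} = \lim_{T\to\infty} \E\mathcal N(T)/T$ for the transition rate between $\halfspace_-'$ and $\halfspace_+'$, one has $\nu_{\mathrm{trans}} = \kappa\,\nu_{\mathrm{TST}}$ with transmission coefficient $\kappa \in (0,1]$. Since \Cref{thm:main tst} already supplies the asymptotics for $\nu_{\mathrm{TST}}$, the content of \Cref{thm:main corr} reduces to proving $\kappa = 1 + \bigO(\beta^{-1/4+\epsilon})$, i.e.\ that essentially every crossing of $A$ corresponds to a genuine transition between the two wells.

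\textbf{Distribution at a crossing.} By stationarity of the invariant measure and a Kac--Rice / co-area argument, the law of $(u(0), \partial_t u(0))$ at a typical crossing of $A$ is that invariant measure restricted to $A$ and reweighted by $|\partial_t \hat u(0)(0)|$. Using the Boué--Dupuis representation developed for \Cref{thm:main tst}, I would establish, with overwhelming probability in $\beta$, the following properties of the initial data: the velocity $v = \partial_t \hat u(0)(0)$ has density proportional to $|v|\exp(-c\beta v^2)$ for some constant $c > 0$, and so is of typical size $\beta^{-1/2}$; the perpendicular position $\proj_\perp u(0)$ lies in the set $\{\|\proj_\perp u\|_{H^{-\epsilon}} \lesssim \beta^{-1/2+\epsilon}\}$ appearing in the definition of $\halfspace_\pm'$; and the perpendicular momentum is similarly controlled. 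Thus a typical crossing starts from what is essentially a Gaussian free field with vanishing zero mode.

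\textbf{Ballistic commitment to a well.} Split $u(t,\cdot) = c(t) + w(t,\cdot)$ with $c = \hat u(0)(t)$ and $\int w \dx = 0$. The zeroth mode obeys $\ddot c - c = -c^3 + \text{(nonlinear couplings to }w\text{)}$, which for small $c$ and typical $w$ is close to the linear saddle equation $\ddot c = c$, so the unstable solution starting from $(0,v)$ is $c(t) = v\sinh t$. For $|v| \geq \beta^{-1/2-\alpha}$ with a small $\alpha > 0$, the trajectory reaches $|c| = 1 - \delta$ at time $T_\star \approx \tfrac{1}{2}\log(1/|v|) = \bigO(\log\beta)$ and enters the well of sign $\mathrm{sign}(v)$. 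Meanwhile $w$ solves to leading order a Klein--Gordon equation with positive frequencies $\sqrt{C_L |k|^2 - 1}$ (this is where the hypothesis $L < 2\pi$ enters), so energy and Strichartz estimates combined with Gronwall keep $\|w\|_{H^{-\epsilon}}$ from growing by more than a polylogarithmic factor over $[0,T_\star]$. The Wick renormalization makes the nonlinear back-reaction of $w$ on $c$ of size $\bigO(\beta^{-1+2\epsilon})$, negligible compared to $v\sinh T_\star \approx 1$. Once the trajectory has entered $\halfspace_\pm'$, energy conservation and the depth of the potential well keep it there for a time much longer than $T_\star$, so each such commitment contributes exactly one transition.

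\textbf{Error accounting and main obstacle.} The defect $|1-\kappa|$ is dominated by ``bad'' crossings, chiefly those with atypically small velocity: the probability that $|v| < \beta^{-1/2-\alpha}$ under the reweighted density is $\bigO(\beta^{-2\alpha})$, while anomalously large $\proj_\perp u(0)$ is superpolynomially rare thanks to the Gaussian tails of the invariant measure. Optimising $\alpha$ against the polylogarithmic losses from the $w$ dynamics produces the stated $\bigO(\beta^{-1/4+\epsilon})$ error. I expect the main obstacle to be rigorously propagating the coupled $(c,w)$ dynamics over the logarithmic window $T_\star$: $w$ has only $H^{-\epsilon}$ regularity, the cubic nonlinearity must be interpreted via Wick ordering, and the back-reaction on the one-dimensional unstable mode $c$ has to be controlled sharply enough to preserve the ballistic picture. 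A natural approach is a Da Prato--Debussche ansatz $w = \Gamma + w^\sharp$, handling the stochastic object $\Gamma$ probabilistically through the invariant-measure estimates used for \Cref{thm:main tst}, and treating the smoother remainder $w^\sharp$ by a deterministic low-regularity PDE argument.
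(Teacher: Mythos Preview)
Your approach is essentially the paper's: factor the transition rate as TST frequency times a transmission coefficient (the paper's \Cref{thm:transmission coefficient}), decompose $u$ into zero mode plus oscillatory part, linearize the zero mode near the saddle to get $\sinh$ growth over a $\log\beta$ window, and control the oscillatory part by a Da~Prato--Debussche splitting (the paper's $u = w + Z + \psi$ in \Cref{sec:corr}). The main obstacle you identify is exactly what \Cref{sec:corr} addresses.

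Two points of difference are worth flagging. First, your crossing measure is reweighted by $|\partial_t \hat u(0)|$, whereas the paper's \Cref{thm:transmission coefficient} works with $v_0$ merely \emph{conditioned} on $\hat v_0(0) > 0$, unweighted; the $\max\{0,\hat v(0)\}$ factor is absorbed into the TST prefactor rather than into $\Prob_\perp$. Second, your error accounting does not pin down where $\beta^{-1/4}$ actually comes from. In the paper it is not an optimization against polylog losses: the Hamiltonian analysis of the zero mode (\Cref{thm:corr tau}) requires the cubic back-reaction, of size $\beta^{-3/2+3\epsilon}$, to stay bounded after division by $Q^2$, forcing the threshold $Q \geq \beta^{-3/4+3\epsilon}$ in \Cref{ass:moments}; under the paper's half-Gaussian law for $Q$ this threshold fails with probability $\sim \beta^{-1/4+3\epsilon}$ (\Cref{thm:corr proper probability}). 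With your $|v|$-reweighted law the same threshold would fail with probability $\sim \beta^{-1/2}$, so your setup and your claimed rate are not mutually consistent and one of them needs adjusting. The paper also splits the transit into two phases via an auxiliary hitting time $\theta_+$ at level $\delta$ (\Cref{thm:corr apriori theta}), separating the $\bigO(\log\beta)$ climb from an $\bigO(1)$ descent; this is how the continuity argument for $\psi$ (\Cref{thm:corr psi bound}) avoids accumulating exponential-in-$\log\beta$ losses from Gr\"onwall, a device your single-phase sketch would need to incorporate.
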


We do not aim for sharpest possible exponent in the error term, and we expect that the bound could be further improved.

\subsection{Further directions}
Our results leave two interesting questions: 
\begin{itemize}
\item What is the transmission coefficient of \eqref{eq:NLW} in three dimensions?
    The $\phi^4_3$ measure is not absolutely continuous with respect to a Gaussian measure,
    whereas $\phi^4_2$ is.
    The additional renormalization also changes the dynamics.
    We expect that adapting the techniques developed in \cite{bringmann_invariant_2022}
    one should be able to show that the dynamical correction is again close to $1$,
    but this is certainly technically difficult. 

\item What is the transmission coefficient of \eqref{eq:SdNLW}, in any dimension?
    The stochastic forcing should increase the likelihood of the flow ``turning back'' near the saddle,
    and we would expect the transition coefficient to approach a value strictly between $0$ and $1$.
\end{itemize}
The last question is especially interesting,
as \eqref{eq:SdNLW} can be seen to ``interpolate'' between \eqref{eq:NLW} and \eqref{eq:SHE} dynamics
as discussed in the beginning of this section.
Better understanding of the stochastic case could thus lead to a more unified metastability
theory of the parabolic and hyperbolic equations.
A heuristic result is discussed in \cite[Remark~5.2]{bouchet_generalisation_2016}.

Finally, let us remark on convergence as $N \to \infty$.
Our estimates for the transition frequency and transmission coefficient are uniform in $N$,
and \Cref{sec:tst} holds also for the limiting flow.
However, we refrain here from stating a precise convergence result for the random variable $\mathcal N(T)$.
This topic is discussed in \cite[Remark~2.7]{berglund_eyringkramers_2017},
where convergence in probability is sketched.

The one-dimensional \Cref{thm:intro she 1d} does hold for the limiting flow,
thanks to higher moment bounds for hitting times and thus uniform integrability
\cite[Section~3.3]{berglund_sharp_2013}.
Such a moment bound depends on the particular PDE dynamics.
The proof in \cite{berglund_sharp_2013} depends heavily on the Markov property of the flow.

\subsection{Notation}\label{sec:notation}

We will fix the following notational conventions:
\begin{itemize}
\item $\Tor^d$ is the periodic domain ${[{0}, {L}]}^d$ with $L \in ({0}, {2\pi})$ fixed.
\item $\epsilon$ is a small positive constant that may vary between sections.
\item $C$ and $c$ are positive constants that may vary from line to line.
    As an exception, we fix
    $C_N$ as in \Cref{def:wick powers} and $C_L \coloneqq (2\pi/L)^2$.
\item We denote $A \lesssim B$ if $A \leq CB$, and $A \simeq B$ if $cB \leq A \leq CB$.
\item $\inorm{\,\cdot\,} \coloneqq (1 + \abs{\,\cdot\,}^2)^{1/2}$ is the usual
    inhomogenous norm.
    We also denote $\inormp{\,\cdot\,} \coloneqq (2 + \abs{\,\cdot\,}^2)^{1/2}$
    and $\inormm{\,\cdot\,} \coloneqq (-1 + \abs{\,\cdot\,}^2)^{1/2}$
    for versions adapted to the mass of the field.
\item $\E$ is expectation with respect to the ambient probability space $\Prob$.
\item $\negpart{x}$ is the negative part of $x$, equal to $\min(-x, 0)$.
\item $\proj_N$ denotes Fourier-space truncation to wavenumbers of size at most $N$,
    and $\proj_\perp$ the projection to oscillatory part (which may be also truncated).
\end{itemize}

\subsection{Acknowledgements}

The authors would like to thank Giacomo Di~Gesù and Antti Kupiainen for useful discussions.

NB and PL were funded by the ERC Advanced Grant 741487 ``Quantum Fields and Probability''.
PL was additionally funded by the Academy of Finland project 339982
and the Finnish Centre of Excellence in Randomness and Structures.
PL would like to thank the Max Planck Institute for Mathematics in the Sciences
for hospitality during a research visit.

\section{Preliminaries}

\subsection{Expected transition frequency}\label{sec:tst}

The prefactor in the Eyring--Kramers law is usually written as an expression involving
eigenvalues of $-\Laplace + U''$ evaluated at the local maxima and minima of the potential $U$
(see e.g.\ the introductory sections of \cite{berglund_sharp_2013,newhall_metastability_2017}).
In this article, we simplify the notation by always writing these eigenvalues explicitly,
under the assumption $m^2 = 1$.
The derivation of the prefactor as a Gaussian normalization constant is done in \Cref{sec:2d tst}.

In this subsection, we discuss the abstract \Cref{thm:eyring-kramers}
and the definition of the transmission coefficient.
The following proof is adapted from \cite[Section~4.1]{newhall_metastability_2017}.
For illustrative purposes, we work here directly with Dirac deltas $\delta_0$
and not any regularized objects.

\begin{proof}[Proof of \Cref{thm:eyring-kramers}]
The majority of proof holds for an arbitrary hyperbolic PDE that has an invariant measure,
and state space partitioned into two generic open sets and their boundary.
The sets are parametrized by a continuous function $q$ so that $\halfspace_+ \coloneqq \{ q > 0 \}$ and
$\halfspace_- \coloneqq \{ q < 0 \}$.

Let $u(t)$ evolve according to the flow of the PDE.
The number of $\halfspace_- \to \halfspace_+$ crossings in the time interval ${[{0}, {T}]}$ is
\begin{equation}\label{eq:tst starting point}
\int_0^T \max\{ 0, \partial_t \Gamma(q(u(t))) \} \dt,
\end{equation}
where $\Gamma$ is the Heaviside step function.
The maximum function is used to count only the positive-direction crossings.
We can expand the derivative with the chain rule:
\begin{equation}
\int_0^T \max\{ 0, \nabla q(u(t)) \cdot \partial_t u(t) \} \delta_0(q(u(t))) \dt.
\end{equation}
Now if we assume the initial data to be sampled from the relevant invariant measure $\mu$,
the distribution of $(u, v) = (u, \partial_t u)$ is also $\mu$.
Integrating over $\mu$ and dividing by $T$, we find that the expected crossing frequency is
\begin{equation}
\int \max\{ 0, \nabla q(u) \cdot v \} \delta_0(q(u)) \diff\mu(u, v).
\end{equation}
We now put $q(u) \coloneqq \hat u(0)$ into this formula
and expand the definition of the product measure to get
\begin{equation}
\frac{\E_\beta \max\{ 0, \hat v(0) \} \int \delta_0(\hat u(0)) \exp(-\beta H(u)) \diff\Gaussian(u)}{\pf(\beta)}.
\end{equation}
By symmetry of the potential,
we multiply by two in order to account for $\halfspace_+ \to \halfspace_-$ crossings as well.

For both \eqref{eq:NLW} and \eqref{eq:SdNLW},
the distribution of $v = \partial_t u$ is white noise by the product measure.
The white-noise part has the one-dimensional Gaussian expectation
\[
\E_\beta \max\{ 0, \hat v(0) \}
= \sqrt{\frac{1}{2\pi \beta L^d}},
\]
and this yields the stated formula.
\end{proof}

If we identify a Schwartz distribution on $\Tor^d$ with its Fourier coefficients,
we see that the set $\{ u \colon \hat u(0) = 0 \}$
is a hypersurface of codimension $1$.
In the following, we will call the integral over this surface the \emph{saddle integral},
as it corresponds to the saddle point of the potential.

\begin{remark}
The parabolic results are about the expected hitting time of $\halfspace_+$,
whereas we derived the expectation of the inverse quantity.
The inverse of the expected frequency only provides a lower bound
for the expected transition time by Jensen's inequality.

However, the difference is expected to be vanishing in the $\beta \to \infty$ limit,
since the flow spends most of its time inside the potential wells.
This makes it approach a two-state continuous-time Markov process.
\end{remark}

This derivation depended in no way on the specifics of the flow,
only the invariant measure.
This implies that the number of $\{ \hat u(0) = 0 \}$ crossings
is the same for both \eqref{eq:NLW} and \eqref{eq:SdNLW}.
Their difference is in the crossing frequency between the smaller subsets
$\halfspace_+'$ and $\halfspace_-'$ defined in \Cref{thm:main corr}.

Since \eqref{eq:NLW} is a Hamiltonian equation with no external forcing,
its past and future trajectory is completely determined by the current state space position.
If we consider a $\halfspace_- \to \halfspace_+$ crossing,
the hypersurface $\{ \hat u(0) = 0 \}$ is partitioned into two:
those trajectories that are also part of a $\halfspace_-' \to \halfspace_+'$ crossing,
and those that return to $\halfspace_-'$ without entering $\halfspace_+'$.

This idea can be formalized as follows.
If $\hat u(0, t) = 0$, then we define the four hitting times
\begin{equation}
\begin{split}
\tau_t^+ \coloneqq \inf_{s > t} \{ \hat u(0, s) \geq 1 - \delta \}, \quad&\quad
\tau_t^- \coloneqq \sup_{s < t} \{ \hat u(0, s) \leq -1 + \delta \},\\
\sigma_t^+ \coloneqq \inf_{s > t} \{ \hat u(0, s) \leq 0 \}, \quad&\quad
\sigma_t^- \coloneqq \sup_{s < t} \{ \hat u(0, s) \geq 0 \}.
\end{split}
\end{equation}
Thus $\tau_t^+ < \sigma_t^+$ corresponds to the flow going to $\halfspace_+'$
without turning back to the saddle,
and $\tau_t^- > \sigma_t^-$ to the flow having not already hit the saddle after departing from $\halfspace_-'$.
Therefore a $\halfspace_-' \to \halfspace_+'$ crossing at time $t$ can be written as the event
\begin{equation}
E_t \coloneqq
\{ \hat u(0, t) = 0 \}
\cap \{ \tau_t^+ < \sigma_t^+ \}
\cap \{ \tau_t^- > \sigma_t^- \}.
\end{equation}
We can then modify \eqref{eq:tst starting point} as
\begin{equation}
\int_0^T \max\{ 0, \partial_t \Gamma(q(u(t))) \} \I(E_t) \dt.\label{eq:dynamical corrections}
\end{equation}
The rest of the derivation proceeds as before,
meaning that the $\halfspace_-' \to \halfspace_+'$ transition frequency is
\begin{equation}
\frac{\E_\beta \max\{ 0, \hat v(0) \}
    \int \I(E_t) \delta_0(\hat u(0)) \exp(-\beta H(u)) \diff\Gaussian(u)}{\pf(\beta)}.
\end{equation}
Observe that we may use conditional probability to write
\begin{equation}
\begin{split}
&\mathrel{\phantom{=}} \int \I(E_t) \delta_0(\hat u(0)) \exp(-\beta H(u)) \diff\Gaussian(u).\\
&= \Prob_{\perp}(E_t) \int\delta_0(\hat u(0)) \exp(-\beta H(u)) \diff\Gaussian(u).
\end{split}
\end{equation}
where the probability $\Prob_{\perp}$ is with respect to the measure given by 
\[
\frac{ \delta_0(\hat u(0)) \exp(-\beta H(u)) \diff\Gaussian(u)}
    {\int \delta_0(\hat u(0)) \exp(-\beta H(u)) \diff\Gaussian(u)},
\]
that is, the $\phi^4$ field conditioned to be at the saddle surface. 
Finally, the reversibility of \eqref{eq:NLW} implies that it suffices to look in one direction of time:
\begin{equation}
\begin{split}
\Prob_{\perp}(E_t)
&= \Prob_{\perp}\!\big(\{ \tau_t^+ < \sigma_t^+ \} \cap \{ \tau_t^- >\sigma_t^- \}\big)\\
&= 1 - \Prob_{\perp}\!\big(\{ \tau_t^+ \geq \sigma_t^+ \} \cup \{ \tau_t^- \leq \sigma_t^- \}\big)\\
&\geq 2\Prob_{\perp}(\tau_t^+ < \sigma_t^+) - 1.
\end{split}
\end{equation}
By the stationarity of the flow $\Prob_{\perp}(E_{t})$ does not depend on $t$.
This leads us to the following conclusion:

\begin{lemma}\label{thm:transmission coefficient}
Let $\mathcal N(T)$ be defined as in \Cref{thm:main corr}.
Then 
\[
\E \frac{\mathcal N(T)}{T}
= \frac{\E_\beta \max\{ 0, \hat v(0) \}
        \int \delta_0(\hat u(0)) \exp(-\beta H(u)) \diff\Gaussian(u)}
    {\pf(\beta)}\times \Prob_{\perp}(E_t),
\]
where
\[
2\Prob_{\perp}(\tau_t^+ < \sigma_t^+) - 1 \leq \Prob_{\perp}(E_t) \leq 1.
\]
Here $\Prob_\perp$ is probability with respect to initial data $(u_0, v_0)$ so that
\begin{itemize}
    \item $u_0$ is sampled from the $\phi^4$ measure conditioned on $\hat u_0(0) = 0$, and
    \item $v_0$ is sampled from the white noise measure conditioned on $\hat v_0(0) > 0$.
\end{itemize}
\end{lemma}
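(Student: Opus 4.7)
The plan is to reorganise the computation sketched in the paragraphs preceding the statement into a clean sequence of steps, essentially following \eqref{eq:dynamical corrections} to its conclusion. First I would rewrite
\[
\mathcal N(T) = \int_0^T \max\{0, \hat v(0,t)\} \, \delta_0(\hat u(0,t)) \, \I(E_t) \dt
\]
by applying the chain rule to the Heaviside step in \eqref{eq:dynamical corrections}, exactly as in the proof of Theorem \ref{thm:eyring-kramers}. Taking expectation against the invariant $\phi^4 \otimes$~white-noise product measure and using stationarity to remove the $t$-dependence of the integrand gives
\[
\E \frac{\mathcal N(T)}{T} = \E \bigl[ \max\{0, \hat v(0)\} \, \delta_0(\hat u(0)) \, \I(E_0) \bigr].
\]

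Next I would factor this expectation into a velocity contribution and a conditioned position contribution. Since the velocity is independent of the position under the invariant measure and the Gaussian expectation $\E_\beta[\max\{0, \hat v(0)\}] = (2\pi\beta L^d)^{-1/2}$ has already been computed in the proof of Theorem \ref{thm:eyring-kramers}, the remaining $u$-integral against $\delta_0(\hat u(0))$ can be written as $\pf(\beta)^{-1} \int \delta_0(\hat u(0)) \exp(-\beta H(u)) \diff\Gaussian(u)$ times the conditional probability $\Prob_\perp(E_0)$. This is the asserted identity; the extra condition $\hat v(0) > 0$ in the definition of $\Prob_\perp$ comes from the sign constraint enforced by the $\max\{0, \cdot\}$ factor.

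For the two-sided bound on $\Prob_\perp(E_t)$ I would exploit time-reversibility of \eqref{eq:NLW}: under the conditioned initial law the time-reversed trajectory has the same distribution, so $\Prob_\perp(\tau_t^- \leq \sigma_t^-) = \Prob_\perp(\tau_t^+ \geq \sigma_t^+)$. Combining this with a union bound on the complement yields
\[
\Prob_\perp(E_t) = 1 - \Prob_\perp\bigl(\{\tau_t^+ \geq \sigma_t^+\} \cup \{\tau_t^- \leq \sigma_t^-\}\bigr) \geq 2\Prob_\perp(\tau_t^+ < \sigma_t^+) - 1,
\]
while the upper bound by $1$ is automatic.

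The main obstacle is the factorisation step: the event $E_t$ is a complicated functional of the full forward and backward trajectories, which themselves depend on both $u_0$ and $v_0$. Interpreting the product on the right-hand side of the lemma correctly requires the conditional measure $\Prob_\perp$ to incorporate both the saddle condition $\hat u(0)=0$ and the positivity $\hat v(0)>0$; the other ingredient, namely making sense of $\delta_0(\hat u(0))$ against the $\phi^4$ measure, is essentially already handled in the derivation of Theorem \ref{thm:eyring-kramers}. The rest of the argument is bookkeeping built on stationarity, reversibility, and the product structure of the invariant measure.
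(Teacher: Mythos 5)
Your proposal is correct and follows essentially the same approach as the paper: you reproduce the transition-state-theory computation from Theorem~\ref{thm:eyring-kramers} with the additional indicator $\I(E_t)$, factor the resulting saddle expectation as a normalized conditional probability against the invariant measure, and obtain the two-sided bound from reversibility of \eqref{eq:NLW} together with a union bound on the complement of $E_t$. One small remark, which applies equally to the paper's own display: the factorisation produces a $v_0$-law weighted by $\max\{0, \hat v_0(0)\}$, i.e.\ a size-biased (Rayleigh) law for the zero mode rather than literally the white noise conditioned on $\hat v_0(0)>0$; your phrase ``the sign constraint enforced by the $\max\{0,\cdot\}$ factor'' glosses over the linear weight in the same way the lemma statement does, and a fully precise version of the argument should keep track of this when it is used quantitatively in \Cref{thm:corr proper probability}.
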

We will call  $\Prob_{\perp}(\tau^{+}_{t}<\sigma_{t}^{+})$ the dynamical correction.
It is estimated in \Cref{sec:corr}.
The stopping time $\tau^+_t$ does not take the size of the oscillatory modes into account,
but as a byproduct of the proof we find that
\begin{equation}
\norm{\proj_\perp u(\tau^+_t)}_{H^{-\epsilon}} \lesssim \beta^{-1/2+\epsilon}
\end{equation}
with high probability,
as required by the definition of $\halfspace'_+$.

The same principle of hitting is expected to apply to \eqref{eq:SdNLW},
however in \eqref{eq:dynamical corrections} one sees a probability instead of a characteristic function,
since the dynamics is no longer completely determined by the initial data.
Computing this probability is expected to give rise to a different dynamical correction
leading to a modification of the Eyring--Kramers formula.
We consider this to be an interesting open problem.

\subsection{Besov spaces}\label{sec:besov}

Throughout this article, we consider functions and distributions as elements of Besov spaces.
These spaces somewhat generalize Sobolev spaces.
Below we recall the definition and basic properties.
For more information, see e.g.\ the textbook \cite{bahouri_fourier_2011},
\cite[Appendix~A]{gubinelli_paracontrolled_2015},
or \cite[Section~3]{mourrat_global_2017}.

\begin{definition}
Let $\chi$ be a smooth function that equals $1$ in $B(0, 2) \setminus B(0, 1) \subset \R$,
and is supported in $B(0, 8/3) \setminus B(0, 3/4)$.
The \emph{Littlewood--Paley blocks} $\Delta_k$, $k \geq -1$, are defined as Fourier multipliers:
\begin{itemize}
\item for $k \geq 0$, they are defined by symbol $\xi \mapsto \chi(2^{-k} \abs\xi)$,
    that is, they restrict frequencies to dyadic annuli; and
\item $\Delta_{-1}$ has symbol $\xi \mapsto 1 - \sum_{k \geq 0} \chi(2^{-k} \abs\xi)$,
    making it a smooth indicator of the unit ball in frequency space.
\end{itemize}
Here $\chi$ is chosen so that the symbols form a partition of unity.
\end{definition}

\begin{definition}
The (nonhomogeneous) \emph{Besov space} $B^s_{p,r}(\Tor^d)$
is the completion of $C^\infty(\Tor^d)$ with respect to the norm
\[
\norm{f}_{B^s_{p,r}(\Tor^d)} \coloneqq \norm{ 2^{ks} \norm{\Delta_k f}_{L^p(\Tor^d)} }_{\ell^r(k \geq -1)},
\quad 1 \leq p,r \leq \infty,\; s \in \R.
\]
We abbreviate $H^s \coloneqq B^s_{2,2}$ and $\besovinfty^s \coloneqq B^s_{\infty,\infty}$.
\end{definition}

\begin{remark}
As a particular case we get from Plancherel's identity
\[
\norm{f}_{H^s}^2 \simeq \sum_{k \in \Z^2} \inorm{k}^{2s} \smallabs{\hat f(k)}^2.
\]
\end{remark}

The decomposition of functions/distributions into dyadic frequency blocks
permits a useful decomposition of products.
Paraproducts capture low--high-frequency interactions,
whereas the resonant product captures interactions between similar frequency ranges.
Paraproducts are heavily used in the analysis of the three-dimensional $\phi^4$ model.

\begin{definition}
We can decompose
\[
fg = f \paral g + f \reson g + f \parar g,
\]
where the \emph{paraproduct} is defined as
\[
f \paral g \coloneqq \sum_{k < \ell - 1} (\Delta_k f)(\Delta_\ell g),
\]
and the \emph{resonant product} as
\[
f \reson g \coloneqq \sum_{\abs{k - \ell} \leq 1} (\Delta_k f)(\Delta_\ell g).
\]
\end{definition}

An important consequence of the definitions is that
paraproducts are Fourier-supported away from the origin.
The mean of the product $fg$ is completely contained in the resonant term $f \reson g$.
Moreover, paraproducts and resonant products
are ``almost adjoint'' operators.
This is stated precisely in \Cref{thm:paraproduct almost adjoint},
closer to where the result is used.

The usefulness of Besov spaces lies in the following multiplication inequalities.
A distribution and a function can be multiplied together if their differentiabilities
(parameter $s$ in the norm) sum to a positive value.

\begin{theorem}\label{thm:besov multiplication}
Let $1/p = 1/p_1 + 1/p_2$.
Then we have the paraproduct estimate
\[
\norm{f \paral g}_{B^{min(0, \alpha) + \beta}_{p, r}}
\lesssim \norm{f}_{B^\alpha_{p_1, \infty}} \norm{g}_{B^\beta_{p_2, r}}.
\]
Note that the low-frequency term can only decrease the smoothness of the product.
On the other hand, for $\alpha + \beta > 0$ we have the resonant bound
\[
\norm{f \reson g}_{B^{\alpha + \beta}_{p, r}}
\lesssim \norm{f}_{B^\alpha_{p_1, \infty}} \norm{g}_{B^\beta_{p_2, r}}.
\]
Together these imply that for $\alpha + \beta > 0$ we have
\[
\norm{fg}_{B^{\min(\alpha, \beta)}_{p, r}}
\lesssim \norm{f}_{B^\alpha_{p_1, r}} \norm{g}_{B^\beta_{p_2, r}}.
\]
There is also a special case for duality, where $s \in \R$:
\[
\norm{fg}_{L^1} \lesssim \norm{f}_{B^{-s}_{p,r}} \norm{g}_{B^s_{p', r'}},
\quad 1 = \frac 1 p + \frac 1 {p'} = \frac 1 r + \frac 1 {r'}.
\]
\end{theorem}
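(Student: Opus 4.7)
The plan is to prove each of the four inequalities separately by Littlewood--Paley arguments that exploit the frequency localisation of $\Delta_k f \cdot \Delta_\ell g$. The single controlling fact is: if $f$ and $g$ are Fourier-supported in annuli $\{\abs\xi \simeq 2^k\}$ and $\{\abs\xi \simeq 2^\ell\}$ respectively with $k \ll \ell$, then their product is Fourier-supported in $\{\abs\xi \simeq 2^\ell\}$; if instead $\abs{k-\ell} \leq 1$, the product is only supported in the ball $\{\abs\xi \lesssim 2^\ell\}$. This dichotomy is the reason paraproducts need no sign assumption on the regularity while resonant products need $\alpha+\beta > 0$ to obtain summability from small to large scales.

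For the paraproduct inequality I would expand $\Delta_j(f \paral g) = \sum_{k < \ell - 1} \Delta_j(\Delta_k f \cdot \Delta_\ell g)$ and note that the support property forces $\abs{j - \ell} \leq C$ for a nonzero contribution. Applying H\"older in $L^p$ gives $\norm{\Delta_k f \cdot \Delta_\ell g}_{L^p} \leq \norm{\Delta_k f}_{L^{p_1}} \norm{\Delta_\ell g}_{L^{p_2}}$; the sum over $k < \ell$ of $\norm{\Delta_k f}_{L^{p_1}}$ is majorised by the geometric series $\sum_{k < \ell} 2^{-k\alpha} \norm{f}_{B^\alpha_{p_1,\infty}}$, which is $\simeq \norm{f}_{B^\alpha_{p_1,\infty}}$ when $\alpha > 0$ and $\simeq 2^{-\ell\alpha} \norm{f}_{B^\alpha_{p_1,\infty}}$ when $\alpha < 0$. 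In both cases this produces exactly the factor $2^{-\ell \min(0,\alpha)}$ that, after taking the $\ell^r$ norm in $j \simeq \ell$ weighted by $2^{j(\min(0,\alpha)+\beta)}$, yields the stated bound.

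The resonant estimate proceeds similarly but with the roles reversed: now $(\Delta_k f)(\Delta_\ell g)$ for $\abs{k-\ell} \leq 1$ can contribute to every $\Delta_j$ with $j \leq k + C$. Writing $\Delta_j(f \reson g) = \sum_{k \geq j - C}\, \sum_{\abs{k-\ell} \leq 1} \Delta_j(\Delta_k f \cdot \Delta_\ell g)$, bounding each summand by H\"older, and summing the geometric tail $\sum_{k \geq j} 2^{-k(\alpha+\beta)} \simeq 2^{-j(\alpha+\beta)}$---where $\alpha+\beta > 0$ is essential---produces the stated regularity. The combined product bound is immediate from $fg = f \paral g + g \paral f + f \reson g$ once one checks that each of $\min(0,\alpha)+\beta$, $\min(0,\beta)+\alpha$ and $\alpha+\beta$ is at least $\min(\alpha,\beta)$ under $\alpha+\beta > 0$. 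For the $L^1$--duality estimate one uses near-orthogonality of the Littlewood--Paley blocks to reduce $\int fg$ to $\sum_{\abs{j-k}\leq 1} \int (\Delta_j f)(\Delta_k g)$, bounds each summand in space by H\"older with exponents $(p, p')$, and then sums over $k$ by H\"older in the discrete index with exponents $(r, r')$, with the weights $2^{-ks}$ and $2^{ks}$ cancelling exactly. The main obstacle is purely bookkeeping: tracking that the $\ell^\infty$ norm on $f$ in the first two bounds (rather than $\ell^r$) suffices, and that the geometric sums converge at the endpoint cases; these are classical Bony-type computations, carried out in detail for instance in \cite[Chapter~2]{bahouri_fourier_2011}.
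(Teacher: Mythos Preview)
The paper does not prove this theorem: it is stated as a standard preliminary result, with the surrounding text directing the reader to \cite{bahouri_fourier_2011}, \cite[Appendix~A]{gubinelli_paracontrolled_2015}, and \cite[Section~3]{mourrat_global_2017} for details. Your sketch is the classical Bony paraproduct argument found in those references, and it is correct in outline; in particular your handling of the frequency-support dichotomy, the geometric sums, and the near-orthogonality for the $L^1$ duality are all the right ideas.

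One small point worth flagging in your bookkeeping remark: the case $\alpha = 0$ in the paraproduct bound is genuinely borderline, since $\sum_{k<\ell} 2^{-k\alpha}$ is not a convergent geometric series there. The standard fix is to use $\norm{f}_{L^{p_1}}$ rather than $\norm{f}_{B^0_{p_1,\infty}}$ at that endpoint (via $\sum_{k<\ell}\Delta_k f = S_{\ell-1} f$ and uniform $L^p$-boundedness of the partial sums), which is consistent with how the estimate is stated in \cite[Theorem~2.82]{bahouri_fourier_2011}. This does not affect any application in the paper, where the relevant exponents are always strictly nonzero.
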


An especially useful variant of the multiplication estimate
is proved in \cite[Lemma~A.7]{gubinelli_pde_2021}:

\begin{theorem}\label{thm:besov lp product}
For $s, \epsilon > 0$ we have
\[
\smallnorm{f^2}_{B^{s}_{1,1}}
\lesssim \norm{f}_{L^2} \norm{f}_{B^{s+\epsilon}_{2,2}},
\quad\text{and}\quad
\smallnorm{f^3}_{B^s_{1,1}}
\lesssim \norm{f}_{L^4}^2 \norm{f}_{B^{s+\epsilon}_{2,2}}.
\]
\end{theorem}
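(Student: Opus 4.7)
The plan is to dyadically decompose the products, apply Hölder to each block-product, and pay a small smoothness loss $\epsilon$ (via Cauchy--Schwarz against the summable weight $\sum_k 2^{-2k\epsilon}$) to trade the $\ell^1$ summation built into $B^s_{1,1}$ for the $\ell^2$ summation of $B^{s+\epsilon}_{2,2}$.

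For the first inequality, I would split $f^2 = 2(f \paral f) + f \reson f$. For the paraproduct, the Fourier support of each summand $(\sum_{j < k-1}\Delta_j f) \cdot \Delta_k f$ sits at scale $2^k$, forcing only $k \sim n$ to contribute to $\Delta_n(f \paral f)$; Hölder $L^2 \cdot L^2$ together with $\bigl\| \sum_{j < k-1}\Delta_j f \bigr\|_{L^2} \leq \norm{f}_{L^2}$ then gives
\[
\norm{\Delta_n(f \paral f)}_{L^1}
\lesssim \norm{f}_{L^2} \sum_{k \sim n} \norm{\Delta_k f}_{L^2}.
\]
For the resonant piece, a product of two blocks at scale $2^k$ can land in any $\Delta_n$ with $n \leq k + C$, giving $\norm{\Delta_n(f \reson f)}_{L^1} \lesssim \sum_{k \geq n - C} \norm{\Delta_k f}_{L^2}^2$. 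Multiplying by $2^{ns}$, summing in $n$ (Fubini converts $\sum_{n \leq k+C} 2^{ns}$ into $2^{ks}$), and one Cauchy--Schwarz yield $\smallnorm{f^2}_{B^s_{1,1}} \lesssim \norm{f}_{L^2} \norm{f}_{B^{s+\epsilon}_{2,2}}$.

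For $\smallnorm{f^3}_{B^s_{1,1}}$, I would triple-expand $f^3 = \sum_{i,j,k} \Delta_i f \Delta_j f \Delta_k f$ and, assuming by symmetry $i \leq j \leq k$, split into the low--low--high regime $j < k - C$ and the resonant regime $j \geq k - C$. In the first regime only $n \sim k$ contributes; summing freely over $i, j$ leaves a factor $\bigl(\sum_{j < k - C} \Delta_j f\bigr)^2 \Delta_k f$, which Hölder $L^4 \cdot L^4 \cdot L^2$ bounds by $\norm{f}_{L^4}^2 \norm{\Delta_k f}_{L^2}$; this is summed exactly as in the first step. In the second regime, summing $\sum_{i \leq j}\Delta_i f$ produces a factor bounded by $\norm{f}_{L^4}$ in $L^4$, and Hölder gives $\norm{f}_{L^4} \norm{\Delta_j f}_{L^4} \norm{\Delta_k f}_{L^2}$ for each $j \sim k$, reducing the problem to controlling $\sum_k 2^{ks} \norm{\Delta_k f}_{L^4} \norm{\Delta_k f}_{L^2}$.

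This last sum is the main obstacle. One must not apply an $\ell^2 \times \ell^2$ Cauchy--Schwarz, because this would require $\bigl(\sum_k \norm{\Delta_k f}_{L^4}^2\bigr)^{1/2} \lesssim \norm{f}_{L^4}$, which \emph{fails}: the Littlewood--Paley characterization of $L^4$ involves $\bigl\|\bigl(\sum_k \smallabs{\Delta_k f}^2\bigr)^{1/2}\bigr\|_{L^4}$, and indeed the embedding $L^4 \hookrightarrow B^0_{4,2}$ does not hold. The correct move is to pull the $L^4$ blocks out as a supremum, invoking only the valid endpoint embedding $L^4 \hookrightarrow B^0_{4,\infty}$, i.e.\ $\sup_k \norm{\Delta_k f}_{L^4} \lesssim \norm{f}_{L^4}$, and then absorb the remaining $\sum_k 2^{ks} \norm{\Delta_k f}_{L^2}$ into $\norm{f}_{B^{s+\epsilon}_{2,2}}$ via the same $\epsilon$-loss Cauchy--Schwarz as before.
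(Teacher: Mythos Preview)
Your argument is correct. The paper itself does not prove this statement; it simply cites \cite[Lemma~A.7]{gubinelli_pde_2021}, so there is no in-paper proof to compare against. What you have written is essentially the standard paraproduct proof that one finds in that reference: decompose into low--high and high--high interactions, use Hölder blockwise ($L^2\cdot L^2$ for $f^2$, $L^4\cdot L^4\cdot L^2$ for $f^3$), exploit the frequency localization to restrict which $\Delta_n$ can see each piece, swap the order of summation via Fubini on the resonant part, and finally trade the $\ell^1$ sum for an $\ell^2$ sum by Cauchy--Schwarz against $2^{-k\epsilon}$.

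Your discussion of the obstacle in the resonant part of $f^3$ is also to the point: the naive $\ell^2\times\ell^2$ Cauchy--Schwarz would require $L^4\hookrightarrow B^0_{4,2}$, which is false, and you correctly replace it by the valid embedding $L^4\hookrightarrow B^0_{4,\infty}$ (i.e.\ $\sup_k\norm{\Delta_k f}_{L^4}\lesssim\norm{f}_{L^4}$, stated in the paper as part of \Cref{thm:besov embeddings}). One small remark: in the resonant part of $f^2$ you actually get $\sum_k 2^{ks}\norm{\Delta_k f}_{L^2}^2\leq\norm{f}_{L^2}\norm{f}_{B^s_{2,2}}$ directly by Cauchy--Schwarz, so no $\epsilon$-loss is needed there; the $\epsilon$ is only used to pass from $\ell^1$ to $\ell^2$ on the paraproduct side. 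This does not affect the stated inequality.
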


Besov spaces can be defined via interpolation of Sobolev spaces,
and there are several interpolation results between Besov spaces.
We use the following one to estimate some Besov norms in terms of $L^4$ and $H^1$ norms.

\begin{theorem}\label{thm:besov interpolation}
Fix $\theta \in (0, 1)$ such that $s = \theta \alpha + (1-\theta) \beta$ and
\[
\frac 1 p = \frac{\theta}{p_1} + \frac{1-\theta}{p_2}, \quad
\frac 1 r = \frac{\theta}{r_1} + \frac{1-\theta}{r_2}.
\]
Then we can interpolate
\[
\norm{f}_{B^s_{p,r}} \lesssim \norm{f}_{B^\alpha_{p_1, r_1}}^\theta \norm{f}_{B^\beta_{p_2, r_2}}^{1-\theta}.
\]
\end{theorem}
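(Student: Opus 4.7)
The plan is to derive this as a direct double application of Hölder's inequality, first at the level of each Littlewood--Paley block and then at the level of the $\ell^r$ summation in $k$. This is the standard real-interpolation style argument, and no deep tool is needed beyond Hölder.

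First I would fix a frequency $k \geq -1$ and note that by Hölder's inequality in $L^p(\Tor^d)$, using the hypothesis $1/p = \theta/p_1 + (1-\theta)/p_2$, one has
\begin{equation*}
\norm{\Delta_k f}_{L^p}
= \norm{ \abs{\Delta_k f}^\theta \cdot \abs{\Delta_k f}^{1-\theta} }_{L^p}
\leq \norm{\Delta_k f}_{L^{p_1}}^\theta \norm{\Delta_k f}_{L^{p_2}}^{1-\theta}.
\end{equation*}
Next, using that $s = \theta\alpha + (1-\theta)\beta$, I would factor the dyadic weight as $2^{ks} = (2^{k\alpha})^\theta (2^{k\beta})^{1-\theta}$ and multiply through to obtain
\begin{equation*}
2^{ks} \norm{\Delta_k f}_{L^p}
\leq \bigl( 2^{k\alpha} \norm{\Delta_k f}_{L^{p_1}} \bigr)^\theta
     \bigl( 2^{k\beta} \norm{\Delta_k f}_{L^{p_2}} \bigr)^{1-\theta}.
\end{equation*}

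The second step is to take the $\ell^r$ norm over $k \geq -1$. Writing $a_k := 2^{k\alpha} \norm{\Delta_k f}_{L^{p_1}}$ and $b_k := 2^{k\beta}\norm{\Delta_k f}_{L^{p_2}}$, the pointwise bound above gives $2^{ks}\norm{\Delta_k f}_{L^p} \leq a_k^\theta b_k^{1-\theta}$, and Hölder's inequality on sequence spaces with exponent $1/r = \theta/r_1 + (1-\theta)/r_2$ yields
\begin{equation*}
\norm{ a_k^\theta b_k^{1-\theta} }_{\ell^r}
\leq \norm{a_k}_{\ell^{r_1}}^\theta \norm{b_k}_{\ell^{r_2}}^{1-\theta}
= \norm{f}_{B^\alpha_{p_1,r_1}}^\theta \norm{f}_{B^\beta_{p_2,r_2}}^{1-\theta},
\end{equation*}
which is the claimed inequality after taking the $\ell^r$ norm on the left-hand side. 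The cases where some of the exponents $r_1, r_2$ equal $\infty$ (so that $\theta = 0$ or $\theta = 1$ in the usual Hölder sense at the $\ell^r$ level) are handled by the standard convention that $a_k^0 = 1$, and are actually easier.

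There is no real obstacle: the argument uses Hölder at two levels, once in physical space to interpolate the Lebesgue exponent of each block and once in $k$ to interpolate the summability exponent, leveraging the affine relation satisfied by $s$, $1/p$, $1/r$ in $\theta$. The only minor care point is the endpoint cases $p_i, r_i \in \{1, \infty\}$, which are covered by the same Hölder inequality with the usual conventions. One should also note that the implicit constant absorbs a harmless factor from the overlap of the Littlewood--Paley supports, but since $\Delta_k f$ is computed directly and not rewritten via convolution identities, no such loss arises in this derivation.
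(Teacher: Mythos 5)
Your proof is correct. The paper states Theorem~\ref{thm:besov interpolation} without proof (it is a standard fact quoted from the Besov-space literature), so there is no paper argument to compare against; but your two-level Hölder argument---interpolation of the Lebesgue exponent blockwise, then interpolation of the summability exponent over the frequency index---is the standard proof, and each step is sound. In particular, the $L^p$-interpolation inequality $\smallnorm{g}_{L^p} \leq \smallnorm{g}_{L^{p_1}}^\theta \smallnorm{g}_{L^{p_2}}^{1-\theta}$ is exactly Hölder applied to $\abs{g}^{\theta p}\abs{g}^{(1-\theta)p}$ with conjugate exponents $p_1/(\theta p)$ and $p_2/((1-\theta)p)$, and the $\ell^r$ step is the same inequality for sequences; both degenerate gracefully to the $\sup$ bound at the $\infty$ endpoints, as you note. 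Your remark that no Littlewood--Paley overlap constant appears is also accurate, since each $\Delta_k f$ is treated as a fixed function rather than re-expanded.
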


Another important property is the ability to trade smoothness for integrability.
This leads to the following embedding results.
Note that the ``exchange rate'' between the $p$ and $s$ parameters
depends on the dimension of the underlying space.

\begin{theorem}\label{thm:besov embeddings}
Let $1 \leq q < p \leq \infty$ and $s \in \R$.
Then
\[
\norm{f}_{B^s_{p,r}} \lesssim \norm{f}_{B^{s'}_{q,r}}, \quad
\text{where } s' \geq s + d\left( \frac 1 q - \frac 1 p \right).
\]
There are also the following relations between Besov and usual Sobolev norms
(where $k \in \N$, including $k = 0$ corresponding to $L^p$ space):
\[
\norm{f}_{B^k_{p,\infty}} \lesssim \norm{f}_{W^{k,p}} \lesssim \norm{f}_{B^k_{p,1}}.
\]
\end{theorem}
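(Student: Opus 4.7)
The plan is to reduce both claims to a single ingredient---Bernstein's inequality---applied block by block to the Littlewood--Paley decomposition, and then to sum in $\ell^r_k$. Concretely, I would first prove: for $1 \leq q \leq p \leq \infty$ and every $k \geq -1$,
\[
\norm{\Delta_k f}_{L^p} \lesssim 2^{kd(1/q - 1/p)} \norm{\Delta_k f}_{L^q}.
\]
This rests on the fact that $\Delta_k f = \varphi_k * f$ for a (periodized) Schwartz function with $\varphi_k(x) = 2^{kd}\varphi_0(2^k x)$ for $k \geq 0$; Young's convolution inequality at exponent $1/t = 1 - 1/q + 1/p$ then delivers the claim via $\norm{\varphi_k}_{L^t} = 2^{kd(1-1/t)}\norm{\varphi_0}_{L^t}$. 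The low-frequency block $\Delta_{-1}$ is handled separately with no exponential factor.

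Given Bernstein, the Besov embedding is essentially bookkeeping. Multiplying by $2^{ks}$ and taking the $\ell^r$-norm in $k$ yields
\[
\norm{f}_{B^s_{p,r}}
\lesssim \bigl\| 2^{k(s + d(1/q - 1/p))}\norm{\Delta_k f}_{L^q} \bigr\|_{\ell^r_k}
\leq \norm{f}_{B^{s'}_{q,r}}
\]
for $s' = s + d(1/q-1/p)$. A strict excess only produces an exponentially decaying prefactor $2^{-k(s'-s-d(1/q-1/p))}$ that is harmless in $\ell^r$. For the Sobolev--Besov sandwich, I would exploit the identity $\Delta_j \partial^\alpha = 2^{j|\alpha|} m_j(D) \Delta_j$ with $m_j(\xi) = 2^{-j|\alpha|}(i\xi)^\alpha \chi(2^{-j}\xi)$, whose inverse Fourier transform is $L^1$-bounded uniformly in $j$ by rescaling of the $j=0$ kernel. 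This gives the block-level equivalence $\norm{\Delta_j \partial^\alpha f}_{L^p} \simeq 2^{j|\alpha|}\norm{\Delta_j f}_{L^p}$, valid for all $1 \leq p \leq \infty$. Taking the supremum over $j$ then yields $\norm{f}_{B^k_{p,\infty}} \lesssim \norm{f}_{W^{k,p}}$, while expanding $\partial^\alpha f = \sum_j \Delta_j \partial^\alpha f$ and using the triangle inequality in $L^p$ gives $\norm{f}_{W^{k,p}} \lesssim \norm{f}_{B^k_{p,1}}$.

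The main technical point is the verification of the convolution-kernel estimates for the projectors $\Delta_j$: everything works for the full range $1 \leq p \leq \infty$ precisely because we have explicit $L^1$ kernel bounds, avoiding any reliance on the Mikhlin--H\"ormander multiplier theorem, which would exclude the endpoints $p \in \{1, \infty\}$. The low-frequency block $\Delta_{-1}$ also requires separate treatment since its symbol is not a dilation of a single function. Once these preliminaries are in place, the rest of the argument is straightforward summation of geometric series in $\ell^r$ with no surprises.
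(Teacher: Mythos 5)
The paper does not prove this theorem: it is stated as a standard preliminary and attributed to the textbook literature (Bahouri--Chemin--Danchin, Gubinelli--Imkeller--Perkowski, Mourrat--Weber), so there is no argument of the paper's own to compare against. Your route is the standard one --- Bernstein block estimates plus $\ell^r$ summation --- and it is the same proof one finds in those references.

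There are two small but real slips in the sketch that you should tighten. First, Young applied to $\Delta_k f = \varphi_k * f$ yields $\norm{\Delta_k f}_{L^p} \lesssim 2^{kd(1/q-1/p)} \norm{f}_{L^q}$, with the \emph{full} $L^q$ norm of $f$ on the right, not the block-level Bernstein inequality $\norm{\Delta_k f}_{L^p} \lesssim 2^{kd(1/q-1/p)} \norm{\Delta_k f}_{L^q}$ that you actually use. The standard repair is to introduce a wider auxiliary multiplier $\tilde\Delta_k$ whose symbol equals $1$ on the support of $\chi(2^{-k}\cdot)$, so that $\Delta_k f = \tilde\varphi_k * (\Delta_k f)$, and apply Young there; the kernel $\tilde\varphi_k$ has the same dilation structure. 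Second, the claimed block-level two-sided bound $\norm{\Delta_j \partial^\alpha f}_{L^p} \simeq 2^{j|\alpha|}\norm{\Delta_j f}_{L^p}$ cannot hold for a single multi-index $\alpha$, since $(i\xi)^\alpha$ vanishes on part of the annulus; what is true, and suffices, is $\norm{\Delta_j \partial^\alpha f}_{L^p} \lesssim 2^{j|\alpha|}\norm{\Delta_j f}_{L^p}$ in one direction, while the lower bound requires summing over $|\alpha| = k$, using that $\sum_{|\alpha|=k} |\xi^\alpha|^2 \simeq |\xi|^{2k}$ on the dyadic annulus to build a left inverse with uniformly $L^1$-bounded kernel. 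With these two adjustments your argument is complete and matches the standard references the paper cites.
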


\subsection{Variational formulation}\label{sec:boue-dupuis}

We will be computing expectations of form $\E e^{-V}$.
It is useful to rephrase them as stochastic control problems over $V$
without the exponential.

This variational approach is due to Üstünel \cite{ustunel_variational_2014},
who in turn extended previous work of Boué and Dupuis \cite{boue_variational_1998}.
Barashkov and Gubinelli used this technique to construct the $\phi^4$ measure
\cite{barashkov_variational_2020}.
(See also e.g.\ \cite{bringmann_invariant_2022b,chandra_phase_2022}
for other applications of the same underlying argument.)

We begin by defining our stochastic objects precisely.
As we discuss in \Cref{sec:wick},
it is necessary to consider regularized versions of these objects.
In the variational formalism the regularization is controlled by a continuous parameter $T \in \R_+$.%
\footnote{This parameter is sometimes referred to as ``time'',
but is not to be confused with the dynamics of the wave equation.}

\begin{definition}
Let $(B^k_t)_{k \in \Z^d}$ be standard complex Brownian motions,
independent except for $B^{-k}_t = \overline{B^k_t}$.
For $k=0$ we take $B^0_t$ to be a real-valued Brownian motion.
We then define a Gaussian process
\[
B_t(x) \coloneqq \sum_{k \in \Z^d} \exp\!\left(\tfrac{2\pi}{L} i k \cdot x\right) B^k_t.
\]
Let $\mathcal F_t$ be the filtration corresponding to this process.
We denote by $\mathbb H_a$ the space of $\mathcal F_t$-progressively measurable processes
that are $\Prob$-almost surely in $L^2(\R_+ \times \Tor^d; \; \R)$.
Elements of $\mathbb H_a$ are called \emph{drifts}.
\end{definition}

We are going to define the Gaussian free field as the limit of regularized processes
where $B_t$ is Fourier-multiplied with the covariance $(\pm m^2 - \Laplace)\inv$.
We first define a family of auxiliary functions
that are used for regularization.

\begin{definition}
Let us fix a non-increasing function $\rho \in C_c^\infty(\R_+;\; \R_+)$
with $\rho(s) = 1$ for $s \leq 1$ and $\rho(s) = 0$ for $s \geq 2$.
We then define
\[
\rho_t(k) \coloneqq \rho\left( \frac{2 \inorm k}{\inorm t} \right)
\quad\text{and}\quad
\sigma_t(k) \coloneqq \left( \frac{\diff}{\dt} \rho_t(k)^2 \right)^{1/2}.
\]
\end{definition}

The definition of $\sigma_t$ is chosen so that
\begin{equation}\label{eq:boue-dupuis sigma squared}
    \int_0^T \sigma_t(k)^2 \dt = \rho_T(k)^2 \xrightarrow[T \to \infty]{} 1.
\end{equation}
This means that the following definition would yield a Gaussian process that
has $(m^2 - \Laplace)\inv$ as its limiting covariance when $T \to \infty$.
Up to any finite $T$, the process is continuous in $t$ and smooth in space.

\begin{remark}
Here we deviate slightly from the argument of \cite{barashkov_variational_2020}.
The invariance of the $\phi^4$ measure follows immediately from Liouville's theorem
if the measure and dynamics are truncated to finitely many Fourier modes.
This is incompatible with the continuous parametrization.

This issue prompted Bringmann to use a slightly different definition in \cite{bringmann_invariant_2022b}.
We instead truncate the dynamics to wavenumbers of magnitude at most $N$,
and stop the stochastic control evolution once $T \geq N$.
This is possible since we never pass $N \to \infty$,
as long as our estimates are uniform in $N$.
\end{remark}

\begin{definition}\label{def:variational gff}
Fix $N \in \N$.
Let $\sigma_t$ be as above,
and let the mass associated with the Gaussian free field be~$\pm m^2$.
We define the operator $J_t \colon L^2(\Tor^d) \to L^2(\Tor^d)$ by its Fourier transform
\[
\widehat{J_t[g]}(k) \coloneqq \frac{\sigma_t(k)
    \I_{\abs k \leq N} \I_{\pm m^2 + C_L \abs k^2 > 0}}{\sqrt{\pm m^2 + C_L \abs k^2}} \hat g(k),
\]
where $C_L = (2\pi/L)^d$,
and then the operator $I_T \colon \mathbb H_a \to L^2(\Tor^d)$ by
\[
I_T[f] \coloneqq \int_0^T J_t f_t \dt.
\]
In particular, we define the \emph{$N$-truncated Gaussian free field} as $\phi_N \coloneqq I_N[B_t]$,
which equals $I_T[B_t]$ for all $T \geq N$.
\end{definition}

Let us note that $t \mapsto J_t[B_t]$ and $t \mapsto I_t[B_t]$ are both martingales.
We use this property together with the Itô formula.
Additionally, the following lemma formalizes the smoothing behaviour of $J_t$.

\begin{lemma}\label{thm:boue-dupuis j regularity}
For any $s,r \in \R$ and $g \in H^r$ we have
\[
\norm{J_t[g]}_{H^r}^2 \lesssim \frac{1}{\inorm t^{1+2s}} \norm{g}_{H^{r-1+s}}^2.
\]
\end{lemma}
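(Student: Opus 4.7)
The plan is to reduce the bound to a pointwise estimate on the Fourier symbol of $J_t$ via Plancherel. Since $J_t$ acts diagonally in Fourier space,
\[
\norm{J_t[g]}_{H^r}^2 \simeq \sum_{k \in \Z^d} \inorm k^{2r} \frac{\sigma_t(k)^2\, \I_{\abs k \le N}\, \I_{\pm 1 + C_L \abs k^2 > 0}}{|\pm 1 + C_L \abs k^2|} \abs{\hat g(k)}^2,
\]
so the lemma will follow once I show that on the support of the symbol
\[
\frac{\sigma_t(k)^2}{|\pm 1 + C_L \abs k^2|} \lesssim \inorm t^{-1-2s}\, \inorm k^{2s-2}.
\]

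The key structural observation I would use is that $\sigma_t(k)^2 = \frac{d}{dt} \rho_t(k)^2$ is supported on the dyadic shell $\inorm k \simeq \inorm t$. Indeed, a direct chain-rule computation gives
\[
\sigma_t(k)^2 = -4\, \rho(2\inorm k / \inorm t)\, \rho'(2\inorm k / \inorm t)\, \frac{\inorm k\, t}{\inorm t^3},
\]
and $\rho'$ is supported in $(1,2)$ by the definition of $\rho$, which forces $\inorm t / 2 < \inorm k < \inorm t$. On this annulus one has the crude bound $\sigma_t(k)^2 \lesssim \inorm k / \inorm t^2 \lesssim \inorm t^{-1}$. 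For the mass factor, the cutoff $\I_{\pm 1 + C_L\abs k^2 > 0}$ together with $C_L > 1$ (which follows from $L < 2\pi$) guarantees $|\pm 1 + C_L\abs k^2| \simeq \inorm k^2$, with constants allowed to depend on $L$; this is clear for the $+$ sign and, in the $-$ sign, uses that the indicator excludes the unique zero of $-1 + C_L\abs k^2$ and leaves a region on which $C_L\abs k^2 - 1 \simeq \abs k^2$.

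Combining these two estimates yields $\sigma_t(k)^2 / |\pm 1 + C_L\abs k^2| \lesssim \inorm t^{-1}\, \inorm k^{-2}$. To get the correct powers in the final estimate I would then trade the annular localization $\inorm k \simeq \inorm t$ for weights, writing $\inorm k^{2r} = \inorm t^{2-2s}\, \inorm k^{2(r-1+s)}$ up to constants. Plugging this back into the Plancherel expression gives
\[
\norm{J_t[g]}_{H^r}^2 \lesssim \inorm t^{-1-2s} \sum_{k:\, \inorm k \simeq \inorm t} \inorm k^{2(r-1+s)} \abs{\hat g(k)}^2 \leq \inorm t^{-1-2s} \norm{g}_{H^{r-1+s}}^2,
\]
which is exactly the stated inequality. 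I do not anticipate a genuine obstacle; the only delicate point is the verification that the mass factor is comparable to $\inorm k^2$ in the negative-mass case, but this is a consequence of the cutoff indicator and the hypothesis $L < 2\pi$, so the constants in $\lesssim$ depend harmlessly on $L$.
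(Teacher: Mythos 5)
Your proposal is correct and follows essentially the same route as the paper's proof: pass to the Fourier side via Plancherel, observe that $\sigma_t(k)^2 = \tfrac{d}{dt}\rho_t(k)^2$ localizes $\inorm{k}\simeq\inorm{t}$ and is bounded by $\inorm{k}/\inorm{t}^2$, use the annulus to replace the mass factor by $\inorm{k}^2$, and then trade $\inorm{k}^{2s}\simeq\inorm{t}^{2s}$ to redistribute the powers. You are slightly more explicit than the paper about why the negative-mass denominator is comparable to $\inorm{k}^2$ (via $L<2\pi$ and the exclusion of $k=0$), but this is the same argument.
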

\begin{proof}
Let us write
\begin{equation}
\norm{J_t[g]}_{H^r}^2
\lesssim \sum_{k \in \Z^d} \frac{(1+\abs k^2)^r}{\pm m^2 + C_L \abs k^2}
    \abs{ \rho_t(k) \rho'(2 \inorm k / \inorm t) \frac{\inorm k}{\inorm t^2} } \hat g(k)^2.
\end{equation}
(If the mass term is $-1$, we sum over non-zero $k$ only.)
Both $\rho$ and $\rho'$ are bounded by smoothness.
As $\sigma_t$ is supported in an annulus proportional to $t$,
we also have $m^2 + C\abs k^2 \simeq \inorm k^2 \simeq \inorm t^2$.
Thus we can move out $\inorm{k} \inorm{t}^{-2}$
and introduce $\inorm k^{2s} / \inorm t^{2s} \simeq 1$ to the above:
\begin{equation}
\frac{1}{\inorm t^{1+2s}} \sum_{k \in \Z^d} \inorm k^{2r-2+2s} \hat g(k)^2,
\end{equation}
which is exactly as required.
\end{proof}

We are now ready to present the \emph{Boué--Dupuis formula}.
The fourth-order polynomial potential we study satisfies the tameness condition
by boundedness from below and hypercontractivity of the Gaussian measure
(see e.g.\ \cite[Proposition~3.3]{mourrat_construction_2017}).

\begin{definition}
We call a measurable functional $V \colon  C_x^\infty(\Tor^d) \to \R$ \emph{tame} if
\[
\E \abs{V(\phi_T)}^p + \E \exp(-q V(\phi_T)) < \infty
\quad\text{for some } \frac 1 p + \frac 1 q = 1.
\]
\end{definition}

\begin{theorem}[{{\cite[Theorem~7]{ustunel_variational_2014}}}]\label{thm:boue-dupuis}
Fix $0 < T < \infty$
and let $V$ be as above.
Then
\[
-\log \E e^{-V(\phi_T)}
= \inf_{v \in \mathbb H_a} \E \left[ V(\phi_T + I_T[v]) + \frac 1 2 \int_0^T \norm{v_t}_2^2 \dt \right].
\]
\end{theorem}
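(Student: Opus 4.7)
The plan is to prove the identity as matching upper and lower bounds, both routed through Girsanov's theorem on the Brownian filtration $(\mathcal{F}_t)$. For the upper bound I fix $v \in \mathbb{H}_a$ (first assuming integrability enough for Novikov's criterion, then relaxing by approximation) and introduce the Girsanov exponential $Z_T^v := \exp\!\left(\int_0^T \dual{v_s, dB_s} - \tfrac{1}{2}\int_0^T \|v_s\|_2^2 \ds\right)$ together with the tilted probability $d\mathbb{Q}_v := Z_T^v\, d\mathbb{P}$. Under $\mathbb{Q}_v$ the process $\tilde{B}_t := B_t - \int_0^t v_s \ds$ is a cylindrical Brownian motion, and since $I_T$ is linear the law of $\phi_T = I_T[B]$ under $\mathbb{Q}_v$ coincides with the law of $\phi_T + I_T[v]$ under $\mathbb{P}$. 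Plugging into the Donsker--Varadhan variational inequality $-\log \E e^{-V(\phi_T)} \leq \E^{\mathbb{Q}_v}[V(\phi_T)] + H(\mathbb{Q}_v \mid \mathbb{P})$ and using the Girsanov identity $H(\mathbb{Q}_v \mid \mathbb{P}) = \tfrac{1}{2}\E \int_0^T \|v_s\|_2^2 \ds$ yields the upper bound.

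For the lower bound I would exhibit an optimal drift. Define $d\mathbb{Q}^*/d\mathbb{P} := e^{-V(\phi_T)}/Z$ with $Z := \E e^{-V(\phi_T)}$; tameness makes $Z$ finite and positive and guarantees that both $\E^{\mathbb{Q}^*}|V(\phi_T)|$ and $H(\mathbb{Q}^* \mid \mathbb{P}) = -\log Z - \E^{\mathbb{Q}^*}[V(\phi_T)]$ are finite. The martingale representation theorem applied to the strictly positive martingale $M_t := \E[e^{-V(\phi_T)} \mid \mathcal{F}_t]/Z$ produces a predictable drift $v^*$ for which $d\mathbb{Q}^*/d\mathbb{P} = Z_T^{v^*}$; equivalently $v^*$ can be read off from the Clark--Ocone representation of $-\log M_T$. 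Reading Girsanov in reverse, $\mathbb{Q}^*$ is exactly the $\mathbb{Q}_{v^*}$ from the previous step, so the upper-bound inequality becomes the equality $-\log Z = \E\bigl[V(\phi_T + I_T[v^*]) + \tfrac{1}{2}\int_0^T \|v^*_s\|_2^2 \ds\bigr]$, exhibiting a minimizer.

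The main obstacle is ensuring $v^* \in \mathbb{H}_a$ and justifying the Girsanov change of measure beyond the Novikov regime. Finiteness of the relative entropy $H(\mathbb{Q}^* \mid \mathbb{P}) < \infty$ directly gives $\E^{\mathbb{Q}^*}\int_0^T \|v^*_s\|_2^2 \ds < \infty$, and the tameness bounds $\E|V|^p + \E e^{-qV} < \infty$ combined with Hölder's inequality let one transfer square integrability back to $\mathbb{P}$. The clean route is to approximate $v^*$ by truncated bounded drifts (where Girsanov is unproblematic), apply the already-proved upper bound to these, and pass to the limit via monotone and dominated convergence controlled by tameness. Since $J_t$ is bounded and smoothing for each fixed $T$ by \Cref{thm:boue-dupuis j regularity}, and the dynamics is truncated at wavenumber $N$ so that only finitely many Fourier modes are active, no PDE-specific machinery is needed at this step; the full argument is essentially the one carried out in \cite{ustunel_variational_2014}.
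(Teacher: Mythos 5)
The paper does not prove this statement; it is quoted directly from \cite{ustunel_variational_2014} (with \cite{boue_variational_1998} as the antecedent). Your sketch correctly identifies the strategy used in that literature — a relative-entropy inequality for one direction, martingale representation and Girsanov to exhibit an optimizer for the other — but two of the steps do not close as written.

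In the upper bound, the claim that the law of $\phi_T$ under $\mathbb{Q}_v$ coincides with the law of $\phi_T + I_T[v]$ under $\Prob$ is false for a general $v \in \mathbb{H}_a$. Under $\mathbb{Q}_v$ one has $B_t = \tilde B_t + \int_0^t v_s(B)\diff s$ with $\tilde B$ a $\mathbb{Q}_v$-Brownian motion, so $B$ solves a fixed-point equation driven by $\tilde B$; under $\Prob$ the shifted path $W_t := B_t + \int_0^t v_s(B)\diff s$ is an explicit, generally non-invertible, functional of $B$. These two laws agree only when $v$ is a functional of the shifted path, which is exactly what a generic element of $\mathbb{H}_a$ is \emph{not}. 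Similarly, $H(\mathbb{Q}_v \mid \Prob) = \tfrac12 \E^{\mathbb{Q}_v}\!\int_0^T\norm{v_s}_2^2\diff s$, not the $\Prob$-expectation. The correct ingredient is the \emph{one-sided} inequality $R\bigl(\Law_{\Prob}(B + \int_0^{\cdot} v)\mid \text{Wiener}\bigr) \le \tfrac12\,\E^{\Prob}\!\int_0^T\norm{v_s}_2^2\diff s$, with equality only when $v$ is adapted to the shifted path. Combined with the Donsker--Varadhan bound and contraction of relative entropy under the pushforward $\omega \mapsto I_T[\omega]$, this yields $-\log \E e^{-V(\phi_T)} \le \E\bigl[V(\phi_T + I_T[v])\bigr] + \tfrac12\,\E\!\int_0^T\norm{v_t}_2^2\diff t$ directly, without ever changing measure. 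Your claimed equality happens to err in the harmless direction, but as stated it is not a proof.

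In the lower bound, after martingale representation produces $v^*$ with $d\mathbb{Q}^*/d\Prob = Z_T^{v^*}$, the identity you obtain is $-\log Z = \E^{\mathbb{Q}^*}[V(\phi_T)] + \tfrac12\E^{\mathbb{Q}^*}\!\int_0^T\norm{v^*_t}_2^2\diff t$. Converting this to a $\Prob$-expectation of the form $\E\bigl[V(\phi_T + I_T[\tilde v]) + \tfrac12\int_0^T\norm{\tilde v_t}_2^2\diff t\bigr]$ for some $\tilde v \in \mathbb{H}_a$ requires expressing $v^*$ as a progressive functional of the innovation $\tilde B = B - \int_0^{\cdot} v^*$, so that the pair $(\tilde B, v^*)$ under $\mathbb{Q}^*$ can be replaced by $(B, \tilde v)$ under $\Prob$. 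This invertibility/adaptedness question — not the Novikov/integrability issues your final paragraph flags — is the actual technical content of Üstünel's paper and of the approximation argument in Boué--Dupuis; once that is handled, the integrability bookkeeping you describe is comparatively routine.
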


\subsection{Wick renormalization}\label{sec:wick}

Let us now turn to the necessity of regularization and renormalization.
As was mentioned in the introduction,
the Gaussian free field $\phi$ is not well-defined in two or more dimensions.
This is due to the divergence of
\begin{equation}
\E \phi^2
= \operatorname{Tr} [\beta\inv (\pm m^2 - \Laplace)\inv]
= \frac{1}{L^d} \sum_{k \in \Z^d} \frac{1}{\beta(\pm m^2 + C_L \abs k^2)}
\end{equation}
when $d \geq 2$.
However, it turns out that by subtracting this divergence,
we can get a well-defined stochastic object.
(The case of negative mass is only formal at this point,
but makes sense once coupled with the quartic potential.)

\begin{definition}\label{def:wick powers}
Let us define
\[
C_{T, \beta, +} \coloneqq \frac{1}{L^d} \sum_{\substack{k \in \Z^d\\ \abs k \leq N}}
    \frac{\rho_T(k)^2}{\beta(2 + C_L \abs k^2)},
\quad
C_{T, \beta, -} \coloneqq \frac{1}{L^d} \sum_{\substack{k \in \Z^d \setminus \{0\}\\ \abs k \leq N}}
    \frac{\rho_T(k)^2}{\beta(-1 + C_L \abs k^2)}.
\]
Note that the other truncation parameter $N$ is implicit in this definition.
We will rescale the Gaussian field to $\beta = 1$ below, and will omit $\beta$ from the notation.
Moreover, we write $C_T$ when the mass is clear from the context.
The Wick powers of $\phi_N$ are then defined via Hermite polynomials;
the first four are
\begin{align*}
\wick{\phi_T} &= \phi_T,\\
\wick{\phi_T^2} &= \phi_T^2 - C_T,\\
\wick{\phi_T^3} &= \phi_T^3 - 3C_T \phi_T,\\
\wick{\phi_T^4} &= \phi_T^4 - 6C_T \phi_T^2 + C_T^2.
\end{align*}
\end{definition}

\begin{remark}
Let us emphasize again that when $T \geq N$, then
\[
C_T = C_N = \sum_{\substack{k \in \Z^d\\ \abs k \leq N}}
    \frac{1}{\pm m^2 + C_L \abs k^2}.
\]
This is the renormalization constant for GFF truncated sharply to wavenumbers at most size $N$.
For $T < N$ the constant $C_T$ is adapted to the continuous variational formulation.
\end{remark}

Let us then collect some important properties of Wick powers.
See e.g.\ \cite[Appendix~A]{berglund_eyringkramers_2017}
and \cite[Section~3]{da_prato_strong_2003} for more details.

\begin{theorem}\label{thm:wick expectation}
For any $j = 1, 2, \ldots$ and $T \geq 0$, the Wick power $\wick{\phi_T^j}$ has zero expectation
under the Gaussian measure of the corresponding mass.
\end{theorem}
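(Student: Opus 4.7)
My plan is to prove this as a direct consequence of the fact that Wick powers are Hermite polynomials of $\phi_T$ adapted to its variance $C_T$, and Hermite polynomials of degree $\geq 1$ have vanishing Gaussian expectation. Since $\phi_T = I_N[B_t]$ is smooth in space for finite $N$ (see \Cref{def:variational gff} and \Cref{thm:boue-dupuis j regularity}), pointwise evaluation is fine, and by Fourier-space translation invariance of the covariance one has $\E \phi_T(x)^2 = C_T$ independently of $x \in \Tor^d$.

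The cleanest route goes through the generating function for Hermite polynomials. Recall that for any $\sigma^2 > 0$,
\[
\exp\!\left( t x - \tfrac{t^2}{2} \sigma^2 \right) = \sum_{j=0}^\infty \frac{t^j}{j!}\, H_j(x, \sigma^2),
\]
where $H_j(\,\cdot\,, \sigma^2)$ are the probabilists' Hermite polynomials rescaled to variance $\sigma^2$. Comparing with \Cref{def:wick powers} one checks that $\wick{\phi_T^j} = H_j(\phi_T, C_T)$ at each $x$. Applying the generating function with $\sigma^2 = C_T$ and $x = \phi_T(x)$, taking expectation, and using the Gaussian moment-generating function $\E \exp(t \phi_T(x)) = \exp(t^2 C_T / 2)$ yields
\[
\sum_{j=0}^\infty \frac{t^j}{j!}\, \E \wick{\phi_T(x)^j}
= \E \exp\!\left( t \phi_T(x) - \tfrac{t^2}{2} C_T \right)
= 1.
\]
Matching coefficients of $t^j$ gives $\E \wick{\phi_T^j} = 0$ for all $j \geq 1$, which is the claim.

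There is essentially no obstacle here: the only thing to be a little careful about is that this argument is pointwise in $x$, and one must justify interchanging sum with expectation. Both are painless because $\phi_T(x)$ is a centered Gaussian with finite variance $C_T < \infty$ (the cutoff $\abs k \leq N$ ensures this even in $d=3$), so every moment is finite and Fubini applies to the power series. The case of negative mass $-m^2$ is handled identically, since the restriction to $k \neq 0$ in the definition of $C_{T, \beta, -}$ still yields a finite variance and the corresponding truncated field is a well-defined centered Gaussian on the mean-zero subspace.
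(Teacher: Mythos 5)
The paper states this result without proof, deferring to the cited references; your generating-function argument via Hermite polynomials is exactly the standard argument those references use, and it is correct. The reduction to a pointwise statement is legitimate because $\phi_T$ is spatially smooth at finite truncation, and the Fubini justification is adequate since $\phi_T(x)$ is a genuine centered Gaussian with finite variance $C_T$.

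One small point is worth flagging since you write ``Comparing with \Cref{def:wick powers} one checks that $\wick{\phi_T^j} = H_j(\phi_T, C_T)$'': the paper's Definition~\ref{def:wick powers} contains a typo in the $j=4$ case. The constant term of $\wick{\phi_T^4}$ should read $3 C_T^2$, not $C_T^2$, in order to match the Hermite polynomial $H_4(x,\sigma^2) = x^4 - 6\sigma^2 x^2 + 3\sigma^4$. As literally written, $\E \wick{\phi_T^4} = 3C_T^2 - 6C_T^2 + C_T^2 = -2C_T^2 \neq 0$, which would contradict the theorem. With the correct coefficient one gets $3C_T^2 - 6C_T^2 + 3C_T^2 = 0$, consistent with your proof, with \Cref{thm:wick change} (whose $j=4$ line does use the correct $3(C_T'-C_T)^2$), and with the chaos-decomposition formulas later in the subsection. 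Your argument implicitly uses the correct Hermite coefficient and is fine; it is the definition in the paper that needs the fix.
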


\begin{theorem}\label{thm:wick change}
Let $\wick{\phi^j}'$ be the Wick powers with respect to a different renormalization constant $C'_T$.
Then
\begin{align*}
\wick{\phi_T}' &= \wick{\phi_T},\\
\wick{\phi_T^2}' &= \wick{\phi_T^2} - (C'_T - C_T),\\
\wick{\phi_T^3}' &= \wick{\phi_T^3} - 3(C'_T - C_T)\phi_T,\\
\wick{\phi_T^4}' &= \wick{\phi_T^4} - 6(C'_T - C_T)\wick{\phi_T^2} + 3(C'_T - C_T)^2.
\end{align*}
\end{theorem}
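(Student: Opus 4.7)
The plan is to use the standard generating-function characterization of Wick powers, which makes the change-of-constant identities fall out uniformly. Concretely, the Wick powers appearing in \Cref{def:wick powers} are (up to rescaling) the Hermite polynomials in $\phi_T$ with parameter $C_T$, and they are encoded by the identity
\begin{equation*}
\exp\!\left(s\phi_T - \tfrac{1}{2} s^2 C_T\right) = \sum_{n=0}^{\infty} \frac{s^n}{n!} \wick{\phi_T^n},
\end{equation*}
valid as a formal power series in $s$. This is most easily checked by differentiating in $s$ and matching with the three-term recursion for Hermite polynomials, or equivalently by observing that the left-hand side is the Wick exponential whose coefficients are the Wick monomials. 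The analogous generating identity for the primed constant reads
\begin{equation*}
\exp\!\left(s\phi_T - \tfrac{1}{2} s^2 C'_T\right) = \sum_{n=0}^{\infty} \frac{s^n}{n!} \wick{\phi_T^n}'.
\end{equation*}

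Next, I would divide these two identities. Writing $\delta \coloneqq C'_T - C_T$, the ratio yields
\begin{equation*}
\sum_{n=0}^{\infty} \frac{s^n}{n!} \wick{\phi_T^n}'
= \exp\!\left(-\tfrac{1}{2} s^2 \delta\right)
\sum_{n=0}^{\infty} \frac{s^n}{n!} \wick{\phi_T^n}.
\end{equation*}
Expanding $\exp(-s^2\delta/2) = \sum_j (-\delta/2)^j s^{2j}/j!$ as a power series and matching the coefficient of $s^n$ on both sides produces the closed-form transition rule
\begin{equation*}
\wick{\phi_T^n}'
= \sum_{0 \leq j \leq n/2} \frac{n!}{j!\,(n-2j)!} \left(-\frac{\delta}{2}\right)^{\!j} \wick{\phi_T^{n-2j}}.
\end{equation*}
Substituting $n=1,2,3,4$ and simplifying the binomial coefficients recovers the four stated identities. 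The $n=1$ case is trivial ($j=0$ only); for $n=2$ the two terms give $\wick{\phi_T^2} - \delta$; for $n=3$ one obtains $\wick{\phi_T^3} - 3\delta\,\phi_T$; and for $n=4$ the three terms combine to $\wick{\phi_T^4} - 6\delta \wick{\phi_T^2} + 3\delta^2$.

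There is no genuine obstacle: once the generating-function identity is established, everything else is bookkeeping. As a sanity check one can instead expand both sides of each of the four identities directly from \Cref{def:wick powers}: the $n=2,3$ identities follow at once, and the $n=4$ case reduces to the algebraic identity $(C'_T)^2 - C_T^2 - 6\delta\, C_T = 3\delta^2$, which is immediate from $\delta = C'_T - C_T$. I would present the generating-function derivation, since it simultaneously explains why such clean combinatorial identities exist and would generalize painlessly to higher Wick powers should they be needed later in the paper.
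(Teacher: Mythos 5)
Your generating-function argument is correct and is the standard clean way to prove the change-of-constant identities; the paper states \Cref{thm:wick change} without proof (deferring to the cited references), so there is no in-paper argument to compare against, but yours is entirely adequate.

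Two small caveats worth flagging. First, the algebraic identity in your final sanity check is not right: with $\delta = C'_T - C_T$ one has $(C'_T)^2 - C_T^2 - 6\delta\,C_T = -4\delta\,C_T + \delta^2$, not $3\delta^2$. The correct consistency relation for the quartic case is $3(C'_T)^2 - 3C_T^2 - 6\delta\,C_T = 3\delta^2$, which does hold identically. This points to the second issue: \Cref{def:wick powers} in the paper writes $\wick{\phi_T^4} = \phi_T^4 - 6C_T\phi_T^2 + C_T^2$, but the constant term should be $3C_T^2$ to match both the Hermite-polynomial/Wick-exponential normalization (which your generating function encodes) and the identities of \Cref{thm:wick change} itself. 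Your main derivation implicitly uses the correct $3C_T^2$, so the conclusion is sound; only the parenthetical direct-expansion check went awry.
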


\begin{theorem}\label{thm:wick binomial}
Let $x \in \R$. Then we have the usual binomial formula
\[
\wick{(\phi+x)^j} = \sum_{k = 0}^j \binom j k \wick{\phi^k} \, x^{j-k}.
\]
\end{theorem}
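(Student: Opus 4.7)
The plan is to prove this via the exponential generating function for Wick powers. The four explicit expressions listed in \Cref{def:wick powers} are the first few coefficients of the generating function
\[
\exp\!\Bigl(t\phi_T - \tfrac{1}{2} t^2 C_T\Bigr) = \sum_{j=0}^\infty \frac{t^j}{j!} \wick{\phi_T^j},
\]
which is the standard analytic characterisation of Wick powers via Hermite polynomials. I would first verify this generating function identity by matching Taylor coefficients in $t$ against the explicit formulas in \Cref{def:wick powers}. This is a routine check: expand $\exp(-t^2 C_T/2) = \sum_n (-C_T/2)^n t^{2n}/n!$ and multiply by $\exp(t\phi_T) = \sum_m \phi_T^m t^m/m!$, then collect the coefficient of $t^j$ for each $j$.

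Once the generating function representation is in hand, the binomial formula becomes an exercise in splitting an exponential. Replacing $\phi_T$ by $\phi_T + x$ in the argument and using $e^{a+b} = e^a e^b$ gives
\[
\exp\!\Bigl(t(\phi_T+x) - \tfrac{1}{2} t^2 C_T\Bigr)
= e^{tx} \cdot \exp\!\Bigl(t\phi_T - \tfrac{1}{2} t^2 C_T\Bigr).
\]
Expanding both factors on the right-hand side as power series in $t$ and applying the Cauchy product yields
\[
\sum_{j \geq 0} \frac{t^j}{j!} \wick{(\phi_T + x)^j}
= \sum_{j \geq 0} t^j \sum_{k=0}^j \frac{x^{j-k}}{(j-k)!} \cdot \frac{\wick{\phi_T^k}}{k!}.
\]
Extracting the coefficient of $t^j$ on both sides and multiplying through by $j!$ produces exactly the claimed identity.

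There is no serious obstacle here: the argument is essentially the observation that translation of the argument of a Hermite polynomial corresponds to multiplication of its generating function by $e^{tx}$, and it relies only on the algebraic identity for the exponential together with the consistency check against \Cref{def:wick powers}. The key structural point is that $x \in \R$ is a deterministic constant, so it commutes with the stochastic object $\phi_T$ and is unaffected by Wick ordering; this is what makes the calculation reduce to the ordinary binomial theorem combined with a trivial rearrangement. An induction on $j$ using \Cref{thm:wick change} would also work in principle, but it quickly becomes unwieldy, whereas the generating function approach handles all $j$ uniformly in a single line.
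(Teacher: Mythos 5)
Your argument is correct. The paper itself states \Cref{thm:wick binomial} without proof, simply pointing to standard references on Wick calculus (e.g.\ \cite[Appendix~A]{berglund_eyringkramers_2017}, \cite[Section~3]{da_prato_strong_2003}); the generating-function computation you give is precisely the standard argument one finds there, and it is complete: replacing $\phi_T$ by $\phi_T + x$ in $\exp(t\phi_T - \tfrac12 t^2 C_T)$ produces the factor $e^{tx}$, and equating Taylor coefficients gives the binomial identity. Your remark that the deterministic shift $x$ leaves the variance $C_T$ unchanged is exactly the point that makes the translated object $\wick{(\phi_T+x)^j}$ well-defined with the same Wick constant, so no additional justification is needed. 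One small note: if you actually carry out the advertised consistency check against \Cref{def:wick powers}, you will find that the constant term listed for $\wick{\phi_T^4}$ there, namely $C_T^2$, disagrees with the generating-function expansion, which gives $3C_T^2$; this is a typo in the paper (as can also be seen by specializing \Cref{thm:wick change} to $C_T = 0$), not a gap in your argument.
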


We then have the following moment bounds for the Gaussian free field in suitable Besov spaces.
The proof of the two-dimensional case can be found in
\cite[Lemma~3.2]{da_prato_strong_2003} or \cite[Theorem~5.1]{mourrat_global_2017},
and the three-dimensional version is discussed e.g.\ in \cite[Lemma~4]{barashkov_variational_2020}
and \cite{mourrat_construction_2017}.

\begin{theorem}\label{thm:wick gff moments}
The Wick powers of Gaussian free field on $\Tor^2$ satisfy
\[
\E \smallnorm{\wick{\phi_T^j}}_{\besovinfty^{-\epsilon}(\Tor^2)}^p < \infty,
\quad 1 \leq p < \infty,\; j = 1,2,\ldots.
\]
These estimates hold uniformly in $T \to \infty$ and as the truncation $N \to \infty$.
On $\Tor^3$, we have
\[
\limsup_{N,T \to \infty} \E \norm{\phi_T}_{\besovinfty^{-1/2-\epsilon}(\Tor^3)}^p < \infty,
\quad
\limsup_{N,T \to \infty} \E \smallnorm{\wick{\phi_T^2}}_{\besovinfty^{-1-\epsilon}(\Tor^3)}^p < \infty,
\]
but the limit $\wick{\phi^3}$ only exists as a space-time distribution.
\end{theorem}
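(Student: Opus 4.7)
The plan is to combine Gaussian hypercontractivity on Wiener chaos with direct second-moment computations via Wick's diagram formula, followed by Besov embeddings to upgrade pointwise bounds to $\besovinfty^{s}$ bounds.

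First, for any fixed $j$, Littlewood--Paley block $\Delta_k$, and point $x \in \Tor^d$, the random variable $\Delta_k \wick{\phi_T^j}(x)$ belongs to the $j$-th homogeneous Wiener chaos of the Brownian filtration $(B^n_t)_n$. Nelson's hypercontractive estimate gives
$$
\E \smallabs{\Delta_k \wick{\phi_T^j}(x)}^{2p} \lesssim \bigl(\E \smallabs{\Delta_k \wick{\phi_T^j}(x)}^2\bigr)^p,
$$
with a constant depending only on $p$ and $j$, reducing the task to a uniform second-moment bound. Writing $K_T$ for the covariance kernel of $\phi_T$, Wick's theorem yields $\E \wick{\phi_T^j}(x) \wick{\phi_T^j}(y) = j!\, K_T(x,y)^j$, so that on the Fourier side
$$
\E \smallabs{\Delta_k \wick{\phi_T^j}(x)}^2 = j! \sum_{n_1, \ldots, n_j} \chi_k(n_1 + \cdots + n_j)^2 \prod_{i=1}^j \frac{\rho_T(n_i)^2\, \I_{\abs{n_i} \leq N}}{\inormc{n_i}^2},
$$
where $\chi_k$ is the symbol of $\Delta_k$, supported in $\{\abs\xi \sim 2^k\}$. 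A direct count produces $O((1+k)^{C_j})$ on $\Tor^2$ for every $j$, and $O(2^{(j-1)k})$ on $\Tor^3$ for $j = 1, 2$, uniformly in $T, N$.

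Next I would integrate in $x$ and invoke the embedding $B^s_{p,p} \hookrightarrow \besovinfty^{s - d/p}$ from \Cref{thm:besov embeddings}. Combining with the pointwise hypercontractive bound and summing over dyadic blocks,
$$
\E \smallnorm{\wick{\phi_T^j}}_{B^s_{p,p}}^p \lesssim \sum_{k \geq -1} 2^{kp(s + j(d-2)/2 + \epsilon)},
$$
which converges whenever $s < -j(d-2)/2 - \epsilon$. Taking $p$ sufficiently large and $\epsilon$ as small as desired yields the stated regularities: $\besovinfty^{-\epsilon}$ on $\Tor^2$ for all $j$, and $\besovinfty^{-1/2-\epsilon}$, $\besovinfty^{-1-\epsilon}$ for $j = 1, 2$ on $\Tor^3$. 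Running the same computation on the increments $\wick{\phi_T^j} - \wick{\phi_{T'}^j}$, where $\rho_{T'}^2 - \rho_T^2$ is supported at frequencies $\abs n \gtrsim T$ and hence produces a vanishing factor as $T \to \infty$, gives a Cauchy bound in $L^p(\Omega; \besovinfty^{s})$ and thus convergence of the full sequence.

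For the non-existence of a spatial limit of $\wick{\phi^3}$ on $\Tor^3$, the same Wick identity shows
$$
\E \smallabs{\dual{\wick{\phi_T^3}, f}}^2 = 6 \int_{\Tor^3 \times \Tor^3} f(x) f(y) K_T(x,y)^3 \diff x \diff y,
$$
which diverges as $T \to \infty$ for any nontrivial test $f$, since $K_T(x,y)^3 \sim \abs{x-y}^{-3}$ near the diagonal fails to be locally integrable in three dimensions; a space-time (parabolic) smoothing is therefore needed, matching the construction of $\wick{\phi^3}$ as a random space-time distribution in the stochastic quantization literature. I expect the main technical obstacle to lie in the combinatorial bookkeeping of the Fourier sum in the $(d,j) = (3,2)$ case: after partial summation this behaves like $\sum_{\abs m \sim 2^k} \inorm m\inv \sim 2^{2k}$, matching exactly the regularity exponent $-1$, so all logarithmic losses must be tracked carefully and absorbed into $\epsilon$. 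A secondary difficulty is keeping estimates uniform simultaneously in the sharp cutoff $N$ and the continuous parameter $T \leq N$, which requires being careful with the $\rho_T$ cutoff when $T$ is comparable to $N$.
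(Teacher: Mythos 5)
The paper does not prove \Cref{thm:wick gff moments} itself; it cites the standard references (Da Prato--Debussche, Mourrat--Weber, Barashkov--Gubinelli), and your proposal reproduces essentially the argument of those references: hypercontractivity on a fixed Wiener chaos to reduce $2p$-moments to second moments, Wick's theorem to express $\E|\Delta_k \wick{\phi_T^j}(x)|^2$ as a convolution sum, and the Besov embedding $B^s_{p,p} \hookrightarrow \besovinfty^{s-d/p}$ with $p$ large. The structure is sound and the final exponent $s < -j(d-2)/2 - \epsilon$ reproduces the stated regularities, and your Cauchy-increment argument for convergence as $T, N \to \infty$ and your non-integrability of $K(x,y)^3 \sim |x-y|^{-3}$ on $\Tor^3$ are both correct and standard.

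One small slip: your intermediate claim that the second moment on $\Tor^3$ is $O(2^{(j-1)k})$ for $j=1,2$ is off by one power of $2^k$; the correct asymptotics are $\E|\Delta_k \wick{\phi_T^j}(x)|^2 \sim 2^{jk}$. Concretely, for $j=1$ one has $\sum_{|n| \sim 2^k} \inorm{n}^{-2} \sim 2^{3k} \cdot 2^{-2k} = 2^k$, and for $j=2$ the discrete convolution \Cref{thm:discrete convolution} with $\alpha = \beta = 2$ (here $\alpha + \beta = 4 > 3 = d$, so the lemma applies without logarithmic loss) gives $\sum_{n_1+n_2 = m} \inorm{n_1}^{-2}\inorm{n_2}^{-2} \lesssim \inorm{m}^{-1}$, hence $\sum_{|m|\sim 2^k} \inorm{m}^{-1} \sim 2^{2k}$. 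Your concern about a borderline case at $(d,j)=(3,2)$ is therefore misplaced — the borderline (logarithmic) losses occur on $\Tor^2$, where $\alpha = \beta = d$, and are exactly what the $(1+k)^{C_j}$ absorbs. Notably, your final display $\sum_k 2^{kp(s + j(d-2)/2 + \epsilon)}$ is already consistent with the corrected second-moment exponent $2^{jk}$, so the conclusion is unaffected; only the sentence stating $O(2^{(j-1)k})$ should read $O(2^{jk})$.
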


The definition of Wick powers stems from Gaussian chaos decomposition,
where the Hermite polynomials form a complete orthonormal basis of $L^2(\Prob)$.
In this setting the Fourier coefficients of Wick powers are given by
\begin{equation}
\widehat{\wick{\phi_T^j}}(k) = j! \hspace{-1em} \sum_{\substack{n_1 + \cdots + n_j = k\\ \abs{n_i} \leq N}}
    \int_0^T \! \int_0^{t_1} \!\cdots\! \int_0^{t_{j-1}}
    \sigma_{t_1}(n_1) \cdots \sigma_{t_j}(n_j)
    \diff B_{t_j}^{n_j} \cdots \diff B_{t_1}^{n_1}.
\end{equation}
The summation indices are restricted to $\abs{n_i} \leq N$ due to the implicit sharp truncation.
When the mass is negative, they are further restricted to be non-zero.

\begin{example}\label{thm:chaos w2 w3}
For $\W^2 = 12 \wick{\phi^2}$ and $\W^3 = 4 \wick{\phi^3}$
as used in \Cref{sec:3d tst}, the chaos decompositions are
\begin{gather}
\widehat{\W^2_T}(k) = 24 \sum_{n_1 + n_2 = k}
    \int_0^T \! \int_0^{t_1} \frac{\sigma_{t_1}(n_1) \sigma_{t_2}(n_2)}{\inormc{n_1} \inormc{n_2}}
        \diff B^{n_2}_{t_2} \diff B^{n_1}_{t_1}, \label{eq:chaos w2}\\
\widehat{\W^3_T}(k) = 24 \sum_{\substack{n_1 + n_2\\ + n_3 = k}}
    \int_0^T \! \int_0^{t_1} \!\! \int_0^{t_2}
    \frac{\sigma_{t_1}(n_1) \sigma_{t_2}(n_2) \sigma_{t_3}(n_3)}
        {\inormc{n_1} \inormc{n_2} \inormc{n_3}}
        \diff B^{n_3}_{t_3} \diff B^{n_2}_{t_2} \diff B^{n_1}_{t_1}. \label{eq:chaos w3}
\end{gather}
Again, the wavenumbers $n_1$, $n_2$, $n_3$ are also restricted by the sharp truncation.
\end{example}

There is a diagrammatic approach to understanding the chaos decompositions
of different stochastic objects; we refer to \cite{mourrat_construction_2017} for a more complete overview.
In these diagrams, white noises are represented by dots and integrals by lines.
For example, $\W^2$ and $\W^3$ are respectively represented as

\begin{center}
\begin{tikzpicture}
    \begin{scope}
        \filldraw (0,0) -- (-0.3, 0.8) circle[whitenoise];
        \filldraw (0,0) -- (0.3, 0.8) circle[whitenoise];
    \end{scope}
    \begin{scope}[xshift=2cm]
        \filldraw (0,0) -- (-0.5, 0.8) circle[whitenoise];
        \filldraw (0,0) -- (0, 0.8) circle[whitenoise];
        \filldraw (0,0) -- (0.5, 0.8) circle[whitenoise];
    \end{scope}
\end{tikzpicture}
\end{center}

A result we use repeatedly is \emph{Wick's theorem}.
In diagrammatic terms, it states that the expectation of a stochastic object
is given by summing over all possible \emph{contracted} diagrams:
ones where the dots are paired, with no pairings allowed within a tree.
By the diagram above, $\E [\W^2 \W^3] = 0$ since no such contractions exist.
As a different example, we can contract $\E [\W \W^2 \W^3]$ as

\begin{center}
\begin{tikzpicture}
    \begin{scope}[xshift=-1.5cm]
        \coordinate (A1) at (0,0.8);
        \filldraw (0,0) -- (A1) circle[whitenoise];
    \end{scope}
    \begin{scope}
        \coordinate (B1) at (-0.3, 0.8);
        \coordinate (B2) at (0.3, 0.8);
        \filldraw (0,0) -- (B1) circle[whitenoise];
        \filldraw (0,0) -- (B2) circle[whitenoise];
    \end{scope}
    \begin{scope}[xshift=2cm]
        \coordinate (C1) at (-0.5, 0.8);
        \coordinate (C2) at (0, 0.8);
        \coordinate (C3) at (0.5, 0.8);
        \filldraw (0,0) -- (C1) circle[whitenoise];
        \filldraw (0,0) -- (C2) circle[whitenoise];
        \filldraw (0,0) -- (C3) circle[whitenoise];
    \end{scope}

    \draw[contract] (A1) parabola[bend pos=0.5] bend +(0,0.9) (C3);
    \draw[contract] (B1) parabola[bend pos=0.5] bend +(0,0.6) (C2);
    \draw[contract] (B2) parabola[bend pos=0.5] bend +(0,0.3) (C1);
\end{tikzpicture}
\end{center}

The rule $\diff B^m_t \diff B^n_t = \delta_{m,n} \dt$
introduces linear dependencies between paired summation indices.
This greatly simplifies the chaos decomposition.
To bound some of the resulting sums,
we use the following discrete convolution estimate from \cite{mourrat_construction_2017}.

\begin{lemma}\label{thm:discrete convolution}
Let $\alpha, \beta < d$ and $\alpha + \beta > d$.
Then
\[
\sum_{\substack{n_1, n_2 \in \Z^d\\ n_1 + n_2 = n}} \inorm{n_1}^{-\alpha} \inorm{n_2}^{-\beta}
\lesssim \inorm{n}^{d - \alpha - \beta}.
\]
\end{lemma}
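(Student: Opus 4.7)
The plan is to prove this standard discrete convolution estimate by a case analysis on the sizes of $n_1$ and $n_2$ relative to $n$. I would split the sum into three regions:
(i) $\inorm{n_1} \leq \inorm{n}/2$,
(ii) $\inorm{n_2} \leq \inorm{n}/2$,
(iii) $\inorm{n_1}, \inorm{n_2} > \inorm{n}/2$.
These are not disjoint but that is harmless for an upper bound.

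In region (i), the constraint $n_1 + n_2 = n$ with $\inorm{n_1} \leq \inorm{n}/2$ forces $\inorm{n_2} \simeq \inorm{n}$ by the triangle inequality, so I can pull the factor $\inorm{n_2}^{-\beta} \lesssim \inorm{n}^{-\beta}$ out of the sum. The remaining inner sum is bounded by comparison with the continuous integral
\[
\sum_{\inorm{n_1} \leq \inorm{n}/2} \inorm{n_1}^{-\alpha}
\lesssim \int_{\smallabs{x} \leq \inorm{n}} \inorm{x}^{-\alpha} \dx
\lesssim \inorm{n}^{d-\alpha},
\]
which uses the assumption $\alpha < d$ so that the integral picks up the large scale. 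Multiplying the two factors gives $\inorm{n}^{d-\alpha-\beta}$, which is the desired bound. Region (ii) is handled identically by symmetry in $\alpha \leftrightarrow \beta$.

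In region (iii), substitute $m = n_1$ and note that both $\inorm{m}$ and $\inorm{n-m}$ exceed $\inorm{n}/2$. I split this region further by whether $\inorm{m} \leq \inorm{n-m}$ or the reverse; by symmetry it suffices to treat $\inorm{m} \leq \inorm{n-m}$. Then $\inorm{n-m}^{-\beta} \lesssim \inorm{m}^{-\beta}$, so the sum is controlled by
\[
\sum_{\inorm{m} \geq \inorm{n}/2} \inorm{m}^{-\alpha-\beta}
\lesssim \int_{\smallabs{x} \geq \inorm{n}/2} \inorm{x}^{-\alpha-\beta} \dx
\lesssim \inorm{n}^{d-\alpha-\beta},
\]
where now the hypothesis $\alpha + \beta > d$ is used to ensure convergence at infinity, and the integral evaluates to the scale of the lower cutoff.

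There is no real obstacle here; the only care required is verifying the comparisons between the discrete sums and their continuous counterparts, which is standard because $\inorm{\,\cdot\,}$ is a regularized power avoiding any singularity at the origin. The hypotheses $\alpha < d$, $\beta < d$, and $\alpha + \beta > d$ correspond exactly to convergence of the continuous integrals on the relevant domains in the three cases above.
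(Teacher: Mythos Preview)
Your proof is correct and is the standard dyadic case split for this convolution estimate; the hypotheses $\alpha,\beta<d$ and $\alpha+\beta>d$ are used exactly where you indicate, and the implicit positivity of $\alpha,\beta$ (needed in region~(iii)) follows from those hypotheses since $\beta>d-\alpha>0$. The paper does not give its own argument at all but simply cites \cite[Lemma~4.1]{mourrat_construction_2017}, so your self-contained proof is more than what the paper provides.
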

\begin{proof}
\cite[Lemma~4.1]{mourrat_construction_2017}.
\end{proof}

\section{TST in two dimensions}\label{sec:2d tst}

We now move on to the proof of \Cref{thm:main tst}.
In this section we consider the two-dimensional case where the technical details are easier.

To recall \Cref{thm:eyring-kramers}, the expected transition frequency
between the two potential wells can be computed from
\begin{equation}\label{eq:2d eyring-kramers}
\frac{2 \int \delta_0(\hat u(0)) \exp(-\beta H(u)) \diff\Gaussian(u)}
    {\sqrt{2\pi \beta L^d} \pf(\beta)}.
\end{equation}
We must compute the partition function $\pf(\beta)$
and the ``saddle integral'' in the numerator.
We first derive the exact prefactor of the Eyring--Kramers law in this subsection,
and then estimate the normalized integrals with the variational method.

\bigskip\noindent
The Gibbs measure of the truncated $\phi^4_2$ model can be written as
\begin{equation}\label{eq:gibbs 2d}
    \frac{1}{\pf_N} \exp\!\left[ -\frac \beta 4 \int_{\Tor^2} \!\! \proj_N \wick{u(x)^4}_{\beta,-} \dx
        - \frac \beta 2 \int_{\Tor^2} - u(x)^2 + \abs{\nabla u(x)}^2 \dx \right]
    \prod_{\abs k \leq N} \!\diff \hat u(k).
\end{equation}
In the limit $N \to \infty$, the product Lebesgue measure ceases to make sense.
However, the second-order term within the exponential defines a Gaussian density
with covariance $\beta\inv (-1-\Laplace)\inv$.
It is well-known in the theory of stochastic quantization
that limits of measures like \eqref{eq:gibbs 2d} are well-defined
and absolutely continuous with respect to the Gaussian measure
(\cite{da_prato_strong_2003}, see also \cite[Section~6]{mourrat_global_2017}).

However, the above explanation is not strictly true.
A Gaussian free field cannot have covariance $\beta\inv (-1-\Laplace)\inv$
since the zero Fourier mode would have negative variance.
The other Fourier modes are well-defined if the domain size $L$ is strictly smaller than $2\pi$.

The dominant fourth-order term fixes this issue.
As done in \cite{berglund_eyringkramers_2017} and other literature,
we can re-centre the field around a potential minimum.
By symmetry we can take the constant function $+1$ as the minimum
and compute $\pf_N / 2$ by integrating over the half-space where $\hat u(0) > 0$.

\begin{lemma}\label{thm:2d partition function form}
The partition function $\pf_N$ can be written as
\begin{equation*}
\frac{\pf_N}{2} =
\sqrt{\prod_{\abs k \leq N} \frac{2\pi}{\beta L^2 (2 + C_L \abs k^2)}}
\exp\left(\! \frac{3 C_N L^2}{2} + \frac{\beta L^2}{4} \right)
\int_{\halfspace_+} \!\!
    \exp\left( -V_\beta(\varphi) \right)
    \diff\Gaussian_N(\varphi),
\end{equation*}
where
\begin{itemize}
\item $\displaystyle{
V_\beta(\varphi) = \int_{\Tor^2} \frac{1}{4\beta} \wickm{\varphi^4} + \frac{1}{\sqrt\beta} \wickm{\varphi^3} \dx }$,
\item $\Gaussian_N(\diff\varphi)$ is a Gaussian measure with covariance $(2 - \Laplace)\inv$,
truncated to wavenumbers of magnitude at most $N$,
\item $C_N$ is the Wick renormalization constant with negative mass, and
\item $\halfspace_+ \coloneqq \{ \hat\varphi(0) > -\sqrt\beta \}$.
\end{itemize}
\end{lemma}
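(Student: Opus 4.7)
My plan is to change variables so as to expand the exponent in \eqref{eq:gibbs 2d} around the positive potential minimum $u \equiv 1$. Since the bare potential $u^4/4 - u^2/2$ has second derivative $3u^2 - 1 = 2$ at $u = 1$, combining with the kinetic term $\tfrac 1 2 \smallnorm{\nabla u}_{L^2}^2$ should produce a Gaussian with covariance $(2 - \Laplace)^{-1}$, matching $\Gaussian_N$.

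First I would use the $u \mapsto -u$ symmetry of \eqref{eq:gibbs 2d} to restrict integration to the half-space $\{\hat u(0) > 0\}$, which yields the factor of $2$ on the left of the identity. Then I would substitute $u = 1 + \varphi/\sqrt\beta$; the half-space constraint becomes $\hat\varphi(0) > -\sqrt\beta$, exactly $\halfspace_+$, and the rescaling contributes a Jacobian $\beta^{-M_N/2}$, where $M_N \coloneqq \smallabs{\{k \in \Z^2 \colon \abs k \leq N\}}$ is the total number of real Fourier degrees of freedom.

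Next I would expand $-\beta H(1 + \varphi/\sqrt\beta)$ by combining the Wick binomial identity \Cref{thm:wick binomial} with the scaling relation for Wick powers under field rescaling. The latter sends the $\beta$-dependent Wick constant $C_{N,-}$ of \Cref{def:wick powers} to the $\beta = 1$ version $C_N = \beta \, C_{N,-}$, which is the constant used inside $\wickm{\varphi^j}$ in $V_\beta$. Summing the contributions from $\wickm{u^4}$, $u^2$, and $\smallnorm{\nabla u}_{L^2}^2$, I expect to verify the algebraic identity
\begin{equation*}
-\beta H(1 + \varphi/\sqrt\beta)
= -\frac 1 2 \int_{\Tor^2} \varphi (2 - \Laplace) \varphi \dx - V_\beta(\varphi) + \frac{\beta L^2}{4} + \frac{3 C_N L^2}{2}.
\end{equation*}
Three cancellations have to conspire: (i) the linear-in-$\varphi$ contribution from the Wick counterterm $\tfrac{3 \beta C_{N,-}}{2} u^2$ inside $-\tfrac\beta 4 \wickm{u^4}$ is matched by the $-3 C_N \varphi / \sqrt\beta$ piece sitting in $\wickm{\varphi^3}$; (ii) the corresponding $\varphi^2$ contribution from the same counterterm is absorbed by the $-6 C_N \varphi^2$ piece of $\wickm{\varphi^4}$; and (iii) the two constant remainders combine with the $\tfrac{3 C_{N,-}^2}{4} L^2$ Wick constant to leave only $\tfrac{\beta L^2}{4} + \tfrac{3 C_N L^2}{2}$.

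Finally, I would recognise the quadratic form as the log-density of $\Gaussian_N$ up to its standard Gaussian normalization $\sqrt{\prod 2\pi / (L^2 (2 + C_L \abs k^2))}$; combining with the Jacobian $\beta^{-M_N/2}$ produces the claimed prefactor $\sqrt{\prod 2\pi / (\beta L^2 (2 + C_L \abs k^2))}$. The main obstacle is careful bookkeeping: the Wick renormalization of the original measure uses $C_{N,-}$ while $\wickm{\varphi^j}$ in $V_\beta$ uses $C_N$, and one must apply the Wick-polynomial scaling identity so that the effective mass shifts from the original $-1$ to $+2$ exactly at the Gaussian level.
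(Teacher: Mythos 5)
Your proposal is correct and takes essentially the same route as the paper's proof: the paper substitutes $u = 1 + \varphi$, applies the Wick binomial formula (\Cref{thm:wick binomial}), moves the induced $\tfrac 3 2 \varphi^2$ term into the quadratic form to produce covariance $\beta^{-1}(2-\Laplace)^{-1}$, collects the constants $\beta L^2/4$ and $3C_N L^2/2$, and only at the end rescales $\varphi \mapsto \sqrt\beta\varphi$; you fold the translation and rescaling into the single change of variables $u = 1 + \varphi/\sqrt\beta$ and track the Wick-constant scaling $C_{N,\beta,-} \mapsto C_N = \beta\, C_{N,\beta,-}$ in one pass, but the algebra and all cancellations are the same.
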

\begin{proof}
If we denote $u = \varphi + 1$, then the fourth-order term becomes
\begin{equation}\label{eq:2d derivation fourth-order}
\frac 1 4 \wick{(\varphi + 1)^4}
= \frac 1 4 \wick{\varphi^4} + \wick{\varphi^3} + \frac 3 2 \wick{\varphi^2} + \varphi + \frac 1 4.
\end{equation}
The second-order term is changed to
\begin{equation}\label{eq:2d derivation gaussian}
\frac 1 2 \left[ - u^2 + \abs{\nabla u}^2 \right]
= -\frac 1 2 \varphi^2 - \varphi - \frac 1 2 + \abs{\nabla \varphi}^2.
\end{equation}
The newly introduced first-order terms cancel out,
and we may move $3 \varphi^2 / 2$
from \eqref{eq:2d derivation fourth-order} to \eqref{eq:2d derivation gaussian}.
This changes the covariance to $\beta\inv (2 - \Laplace)\inv$.

There are also two constant terms, $\beta L^2 / 4$ and
the $3 \beta L^2 C_{N,\beta} / 2$ coming from renormalization.
The latter simplifies as $3 L^2 C_N / 2$.
We move these out into the exponential prefactor.

We then normalize the Gaussian measure,
and denote the normalized measure by $\diff\Gaussian_{N, \beta}(\varphi)$.
The partition function is therefore equal to
\begin{multline}
\pf_N = 2
\sqrt{\prod_{\abs k \leq N} \frac{2\pi}{\beta L^d (2 + C_L \abs k^2)}}
\exp\left( \frac{3 C_N L^2}{2} + \frac{\beta L^2}{4} \right)\\
\int_{\{ \hat\varphi(0) > -1 \}}
    \exp\left( -\frac \beta 4 \int_{\Tor^2} \proj_N [\wickm{\varphi^4} + 4 \wickm{\varphi^3}] \dx \right)
    \diff\Gaussian_{N, \beta}(\varphi).
\end{multline}
We can drop the projection operator $\proj_N$,
since it does not affect the value of the integral over whole $\Tor^2$.

We finally simplify the notation by rescaling $\varphi$ by $\sqrt\beta$;
this cancels the $\beta\inv$ from the covariance and leaves us with the claim.
From now on we drop $\beta = 1$ from the notation for the Gaussian measure $\Gaussian_N$.
\end{proof}

There is now a mismatch between the random field and the Wick ordering:
we need to change the renormalization
from $(-1 - \Laplace)\inv$ covariance to $(2 - \Laplace)\inv$ covariance.
This is done with \Cref{thm:wick change},
and it introduces some second-order correction terms.
These terms are easily estimated as part of the stochastic quantization argument.

\begin{theorem}\label{thm:2d partition overall}
For $\beta$ large enough, we have
\[
\pf_N =
2 \sqrt{\prod_{\abs k \leq N} \frac{2\pi}{\beta L^d (2 + C_L \abs k^2)}}
    \exp\left( \frac{3 C_{N} L^2}{2} + \frac{\beta L^2}{4} \right)
    (1 + \bigO(\beta\inv)).
\]
\end{theorem}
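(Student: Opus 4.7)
The reduction, via \Cref{thm:2d partition function form}, is to show that
\begin{equation*}
\mathcal I_\beta \coloneqq \int_{\halfspace_+} \exp(-V_\beta(\varphi)) \diff\Gaussian_N(\varphi) = 1 + \bigO(\beta\inv)
\end{equation*}
uniformly in $N$. My first step would be to dispose of the half-space restriction: under $\Gaussian_N$ the zero mode $\hat\varphi(0)$ is a centred Gaussian with variance bounded independently of $N$, so $\Gaussian_N(\halfspace_+^c) \lesssim \exp(-c\beta)$. Combined with a uniform bound on $\int \exp(-2V_\beta) \diff\Gaussian_N$, obtained by the same method as below, Cauchy--Schwarz reduces the claim to the full-space integral.

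Next I would rewrite $V_\beta$ so that its Wick powers match the ambient covariance $(2-\Laplace)\inv$, using \Cref{thm:wick change}. Setting $\delta_N \coloneqq C_{N,-} - C_{N,+}$, which is uniformly bounded in $N$ in two dimensions since the difference of the two logarithmically divergent sums converges absolutely, one obtains
\begin{equation*}
V_\beta(\varphi) = \int_{\Tor^2} \left[ \tfrac{1}{4\beta}\wickp{\varphi^4} + \tfrac{1}{\sqrt\beta}\wickp{\varphi^3} - \tfrac{3\delta_N}{2\beta}\wickp{\varphi^2} - \tfrac{3\delta_N}{\sqrt\beta}\varphi \right] \dx + \tfrac{3\delta_N^2 L^2}{4\beta}.
\end{equation*}
Each integrated term has vanishing $\Gaussian_N$-expectation (three by the defining property of Wick ordering and the fourth because $\hat\varphi(0)$ is centred), so $\E V_\beta(\phi_T) = 3\delta_N^2 L^2/(4\beta) = \bigO(\beta\inv)$.

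The remaining step is to sandwich $-\log \E \exp(-V_\beta(\phi_T))$ at order $\bigO(\beta\inv)$ via the Boué--Dupuis formula (\Cref{thm:boue-dupuis}). The trivial drift $v \equiv 0$ immediately gives $-\log \E \exp(-V_\beta(\phi_T)) \leq \E V_\beta(\phi_T) = \bigO(\beta\inv)$, hence $\mathcal I_\beta \geq 1 - \bigO(\beta\inv)$. For the matching lower bound on the infimum, I would expand $V_\beta(\phi_T + I_T[v])$ via the binomial formula (\Cref{thm:wick binomial}) into zero-mean pure-noise terms, the coercive pair $\tfrac12 \int_0^T \|v_t\|_2^2 \dt$ together with $\tfrac{1}{4\beta} \int I_T[v]^4 \dx$, and cross terms such as $\tfrac{3}{\sqrt\beta}\int \wickp{\phi_T^2}\, I_T[v] \dx$ or $\tfrac{3}{2\beta}\int \wickp{\phi_T^2}\, I_T[v]^2 \dx$. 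Each cross term is handled by pairing $\wickp{\phi_T^j}$, which lies in $\besovinfty^{-\epsilon}$ with moments bounded uniformly in $T$ and $N$ (\Cref{thm:wick gff moments}), against a power of $I_T[v]$ in a dual Besov space via \Cref{thm:besov multiplication,thm:besov lp product}, followed by Young's inequality to absorb into the coercive pair. \Cref{thm:boue-dupuis j regularity} supplies the key smoothing estimate $\|I_T[v]\|_{H^1}^2 \lesssim \int_0^T \|v_t\|_2^2 \dt$.

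The main technical hurdle is the $\beta$-accounting. Each cross term must, after Young's inequality, leave a deterministic residue of order $\beta\inv$ rather than $\beta^{-1/2}$, which forces a careful Young splitting pairing each $\beta$-power with just enough kinetic energy or $L^4$-mass. In two dimensions this accounting is clean because the Wick powers sit in $\besovinfty^{-\epsilon}$ with bounded moments, leaving ample room in the duality estimates; summing the residues yields $\inf_v \geq -\bigO(\beta\inv)$ and hence $\mathcal I_\beta \leq 1 + \bigO(\beta\inv)$, completing the sandwich.
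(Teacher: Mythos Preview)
Your reduction to the full-space integral is the fatal gap. The complement $\halfspace_+^c = \{\hat\varphi(0) < -\sqrt\beta\}$ is not a harmless tail: after the shift $u = 1 + \varphi$ it contains the \emph{other} potential well, centred at $\hat\varphi(0) \approx -2\sqrt\beta$. There the potential satisfies $-V_\beta \approx 4\beta L^2$, so although $\Gaussian_N(\halfspace_+^c) \lesssim e^{-c\beta}$, the integrand $\exp(-V_\beta)$ is exponentially large on that set and $\int_{\halfspace_+^c} \exp(-V_\beta)\,\diff\Gaussian_N \approx 1$. Your Cauchy--Schwarz bound uses $\int \exp(-2V_\beta)\,\diff\Gaussian_N$, which is \emph{not} uniformly bounded in $\beta$ (it grows like $e^{c\beta}$ near the second well), so the product does not decay. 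In fact the full-space integral equals $2 + \bigO(\beta\inv)$, not $1 + \bigO(\beta\inv)$, so no amount of tail-trimming can recover the half-space value this way.

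This manifests concretely in the Boué--Dupuis lower bound you sketch. You list only noise--drift cross terms, but the expansion of $\tfrac{1}{\sqrt\beta}\wickp{(\phi + I[v])^3}$ also produces the pure-drift term $\tfrac{1}{\sqrt\beta}\int I[v]^3$. For a constant drift $I[v] \equiv -\alpha\sqrt\beta$ this term is $-\alpha^3\beta L^2$, which cannot be absorbed into $\tfrac{1}{4\beta}\int I[v]^4 + \tfrac12\int\|v_t\|_2^2$ with an $\bigO(\beta\inv)$ residue; the infimum is in fact of order $-\beta$. The paper handles this by keeping the half-space constraint as a soft penalty $\kappa\negpart{\hat\varphi(0) + (1-\delta')\sqrt\beta}^2$ in the variational functional, splitting $I[v]$ into its mean and oscillatory parts, controlling the oscillatory cubic via Poincar\'e, and using the penalty term to cancel the mean-mode contribution (see \eqref{eq:2d delta fixed}--\eqref{eq:2d kappa fixed}). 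A separate concentration argument (\Cref{sec:2d concentration}) then shows the strip near the saddle contributes $e^{-c\beta}$. Both ingredients are essential and absent from your outline.
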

\begin{proof}
We only need to show that the integral in \Cref{thm:2d partition function form} approaches $1$ sufficiently fast.
First, we split the integration region $\halfspace_+$ into two parts:
a slightly smaller ``good'' set
\begin{equation}
\goodregion_+ \coloneqq \left\{ \hat\varphi(0) > (-1 + \delta')\sqrt\beta \right\},
\end{equation}
and the remaining region
\begin{equation}
\badregion_+ \coloneqq \left\{ -\sqrt\beta < \hat\varphi(0) < (-1 + \delta')\sqrt\beta  \right\}.
\end{equation}
Here $\delta' > 0$ is a parameter to be fixed in \eqref{eq:2d delta fixed};
it will be close to but strictly smaller than $1$.
The integral over $\badregion_+$ corresponds to a (half-)neighbourhood of the saddle.
We show it to be negligible in \Cref{sec:2d concentration}.

The variational formula in \Cref{thm:boue-dupuis} is defined for integrals over the whole space.
To go back to the full space, we can write
\begin{equation}
\int_z^\infty F(x) \dx
= \lim_{\kappa \to \infty} \int_{-\infty}^\infty \exp(-\kappa \negpart{x-z}^2) F(x) \dx,
\end{equation}
where $\negpart{x-z} \coloneqq \min(-x, 0)$ denotes the negative part of $x-z$.
If we fix $\kappa$ to a finite value, as will be done in \eqref{eq:2d kappa fixed},
the estimation error will be
\begin{equation}
\int_{-\infty}^z \exp(-\kappa \negpart{x-z}^2) F(x) \dx.
\end{equation}
In our concrete case, the approximating integral is thus
\begin{equation}\label{eq:2d partition approx}
\int \exp( -\kappa \negpart{\hat\varphi(0) + (1 - \delta')\sqrt\beta }^2 )
    \exp\left( -V_\beta(\varphi) \right)
    \diff\Gaussian(\varphi),
\end{equation}
which is shown to be finite in \Cref{sec:2d partition}.
The error is
\begin{equation}
\int_{\{ \hat\varphi(0) < (-1+\delta')\sqrt\beta \}}
    \exp( -\kappa \negpart{\hat\varphi(0) + (1 - \delta')\sqrt\beta }^2 )
    \exp\left( -V_\beta(\varphi) \right)
    \diff\Gaussian(\varphi).
\end{equation}
We can use Hölder to upper-bound the error by
\begin{multline}\label{eq:2d error bound}
\Prob\!\left( \hat\varphi(0) < (-1+\delta')\sqrt\beta \right)^{1-1/q}\\
\bigg[ \int \exp( -q\kappa \negpart{\hat\varphi(0) + (1 - \delta')\sqrt\beta }^2 )
    \exp\left( -q V_\beta(\varphi) \right)
    \diff\Gaussian(\varphi) \bigg]^{1/q}.
\end{multline}
The term in brackets is \eqref{eq:2d partition approx} except for the additional $q > 1$ factor.
For $q$ small enough, the proof in \Cref{sec:2d partition} still works.
The prefactor is a one-dimensional Gaussian probability that has exponential Markov bounds;
therefore it is much smaller than the required order $1/\beta$.
\end{proof}

We also need to compute the integral over the hypersurface where $\hat u(0) = 0$.
Since the zero mode was the only problematic part of the covariance (as long as $L < 2\pi$),
we will not need to repeat the re-centering trick.
Using relevant parts from \Cref{sec:2d partition},
we find the same convergence rate as for $\pf_N$ and a similar Gaussian prefactor:

\begin{theorem}\label{thm:2d saddle}
The saddle integral
\[
\mathcal I_N = \int_{\{ \hat u(0) = 0 \}} \hspace{-1em}
    \exp\!\left[ -\int_{\Tor^2} \frac{1}{4\beta} \wick{u(x)^4}_-
        + \frac 1 2 u(x)^2 - \frac 1 2\abs{\nabla u(x)}^2 \dx \right]
    \prod_{\abs k \leq N} \!\diff \hat u(k)
\]
satisfies
\[
\mathcal I_N
= \sqrt{\prod_{0 < \abs k \leq N} \frac{2\pi}{\beta L^d (C_L \abs k^2 - 1)}} (1 + \bigO(\beta\inv)).
\]
\end{theorem}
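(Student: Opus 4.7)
The strategy follows the recipe of \Cref{thm:2d partition overall} applied to the codimension-one hyperplane $\{\hat u(0) = 0\}$ instead of the open set $\halfspace_+$. The key simplification is that the hyperplane removes the single Fourier mode making the quadratic form $\tfrac{1}{2}\dual{u,(-1-\Laplace)u}$ indefinite: on the surviving modes $L < 2\pi$ guarantees $C_L \abs{k}^2 > 1$, so no re-centering around a potential minimum is required.

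First I would split $\mathcal I_N$ into a Gaussian normalisation times a perturbative expectation. The quadratic exponent in $\mathcal I_N$ is recognised as the density (with respect to the product Lebesgue measure over modes $0<\abs{k}\leq N$) of a Gaussian measure $\Gaussian_N^\perp$ with covariance $\beta^{-1}(-1-\Laplace)\inv$ supported on the hyperplane. A direct finite-dimensional Gaussian integration produces exactly
\[
\sqrt{\prod_{0 < \abs k \leq N} \frac{2\pi}{\beta L^2 (C_L \abs k^2 - 1)}},
\]
which matches the claimed prefactor.

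After rescaling $u \mapsto u/\sqrt\beta$ (which leaves the Wick constant $C_N$ unchanged, since it is the one associated to $(-1-\Laplace)\inv$), it remains to show
\[
\E_{\Gaussian^\perp} \exp\!\left(-\tfrac{1}{4\beta} \int_{\Tor^2} \wickm{\varphi^4} \dx\right) = 1 + \bigO(\beta^{-1}).
\]
To this end I apply the Boué--Dupuis formula (\Cref{thm:boue-dupuis}) with $V_\beta(\varphi) \coloneqq \tfrac{1}{4\beta}\int \wickm{\varphi^4} \dx$; the variational GFF of \Cref{def:variational gff} is adapted to the hyperplane simply by restricting the Brownian indices to $k \neq 0$, which keeps the moment bounds of \Cref{thm:wick gff moments} uniform in $N$. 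Taking the zero drift and invoking \Cref{thm:wick expectation} yields the upper bound $-\log \E e^{-V_\beta} \leq 0$. For the matching lower bound I expand $\wickm{(\varphi_T + I_T[v])^4}$ via \Cref{thm:wick binomial}, bound the cross-terms $\wickm{\varphi_T^k}(I_T[v])^{4-k}$ through the Besov multiplication estimates of \Cref{thm:besov multiplication}, and absorb them against the quadratic penalty $\tfrac{1}{2}\int_0^T \norm{v_t}_2^2 \dt$ via Young's inequality. The $\beta^{-1}$ prefactor propagates through every estimate to give $\inf_v \E[\,\cdots\,] \geq -C\beta^{-1}$, whence $\E e^{-V_\beta} = 1 + \bigO(\beta^{-1})$.

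The main obstacle, as in the partition-function computation, is the uniform-in-$N$ control of the variational lower bound for the Wick-renormalised quartic interaction. The argument here is, however, strictly simpler than that of \Cref{thm:2d partition overall}: no $\halfspace_+/\goodregion_+/\badregion_+$ decomposition is needed, no $\kappa$-penalty factor has to be introduced, and the reference Gaussian $\Gaussian_N^\perp$ is well-defined on the whole hyperplane. Only the ``well-interior'' portion of the analysis from \Cref{sec:2d partition} is invoked.
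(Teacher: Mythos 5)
Your proposal is correct and follows essentially the same route as the paper: compute the Gaussian normalization from the $(-1-\Laplace)^{-1}$ covariance restricted to non-zero modes, then use the Bou\'e--Dupuis formula with $v=0$ for the upper bound and the quartic-term estimates of \Cref{sec:2d partition} for the lower bound. One small caveat worth keeping in mind when filling in details: the cross-terms $\phi\, I_N[v]^3$ and $\wickm{\phi^2}\, I_N[v]^2$ cannot be absorbed by the quadratic drift penalty alone but also need the positive $\frac{1}{4\beta}\int I_N[v]^4$ term, exactly as in \eqref{eq:2d variational controls}.
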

\begin{proof}
Since there is no re-centring, the prefactor comes from
the Gaussian covariance $(-1 - \Laplace) \inv$ restricted to non-zero wavenumbers.

As in \Cref{sec:2d partition}, we can write the variational formulation of the saddle integral as
\begin{equation}
-\log \mathcal I_N
= \inf_v \E_u \left[ \frac{1}{4\beta} \int_{\Tor^2} \wickm{(u + I_N[v])^4} \dx
    + \frac 1 2 \int_0^N \norm{v_t}_2^2 \dt \right].
\end{equation}
The lower bound follows by the quartic terms in \Cref{sec:2d partition},
and the upper bound follows by substituting $v = 0$ as in there.

We note that the operator $I_N$ depends on the mass
and thus the stochastic estimates are slightly different to the $(2 - \Laplace)\inv$ case.
However, the two covariances are essentially same for large wavenumbers,
so the difference is irrelevant.
\end{proof}

Let us then combine \Cref{thm:2d partition overall,thm:2d saddle}
to find the final estimate stated in \Cref{thm:main tst}.
First, the Gaussian prefactors and the white-noise part
from \Cref{thm:eyring-kramers} yield the prefactor
\begin{equation}
\frac{2}{\sqrt{2\pi \beta L^d}}
\cdot \sqrt{\prod_{0 < \abs k \leq N} \frac{2\pi}{\beta L^d (C_L \abs k^2 - 1)}}
\cdot \frac 1 2 \sqrt{\prod_{\abs k \leq N} \frac{\beta L^d (2 + C_L \abs k^2)}{2\pi}},
\end{equation}
which simplifies as
\begin{equation}
\frac{\sqrt{\smash[b]{\smallabs{C_L \abs 0^2 - 1}}}}{2\pi}
\sqrt{\prod_{\abs k \leq N} \frac{C_L \abs k^2 + 2}{\smallabs{C_L \abs k^2 - 1}}}.
\end{equation}
\Cref{thm:2d partition overall} also contributes two exponential terms:
$\exp(-\beta L^2 / 4)$ gives the exponential dependency on temperature,
and $\exp(-3 C_N L^2 / 2)$ renormalizes the prefactor as shown below.
Finally, the quotient of the $(1 + \bigO(\beta\inv))$ error terms is still $(1 + \bigO(\beta\inv))$.

\begin{lemma}\label{thm:prefactor renormalization}
We have the uniform-in-$N$ bounds
\begin{equation}\label{eq:prefactor normalization}
\sqrt{\prod_{0 < \abs k \leq N} \frac{C_L \abs k^2 + 2}{C_L \abs k^2 - 1} }
    \exp(-3 C_N L^d / 2)
\simeq 1.
\end{equation}
This estimate holds both in $d=2$ and $d=3$.
\end{lemma}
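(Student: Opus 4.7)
The plan is to take logarithms and exploit the cancellation between two individually divergent sums. Since $L < 2\pi$ we have $C_L > 1$, so $C_L \abs k^2 - 1 > 0$ for every $k \in \Z^d \setminus \{0\}$; in particular both factors in \eqref{eq:prefactor normalization} are positive and all logarithms below are well defined. By \Cref{def:wick powers} applied at $T \geq N$ and $\beta = 1$,
\[
\tfrac{3}{2} C_N L^d = \tfrac{3}{2} \sum_{0 < \abs k \leq N} \frac{1}{C_L \abs k^2 - 1},
\]
so it suffices to prove that
\[
S_N \coloneqq \sum_{0 < \abs k \leq N} \frac{1}{2}\left[ \log\!\left(1 + \frac{3}{C_L \abs k^2 - 1}\right) - \frac{3}{C_L \abs k^2 - 1} \right]
\]
is bounded uniformly in $N$, since then $\exp(S_N)$ will lie in a fixed interval $[c, C]$ with $c, C > 0$.

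First I would apply the elementary Taylor remainder bound $\abs{\log(1+x) - x} \leq x^2$, valid for $x \in [0, 1/2]$. Setting $x_k \coloneqq 3/(C_L \abs k^2 - 1)$, there is an $N$-independent threshold $K_0$ so that $x_k \leq 1/2$ whenever $\abs k > K_0$, and for such $k$ the corresponding summand is bounded in absolute value by $\tfrac 1 2 x_k^2 \lesssim \abs k^{-4}$. Second, I would dominate the resulting tail by $\sum_{k \in \Z^d \setminus \{0\}} \abs k^{-4}$, which is finite in both $d = 2$ and $d = 3$ since the critical exponent for summability on $\Z^d$ is $d$, and $4 > d$ in both cases. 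Third, the finitely many low-frequency terms with $0 < \abs k \leq K_0$ contribute a constant that is plainly independent of $N$, since each denominator $C_L \abs k^2 - 1$ is bounded away from $0$.

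Adding these three contributions yields $\abs{S_N} \leq C$ uniformly in $N$, which after exponentiation gives \eqref{eq:prefactor normalization}. There is no genuine obstacle: the substance of the argument is simply recognising that the Wick renormalisation constant $C_N$ is exactly the sum whose subtraction removes the leading $3/(C_L \abs k^2 - 1)$ term in the expansion of $\log(1 + 3/(C_L \abs k^2 - 1))$, after which the residual $\bigO(\abs k^{-4})$ is harmlessly summable in dimensions $d \leq 3$. The same computation will also explain why the analogous identity fails at $d \geq 4$, consistent with triviality of the $\phi^4_d$ measure in those dimensions.
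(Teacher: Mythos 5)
Your argument is correct and is essentially the same as the paper's: both proofs take logarithms, observe that the subtraction of $\tfrac{3}{2}C_N L^d$ exactly cancels the first-order term $3/(C_L\abs k^2 - 1)$ in the expansion of $\log(1 + 3/(C_L\abs k^2 - 1))$, and then bound the residual by $\bigO(\abs k^{-4})$, which is summable over $\Z^d \setminus \{0\}$ for $d = 2, 3$. The only cosmetic difference is that the paper writes the comparison directly as $0 < \log\!\left(e^{3/y}/(1+3/y)\right) < 5y^{-2}$ rather than invoking a Taylor remainder estimate, but this is the same bound.
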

\begin{proof}
Let us first expand the definition of $C_N$ in
\begin{equation}
\exp(-3 C_N L^d / 2)
= \exp\!\bigg(\! - \sum_{\abs k \leq N} \frac{3}{2 (C_L \abs k^2 - 1)} \bigg).
\end{equation}
We can thus write the square of \eqref{eq:prefactor normalization} as
\begin{equation}
e^{3} \prod_{0 < \abs k \leq N} \frac{1 + 3/y}{1} \exp(-3/y),
\quad\text{where}\quad y = C_L \abs k^2 - 1.
\end{equation}
One can verify the logarithm estimate
\begin{equation}
0 < \sum_{0 < \abs k \leq N} \log \left( \frac{\exp(3/y)}{1 + 3/y} \right)
< \sum_{0 < \abs k \leq N} 5 y^{-2}.
\end{equation}
The upper estimate is a sum over $\abs k^{-4}$,
so the product is both positive and finite.
This holds for both two- and three-dimensional $k$.
\end{proof}

\subsection{Partition function}\label{sec:2d partition}

In the variational formulation of \Cref{thm:boue-dupuis},
the approximating integral \eqref{eq:2d partition approx} can be written as
\begin{equation}\label{eq:2d variational}
\begin{split}
&\int \exp\left( -q\kappa \negpart{\hat\varphi_N(0) + (1 - \delta')\sqrt\beta }^2
        - qV_\beta(\varphi_N) \right)
    \diff\Gaussian(\varphi_N)\\
=\; &\exp\bigg( {-\inf_v} \E \bigg[ q \kappa \negpart{\hat\phi_N(0) + \widehat{I_N[v]}(0) + (1 - \delta')\sqrt\beta }^2\\
    &\qquad + q V_\beta(\phi_N + I_N[v])
    + \frac 1 2 \int_0^N \norm{v_t}_2^2 \dt \bigg] \bigg).
\end{split}
\end{equation}
Here $\phi_N$ is sampled from the Gaussian free field of covariance $(2-\Laplace)\inv$;
we differentiate it from $\varphi_N$ by typographical convention.
For \eqref{eq:2d partition approx} we have $q = 1$;
for the error bound \eqref{eq:2d error bound} we set $q > 1$ instead.

Our goal is to estimate \eqref{eq:2d variational} uniformly in the truncation parameter $N$.
Let us recall from \Cref{sec:boue-dupuis}
that this parameter also appears implicitly in the definition of the operator $I_t$.

For now, we assume that $V_\beta$ is already defined
with the $(2 - \Laplace)\inv$ Wick ordering that matches the random field.
We defer the analysis of the correction terms to the end of this section.

\bigskip
Let us first prove the much easier upper bound on the infimum,
which is shown by choosing a suitable drift $v$.
In two dimensions it is sufficient to put $v \equiv 0$.
In three dimensions the choice is more complicated.
This operation corresponds essentially to Jensen's inequality;
compare with \cite[Section~4.2]{berglund_eyringkramers_2017}.

\begin{theorem}\label{thm:2d partition lower}
The infimum in \eqref{eq:2d variational} can be bounded from above by $C/\beta$,
where $C$ depends on $\kappa$, $\delta'$ and $q$.
\end{theorem}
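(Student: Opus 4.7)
The plan is to exhibit the drift $v \equiv 0$ in the infimum and check that each surviving term is either identically zero or exponentially small in $\beta$. At the level of the original expectation this amounts to a Jensen-type bound, as the paragraph preceding the lemma hints.

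With $v \equiv 0$ the entropy term $\frac{1}{2} \int_0^N \norm{v_t}_2^2 \dt$ vanishes and $I_N[v] = 0$, so the objective in \eqref{eq:2d variational} reduces to
\[
\E\bigl[\, q \kappa \negpart{\hat\phi_N(0) + (1-\delta')\sqrt\beta}^2 + q V_\beta(\phi_N) \,\bigr].
\]
Under the working assumption that the Wick ordering in $V_\beta$ is already matched to the $(2-\Laplace)\inv$ covariance of $\phi_N$, the functional $V_\beta(\phi_N)$ is a linear combination of spatial integrals of $\wick{\phi_N^3}$ and $\wick{\phi_N^4}$. By \Cref{thm:wick expectation} and Fubini, these integrals have vanishing expectation, so the entire potential term contributes nothing.

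For the barrier term, the crucial observation is that $\hat\phi_N(0)$ is a centred Gaussian whose variance is the zero Fourier entry of $(2-\Laplace)\inv / L^2$, a fixed constant independent of $N$ and $\beta$. Since $\negpart{X+a}^2 = 0$ unless $X < -a$ and then equals $(X+a)^2 \leq X^2$, a direct Gaussian tail computation yields
\[
\E \negpart{\hat\phi_N(0) + (1-\delta')\sqrt\beta}^2 \lesssim \exp(-c\beta)
\]
for some $c > 0$ depending only on $\delta'$ and $L$. Multiplying by $q\kappa$ and combining, the infimum is bounded above by $C e^{-c\beta}$, which is certainly $\bigO(\beta\inv)$ uniformly in $N$, and for $q$ in a small neighbourhood of $1$ (so that the variational formula still applies).

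There is no real obstacle here; the upper bound rests entirely on the mean-zero property of Wick powers and the Gaussian concentration of $\hat\phi_N(0)$. The one subtlety, deferred as in the paper, is that the Wick ordering in the definition of $V_\beta$ is $\wickm$ (mass $-1$) whereas $\phi_N$ has covariance $(2-\Laplace)\inv$; the correction from \Cref{thm:wick change} produces lower-degree polynomial terms whose Gaussian expectations are bounded uniformly in $N$, so the $\bigO(\beta\inv)$ estimate survives. The genuinely difficult work of the section lies in the \emph{matching lower bound}, where one must control an arbitrary drift $v \in \mathbb H_a$ rather than exhibit a specific one.
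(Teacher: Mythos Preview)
Your proof is correct and follows the same approach as the paper: set $v\equiv 0$, use \Cref{thm:wick expectation} to kill the Wick-power terms, and bound the barrier term via Gaussian tail behaviour of $\hat\phi_N(0)$. The only difference is cosmetic: the paper uses H\"older plus a fourth-moment Markov inequality to obtain a bound of order $1/\beta$, whereas you appeal directly to the Gaussian tail to get the sharper $e^{-c\beta}$; both suffice for the stated conclusion.
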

\begin{proof}
As we put $v \equiv 0$ into the infimum, we are left with
\begin{equation}
    \E \left[ q\kappa \negpart{\hat\phi_N(0) + (1 - \delta')\sqrt\beta }^2
    + \int_{\Tor^2} \frac{q}{4\beta} \wickp{\phi_N^4} + \frac{q}{\sqrt\beta} \wickp{\phi_N^3} \dx \right].
\end{equation}
The Wick powers have zero expectation by \Cref{thm:wick expectation}.
Hölder and the rough bound
$\negpart{\hat\phi_N(0) + (1 - \delta')\sqrt\beta } \leq \smallabs{\hat\phi_N(0)}$
give us
\begin{equation}
\E\, \negpart{\hat\phi_N(0) + (1 - \delta')\sqrt\beta }^2
\leq \Prob\left(\hat\phi_N(0) < -(1 + \delta')\sqrt\beta \right)^{1/2}
    \left[ \E \smallabs{\hat\phi_N(0)}^4 \right]^{1/2}.
\end{equation}
A fourth-order Markov estimate then gives us
\begin{equation}
\E q\kappa\, \negpart{\hat\phi_N(0) + (1 - \delta')\sqrt\beta }^2
\leq q\kappa \frac{\E \smallabs{\hat\phi_N(0)}^4}{(1-\delta')^2 \beta},
\end{equation}
and since $\hat\phi_N(0)$ is a Gaussian random variable, the claim follows.
\end{proof}

To get an upper bound for \eqref{eq:2d variational}, we need to lower-bound the infimum.
The bounds we get also hold with a small factor $q > 1$ in front
of the two potential terms, as required by \eqref{eq:2d error bound}.

The argument adapts \cite[Section~3]{barashkov_variational_2020}.
We are going to control the infimum with the two purely $v$-dependent positive terms
\begin{equation}\label{eq:2d variational controls}
    \frac{1}{4\beta} \int_{\Tor^2} I_N[v]^4 \dx
    \quad\text{and}\quad
    \frac 1 2 \int_0^N \norm{v_t}_2^2 \dt.
\end{equation}
Some care is necessary to preserve the factor of $\beta\inv$,
especially when working with the cubic terms.

\begin{remark}
The estimates in \cite{barashkov_variational_2020} are mostly independent of the domain size.
Since we have fixed $L < 2\pi$, we are free to use some simpler estimates
(especially in \Cref{sec:3d tst}).

Additionally, the Lebesgue measure to is normalized to unit mass in \cite{barashkov_variational_2020}.
We do not do so,
but this does not change the argument in any meaningful way.
\end{remark}

The following lemma makes the latter drift-dependent term slightly easier to use:

\begin{lemma}\label{thm:2d drift h1}
We have
\[
\norm{I_N[v]}_{H^1}^2 \leq \int_0^N \norm{v_t}_2^2 \dt.
\]
\end{lemma}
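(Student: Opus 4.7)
The plan is to use Plancherel to move to Fourier space, apply Cauchy--Schwarz in the time variable $t$, and then exploit the two defining properties of the regularization: the symbol $\sigma_t(k)^2$ is a derivative of $\rho_t(k)^2$ and thus integrates against $\dt$ to $\rho_N(k)^2 \leq 1$, while the denominator $\pm m^2 + C_L\abs{k}^2$ from the definition of $J_t$ absorbs the $\langle k\rangle^2$ weight of the $H^1$ norm (thanks to $L < 2\pi$, so $C_L \geq 1$).

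Concretely, I would first compute the Fourier coefficient
\[
\widehat{I_N[v]}(k)
= \int_0^N \frac{\sigma_t(k) \I_{\abs k \leq N} \I_{\pm m^2 + C_L\abs{k}^2 > 0}}{\sqrt{\pm m^2 + C_L\abs{k}^2}}\, \hat v_t(k)\dt
\]
from \Cref{def:variational gff}. Applying Cauchy--Schwarz in $t$ gives
\[
\bigl|\widehat{I_N[v]}(k)\bigr|^2
\leq \left(\int_0^N \frac{\sigma_t(k)^2}{\pm m^2 + C_L\abs{k}^2}\dt\right)
    \left(\int_0^N \abs{\hat v_t(k)}^2 \dt\right)
= \frac{\rho_N(k)^2}{\pm m^2 + C_L\abs{k}^2}\int_0^N \abs{\hat v_t(k)}^2 \dt,
\]
using \eqref{eq:boue-dupuis sigma squared} in the last step. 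Since $\rho_N(k)^2 \leq 1$ and, for admissible $k$, the ratio $\langle k\rangle^2/(\pm m^2 + C_L\abs{k}^2)$ is bounded by $1$ (in the positive mass case, $2 + C_L\abs{k}^2 \geq 1 + \abs{k}^2$ since $C_L \geq 1$; in the negative mass case one restricts to $k \neq 0$ and the analogous inequality holds on the relevant Fourier support), one obtains $\langle k\rangle^2 |\widehat{I_N[v]}(k)|^2 \leq \int_0^N |\hat v_t(k)|^2\dt$.

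Summing over $k \in \Z^d$ and exchanging sum with integral by Tonelli then yields
\[
\norm{I_N[v]}_{H^1}^2
= \sum_{k \in \Z^d} \langle k\rangle^2 \bigl|\widehat{I_N[v]}(k)\bigr|^2
\leq \int_0^N \sum_{k \in \Z^d} \abs{\hat v_t(k)}^2 \dt
= \int_0^N \norm{v_t}_2^2\dt,
\]
which is the claim. There is no real obstacle here, only a routine bookkeeping issue: one must match the chosen convention for the $H^1$ norm with the Fourier symbol of $J_t$ so that the mass and $C_L$ factors give a clean inequality with constant $1$; this is the reason the hypothesis $L < 2\pi$ is needed at this step.
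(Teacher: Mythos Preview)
Your proof is correct and follows essentially the same approach as the paper: pass to Fourier side, apply Cauchy--Schwarz in $t$, and use $\int_0^N \sigma_t(k)^2\dt = \rho_N(k)^2 \leq 1$ together with the fact that the $H^1$ weight is dominated by the mass denominator. The only cosmetic difference is that the paper writes the $H^1$ weight as $1 + C_L\abs{k}^2$ (the eigenvalues of $1-\Laplace$ on $\Tor^d$), so the ratio $(1+C_L\abs{k}^2)/(2+C_L\abs{k}^2)\leq 1$ holds without invoking $L<2\pi$; your parenthetical remark about the negative-mass case does not quite give constant $1$ for all $L<2\pi$, but the lemma is only used in the positive-mass setting, so this does not affect the argument.
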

\begin{proof}
By moving to the Fourier side, we can estimate the $H^1$ norm with
\begin{align*}
&\sum_{\abs k \leq N} \left[ \sqrt{\frac{1 + C_L \abs k^2}{2 + C_L \abs k^2}}
    \int_0^N \sigma_t(k) \hat v_t(k) \dt \right]^2\\
\leq\; &\sum_{k \in \Z^d} \left[ \int_0^N \sigma_t(k)^2 \dt \right]
    \left[ \int_0^N \hat v_t(k)^2 \dt \right].
\end{align*}
Now we can use the property~\eqref{eq:boue-dupuis sigma squared} that $\sigma_t^2$ integrates to $1$.
\end{proof}

The result that we prove in the rest of this subsection can be stated as:

\begin{theorem}\label{thm:2d variational upper}
For $\beta$ large enough and $q \geq 1$ small,
we can fix $\kappa$, $\delta'$, and $\rho \in {({0}, {1})}$ such that
\begin{align*}
&\mathrel{\phantom{\leq}}
\E \bigg| q\kappa \negpart{\hat\phi_N(0) + \widehat{I_N[v]}(0) + (1 - \delta')\sqrt\beta }^2\\
    &\quad\qquad + q\int_{\Tor^2} \frac{1}{4\beta} \wickp{(\phi_N + I_N[v])^4}
        - \frac{1}{4\beta} I_N[v]^4
    + \frac{1}{\sqrt\beta} \wickp{(\phi_N + I_N[v])^3} \dx \bigg|\\
&\leq \frac C \beta + \rho \left[ \frac{1}{4 \beta} \int_{\Tor^2} I_N[v]^4 \dx
    + \frac 1 2 \norm{I_N[v]}_{H^1}^2 \right].
\end{align*}
This implies that the infimum in \eqref{eq:2d variational} is bounded from below by
\[
-\frac C \beta + (1-\rho) \E \left[ \frac{1}{4\beta} \int_{\Tor^2} I_N[v]^4 \dx
    + \frac 1 2 \int_0^N \norm{v_t}_2^2 \dt \right].
\]
As the term in brackets is non-negative, this yields a lower bound $-C/\beta$.
\end{theorem}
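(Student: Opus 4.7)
The plan is to expand $\wickp{(\phi_N + I_N[v])^j}$ by the Wick binomial formula (\Cref{thm:wick binomial}); the $I_N[v]^4$ summand of the quartic expansion cancels the explicit $-I_N[v]^4/(4\beta)$, leaving pure Wick terms $\int_{\Tor^2}\wickp{\phi_N^j}\dx$, which vanish in expectation by \Cref{thm:wick expectation}, together with mixed terms pairing one of $\phi_N$, $\wickp{\phi_N^2}$, $\wickp{\phi_N^3}$ against powers of $I_N[v]$, plus the purely drift-dependent $\tfrac{1}{\sqrt\beta}\int I_N[v]^3$. Since the pure Wick terms are mean zero and the indicator is non-negative, it suffices to bound the absolute value of the sum of mixed terms plus the indicator by $C/\beta + \rho[\cdots]$; this will imply the stated infimum lower bound.

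For each mixed term I would apply Besov duality (\Cref{thm:besov multiplication}) combined with the product estimate \Cref{thm:besov lp product} and embedding \Cref{thm:besov embeddings}. For example,
\begin{align*}
\Big|\int_{\Tor^2}\wickp{\phi_N^2}\,I_N[v]^2\dx\Big|
&\lesssim \smallnorm{\wickp{\phi_N^2}}_{\besovinfty^{-\epsilon}} \smallnorm{I_N[v]^2}_{B^\epsilon_{1,1}}\\
&\lesssim \smallnorm{\wickp{\phi_N^2}}_{\besovinfty^{-\epsilon}} \norm{I_N[v]}_{L^4}\norm{I_N[v]}_{H^1},
\end{align*}
and a trilinear Young's inequality with weights tuned to the $\tfrac{3q}{2\beta}$ prefactor yields a bound of the schematic form $\rho[\tfrac{1}{4\beta}\norm{I_N[v]}_{L^4}^4+\tfrac 1 2\norm{I_N[v]}_{H^1}^2]+C_\rho\beta^{-k}\smallnorm{\wickp{\phi_N^2}}^m$ with $k\geq 1$, whose stochastic residual is $O(\beta^{-1})$ by the uniform moments of \Cref{thm:wick gff moments}. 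Analogous estimates treat $\wickp{\phi_N^3}I_N[v]$, $\phi_N I_N[v]^3$, $\wickp{\phi_N^2}I_N[v]$ and $\phi_N I_N[v]^2$; the drift-only piece $\tfrac{1}{\sqrt\beta}\int I_N[v]^3$ is absorbed via $\norm{I_N[v]}_{L^4}^3\leq\eta\norm{I_N[v]}_{L^4}^4+C_\eta\norm{I_N[v]}_{H^1}^2$ (Young with $p=4/3$ followed by Sobolev $H^1(\Tor^2)\hookrightarrow L^4$, valid only in two dimensions) with $\eta$ of order $\sqrt\beta$. For the negative-part indicator I would use $(a+b)_-^2\leq 2a_-^2+2b_-^2$ to split the $\phi_N$ and $I_N[v]$ zero modes: the $\phi_N$-piece is super-exponentially small in $\beta$ by Gaussian concentration on $\{\hat\phi_N(0)<-(1-\delta')\sqrt\beta/2\}$, while the $I_N[v]$-piece is bounded by $\widehat{I_N[v]}(0)^2\leq L^2\norm{I_N[v]}_{H^1}^2$, producing a coefficient $2q\kappa L^2$ that fits under $\rho/4$ once $\kappa$ is chosen small.

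The principal technical obstacle is the bookkeeping of Young's inequalities: the six mixed estimates must jointly fit under the global $\rho<1$ budget on the two positive controls $\tfrac{1}{4\beta}\int I_N[v]^4$ and $\tfrac 1 2\norm{I_N[v]}_{H^1}^2$, while simultaneously keeping each stochastic residual of order $\beta^{-1}$ or smaller. Two features make this tractable in two dimensions: $\wickp{\phi_N^3}\in\besovinfty^{-\epsilon}$ has all moments uniformly in $N$ and $T$ (\Cref{thm:wick gff moments}), and $H^1(\Tor^2)\hookrightarrow L^p(\Tor^2)$ holds for every finite $p$. Neither property survives in three dimensions, which is why the analogous \Cref{sec:3d tst} will require the paracontrolled machinery of \cite{barashkov_variational_2020}.
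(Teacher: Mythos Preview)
Your treatment of the mixed $\phi_N^j\,I_N[v]^k$ terms matches the paper and is fine. The genuine gap is in the purely drift-dependent cubic $\tfrac{q}{\sqrt\beta}\int_{\Tor^2} I_N[v]^3\dx$ and, relatedly, in the role you assign to the indicator term.

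Your proposed bound for $\tfrac{1}{\sqrt\beta}\int I_N[v]^3$ cannot work for any $\rho<1$. Test it on a constant drift $I_N[v]\equiv c$ with $c=\sqrt\beta$: the cubic is $\beta L^2$, while the two controls give $\tfrac{\rho}{4\beta}\beta^2 L^2+\tfrac{\rho}{2}\beta L^2=\tfrac{3\rho}{4}\beta L^2$, forcing $\rho\geq 4/3$. No choice of $\eta$ in your Young inequality avoids this, because the obstruction is in the zero mode, where the Sobolev embedding gives no gain.

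The paper handles this by splitting $I_N[v]=I_N^\circ[v]+I_N^\perp[v]$ into mean and oscillatory parts. For the oscillatory piece $\tfrac{1}{\sqrt\beta}\int I_N[v]^2\,I_N^\perp[v]$ one applies the Poincar\'e-type \Cref{thm:2d poincare}, which gives the sharper constant $(2+C_L)^{-1}<\tfrac12$ against $\int_0^N\|v_t\|_2^2\dt$; this is what makes a $\rho'<1$ budget feasible. For the zero-mode piece $\tfrac{1}{\sqrt\beta}I_N^\circ[v]\int I_N[v]^2$, one rewrites $I_N^\circ[v]=\big(\hat\phi_N(0)+I_N^\circ[v]+(1-\delta')\sqrt\beta\big)-\hat\phi_N(0)-(1-\delta')\sqrt\beta$. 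The $(1-\delta')$-term is absorbed by choosing $\delta'$ close to $1$; the $\hat\phi_N(0)$-term vanishes in expectation; and the first bracket, when negative, is controlled by the indicator via Young, producing a term $-C_\rho\,\negpart{\cdots}^2$. One then \emph{fixes} $\kappa=C_\rho$ so that the positive indicator exactly cancels this. In other words, $\kappa$ is not a small parameter to be tuned down as you suggest---it is determined by the cubic estimate, and the indicator is a control term, not a nuisance term to be bounded away.
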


\subsubsection*{Fourth-order terms}

We use \Cref{thm:wick binomial} to expand $\wick{(\phi_N + I_N[v])^4}$ as
\begin{equation}
\wick{\phi_N^4}
+ 4\wick{\phi_N^3} I_N[v]
+ 6\wick{\phi_N^2} I_N[v]^2
+ 4\phi_N I_N[v]^3
+ I_N[v]^4.
\end{equation}
The $\E \wickp{\phi_N^4}$ term vanishes by \Cref{thm:wick expectation}.
To control the mixed terms,
we will apply the multiplicative inequality \Cref{thm:besov multiplication}.

The first mixed term is
\begin{align}\label{eq:2d phi3 v}
\frac{1}{\beta} \int_{\Tor^2} \wickp{\phi_N^3} I_N[v] \dx
&\lesssim \frac 1 \beta \smallnorm{\wickp{\phi_N^3}}_{H^{-\epsilon}} \norm{I_N[v]}_{H^{\epsilon}} \notag\\
&\lesssim \frac{C_\rho}{\beta} \smallnorm{\wickp{\phi_N^3}}_{H^{-\epsilon}}^2
    + \frac \rho \beta \norm{I_N[v]}_{H^1}^2.
\end{align}
The first term has finite expectation by \Cref{thm:wick gff moments}.
The introduction of $\rho$ is even not necessary here, as it would suffice to take $\beta$ large.

In the second mixed term, we apply \Cref{thm:besov lp product}
to use both the $H^1$ and $L^4$ controlling terms:
\begin{align}
\frac{1}{\beta} \int_{\Tor^2} \wickp{\phi_N^2} I_N[v]^2 \dx
&\lesssim \frac{1}{\beta} \smallnorm{\wickp{\phi_N^2}}_{\besovinfty^{-\epsilon}}
    \smallnorm{I_N[v]^2}_{B^{\epsilon}_{1,1}} \notag\\
&\lesssim \frac{1}{\beta} \smallnorm{\wickp{\phi_N^2}}_{\besovinfty^{-\epsilon}}
    \norm{I_N[v]}_{L^2} \norm{I_N[v]}_{H^{2\epsilon}}\\
&\lesssim \frac{C_\rho}{\beta} \smallnorm{\wickp{\phi_N^2}}_{\besovinfty^{-\epsilon}}^4
    + \frac{\rho}{\beta} \norm{I_N[v]}_{L^4}^4
    + \frac{\rho}{\beta} \norm{I_N[v]}_{H^1}^2. \notag
\end{align}
Again, the first term has finite expectation.
Finally, we use \Cref{thm:besov lp product} to estimate the third term:
\begin{equation}\label{eq:2d partition pi3}
\begin{split}
\frac{1}{\beta} \int_{\Tor^2} \phi_N I_N[v]^3 \dx
&\lesssim \frac 1 \beta \norm{\phi_N}_{\besovinfty^{-\epsilon}}
    \smallnorm{I_N[v]^3}_{B^{\epsilon}_{1,1}}\\
&\lesssim \frac 1 \beta \norm{\phi_N}_{\besovinfty^{-\epsilon}}
    \norm{I_N[v]}_{L^4}^2 \norm{I_N[v]}_{H^{2\epsilon}}.
\end{split}
\end{equation}
Now we use a crude application of \Cref{thm:besov interpolation} to estimate
\begin{equation}
\norm{I_N[v]}_{H^{2\epsilon}}
\lesssim \norm{I_N[v]}_{B^{1/2}_{8/3,\infty}}
\lesssim \norm{I_N[v]}_{B^0_{4,\infty}}^{1/2} \norm{I_N[v]}_{B^1_{2,\infty}}^{1/2},
\end{equation}
which lets us bound
\begin{equation}
\begin{split}
\eqref{eq:2d partition pi3}
&\lesssim \frac 1 \beta \norm{\phi_N}_{\besovinfty^{-\epsilon}}
    \norm{I_N[v]}_{L^4}^{5/2} \norm{I_N[v]}_{H^1}^{1/2}\\
&\lesssim \frac{C_\rho}{\beta} \norm{\phi_N}_{\besovinfty^{-\epsilon}}^8
    + \frac \rho \beta \norm{I_N[v]}_{L^4}^4
    + \frac \rho \beta \norm{I_N[v]}_{H^1}^2.
\end{split}
\end{equation}
As the stochastic term has finite expectation,
this completes the estimation of fourth-order terms.

\subsubsection*{Third-order terms}
With the fourth-order terms we could have $\beta^{-1}$ in front of all controlling terms,
including $\norm{I_N[v]}_{H^1}^2$.
Here we only have $\beta^{-1/2}$ at our disposal.
We can modify the argument of \eqref{eq:2d phi3 v} to estimate
\begin{equation}
\frac{1}{\sqrt\beta} \int_{\Tor^2} \wickp{\phi_N^2} I_N[v] \dx
\lesssim \frac{C_\rho}{\beta} \norm{\phi_N^2}_{H^{-\epsilon}}^2 + \rho \norm{I_N[v]}_{H^1}^2.
\end{equation}
We can similarly distribute the $\beta$ in
\begin{align}
\frac{1}{\sqrt\beta} \int_{\Tor^2} \phi_N I_N[v]^2 \dx
&\lesssim \frac{1}{\sqrt\beta} \norm{\phi_N}_{B^{-\epsilon}_{4, 4}}
    \norm{I_N[v]}_{B^{\epsilon}_{8/3, 8/3}}^2 \notag\\
&\lesssim \frac{1}{\sqrt\beta} \norm{\phi_N}_{B^{-\epsilon}_{4, 4}}
    \norm{I_N[v]}_{L^4} \norm{I_N[v]}_{H^1} \notag\\
&\lesssim \frac{C_\rho}{\beta} \norm{\phi_N}_{B^{-\epsilon}_{4,4}}^4
    + \frac \rho \beta \norm{I_N[v]}_{L^4}^4
    + \rho \norm{I_N[v]}_{H^1}^2.
\end{align}

The only problematic term is the $I_N[v]^3$ term.
Here we need to use the mass of the translated field.
We split $I_N[v]$ into its mean $I_N^\circ[v]$ and the remaining oscillatory part $I_N^\perp[v]$.
We can then apply the following Poincaré inequality:

\begin{lemma}\label{thm:2d poincare}
On a periodic domain $[{0}, {L}]^d$, we have
\[
\norm{I_N^\perp[v]}_{L^2}^2
\leq \frac{1}{2 + (2\pi/L)^2} \int_0^N \norm{v_t}_2^2 \dt.
\]
\end{lemma}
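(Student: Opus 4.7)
The plan is to work entirely on the Fourier side. Writing out the definition from \Cref{def:variational gff}, for the positive-mass case we have
\begin{equation*}
\widehat{I_N[v]}(k) = \int_0^N \frac{\sigma_t(k)\,\I_{\abs k \le N}}{\sqrt{2 + C_L \abs k^2}}\,\hat v_t(k)\,\dt,
\end{equation*}
so by Parseval,
\begin{equation*}
\norm{I_N^\perp[v]}_{L^2}^2 \;\simeq\; \sum_{0 < \abs k \le N} \frac{1}{2 + C_L \abs k^2}\left| \int_0^N \sigma_t(k) \hat v_t(k)\,\dt \right|^2.
\end{equation*}

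The first key step is that removing the zero mode is exactly what allows us to gain a spectral gap. Since $k \in \Z^d \setminus \{0\}$ gives $\abs k^2 \ge 1$, we can bound the prefactor uniformly:
\begin{equation*}
\frac{1}{2 + C_L \abs k^2} \;\le\; \frac{1}{2 + C_L} \;=\; \frac{1}{2 + (2\pi/L)^2}.
\end{equation*}
This constant comes out of the sum.

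The second step is to handle the time integral by Cauchy--Schwarz, treating $\sigma_t(k)^2\,\dt$ as a probability measure on $[0,N]$ (after extending to $[0,\infty)$ if needed):
\begin{equation*}
\left| \int_0^N \sigma_t(k) \hat v_t(k)\,\dt \right|^2 \;\le\; \left(\int_0^N \sigma_t(k)^2\,\dt\right) \left(\int_0^N \abs{\hat v_t(k)}^2\,\dt\right) \;\le\; \int_0^N \abs{\hat v_t(k)}^2\,\dt,
\end{equation*}
where the final inequality uses \eqref{eq:boue-dupuis sigma squared}, namely $\int_0^N \sigma_t(k)^2\,\dt = \rho_N(k)^2 \le 1$.

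Summing over $k \neq 0$ and applying Parseval once more converts the sum back into the space--time $L^2$ norm of $v$, giving the claim. No step is really an obstacle here; this is essentially the statement that $J_t$ has operator norm at most $1/\sqrt{2+C_L}$ on the oscillatory part of $L^2$, combined with an Itô-isometry-style Cauchy--Schwarz in time. The argument is entirely analogous to the proof of \Cref{thm:2d drift h1}, just using the sharper prefactor $1/(2+C_L\abs k^2) \le 1/(2+C_L)$ on non-zero modes instead of the bound $(1+C_L\abs k^2)/(2+C_L\abs k^2) \le 1$ used there.
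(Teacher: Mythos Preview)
Your proof is correct and takes essentially the same approach as the paper: pass to Fourier coefficients, use Cauchy--Schwarz in $t$ together with $\int_0^N \sigma_t(k)^2\,\dt \le 1$, and bound the prefactor $1/(2+C_L\abs k^2)$ by $1/(2+C_L)$ on nonzero modes. This is exactly how the paper argues, referring back to the proof of \Cref{thm:2d drift h1}.
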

\begin{proof}
As in \Cref{thm:2d drift h1}, we can estimate the norm as
\[
\sum_{k \in \Z^d \setminus \{0\}} \frac{1}{2 + C_L \abs k^2} \left[ \int_0^N \sigma_t(k)^2 \dt \right]
    \left[ \int_0^N v_t(k)^2 \dt \right].
\]
Since $\abs k \geq 1$, we can bound the prefactor with $1/(2 + C_L)$.
\end{proof}

We can use Young's inequality (now with explicit constants) and \Cref{thm:2d poincare} to estimate
\begin{align}\label{eq:2d partition using poincare}
\frac{1}{\sqrt\beta} \int_{\Tor^2} I_N[v]^2 I_N^\perp[v] \dx
&\leq \frac{\rho'}{4\beta} \norm{I_N[v]}_{L^4}^4
    + \frac{1}{\rho'} \norm{I_N^\perp[v]}_{L^2}^2 \notag\\
&\leq \frac{\rho'}{4\beta} \norm{I_N[v]}_{L^4}^4
    + \frac{1}{(2 + (2\pi/L)^2) \rho'} \int_0^N \norm{v_t}_2^2 \dt.
\end{align}
Everywhere above, we could choose $\rho$ to be arbitrarily small,
but here $\rho'$ needs to satisfy both $\rho' < 1$ and $(2 + (2\pi/L)^2) \rho' > 2$
in order to match \eqref{eq:2d variational controls}.
This choice is always possible, even without any assumptions on $L$.

Finally, we write the remaining part of the $I_N[v]^3$ term as
\begin{align}\label{eq:2d delta fixed}
&\E \frac{I_N^\circ[v]}{\sqrt\beta} \int_{\Tor^2} I_N[v]^2 \dx \notag\\
=\; &\E \frac{\hat\phi_N(0) + I_N^\circ[v] + (1-\delta')\sqrt\beta}{\sqrt\beta} \norm{I_N[v]}_{L^2}^2
    - (1-\delta') \E \norm{I_N[v]}_{L^2}^2.
\end{align}
(Recall that $\E \hat\phi_N(0) = 0$.)
We fix $\delta'$ so that
\begin{equation}
(1-\delta') \norm{I_N[v]}_{L^2}^2
\leq \frac \rho 2 \int_0^N \norm{v_t}_2^2 \dt.
\end{equation}
The remaining two terms can be bounded from below as
\begin{align}
&\E \frac{-\negpart{ \hat\phi_N(0) + I_N^\circ[v] + (1-\delta')\sqrt\beta }}{\sqrt\beta} \int_{\Tor^2} I_N[v]^2 \dx \notag\\
\geq\; &-C_\rho \E \negpart{ \hat\phi_N(0) + I_N^\circ[v] + (1-\delta')\sqrt\beta }^2
    - \frac \rho \beta \E \norm{I_N[v]}_{L^4}^4. \label{eq:2d kappa fixed}
\end{align}
We now fix $\kappa = C_\rho$ in \eqref{eq:2d partition approx}.
Therefore the first term of \eqref{eq:2d kappa fixed} is cancelled by a matching positive term.

\subsubsection*{Change of Wick ordering}

Let us then still check the correction terms stemming from changing the Wick ordering.
We now use \Cref{thm:wick change} to replace the Wick ordering by
\begin{align}
\frac{1}{4\beta} \wickm{\bullet^4}
&= \frac{1}{4\beta} \wickp{\bullet^4}
    - \frac{6 (C_N^- - C_N^+)}{4\beta} \wickp{\bullet^2} + \frac{3 (C_N^- - C_N^+)^2}{4\beta},
    \label{eq:2d wick change quartic}\\
\frac{1}{\sqrt\beta} \wickm{\bullet^3}
&= \frac{1}{\sqrt\beta} \wickp{\bullet^3} - \frac{3 (C_N^- - C_N^+)}{\sqrt\beta} \bullet.
    \label{eq:2d wick change cubic}
\end{align}
Here the difference of renormalization constants satisfies
\begin{equation}\label{eq:wick difference}
\sup_N \smallabs{C_N^+ - C_N^-}
\leq \frac{1}{2} + \sum_{k \in \Z^2 \setminus \{0\}}
    \abs{\frac{1}{C_L \abs k^2 + 2} - \frac{1}{C_L \abs k^2 - 1}}
< \infty.
\end{equation}

As we substitute $\bullet = \phi + I_N[v]$,
the raw Wick powers $\phi$ and $\wickp{\phi^2}$ vanish under the expectation.
The cross term of \eqref{eq:2d wick change quartic} is bounded by
\begin{equation}
\frac{3 (C_N^- - C_N^+)}{\beta} \E \int_{\Tor^2} \phi I_N[v] \dx
\lesssim \frac{C_\rho}{\beta^2} \norm{\phi}_{\besovinfty^{-\epsilon}}^2
    + \rho \norm{I_N[v]}_{H^1}^2,
\end{equation}
and the other term in \eqref{eq:2d wick change quartic} follows similarly:
\begin{equation}
\frac{3 (C_N^- - C_N^+)}{2 \beta} \E \int_{\Tor^2} I_N[v]^2 \dx
\lesssim \frac{C_\rho}{\beta} + \frac{\rho}{\beta} \norm{I_N[v]}_{L^4}^4 + \rho \norm{I_N[v]}_{L^2}^2.
\end{equation}
Then the final term in \eqref{eq:2d wick change cubic} is estimated as
\begin{equation}
\frac{3 (C_N^- - C_N^+)}{\sqrt\beta} \E \int_{\Tor^2} I_N[v] \dx
\lesssim \frac{C_\rho}{\beta} + \rho \norm{I_N[v]}_{L^2}^2.
\end{equation}
In summary, all of these correction terms could be controlled within the previous framework.

\subsection{Concentration of measure}\label{sec:2d concentration}

To finish the proof of \Cref{thm:2d partition overall}, we want to show that the integral over
\begin{equation}
\badregion_- \cup \badregion_+ \coloneqq \left\{ -\delta'\sqrt\beta < \hat u(0) < \delta'\sqrt\beta  \right\}
\end{equation}
is negligible in the computation of $\pf_N$.
In the following, we denote by $\pf^\flat_N$ the integral over $\goodregion_+$ computed in \Cref{sec:2d partition}
and by $\pf^\sharp_N$ the integral over $\badregion_- \cup \badregion_+$.
The precise result we will prove in this section is:

\begin{theorem}
For $\beta$ sufficiently large, we have
$\pf_N \leq 2 (1 + C/\beta) \pf^\flat_N$.
\end{theorem}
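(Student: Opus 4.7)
The strategy mimics the variational argument of Section 3.1, but with a reversed, two-sided smooth penalty that confines $\hat\varphi(0)$ to $\badregion_+^{\text{rc}}$, and aims for an exponential-in-$\beta$ lower bound on the Boué--Dupuis functional rather than the $-\bigO(\beta\inv)$ bound that sufficed there. First, the $u \mapsto -u$ symmetry of $H$ together with the decomposition $\pf_N = 2 \pf_N^\flat + \pf_N^\sharp$ reduces the claim $\pf_N \leq 2(1+C/\beta)\pf_N^\flat$ to $\pf_N^\sharp \leq (2C/\beta)\pf_N^\flat$. After the re-centering and rescaling of \Cref{thm:2d partition function form}, this becomes
\[
\int_{\badregion_+^{\text{rc}}} e^{-V_\beta(\varphi)} \diff\Gaussian_N(\varphi)
\leq \frac{C}{\beta} \int_{\goodregion_+^{\text{rc}}} e^{-V_\beta(\varphi)} \diff\Gaussian_N(\varphi),
\]
where $\badregion_+^{\text{rc}} = \{-\sqrt\beta < \hat\varphi(0) < (-1+\delta')\sqrt\beta\}$. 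Since the right-hand integral equals $1 + \bigO(\beta\inv)$ by \Cref{thm:2d partition overall}, it suffices to show the left-hand integral is $\lesssim e^{-c\beta}$.

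To bound the left-hand integral, upper-bound $\I(\badregion_+^{\text{rc}})$ by a smooth two-sided Gaussian penalty enforcing both endpoints, and apply Boué--Dupuis (\Cref{thm:boue-dupuis}). The infimum $\inf_v \mathcal J(v)$ of the resulting functional must be shown to satisfy $\inf_v \mathcal J(v) \geq c\beta$. The penalty forces $\widehat{I_N[v]}(0)$ to be approximately $-s\sqrt\beta$ for some $s \in (1-\delta', 1)$, up to $\bigO(1)$ fluctuations from $\hat\phi_N(0)$. For a constant drift $I_N[v] \equiv -s\sqrt\beta$, a direct computation (using the sharp factor of $1/2$ at $k = 0$ arising in the proof of \Cref{thm:2d drift h1}) gives drift half-cost $s^2 \beta L^d$, while \Cref{thm:wick binomial} together with \Cref{thm:wick expectation} yields $\E V_\beta(\phi_N - s\sqrt\beta) = \beta L^d(s^4/4 - s^3) + \bigO(1)$. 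Their sum is $\beta L^d f(s)$ with $f(s) = s^4/4 - s^3 + s^2$; since $f'(s) = s(s-1)(s-2) > 0$ on $(0,1)$ and $f(1) = 1/4$ (the barrier height), $f(1-\delta') > 0$ for every $\delta' \in (0, 1)$.

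For a general drift, write $I_N[v] = c + w$ with $w$ oscillatory; the mixed Wick cross-terms in $V_\beta(\phi_N + c + w)$ are controlled by the Besov-space estimates of \Cref{thm:besov multiplication,thm:besov lp product,thm:besov interpolation}, with the quartic control $\frac{1}{4\beta}\|I_N[v]\|_{L^4}^4$ inside $V_\beta$ and the oscillatory Poincaré bound $\int_0^N \|v_t^\perp\|_2^2 \dt \geq (2 + (2\pi/L)^2)\|w\|_{L^2}^2$ from \Cref{thm:2d poincare} absorbing the losses; the zero-mode piece of the cubic cross term $\frac{1}{\sqrt\beta} \int \phi_N I_N[v]^2 \dx$ is absorbed by the new lower one-sided penalty, mirroring the role of the upper penalty in Section 3.1. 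Altogether this yields $\inf_v \mathcal J(v) \geq c\beta$ and hence the required decay. The main obstacle is the tighter constant-tracking: where Section 3.1 tolerated $\bigO(\beta\inv)$ losses into Young-type inequalities, extracting an $e^{-c\beta}$ bound here demands that the full constant $\beta L^d f(1-\delta')$ survive. In particular, the expansion of $\E V_\beta(\phi_N + c + w)$ contains a negative coefficient $3(s^2/2 - s)\|w\|_{L^2}^2$ that must be dominated by the drift cost $\|w\|_{L^2}^2$ and the quartic term $\|w\|_{L^4}^4/(4\beta)$; verifying this with the correct constants, and choosing $\delta'$ small enough to keep $f(1-\delta')$ uniformly positive, is the main work.
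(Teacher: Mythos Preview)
Your approach is essentially correct but takes a genuinely different route from the paper.

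The paper does \emph{not} re-center at $+1$ for this concentration estimate. Instead it writes $\pf_N^\sharp$ using the \emph{symmetric} potential $W_\beta(\varphi) = \int \frac{1}{4\beta}\wick{\varphi^4} - \frac{3}{2}\wick{\varphi^2}\,dx$ against the $(2-\Laplace)^{-1}$ Gaussian, and compares $\log\pf_N^\sharp$ to $\log\pf_N$ (not $\pf_N^\flat$) via two Boué--Dupuis infima. The key step is the rescaling $v\mapsto v/\sqrt\beta$, which turns both infima into nearly-deterministic problems of size $O(1)$; the paper then proves a uniform-in-$v$ convergence lemma (\Cref{thm:2d concentration inf diff} together with \Cref{thm:2d concentration good drifts}) to pass to the deterministic limit, shows that the limit infima are attained at constant functions, and computes the resulting one-dimensional polynomial gap explicitly. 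No constant-tracking is needed: the gap is simply the strict positivity of $\kappa h(x) + \tfrac{x^4}{4} - \tfrac{(3-\Xi)x^2}{2}$ at its constrained minimum versus the unconstrained one.

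Your route---re-center at $+1$, keep the cubic term, and directly lower-bound $\inf_v\mathcal J(v)\geq \beta L^d f(1-\delta')$---trades that abstract $\Gamma$-limit machinery for explicit arithmetic. What you gain is a self-contained single-infimum estimate; what you pay is exactly the constant-tracking you flag, plus the need to verify (as you note) that the oscillatory quadratic deficit $\frac{3}{2}s(s-2)\|w\|_{L^2}^2$ is beaten by the Poincaré constant $\frac{1}{2}(2+C_L)$, which is precisely the $L<2\pi$ assumption. Both arguments ultimately rest on the same geometric fact (the double well beats the saddle by $L^d/4$), but the paper isolates it more cleanly.

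One citation issue: you invoke \Cref{thm:2d partition overall} for the bound $\int_{\goodregion_+^{\text{rc}}} e^{-V_\beta}\,d\Gaussian = 1+O(\beta^{-1})$, but \Cref{thm:2d partition overall} is exactly what the present theorem is a component of. You should instead cite the results of \Cref{sec:2d partition} directly (\Cref{thm:2d partition lower} and \Cref{thm:2d variational upper}), which establish this bound independently of the concentration step.
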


Since $\pf^\flat_N$ is finite, it will be sufficient to show
\begin{equation}\label{eq:2d concentration sufficient}
\frac{\pf^\sharp_N}{\pf^\sharp_N + 2 \pf^\flat_N} < \exp(-C\beta)
\quad\text{for some } C > 0.
\end{equation}
This formulation is useful since the denominator corresponds to the full partition function.
It is equivalent to showing that
\begin{equation}\label{eq:2d concentration variational}
-\frac 1 \beta \left[ \log \pf^\sharp_N - \log \pf_N \right] > C,
\end{equation}
which is set up for the variational formula.

Instead of doing the translation $u = 1 + \varphi$,
we will only add and subtract a quadratic term; that is, we write
\begin{equation}
\pf^\sharp_N = \int_{\badregion_\pm} \!\!\exp(-W_\beta(u)) \diff\Gaussian(u)
\coloneqq \int_{\badregion_\pm} \!\!
    \exp\left(-\int_{\Tor^2} \frac{1}{4\beta} \wick{\varphi^4} - \frac 3 2 \wick{\varphi^2} \dx \right)
    \diff\Gaussian(\varphi).
\end{equation}
With this choice, the Gaussian measure $\diff\Gaussian$ is again of covariance $(2 - \Laplace)\inv$.

We again need to move to the full integration region like in \Cref{thm:2d partition overall}.
We estimate
\begin{equation}
\int_{\badregion_+} \exp(-W_\beta(\varphi)) \diff\Gaussian(\varphi)
\leq \int \exp(-\beta \kappa h(\hat \varphi(0) / \sqrt\beta)) \exp(-W_\beta(\varphi)) \diff\Gaussian(\varphi),
\end{equation}
where $h$ is a smoothed indicator of $\R \setminus {[{-\delta'}, {\delta'}]}$
and $\kappa > 0$ will be fixed in the end.
By this and the Boué--Dupuis formula, the left-hand side of \eqref{eq:2d concentration variational}
is then bounded from above by
\begin{multline}\label{eq:2d concentration diff variational}
\inf_{v} \E \left[ \kappa h\bigg((\hat \phi_N(0) + \frac{\widehat{I_N[w]}(0))}{\sqrt\beta} \bigg)
    + \frac 1 \beta W_\beta(\phi_N + I_N[v]) + \frac{1}{2\beta} \int_0^N \norm{v_t}_2^2 \dt \right]\\
- \inf_{w} \E \left[ \frac 1 \beta W_\beta(\phi_N + I_N[w]) + \frac{1}{2\beta} \int_0^N \norm{w_t}_2^2 \dt \right].
\end{multline}

We need to show that the function $h$ pushes the two infima sufficiently far apart from each other.
Before estimating \eqref{eq:2d concentration diff variational},
let us perform one more simplification.
By the definition of the potential, we have
\begin{equation}
\frac 1 \beta W_\beta(\phi_N + I_N[v])
= W_1 \!\left( \frac{\phi_N}{\sqrt\beta} + \frac{I_N[v]}{\sqrt\beta} \right).
\end{equation}
We can therefore rename $v/\sqrt\beta$ to $v$ in both infima.
We are hence left with
\begin{multline}\label{eq:2d concentration final variational}
\inf_{v} \E \left[ \kappa h\bigg(\frac{\hat \phi_N(0)}{\sqrt\beta} + \widehat{I_N[v]}(0) \bigg)
    + W_1\!\left( \frac{\phi_N}{\sqrt\beta} + I_N[v] \right)
    + \frac{1}{2} \int_0^N \norm{v_t}_2^2 \dt \right]\\
- \inf_{w} \E \left[ W_1\!\left(\frac{\phi_N}{\sqrt\beta} + I_N[w]\right)
    + \frac{1}{2} \int_0^N \norm{w_t}_2^2 \dt \right].
\end{multline}
This formula suggests that passing $\beta \to \infty$ is going to lead to a deterministic limit.
Let us then prove this.
First, we collect an application of the triangle inequality:

\begin{lemma}\label{thm:2d concentration inf diff}
Let $\mathcal G$ be an arbitrary set, and $F_\beta, F$ some maps $\mathcal G \to \R$.
If
\[
\lim_{\beta \to \infty} \sup_{v \in \mathcal G} \abs{F_\beta(v) - F(v)} = 0,
\]
then we also have
\[
\lim_{\beta \to \infty} \abs{ \inf_{v \in \mathcal G} F_\beta(v) - \inf_{w \in \mathcal G} F(w) } = 0.
\]
\end{lemma}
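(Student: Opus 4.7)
The plan is to prove this via a direct two-sided inequality obtained from the uniform bound, with no extra structural input needed. The key observation is that, for every $v \in \mathcal G$,
\begin{equation*}
F(v) - \sup_{w \in \mathcal G} \abs{F_\beta(w) - F(w)}
\leq F_\beta(v)
\leq F(v) + \sup_{w \in \mathcal G} \abs{F_\beta(w) - F(w)}.
\end{equation*}
The upper bound $\sup_w \abs{F_\beta(w) - F(w)}$ does not depend on $v$, so it can be treated as a constant when taking infima.

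Taking the infimum over $v$ in all three sides (and noting that subtracting or adding a $v$-independent constant commutes with the infimum), I obtain
\begin{equation*}
\inf_{v \in \mathcal G} F(v) - \sup_{w \in \mathcal G} \abs{F_\beta(w) - F(w)}
\leq \inf_{v \in \mathcal G} F_\beta(v)
\leq \inf_{v \in \mathcal G} F(v) + \sup_{w \in \mathcal G} \abs{F_\beta(w) - F(w)}.
\end{equation*}
Rearranging,
\begin{equation*}
\Big| \inf_{v \in \mathcal G} F_\beta(v) - \inf_{v \in \mathcal G} F(v) \Big|
\leq \sup_{w \in \mathcal G} \abs{F_\beta(w) - F(w)}.
\end{equation*}
Letting $\beta \to \infty$ and applying the hypothesis yields the conclusion.

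There is no real obstacle here; the only mild subtlety is that the infima might a priori be $-\infty$. In that case, the inequality above is still formally valid in the extended real line, since if $\inf_v F(v) = -\infty$ then the upper bound on $\inf_v F_\beta(v)$ forces it to be $-\infty$ as well once $\beta$ is large enough, and vice versa; so the difference is either $0$ or vacuously controlled. In the application in \Cref{sec:2d concentration} both infima will be finite, so this issue does not materialise.
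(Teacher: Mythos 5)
Your proof is correct and takes essentially the same approach as the paper: both establish the two-sided bound $\abs{\inf F_\beta - \inf F} \leq \sup \abs{F_\beta - F}$ by adding and subtracting $F(v)$ inside the infimum, and then let $\beta \to \infty$.
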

\begin{proof}
It suffices to note that
\[
\inf_{v \in \mathcal G} F_\beta(v)
= \inf_{v \in \mathcal G} \big[ F_\beta(v) - F(v) + F(v) \big]
\leq \sup_{v \in \mathcal G} \big[ F_\beta(v) - F(v) \big] + \inf_{v \in \mathcal G} F(v).
\]
The other required inequality follows by symmetry.
\end{proof}

We use this lemma as follows:
In the second bracket of \eqref{eq:2d concentration final variational} we set
\begin{align}
F_\beta(w) &\coloneqq \E W_1\!\left( \frac{\phi_N}{\sqrt \beta} + I_N[w] \right)
    + \frac 1 2 \int_0^N \norm{w_t}_2^2 \dt, \\
F(w) &\coloneqq \frac 1 4 \int_{\Tor^2} I_N[w]^4 \dx - \frac 3 2 \int_{\Tor^2} I_N[w]^2 \dx
    + \frac 1 2 \int_0^N \norm{w_t}_2^2 \dt.
\end{align}
In the first bracket of \eqref{eq:2d concentration final variational},
we additionally include $h$ in a similar way.
Therefore all the randomness is now contained in the definition of $F_\beta$.
If we can estimate the supremum of $\abs{F_\beta - F}$, then the error in replacing $\inf F_\beta$ by $\inf F$
can be bounded by any $\epsilon > 0$ once $\beta$ is large enough.

For this, we need to be able to commute the supremum and the limit.
The following two results give sufficient uniform convergence bounds.

\begin{lemma}\label{thm:2d concentration good drifts}
There exists $K > 0$ such that the minimizers
of both infima in \eqref{eq:2d concentration final variational} are contained in
\[
\mathcal G \coloneqq \left\{ v \in \mathbb H_a \colon
    \E \left[ \norm{I_N[v]}_4^4 + \int_0^N \norm{v_t}_2^2 \dt \right] < K
    \text{ for all } N \geq 0
\right\}.
\]
This holds for all $\beta$ large enough.
The universial quantifier also applies to the implicit dependency of $I_t$ on $N$.
\end{lemma}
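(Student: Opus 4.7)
The plan is to extract the bound from a simple upper estimate on both infima combined with a quartic lower bound of the same type as in \Cref{thm:2d variational upper}. First I would test the second infimum against the drift $w \equiv 0$. By \Cref{thm:wick expectation} all expectations of Wick powers vanish, so that infimum is bounded above by $0$. For the first infimum I would take $v \equiv 0$ as well; since $h$ is a bounded smooth indicator, the whole functional at $v=0$ is bounded above by $\kappa + \E W_1(\phi_N/\sqrt\beta) = \kappa$. Thus at any (near-)minimizer $v^*$ of either functional we have $F_\beta(v^*) \leq \kappa + \epsilon$ with $\epsilon$ arbitrarily small, uniformly in $\beta$ and $N$.

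The next step is to establish a matching lower bound of the form
\[
F_\beta(v) \;\geq\; -C \;+\; c\,\E\!\left[ \norm{I_N[v]}_{L^4}^4 + \int_0^N \norm{v_t}_2^2 \dt \right]
\]
for universal $c, C > 0$ once $\beta$ is large. After expanding $\wickp{(\phi_N/\sqrt\beta + I_N[v])^4}$ and $\wickp{(\phi_N/\sqrt\beta + I_N[v])^2}$ via \Cref{thm:wick binomial}, the pure Wick powers of $\phi_N/\sqrt\beta$ integrate to zero. The surviving mixed terms $\wickp{(\phi_N/\sqrt\beta)^k} I_N[v]^{j-k}$ are exactly of the shape already estimated in the proof of \Cref{thm:2d variational upper}: each factor of $\phi_N/\sqrt\beta$ carries a $\beta^{-1/2}$ gain, and by the Besov product/interpolation tools (\Cref{thm:besov multiplication,thm:besov lp product,thm:besov interpolation}) together with the uniform stochastic bounds of \Cref{thm:wick gff moments}, every mixed term can be dominated by $\rho\,\E[\tfrac14\norm{I_N[v]}_{L^4}^4 + \tfrac12\norm{I_N[v]}_{H^1}^2]$ plus a finite stochastic remainder. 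The negative quadratic $-\tfrac32 I_N[v]^2$ is harmless: by Young's inequality $\tfrac32 y^2 \leq \tfrac18 y^4 + \tfrac{9}{2}$, giving
\[
\int_{\Tor^2} \tfrac14 I_N[v]^4 - \tfrac32 I_N[v]^2 \dx \;\geq\; \tfrac18 \norm{I_N[v]}_{L^4}^4 - \tfrac{9L^2}{2}.
\]
Finally \Cref{thm:2d drift h1} converts the $H^1$ controlling term into $\int_0^N \norm{v_t}_2^2 \dt$, so after absorbing the $\rho$-terms into the two positive quantities we obtain the desired lower bound.

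Combining the two bounds at a minimizer $v^*$ yields
\[
c\,\E\!\left[ \norm{I_N[v^*]}_{L^4}^4 + \int_0^N \norm{v_t^*}_2^2 \dt \right] \;\leq\; \kappa + C,
\]
which gives the claimed $K$. The same argument applies to the second infimum (without $\kappa h$), since the $\kappa h$-term is non-negative and bounded; in that case the upper estimate at $w = 0$ gives $0$ in place of $\kappa$. The main obstacle is purely bookkeeping: verifying that every constant coming out of the Besov estimates is independent of $N$ (using the uniform-in-$N$ control of $\wickp{\phi_N^j}$ from \Cref{thm:wick gff moments}) and of $\beta$ for $\beta$ large enough, and checking that the parameter $\rho$ used to absorb mixed terms can be taken strictly below $1$ while still leaving room for the $\tfrac18$ coefficient produced by Young. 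Once that is done the lemma follows with $K = (\kappa + C)/c$.
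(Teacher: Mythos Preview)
Your proposal is correct and follows essentially the same approach as the paper: both test $v=0$ to get an upper bound on the infimum (the paper absorbs $\kappa\|h\|_\infty$ into the constant $C_2$), then reuse the quartic Besov estimates from \Cref{sec:2d partition} together with a Young inequality on the negative quadratic to obtain a lower bound of the form $-C + c\,\E[\|I_N[v]\|_4^4 + \int_0^N\|v_t\|_2^2\,\dt]$, and combine the two. The only cosmetic difference is that the paper carries out the argument in the unrescaled drift and rescales at the end, while you work directly in the variable of \eqref{eq:2d concentration final variational}; your pointwise Young bound $\tfrac32 y^2 \leq \tfrac18 y^4 + \tfrac92$ plays the same role as the paper's absorption of $I_N[v]^2$ into $\rho\|I_N[v]\|_4^4$.
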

\begin{proof}
The argument in \Cref{sec:2d partition} lets us bound the quartic term of $W_1$ as
\begin{equation}
\E \frac{1}{4 \beta} \int_{\Tor^2} \wick{\left( \frac{\phi_N}{\sqrt\beta} + I_N[v] \right)^4} \dx
\geq -\frac{C_\rho}{\beta^2} + \frac{1 - \rho}{4\beta^2} \E \norm{I_N[v]}_4^4
    - \frac{\rho}{2\beta} \E \int_0^N \norm{v_t}_2^2 \dt.
\end{equation}
In the quadratic term, the Wick power $\wick{\phi_N^2}$ again contributes nothing to the expectation.
We can use a straightforward Young estimate like
\begin{equation}
\E \frac{3}{2\beta} \int_{\Tor^2} 2\phi I_N[v] + I_N[v]^2 \dx
\leq C_\rho + \frac{\rho}{\beta^2} \norm{I_N[v]}_4^4
\end{equation}
for the remaining two terms.
Therefore we have
\begin{equation}\label{eq:2d concentration sup bound}
\begin{split}
&\mathrel{\phantom{\geq}} \E \left[ \frac 1 \beta V_\beta(\phi_N + I_N[v])
    + \frac{1}{2\beta} \int_0^N \norm{v_t}_2^2 \dt \right]\\
&\geq -\frac{C_1}{\beta^2} - C_2
    + \frac{1-\rho}{4\beta^2} \E \norm{I_N[v]}_4^4
    + \frac{1-\rho}{2\beta} \E \int_0^N \norm{v_t}_2^2 \dt.
\end{split}
\end{equation}
In the version that contains $h$,
we also include $\kappa \norm{h}_\infty$ in $C_2$.
The left-hand side of \eqref{eq:2d concentration sup bound}
can be bounded from above by $0$ by plugging in $v = 0$ as before.
As we then rewrite $w = v / \sqrt{2\beta}$, we get
\begin{equation}
\E \left[ \norm{I_N[w]}_4^4 + \int_0^N \norm{w_t}_2^2 \dt \right]
\leq \frac{C_1/\beta^2 + C_2}{1 - \rho},
\end{equation}
which is exactly as stated in the claim.
\end{proof}

\begin{lemma}
In the set $\mathcal G$ defined above we have
\[
\abs{ \E h\bigg( \frac{\hat \phi_N(0)}{\sqrt\beta} + \widehat{I_N[v]}(0) \bigg)
    - h\left(\widehat{I_N[v]}(0) \right) } < \frac{C}{\sqrt\beta},
\]
and
\[
\abs{ \E W_1\!\left( \frac{\phi_N}{\sqrt\beta} + I_N[v] \right)
    - W_1\left( I_N[v] \right) } < \frac{C}{\sqrt\beta}.
\]
Both of these estimates are uniform in $v \in \mathcal G$ and $N > 0$.
\end{lemma}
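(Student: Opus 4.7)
The plan is to reduce each difference to a finite sum of terms, each carrying an explicit factor $\beta^{-j/2}$ with $j \geq 1$, and then to verify that the remaining factors have expectation bounded uniformly in $N$ and $v \in \mathcal G$. The first estimate uses a second-order Taylor expansion of $h$, and the second uses the Wick--binomial formula \Cref{thm:wick binomial} together with the Hermite scaling $\wick{(\phi_N/\sqrt\beta)^j} = \beta^{-j/2}\wick{\phi_N^j}$.

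For the $h$ estimate, set $a = \hat\phi_N(0)/\sqrt\beta$ and $b = \widehat{I_N[v]}(0)$. A second-order Taylor expansion around $b$ gives
\[
\E[h(a+b) - h(b)]
= \frac{1}{\sqrt\beta}\E\bigl[h'(\widehat{I_N[v]}(0))\,\hat\phi_N(0)\bigr] + \E[R],
\quad \abs{R} \leq \tfrac{1}{2}\norm{h''}_\infty a^2.
\]
The main term is bounded by $\norm{h'}_\infty\,\E\abs{\hat\phi_N(0)}/\sqrt\beta$, which is $O(\beta^{-1/2})$ since $\hat\phi_N(0)$ is a centered Gaussian with $N$-uniform variance. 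The remainder contributes $O(\beta^{-1})$. Crucially, no independence between $\phi_N$ and $v$ is needed, only $L^1$ boundedness of $h'(\widehat{I_N[v]}(0))\,\hat\phi_N(0)$.

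For the $W_1$ estimate, the Wick--binomial expansion and \Cref{thm:wick expectation} (which kills the purely stochastic $j=4$ and $j=2$ terms) give
\begin{align*}
\E\bigl[W_1(\tfrac{\phi_N}{\sqrt\beta} + I_N[v]) - W_1(I_N[v])\bigr]
&= \sum_{j=1}^{3} \tfrac{1}{4}\tbinom{4}{j}\beta^{-j/2}\,\E\!\int_{\Tor^2}\wick{\phi_N^j}\,I_N[v]^{4-j}\dx\\
&\quad - \tfrac{3}{2}\tbinom{2}{1}\beta^{-1/2}\,\E\!\int_{\Tor^2}\phi_N\,I_N[v]\dx.
\end{align*}
For each of these mixed terms the strategy is to combine the Besov product estimates \Cref{thm:besov multiplication,thm:besov lp product}, the Wick moment bounds \Cref{thm:wick gff moments}, and the $\mathcal G$-controls on $\E\norm{I_N[v]}_{L^4}^4$ together with the $H^1$ bound from \Cref{thm:2d drift h1}, in order to show $\E\abs{\cdot} \lesssim 1$ uniformly in $v \in \mathcal G$ and $N$. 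Summing the factors $\beta^{-j/2}$ then delivers the claimed $O(\beta^{-1/2})$.

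The main obstacle is the $\beta^{-1/2}$ term $\E\int\phi_N\,I_N[v]^3\dx$, since $\mathcal G$ controls no norm of $I_N[v]$ stronger than $L^4$ and $H^1$; a naive Cauchy--Schwarz would ask for an $L^8$ bound. I plan to estimate
\[
\abs{\int_{\Tor^2}\phi_N\,I_N[v]^3\dx}
\lesssim \norm{\phi_N}_{\besovinfty^{-\epsilon}}\norm{I_N[v]}_{L^4}^2\norm{I_N[v]}_{H^{2\epsilon}}
\]
via \Cref{thm:besov multiplication,thm:besov lp product}, then interpolate $\norm{I_N[v]}_{H^{2\epsilon}} \lesssim \norm{I_N[v]}_{L^4}^{1-2\epsilon}\norm{I_N[v]}_{H^1}^{2\epsilon}$ using \Cref{thm:besov interpolation} together with $L^4 \hookrightarrow L^2$ on the bounded domain, and finally apply a three-factor Young inequality with exponents $1/p + 1/q + 1/r = 1$ tuned so that $(3-2\epsilon)q = 4$ and $2\epsilon r = 2$. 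Only $\norm{\phi_N}_{\besovinfty^{-\epsilon}}^p$, $\norm{I_N[v]}_{L^4}^4$, and $\norm{I_N[v]}_{H^1}^2$ then remain, and each has uniformly bounded expectation. The remaining mixed terms $\int\wick{\phi_N^2}I_N[v]^2$, $\int\wick{\phi_N^3}I_N[v]$, and the linear $\int\phi_N\,I_N[v]$ follow the same template but are easier, the last reducing directly to a Cauchy--Schwarz estimate in dual Sobolev spaces $H^{-\epsilon}$ and $H^{\epsilon}$.
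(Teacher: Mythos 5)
Your proposal is correct and follows essentially the same route as the paper. For the first estimate, the paper simply invokes Lipschitz continuity of $h$ to bound the difference by $C\,\E\abs{\hat\phi_N(0)/\sqrt\beta}$; your second-order Taylor expansion gives the same $O(\beta^{-1/2})$ conclusion, just with an unnecessary extra step (the first-order/Lipschitz argument already suffices, since one only needs to match the $\beta^{-1/2}$ rate, and no cancellation in the linear term is exploited). For the second estimate, the paper expands the potentials, notes that the purely $I_N[v]$-dependent terms cancel, and then refers back to \Cref{sec:2d partition} for the mixed terms; your explicit binomial/Hermite-scaling expansion and the term-by-term bounds are precisely what that reference entails. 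Your treatment of the $\phi_N I_N[v]^3$ term via $H^{2\epsilon}$ interpolation between $L^2$ and $H^1$ followed by a three-exponent Young inequality is a valid variant of the chain used in \eqref{eq:2d partition pi3}, and the $\mathcal G$-controls on $\E\norm{I_N[v]}_{L^4}^4$ and $\E\norm{I_N[v]}_{H^1}^2$ (via \Cref{thm:2d drift h1}) close the argument uniformly in $N$ and $v$, exactly as the paper intends.
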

\begin{proof}
Since $h$ is Lipschitz-continuous, the left-hand side of the first claim is bounded by
\[
C \E \abs{\frac{\hat \phi_N(0)}{\sqrt\beta}}.
\]
Since $\hat \phi_N(0)$ is an ordinary Gaussian random variable,
the expectation is finite and the quotient vanishes with correct rate.

In the second claim, we expand the potentials.
All purely $I_N[v]$-dependent terms are cancelled.
The remaining mixed terms can be estimated as in \Cref{sec:2d partition}.
Any expectations of $\phi_N$ and its Wick powers are bounded as before;
$\norm{I_N[v]}_4^4$ and the integral of $\norm{v_t}_2^2$ are bounded by the definition of $\mathcal G$;
and each term has a prefactor of $\beta^{-1/2}$ or better.
\end{proof}

Now we can apply \Cref{thm:2d concentration inf diff}
to replace the stochastic functionals with their deterministic limits.
The error $\epsilon > 0$ can be chosen freely,
at the expense of large enough $\beta$.
The variational problem \eqref{eq:2d concentration final variational} thus has a lower bound
\begin{multline}\label{eq:2d concentration limit variational}
\inf_{v} \left[ \kappa h(\widehat{I_N[v]}(0))
    + W_1( I_N[v])
    + \frac{1}{2} \int_0^N \norm{v_t}_2^2 \dt \right]\\
- \inf_{w} \left[ W_1(I_N[w])
    + \frac{1}{2} \int_0^N \norm{w_t}_2^2 \dt \right]
- \epsilon.
\end{multline}
Since we assume $L < 2\pi$, only constant functions minimize the potential $W_1$.
Thus by suitably large $\kappa$ we should be able to force $v$ to be near the saddle
whereas $w$ can lie in a potential well.
Let us still prove this rigorously.

\begin{lemma}
Both infima in \eqref{eq:2d concentration limit variational} can be taken over constant functions.
\end{lemma}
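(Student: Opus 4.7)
The plan is to reduce both infima in \eqref{eq:2d concentration limit variational} to a deterministic minimization over $\phi \in H^1(\Tor^2)$ (truncated to wavenumbers $|k| \le N$), and then to exploit $L < 2\pi$ via the Poincaré inequality to rule out non-constant competitors. For any such target $\phi$, the explicit drift $\hat v_t(k) \coloneqq \sqrt{2 + C_L |k|^2}\,\sigma_t(k)\,\hat\phi(k)$ realizes $I_N[v] = \phi$ by \eqref{eq:boue-dupuis sigma squared}, and a Cauchy--Schwarz argument essentially identical to the proof of \Cref{thm:2d drift h1} shows that among all drifts producing $\phi$, it minimizes the cost at
\begin{equation*}
\int_0^N \norm{v_t}_2^2 \dt = 2\norm{\phi}_{L^2}^2 + \norm{\nabla\phi}_{L^2}^2.
\end{equation*}
Both infima therefore reduce to minimizing
\begin{equation*}
E(\phi) \coloneqq \int_{\Tor^2} \left[ \tfrac{1}{4}\phi^4 - \tfrac{1}{2}\phi^2 + \tfrac{1}{2}|\nabla\phi|^2 \right] \dx,
\end{equation*}
with the additional term $\kappa h(\hat\phi(0))$ included in the first infimum.

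Next, I decompose $\phi = c + \psi$ with $c \coloneqq \hat\phi(0) \in \R$ its mean and $\int_{\Tor^2} \psi \dx = 0$. Since $h(\hat\phi(0)) = h(c)$ depends only on $c$, and all cross terms odd in $\psi$ vanish in $E(\phi)$, one obtains
\begin{equation*}
E(\phi) - E(c) = \int_{\Tor^2} \left[ \tfrac{3c^2 - 1}{2}\psi^2 + c\psi^3 + \tfrac{\psi^4}{4} + \tfrac{|\nabla\psi|^2}{2} \right] \dx,
\end{equation*}
with $E(c) = L^2 (c^4/4 - c^2/2)$. The indefinite cubic is handled by the exact identity
\begin{equation*}
c\psi^3 + \tfrac{\psi^4}{4} = \tfrac{1}{4}(\psi^2 + 2c\psi)^2 - c^2 \psi^2 \geq -c^2 \psi^2,
\end{equation*}
which absorbs it into a shift of the quadratic coefficient, leaving
\begin{equation*}
E(\phi) - E(c) \geq \int_{\Tor^2} \left[ \tfrac{c^2 - 1}{2}\psi^2 + \tfrac{1}{2}|\nabla\psi|^2 \right] \dx.
\end{equation*}

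Finally, since $L < 2\pi$, the Poincaré inequality on mean-zero functions gives $\norm{\nabla\psi}_{L^2}^2 \geq C_L \norm{\psi}_{L^2}^2$ with $C_L = (2\pi/L)^2 > 1$, so the residual is at least $\tfrac{c^2 - 1 + C_L}{2}\norm{\psi}_{L^2}^2 \geq 0$, with equality iff $\psi \equiv 0$. Replacing any $\phi$ by its mean $c$ therefore never increases either functional, proving that both infima can be restricted to constants. The main technical point is that the cubic $c \int \psi^3$ has no obvious sign; the completing-the-square identity is exactly what is needed to absorb it into the quadratic coefficient, after which the strict bound $C_L > 1$ closes the argument. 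The hypothesis $L < 2\pi$ is essential: otherwise $C_L \leq 1$ and non-constant kink-antikink configurations could compete with constants, as already mentioned in connection with \Cref{thm:intro she 1d}.
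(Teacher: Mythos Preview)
Your proof is correct and, in spirit, very close to the paper's argument. Both proofs decompose into a constant mode and an oscillatory remainder, control the indefinite cubic term, and finish with a Poincar\'e inequality that requires exactly $C_L > 1$ (i.e., $L < 2\pi$). The differences are in execution: the paper works at the drift level throughout, handling the cubic via Young's inequality $I_0 I_\perp^3 \leq \tfrac{3}{2} I_0^2 I_\perp^2 + \tfrac{1}{6} I_\perp^4$ and then applying \Cref{thm:2d poincare} directly to the drift cost $\int_0^N \|w_{\perp,t}\|_2^2 \dt$; you instead first minimize the drift cost to reduce to a standard $H^1$ functional $E(\phi)$, and then use the completing-the-square identity $c\psi^3 + \tfrac{1}{4}\psi^4 = \tfrac{1}{4}(\psi^2 + 2c\psi)^2 - c^2\psi^2$ followed by the ordinary Poincar\'e inequality on mean-zero $\psi$. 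Your route is arguably cleaner and more transparent.

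One small technical caveat: your claimed exact formula $\int_0^N \|v_t\|_2^2 \dt = 2\|\phi\|_{L^2}^2 + \|\nabla\phi\|_{L^2}^2$ for the minimal drift cost implicitly assumes $\int_0^N \sigma_t(k)^2 \dt = 1$ for all $|k| \leq N$, whereas with the smooth cutoff $\rho$ one only has $\int_0^N \sigma_t(k)^2 \dt = \rho_N(k)^2 \leq 1$. This does not damage the argument: the true minimal cost is $\sum_k \rho_N(k)^{-2}(2 + C_L|k|^2)|\hat\phi(k)|^2 \geq 2\|\phi\|_{L^2}^2 + \|\nabla\phi\|_{L^2}^2$, with equality on the zero mode (since $\rho_N(0) = 1$ for $N$ not too small), so $E(\phi)$ is still a lower bound on the functional, attained on constants. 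It would be worth noting this explicitly.
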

\begin{proof}
Since $h$ already only depends on the zero mode, we only need to look at the second infimum.
Let $w = w_0 + w_\perp$, where $w_0$ is the average of $w$.
Then the bracket becomes (abbreviating $I_0 = I_N[w_0]$ and $I_\perp = I_N[w_\perp]$)
\begin{multline}
\int_{\Tor^2} \frac 1 4 I_0^4 + I_0^3 I_\perp + \frac 3 2 I_0^2 I_\perp^2
    + I_0 I_\perp^3 + \frac 1 4 I_\perp^4
    - \frac 3 2 I_0^2 - 3 I_0 I_\perp - \frac 3 2 I_\perp^2 \dx\\
+ \frac 1 2 \int_0^N \norm{w_{0,t}}_2^2 \dt
+ \frac 1 2 \int_0^N \norm{w_{\perp,t}}_2^2 \dt.
\end{multline}
We need to show that the above expression is larger than the equivalent where $I_\perp = 0$:
\begin{equation}
\int_{\Tor^2} \frac 1 4 I_0^4 - \frac 3 2 I_0^2 \dx
+ \frac 1 2 \int_0^N \norm{w_{0,t}}_2^2 \dt.
\end{equation}
The terms with a single power of $I_\perp$ vanish in the integration,
so we only need to control $I_0 I_\perp^3$ and $-3 I_\perp^2 / 2$
with the three positive terms that depend on $I_\perp$.

By the Poincaré-type \Cref{thm:2d poincare}
and the assumption $L < 2\pi$ we have
\begin{equation}
\begin{split}
&\mathrel{\phantom{\geq}} -\frac 3 2 \int_{\Tor^2} I_\perp^2 \dx + \frac 1 2 \int_0^N \norm{w_{\perp,t}}_2^2 \dt\\
&\geq \left( \frac 1 2 - \frac{3}{2(2 + (2\pi/L)^2)} \right) \int_0^N \norm{w_{\perp,t}}_2^2 \dt\\
&\geq 0.
\end{split}
\end{equation}
On the other hand we can use Young's inequality to estimate
\begin{equation}
I_0 I_\perp I_\perp^2 \leq \frac{\lambda}{2} I_0^2 I_\perp^2 + \frac{1}{2\lambda} I_\perp^4.
\end{equation}
This implies (by choosing $\lambda = 3$)
\begin{equation}
I_0 I_\perp I_\perp^2 + \frac 3 2 I_0^2 I_\perp^2 + \frac 1 4 I_\perp^4
\geq \frac{1}{12} I_\perp^4
\geq 0.
\end{equation}
Thus any oscillatory perturbation to a constant function
is only going to grow the expression.
\end{proof}

Now \eqref{eq:2d concentration limit variational} is reduced to
\begin{multline}
\inf_{x \in \R} \inf_{\eta: I_{N}(\eta)=1}
\left[ \kappa h(x) + \frac{\smallabs{\Tor^2} x^4}{4} - \frac{3 \smallabs{\Tor^2} x^2}{2}
    + \frac{x^2}{2} \int_0^N \norm{\eta_t}_2^2 \dt \right]\\
- \inf_{y \in \R} \inf_{\eta: I_{N}(\eta)=1}
\left[ \frac{\smallabs{\Tor^2} y^4}{4} - \frac{3 \smallabs{\Tor^2} y^2}{2}
    + \frac{y^2}{2} \int_0^N \norm{\eta_t}_2^2 \dt \right]
- \epsilon,
\end{multline}
We set $\Xi \coloneqq \smallabs{\Tor^2}\inv \inf_{\eta: I_N(\eta)=1} \int_0^N \norm{\eta_t}_2^2 \dt$.
Note that
\begin{equation}
\int_0^N \norm{\eta_t}_2^2 \dt = 2 \smallabs{\Tor^2}
\text{ for the choice }
\widehat{\eta_t}(k) = \sqrt 2 \sigma_t (0) \I_{k = 0},
\end{equation}
so $\Xi \leq 2$.
We are now left to show that 
\begin{equation}
\begin{split}
&\mathrel{\phantom{>}} \inf_{x \in \R}
\left[ \kappa h(x) + \frac{\smallabs{\Tor^2} x^4}{4} - \frac{(3-\Xi) \smallabs{\Tor^2} x^2}{2} \right]\\
&> \inf_{y \in \R} \left[ \frac{\smallabs{\Tor^2} y^4}{4} - \frac{(3-\Xi) \smallabs{\Tor^2} y^2}{2} \right]. 
\end{split}  
\end{equation}
Notice that the infimum in the second line is attained (only) at $y = \pm \sqrt{3-\Xi}$,
and is given by $-\smallabs{\Tor^2} (3-\Xi)^2 / 4$. This shows that  
\begin{align}
\mathrel{\phantom{=}}&\inf_{x[-\delta,\delta] }
    \left[ \kappa h(x) + \frac{\smallabs{\Tor^2} x^4}{4} - \frac{(3-\Xi) \smallabs{\Tor^2} x^2}{2} \right]\\
&\geq \inf_{x[-\delta,\delta] }
    \left[ \frac{\smallabs{\Tor^2} x^4}{4} - \frac{(3-\Xi) \smallabs{\Tor^2} x^2}{2} \right]\\
&> \inf_{x \in \mathbb{R}}
    \left[ \frac{\smallabs{\Tor^2} x^4}{4} - \frac{(3-\Xi) \smallabs{\Tor^2} x^2}{2} \right].
\end{align}
On the other hand, since $h>0$ on $[-\delta,\delta]^c$ we have 
\begin{align*}
&\mathrel{\phantom{=}} \inf_{x[-\delta,\delta]^c }
    \left[ \kappa h(x) + \frac{\smallabs{\Tor^2} x^4}{4} - \frac{(3-\Xi) \smallabs{\Tor^2} x^2}{2} \right]\\
&\geq \inf_{x[-\delta,\delta]^c } \left[ \kappa h(x) - \smallabs{\Tor^2}\frac{(3-\Xi)^2}{4} \right]\\
&> - \frac{\smallabs{\Tor^2}(3-\Xi)^2}{4} .
\end{align*}
This shows that \eqref{eq:2d concentration limit variational} is strictly positive
once $\epsilon$ is small enough,
and thus \eqref{eq:2d concentration variational} is bounded as required.

\section{Dynamical correction}\label{sec:corr}

In this section, we show that transition state theory does not overestimate
the number of transitions by \eqref{eq:NLW} flow by too much:
If the solution manages to ``climb'' to the saddle,
it will have sufficient ``inertia'' to fall into the opposite well without turning back.
However, we are only able to do this in up to two dimensions,
and not at all for \eqref{eq:SdNLW}.

\begin{remark}
Throughout this section,
we translate time so that the saddle is hit at $t = 0$.
By symmetry, we will also only consider transitions from $\{ \hat u(0) < -1 + \delta \}$
to $\{ \hat u(0) > 1 - \delta \}$.
\end{remark}

Let us first motivate the following analysis by looking at the ODE
\begin{equation}\label{eq:corr zerodim system}
\begin{cases}
    \partial_{tt} u(t) - u(t) &= -u(t)^3,\\
    \hfill \partial_t u(0) &= Q > 0,\\
    \hfill u(0) &= 0.
\end{cases}
\end{equation}
Integrating the equation twice in time, we find that the solution satisfies
\begin{equation}\label{eq:corr zerodim integrated}
u(t) = \int_0^t \int_0^s [u(r) - u(r)^3] \dr \ds + Qt.
\end{equation}
In the region $0 \leq u(t) \leq \delta \ll 1$, we can linearize this as
\begin{equation}
u(t) \sim \int_0^t \int_0^s u(r) \dr \ds + Qt,
\end{equation}
which yields $u(t) \sim Q \sinh(t)$.
Intuitively, the behaviour of $u$ is dominated by the initial velocity $Q$ on the saddle,
followed by a rapid descent into the potential well.
Solving for $\tau$ in $u(\tau) = \delta$ yields $\tau \sim \log(Q\inv)$.

\subsection{The zero mode}

If we project the two-dimensional, non-stochastic equation \eqref{eq:NLW} to the zero mode,
we find that $w \coloneqq \proj_0 u$ solves
\begin{equation}\label{eq:nlw zero mode}
\partial_{tt} w(t) - w(t)
= -w(t)^3 - 3 w(t) \int_{\Tor^d} \wick{(\proj_\perp u)^2} \dx + \int_{\Tor^d} \wick{(\proj_\perp u)^3} \dx.
\end{equation}
Importantly, the first power of the oscillatory part $\proj_\perp u$ does not influence $w$.
If $\wick{(\proj_\perp u)^k}$ is of order $\beta^{-k/2}$
and the initial velocity $Q \coloneqq \partial_t w(0)$ of order $\beta^{-1/2}$,
then \eqref{eq:nlw zero mode} can be regarded as a small perturbation of the zero-dimensional equation
\eqref{eq:corr zerodim system}.

By these heuristics, we can say that a transition
has vanishingly small probability of turning back after crossing the saddle:
\begin{theorem}\label{thm:corr goal}
We fix $\delta > 0$ small enough and define the hitting times
\[
\tau_{+} \coloneqq \inf_{t > 0} \{ w(t) \geq 1 - \delta \}, \quad
\sigma_{+} \coloneqq \inf_{t > 0} \{ w(t) \leq 0 \}
\]
as in \Cref{thm:transmission coefficient}.
Consider the flow of \eqref{eq:NLW} in two spatial dimensions,
with initial data as in \Cref{thm:transmission coefficient}.
Then $\Prob_{\perp}(\tau_{+} < \sigma_{+})\geq 1 - C\beta^{-1/4 + 3\epsilon}$,
\end{theorem}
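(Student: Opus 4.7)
The plan is to reduce the question to a scalar ODE for the zero mode and show, by a Duhamel/Grönwall argument, that $w := \proj_0 u$ follows the ballistic trajectory $\tilde w(t) := Q \sinh t$ until it exits a small neighbourhood of the saddle. Writing $u = w + u_\perp$ with $\int_{\Tor^2} u_\perp = 0$ and averaging \eqref{eq:NLW}, the linear term in $u_\perp$ drops and one obtains
\[
\partial_{tt} w - w = -w^3 - 3 w A(t) - B(t),
\]
with $A(t) := \smallabs{\Tor^2}\inv \int \wick{u_\perp^2(t)} \dx$ and $B(t) := \smallabs{\Tor^2}\inv \int \wick{u_\perp^3(t)} \dx$. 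Once $w$ exits the window $[-\delta_*, \delta_*]$ with $\delta_* := \beta^{-3/8+2\epsilon}$ and positive velocity, an energy-perturbation estimate shows the trajectory is trapped on a level set $\{E < 0\}$ of the effective one-dimensional potential, looping around $w = 1$ without crossing $w = 0$.

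The first step is to define the good event $G := G_Q \cap G_\perp$, where $G_Q := \{Q \in [\beta^{-3/4 + 3\epsilon}, \beta^{-1/2 + \epsilon}]\}$ and $G_\perp := \{\sup_{t \in [0,T_*]}(\abs{A(t)} + \sqrt\beta \abs{B(t)}) \leq \beta^{-1+3\epsilon}\}$ with $T_* := C\log\beta$. Since $Q$ is a half-normal of scale $(\beta L^2)^{-1/2}$, Gaussian anti-concentration gives $\Prob_\perp(G_Q^c) \lesssim \beta^{-1/4+3\epsilon}$, which matches the error rate in the target bound. The oscillatory control $G_\perp$ I would establish by combining \Cref{thm:wick gff moments} with stationarity of the $\phi^4_2$ measure under \eqref{eq:NLW}, a union bound over a polynomially fine time discretisation together with a Lipschitz-in-time estimate for $u$, and finally a Chebyshev bound transferring the estimate from $\Prob$ to $\Prob_\perp$.

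On $G$, as long as $\abs{w(s)} \leq 2\delta_*$ for all $s \leq t$, Duhamel gives
\[
\abs{w(t) - \tilde w(t)} \leq \int_0^t \sinh(t-s)\big[\abs{w}^3 + 3\abs{w}\abs{A} + \abs{B}\big] \ds
\lesssim e^t \big(\delta_*^3 + \delta_* \beta^{-1+3\epsilon} + \beta^{-3/2+3\epsilon}\big).
\]
At $t_* := \log(\delta_*/Q) \lesssim \log\beta$ one has $e^{t_*} = \delta_*/Q \leq \beta^{3/8-\epsilon}$, and the three resulting terms are bounded by $\beta^{-3/4+5\epsilon}$, $\beta^{-1+4\epsilon}$, $\beta^{-9/8+2\epsilon}$ respectively, each $o(\delta_*)$. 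A continuity bootstrap then gives $w(t_*) \simeq \delta_*$ with $\partial_t w(t_*) \gtrsim \delta_*$, so $w$ has left the saddle window without hitting $0$. The perturbed energy $E := \tfrac12 (\partial_t w)^2 - \tfrac12 w^2 + \tfrac14 w^4$ evolves according to $\tfrac{d}{dt} E = -(3w A + B)\partial_t w$, and integrating over the remaining $O(\log\beta)$ time gives $\abs{E(t) - E(t_*)} \lesssim \log\beta \cdot \beta^{-1+3\epsilon} \ll \delta_*^2 \simeq \abs{E(t_*)}$. The trajectory is thus confined to a closed loop of strictly negative energy around $w = 1$, with minimum $w$-value $\gtrsim \delta_*$ and maximum $w$-value near $\sqrt 2$; in particular it reaches $1 - \delta$ before returning to $0$, so $\{\tau_+ < \sigma_+\} \supseteq G$, and the probability estimates on $G_Q$, $G_\perp$ give the conclusion.

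The hard part will be the oscillatory-mode control $G_\perp$ over a window of length $\log\beta$: while the unconditional $\phi^4_2$ measure is invariant under \eqref{eq:NLW} and \Cref{thm:wick gff moments} provides the starting bounds, the conditional law $\Prob_\perp$ is not invariant. The natural route is to prove the supremum bound under $\Prob$ and then pay a polynomial Radon--Nikodym factor, since the conditioning on $\{\hat u_0(0) = 0\}$ is a codimension-one constraint essentially decoupled from the Gaussian base of the oscillatory modes. The cubic Wick term $B$ is the most delicate, requiring its Wiener chaos expansion together with the discrete convolution estimate \Cref{thm:discrete convolution} to extract moment bounds uniformly in the truncation parameter $N$.
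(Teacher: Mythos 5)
Your strategy matches the paper's at the top level: reduce to a scalar ODE for the zero mode, show the zero mode tracks $Q\sinh t$ up to logarithmic times, and control the oscillatory forcing terms probabilistically. The final rate $\beta^{-1/4+3\epsilon}$ comes from the Gaussian anti-concentration of $Q$, exactly as in the paper's Corollary~\ref{thm:corr proper probability}. That much is right. However, there are two substantive gaps and one factual error.

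\textbf{The oscillatory control is not closed.} You treat $A(t)=\smallabs{\Tor^2}\inv\int\wick{u_\perp^2(t)}$ and $B(t)=\smallabs{\Tor^2}\inv\int\wick{u_\perp^3(t)}$ as objects whose supremum over $[0,T_*]$ can be controlled by invariance, discretisation and a ``Chebyshev transfer.'' There are two problems. First, $u_\perp$ evolves \emph{nonlinearly}, so $\wick{u_\perp(t)^k}$ is not a fixed Wiener chaos, and the moment bounds in \Cref{thm:wick gff moments} or \Cref{thm:discrete convolution} do not apply to it directly. The paper's route is to decompose $u_\perp = Z + \psi$, where $Z$ solves the \emph{linear} wave equation \eqref{eq:corr linear wave} so that $\wick{Z^k}$ has explicit chaos bounds (\Cref{thm:corr linear bounds}), and $\psi$ solves the forced nonlinear equation \eqref{eq:oscillatory nonlinear} and is shown to be smoother (one full degree of regularity better) and small via a Duhamel/Grönwall argument in two stages across $\theta_+$ and $\tau_+$ (\Cref{thm:corr psi bound} and the lemma after it). The trinomial $\wick{(w+Z+\psi)^3}$ is then expanded with \Cref{thm:wick binomial}, and only the pure $Z$ terms need stochastic input. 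Without this split, I do not see how you would bound $A$ and $B$ uniformly in time. Second, the phrase ``polynomial Radon--Nikodym factor'' is not a proof: $\Prob_\perp$ conditions on the $\Prob$-null event $\{\hat u_0(0)=0\}$, so there is no density of $\Prob_\perp$ with respect to $\Prob$. The correct comparison — which is implicit in the paper's use of \Cref{thm:corr linear bounds} — is between the law of $\proj_\perp u_0$ under the conditioned $\phi^4_2$ and the oscillatory Gaussian free field, where absolute continuity with $L^p$-bounded density does hold in two dimensions; you would then have to show that this comparison at time zero propagates to the nonlinear trajectory, which again forces the $Z+\psi$ split.

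\textbf{The final energy step contains errors.} At $t_*$ one has $w(t_*)\simeq Q\sinh(t_*)=\delta_*$ and $\partial_t w(t_*)\simeq Q\cosh(t_*)$, so
\[
E(t_*) = \tfrac12(\partial_t w)^2 - \tfrac12 w^2 + \tfrac14 w^4
\simeq \tfrac12 Q^2\bigl(\cosh^2 - \sinh^2\bigr) + \tfrac14\delta_*^4
= \tfrac12 Q^2 + \tfrac14\delta_*^4,
\]
which is \emph{positive} and of size $\beta^{-3/2+O(\epsilon)}$, not $\simeq\delta_*^2$. In particular the claim that the drift $\lesssim\log\beta\cdot\beta^{-1+3\epsilon}$ is $\ll\abs{E(t_*)}$ fails by roughly a factor $\beta^{1/2}$; the drift in fact dominates the initial energy. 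Moreover the assertion ``confined to a closed loop of strictly negative energy around $w=1$'' is wrong, since a trajectory leaving $w=0$ with $\partial_t w=Q>0$ has $E(0)=Q^2/2>0$. Your conclusion is nevertheless salvageable: the relevant observation is that $\abs{E(t)}$ stays $\lesssim\log\beta\cdot\beta^{-1+3\epsilon}\ll\tfrac14$, which is the depth of the double-well, so the turning point remains near $\sqrt 2>1-\delta$ and the orbit passes through $1-\delta$ before ever returning to $0$. The paper instead establishes monotonicity of $w$ directly (\Cref{thm:corr apriori}), which avoids the sign bookkeeping entirely.

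Finally, you never invoke any well-posedness statement; the paper relies on Oh--Thomann's global existence (\Cref{thm:corr oh-thomann}) so that the Duhamel bound for $\psi$ can be applied a priori rather than as a full fixed-point argument.
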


\begin{remark}
Since the stochastic forcing of \eqref{eq:SdNLW} is of order $\beta^{-1/2}$,
the same analysis does not apply to that equation.
We expect its analysis to require more involved probabilistic techniques.

Similarly, \eqref{eq:NLW} in three dimensions is more complicated to study,
since the $\phi^4_3$ measure is not absolutely continuous with respect to the GFF,
and the dynamics is more complicated due to the additional renormalization.
\end{remark}

The proof has two main steps:
First, we assume the oscillatory part to be small,
and show that the hitting times then satisfy $\tau_+ < \sigma_+$.
After that, we show the assumption to hold with large probability.

We decompose the solution as
\begin{equation}
    u = w + Z + \psi,
\end{equation}
where $w$ is the zero mode governed by \eqref{eq:nlw zero mode},
$Z$ solves the linear wave equation
\begin{equation}\label{eq:corr linear wave}
\begin{cases}
    \partial_{tt} Z(x,t) - (1 + \Laplace) Z(x,t) &= 0,\\
    \hfill \Law(Z(0)) &= \proj_\perp (\phi^4_2(\beta)),\\
    \hfill \Law(\partial_t Z(0)) &= \proj_\perp(\beta^{-1/2}\text{ white noise}),
\end{cases}
\end{equation}
and the nonlinear oscillatory part $\psi$ is left to solve
\begin{equation}
\begin{cases}\label{eq:oscillatory nonlinear}
    \partial_{tt} \psi(x,t) - (1 + \Laplace) \psi(x,t) &= - \proj_\perp \wick{(Z + w + \psi)(x,t)^3},\\
    \hfill \psi(0) &= 0,\\
    \hfill \partial_t \psi(0) &= 0.
\end{cases}
\end{equation}
Note that both $\psi$ and $Z$ are purely oscillatory.

\begin{assumption}\label{ass:moments}
Let $T = 2 \delta^{-1/2} \log(\beta)$.
For $k = 1, 2, 3$ we have the bound
\begin{equation}
\left[ \int_0^T \smallnorm{\wick{Z^k}}_{\besovinfty^{-\epsilon}}^2 \dt \right]^{1/2}
\leq \beta^{-k/2 + k\epsilon},
\end{equation}
and
\begin{equation}\label{eq:corr moments q}
Q \geq \beta^{-3/4 + 3\epsilon}.
\end{equation}
\end{assumption}

\begin{assumption}\label{ass:psi}
With $T$ as above, we assume that
\begin{equation*}
\sup_{0 \leq t \leq T} \norm{\psi(t)}_{H^{1-\epsilon}} \leq \beta^{-1/2+2\epsilon}.
\end{equation*}
\end{assumption}

The choice of $T = 2 \delta^{-1/2} \log(\beta)$ as the upper integration bound
stems from the following \emph{a priori} estimate for the stopping time.
This estimate also implies that $w$ is positive throughout,
meaning that $\tau_+ < \sigma_+$.

\begin{lemma}\label{thm:corr apriori}
If \Cref{ass:moments,ass:psi} hold and $\beta$ is large enough,
then the hitting time satisfies $\tau \leq 2 \delta^{-1/2} \log(\beta)$.
Moreover, $w'(t) > 0$ and
\[
0 < w(t) < \frac{3Q}{2} \sinh(t)
\]
for all $0 < t \leq \tau$.
\end{lemma}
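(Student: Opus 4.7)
The approach is a bootstrap on the window where $w(t) > 0$, $w(t) < \tfrac{3Q}{2}\sinh(t)$, and $w(t) < 1-\delta$ hold simultaneously; let $T^\ast \le T$ denote the first failure. On $[0, T^\ast)$ I would use the Duhamel representation
\[
w(t) = Q\sinh(t) + \int_0^t \sinh(t-s)\bigl[-w(s)^3 + R(s)\bigr]\ds,
\]
where $R(s) = -3 w(s) A(s) + B(s)$ packages all the oscillatory contributions, with $A,B$ obtained from expanding $\wick{(Z+\psi)^k}$ via \Cref{thm:wick binomial} into Wick powers of $Z$ multiplied by powers of $\psi$. Using the bootstrap bound $w \leq 1$, Cauchy--Schwarz in time, and combining \Cref{ass:moments} for the pure $Z$ contributions with \Cref{ass:psi} and Besov duality (\Cref{thm:besov multiplication}) for the mixed and pure $\psi$ contributions, I expect $\int_0^T R(s)^2\,\ds \lesssim \beta^{-2+C\epsilon}$ and consequently $\int_0^T |R(s)|\,\ds \lesssim \sqrt{\log\beta}\cdot\beta^{-1+C\epsilon}$, which is $\ll Q$ thanks to \eqref{eq:corr moments q}.

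For the upper bound $w(t) < \tfrac{3Q}{2}\sinh(t)$, the self-interaction $-w^3$ contributes non-positively to the Duhamel integral, so using $\sinh(t-s) \leq \sinh(t)$ I get $w(t) \leq Q\sinh(t) + \sinh(t)\int_0^t |R|\,\ds \leq (1 + o(1))Q\sinh(t)$, strictly below the bootstrap threshold once $\beta$ is large. For positivity and strict monotonicity, I would integrate the equation once to write $w'(t) = Q + \int_0^t [w - w^3 + R]\,\ds$; inside the bootstrap $w(1-w^2) > 0$, so dropping this nonnegative contribution and using the noise bound yields $w'(t) \geq Q - \int_0^t |R|\,\ds > Q/2$, giving both $w' > 0$ and $w > 0$ on $(0, T^\ast]$.

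The main obstacle is bounding $\tau$: the monotonicity estimate $w'\ge Q/2$ only yields $\tau \lesssim Q^{-1} \sim \beta^{3/4}$, far larger than $T$, so one must exploit the exponential instability of the saddle. The idea is to use the stronger pointwise lower bound $w''(t) \geq \delta\,w(t) - \mathcal N(t)$ with $\mathcal N(t) := 3|A(t)| + |B(t)|$, which is valid on the bootstrap interval because $w(1-w^2) \geq \delta w$ whenever $w \leq 1-\delta$ and $\delta$ is small. Comparing with the ODE $u'' = \delta u - \mathcal N$, $u(0) = 0$, $u'(0) = Q$, the difference $\phi := w - u$ satisfies $\phi'' \geq \delta \phi$ with vanishing initial data and is therefore nonnegative (a direct application of the Duhamel formula for $\partial_t^2 - \delta$). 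Solving for $u$ and using $\int_0^T \mathcal N \ll Q$ gives
\[
w(t) \geq u(t) \geq \frac{\sinh(\sqrt\delta\,t)}{\sqrt\delta}\Bigl(Q - \int_0^t \mathcal N(s)\,\ds\Bigr) \geq \frac{Q}{2\sqrt\delta}\sinh(\sqrt\delta\,t).
\]
Setting $w(\tau) = 1-\delta$ then forces $\sqrt\delta\,\tau \leq \log(C/Q) \leq (\tfrac{3}{4} + o(1))\log\beta$, hence $\tau \leq \tfrac{3}{4}\delta^{-1/2}\log\beta + O(1) < T$. The most delicate bookkeeping is packaging the various Wick-product and Besov-norm estimates (especially for the mixed $Z\psi$ and $\wick{Z^2}\psi$ terms that combine \Cref{ass:moments} with \Cref{ass:psi}) into the single quantity $\int \mathcal N$ with the claimed power of $\beta$; once that is done, a standard continuity argument extends the bootstrap bounds from $T^\ast$ to all of $[0, \tau]$.
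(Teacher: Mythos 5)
Your proposal is correct and takes essentially the same approach as the paper: you bound the noise contribution by \Cref{ass:moments,ass:psi}, exploit the inequality $w(1-w^2)\ge c\,\delta\,w$ on the region $0\le w\le 1-\delta$ to get a $\sinh$-type lower bound on $w$, and deduce the $\bigO(\log\beta)$ bound on $\tau$, while the upper bound and monotonicity follow by dropping the sign-definite nonlinear term. Your $\sinh$-kernel Duhamel formulation and the explicit comparison ODE $u'' = \delta u - \mathcal N$ are just a clean repackaging of the paper's double time integration and the resulting Grönwall-type integral inequality.
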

\begin{proof}
Integrating \eqref{eq:nlw zero mode} twice in time,
we find that the counterpart to \eqref{eq:corr zerodim integrated} is
\begin{multline}\label{eq:corr zero mode full}
w(t) = Qt + \int_0^t \int_0^s [w(r) - w(r)^3] \dr \ds\\
    + \int_0^t \int_0^s \left[-3 w(r) \int_{\Tor^2} \wick{(Z+\psi)^2} \dx
        + \int_{\Tor^2} \wick{(Z+\psi)^3} \dx \right] \dr \ds.
\end{multline}
By \Cref{ass:moments} the second line can be bounded with
\begin{equation}
C \int_0^t (T^{1/2} + T) \left[ \beta^{-1+2\epsilon} + \beta^{-3/2+3\epsilon} \right] \dt
\leq C \beta^{-1+3\epsilon} t.
\end{equation}
Now when $\beta$ is large enough, we have $C \beta^{-1+3\epsilon} \leq Q/2$.
Therefore we can estimate
\begin{equation}
\eqref{eq:corr zero mode full}
\geq \frac{Qt}{2} + \int_0^t \int_0^s (1-(1-\delta)^2) w(r) \dr \ds.
\label{eq:corr zero mode estimate}
\end{equation}
This integral equation implies that in the $0 \leq w(t) \leq 1 - \delta$ regime,
\begin{equation}
w(t) \geq \frac{Q}{2\sqrt{2\delta - \delta^2}} \sinh(\sqrt{2\delta - \delta^2} t).
\end{equation}
This shows that $w(t)$ stays positive until the hitting time.
From the equation we can also solve an upper bound for $\tau$,
assuming $\beta$ large enough:
\begin{equation}
\begin{split}
\tau &\leq \frac{1}{\sqrt{2\delta - \delta^2}} \arcsinh\left( \frac{2\delta\sqrt{2\delta - \delta^2}}{Q} \right)\\
&\leq 2 \delta^{-1/2} \log(\beta).
\end{split}
\end{equation}
Here we crudely bounded $\arcsinh(x) \leq \log(3x) \leq 2 \log(x)$ for $x$ large.

If we integrate \eqref{eq:nlw zero mode} only once in time,
the same argument shows that $w'(t) > 0$ in the domain.
Finally, the upper bound for $w(t)$ follows by noting that $w(r) - w(r)^3 \leq w(r)$
and modifying \eqref{eq:corr zero mode estimate} accordingly.
\end{proof}

We now use this estimate to bootstrap a better estimate for the behaviour near the saddle.
We introduce an auxiliary hitting time:
\begin{definition}
With $\delta > 0$ as in \Cref{thm:corr goal}, we define the hitting time
\[
\theta_{+} \coloneqq \inf_{t > 0} \{ w(t) \geq \delta \}.
\]
\end{definition}
The rationale for this auxiliary object is that $\theta_+$ will be of order $\bigO(\log \beta)$,
whereas $\tau_+ - \theta_+$ will be of order $\bigO(1)$.
The next theorem shows that
$\theta_+$ is only an additive perturbation away from the zero-dimensional hitting time
sketched in \eqref{eq:corr zerodim system}.
The difference to $\tau_+$ is then shown in \Cref{thm:corr apriori theta}.

\begin{lemma}\label{thm:corr tau}
If \Cref{ass:moments,ass:psi} hold and $\beta$ is large enough, then
\[
\theta_+ \leq \arcsinh((1-\delta) Q\inv) + 2 \delta^2.
\]
\end{lemma}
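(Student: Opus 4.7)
The plan is to refine the a priori bound from \Cref{thm:corr apriori} by exploiting the fact that, in the regime $0 \leq w \leq \delta$, we have $w^3 \leq \delta^2 w$, so the ODE $w''-w=-w^3$ is essentially linear and the relevant timescale is governed by $\sinh$.

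First, I would revisit the integral equation \eqref{eq:corr zero mode full}. By the same estimate used in the proof of \Cref{thm:corr apriori}, the terms of the second line involving $Z$ and $\psi$ are controlled, under \Cref{ass:moments,ass:psi}, by $C\beta^{-1+3\epsilon} t$ for $t \leq 2\delta^{-1/2} \log\beta$. Since \eqref{eq:corr moments q} gives $Q \geq \beta^{-3/4+3\epsilon}$, this error is of the form $\beta^{-c} Q t$ for some $c > 0$, i.e.\ a lower-order perturbation of the $Qt$ term.

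Second, by definition of $\theta_+$ and the positivity $w(r) > 0$ established in \Cref{thm:corr apriori}, for every $0 \leq t \leq \theta_+$ we have $w(r) \in [0,\delta]$ on $[0,t]$, so $w(r) - w(r)^3 \geq (1-\delta^2) w(r)$. Combining with the previous step, we obtain the integral inequality
\[
w(t) \;\geq\; (1-\beta^{-c}) Q t + (1-\delta^2) \int_0^t \!\!\int_0^s w(r) \dr \ds.
\]
A standard comparison with the linear initial value problem $W''=(1-\delta^2)W$, $W(0)=0$, $W'(0)=(1-\beta^{-c})Q$ then yields
\[
w(t) \;\geq\; \frac{(1-\beta^{-c}) Q}{\sqrt{1-\delta^2}} \sinh\!\bigl(\sqrt{1-\delta^2}\, t\bigr).
\]

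Third, I would invert this estimate at the level $w(t) = \delta$ to get
\[
\theta_+ \;\leq\; \frac{1}{\sqrt{1-\delta^2}}\,\arcsinh\!\left(\frac{\delta\sqrt{1-\delta^2}}{(1-\beta^{-c})\, Q}\right).
\]
Using the Taylor expansion $(1-\delta^2)^{-1/2} = 1 + O(\delta^2)$, the monotonicity of $\arcsinh$, and the inequality $\delta \leq 1-\delta$ for $\delta$ small, the right-hand side is bounded by $\arcsinh((1-\delta)Q^{-1}) + 2\delta^2$ once $\beta$ is large enough.

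The main obstacle is the bookkeeping in the last step: the time interval on which we apply the comparison is of order $\log\beta$, so one must verify that the compounding multiplicative error from the $(1-\beta^{-c})^{-1}$ factor inside $\arcsinh$ yields only an additive $O(\beta^{-c}\log\beta)$ correction, which is then easily absorbed into the $2\delta^2$ slack. The comparison argument itself is routine once the correct integral inequality is in hand, and the positivity and containment in $[0,\delta]$ are immediate from \Cref{thm:corr apriori} and the definition of $\theta_+$.
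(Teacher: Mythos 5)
Your overall strategy---linearize $w-w^3\ge(1-\delta^2)w$ on $\{0\le w\le\delta\}$, compare with the linear ODE, and invert the $\sinh$ bound---is natural, but there is a genuine gap in the final step that the paper's proof is specifically designed to avoid. From your comparison you obtain
\[
\theta_+ \;\le\; \frac{1}{\sqrt{1-\delta^2}}\,\arcsinh\!\left(\frac{\delta\sqrt{1-\delta^2}}{(1-\beta^{-c})\,Q}\right).
\]
The factor $(1-\beta^{-c})^{-1}$ and the $\sqrt{1-\delta^2}$ \emph{inside} the $\arcsinh$ are harmless, because $\arcsinh(\alpha x)-\arcsinh(x)\to\log\alpha$ as $x\to\infty$, so these contribute only $O(\beta^{-c})+O(\delta^2)$ additively. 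The problem is the prefactor $(1-\delta^2)^{-1/2}=1+\tfrac12\delta^2+O(\delta^4)$ \emph{outside} the $\arcsinh$. Since $\arcsinh(\delta Q^{-1})$ is itself of order $\log Q^{-1}\sim\tfrac34\log\beta$ under \Cref{ass:moments}, the multiplicative error produces an additive term of order $\delta^2\log\beta$, which diverges as $\beta\to\infty$ and cannot be absorbed into the fixed slack $2\delta^2$ (nor into the constant slack $\arcsinh((1-\delta)Q^{-1})-\arcsinh(\delta Q^{-1})\approx\log\tfrac{1-\delta}{\delta}$). You flagged the $(1-\beta^{-c})^{-1}$ factor but not this one, and it is the one that breaks the argument.

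The source of the loss is that $w-w^3\ge(1-\delta^2)w$ is a wasteful uniform bound: the trajectory spends the overwhelming majority of the interval $[0,\theta_+]$ at heights $w\ll\delta$, where $w^3$ is \emph{much} smaller than $\delta^2 w$. The paper avoids the issue by rewriting $\theta_+$ via conservation of energy as a phase-space line integral $\int_0^\delta \frac{\diff \bar w}{\sqrt{Q^2+\bar w^2-\frac12\bar w^4-\mathcal G}}$, rescaling $\bar w=Qz$, and Taylor-expanding $(1+z^2+t)^{-1/2}$ around $t=0$. The point is that the contribution of $-\frac{Q^2}{2}z^4$ and the stochastic perturbation $\mathcal G$ at each height is weighted by the local speed, so the $k$-th correction term integrates to $\delta^{2k}$ (plus $\beta$-vanishing pieces) rather than $\delta^{2k}\log\beta$; summing over $k$ then gives exactly the claimed $2\delta^2$. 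To salvage your approach you would effectively have to let the linearization constant depend on the height $w$, which leads back to the energy/line-integral representation.
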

\begin{proof}
We can write the evolution of $w$ as the system
\begin{equation}
\left\{
\begin{aligned}
    \partial_t w(t) &= p(t),\\
    \partial_t p(t) &= -V'(t),
\end{aligned}
\right.
\end{equation}
where
\begin{multline}
V(t) = \frac 1 4 w(t)^4 - \frac 1 2 w(t)^2\\
- \underbrace{\int_0^t \left[-3 w(s) \int_{\Tor^2} \wick{(Z+\psi)^2} \dx
    + \int_{\Tor^2} \wick{(Z+\psi)^3} \dx \right] \ds}_{G(t)}.
\end{multline}
Observe that by monotonicity of $w$,
we can bound $\abs{G(t)}$ from above by
\begin{equation}
\mathcal G(w) = w (T^{1/2} \smallnorm{\wick{(Z+\psi)^2}}_{L^2_T \besovinfty^{-\epsilon}(\Tor^2)})
    + T^{1/2} \smallnorm{\wick{(Z+\psi)^3}}_{L^2_T \besovinfty^{-\epsilon}(\Tor^2)}.
\end{equation}
Now let $\bar{w}$ solve the Hamiltonian system 
\begin{equation}
\left\{
\begin{aligned}
    \partial_t \bar{w}(t) &= \bar{p}(t),\\
    \partial_t \bar{p}(t) &= -\bar{V}(w(t)),
\end{aligned}
\right.
\end{equation}
where
\[
\bar{V}(\bar{w}(t))= -\frac 1 4 \bar{w}(t)^4 - \frac 1 2 \bar{w}(t)^2-\mathcal{G}(\bar{w}).
\]
Here $\mathcal{G}$ depends on $t$ only though $\bar{w}(t)$.
Let $\bar\theta_+$ be the first time that $\bar{w} \geq \delta$.
Observe that $\bar{w}\leq w$ which implies $\bar\theta_+ \geq \theta_+$.
The conservation of energy $p^2/2 + \bar{V}$ and the initial conditions $p(0) = Q$, $\bar{V}(0) = 0$ then imply that
\begin{equation}
\frac 1 2 Q^2
= \frac 1 2 p(t)^2 - \frac 1 2 \bar{w}(t)^2 + \frac 1 4 \bar{w}(t)^4 - \mathcal{G},
\end{equation}
which in turn implies
\begin{equation}
\frac{\diff \bar{w}}{\dt}
= \bar{p}(t)
= \sqrt{Q^2 + \bar{w}(t)^2 - \tfrac 1 2 \bar{w}(t)^4 +  \mathcal{G}}.
\end{equation}
We use this to write $\bar\theta_+$ via the line integral
\begin{equation}
\bar\theta_+
= \int_0^{\bar\theta_+} \dt
\leq \int_0^{\delta} \frac{\diff \bar{w}}{\sqrt{Q^2 + \bar{w}^2 - \tfrac 1 2 \bar{w}^4 - \mathcal G}},
\end{equation}

We can then rescale the variable ($\bar w = Qz$) to get
\begin{equation}
\theta_+ \leq\bar\theta_+ \leq \int_0^{\delta Q\inv}
    \frac{\diff z}{\sqrt{1 + z^2 - \tfrac{Q^2}{2} z^4 - Q^{-2} \mathcal G(z)}}.
\end{equation}
We Taylor-expand the function $t \mapsto (1 + z^2 + t)^{-1/2}$ around $t=0$.
The zeroth-order term gives
\begin{equation}
\theta_+ \approx \int_0^{\delta Q\inv} \frac{\diff z}{\sqrt{1 + z^2}} = \arcsinh(\delta Q\inv).
\end{equation}
\Cref{ass:moments,ass:psi} imply that $Q^{-2} \mathcal G$ is bounded by
\begin{equation}
\begin{split}
&\mathrel{\phantom{\leq}}
Q\inv z T^{1/2} \smallnorm{\wick{(Z+\psi)^2}}_{L^2_T \besovinfty^{-\epsilon}(\Tor^2)}
+ Q^{-2} T^{1/2} \smallnorm{\wick{(Z+\psi)^3}}_{L^2_T \besovinfty^{-\epsilon}(\Tor^2)}\\
&\leq \beta^{-1/4} z + \beta^{-\epsilon}.
\end{split}
\end{equation}
Here we took $\beta^{-\epsilon}$ to be small enough to cancel $T^{1/2} = \sqrt{2\delta^{-1/2} \log(\beta)}$.
The remaining terms of the Taylor expansion are thus bounded by
\begin{equation}
\begin{split}
&\mathrel{\phantom{\leq}} \sum_{k \geq 1} \frac{(2k-1)!!}{k!\, 2^k} \int_0^{\delta Q\inv}
    \frac{(\tfrac{Q^2}{2} z^4 + \beta^{-1/4} z + \beta^{-\epsilon})^k \diff z}{(1 + z^2)^{k+1/2}}\\
&\leq \sum_{k \geq 1} \frac{(2k-1)!!}{k!\, 2^k} \int_0^{\delta Q\inv}
    \frac{(Q^{2k} z^{4k} + 2^k \beta^{-k/4} z^{k} + 2^k \beta^{-\epsilon k}) \diff z}
        {\max\{1, z^{2k+1}\}}.
\end{split}
\end{equation}
As we estimate each term of the integral, we finally get an upper error bound
\begin{equation}
\sum_{k \geq 1} \frac{(2k-1)!!}{k!\, 2^k}
    \left[ \delta^{2k} + \beta^{-\epsilon k} \right]
\leq \sum_{k \geq 1} \delta^{2k}
\leq 2 \delta^2,
\end{equation}
which holds when $\beta$ is large enough.
\end{proof}

\begin{lemma}\label{thm:corr w integral}
Under \Cref{ass:moments,ass:psi}, we also have
\[
\int_0^{\theta_+} w(t) \dt \leq 2(\delta + \delta^3).
\]
\end{lemma}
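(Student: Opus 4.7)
The plan is to combine \Cref{thm:corr apriori} and \Cref{thm:corr tau}, since both do the heavy lifting. Since $\delta \leq 1-\delta$ for $\delta$ small, we have $\theta_+ \leq \tau_+$, so Lemma \ref{thm:corr apriori} applies throughout $[0,\theta_+]$ and gives $w'(t)>0$ together with the upper bound $0 < w(t) \leq \tfrac{3Q}{2}\sinh(t)$. Integrating in $t$ yields immediately
\begin{equation*}
\int_0^{\theta_+} w(t)\,\dt \leq \frac{3Q}{2}\bigl(\cosh(\theta_+) - 1\bigr).
\end{equation*}

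Now I would plug in the estimate $\theta_+ \leq \arcsinh(\delta/Q) + 2\delta^2$ from Lemma \ref{thm:corr tau}. Using the hyperbolic addition formula $\cosh(a+b)=\cosh(a)\cosh(b)+\sinh(a)\sinh(b)$ together with $\cosh(\arcsinh(x))=\sqrt{1+x^2}$ and $\sinh(\arcsinh(x))=x$,
\begin{equation*}
\cosh(\theta_+) \leq \sqrt{1+\delta^2/Q^2}\,\cosh(2\delta^2) + (\delta/Q)\sinh(2\delta^2).
\end{equation*}
For $\beta$ large enough, \Cref{ass:moments} forces $Q \ll \delta$, so I can use $\sqrt{1+\delta^2/Q^2} \leq \delta/Q + Q/(2\delta)$, and for $\delta$ small the elementary bounds $\cosh(2\delta^2) \leq 1+4\delta^4$ and $\sinh(2\delta^2) \leq 3\delta^2$.

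After multiplying through by $\tfrac{3Q}{2}$, the $\delta/Q$ terms give the leading contribution $\tfrac{3\delta}{2}$, while all remaining terms carry at least one extra factor of $\delta^2$ and/or $Q$. Specifically, I would obtain something of the form $\tfrac{3\delta}{2} + \tfrac{9\delta^3}{2} + O(\delta^5) + O(Q\delta^4)$. Verifying that this is bounded by $2(\delta+\delta^3)$ reduces to checking $\tfrac{5\delta^3}{2}+O(\delta^5) \leq \tfrac{\delta}{2}$, which holds whenever $\delta$ is small enough (e.g.\ $\delta^2 \leq 1/5$). Since $\delta$ was fixed small in \Cref{thm:corr goal} and $\beta$ is taken large, this completes the argument.

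The only mildly delicate step is the bookkeeping of corrections: the leading coefficient $\tfrac{3}{2}$ from the crude upper bound $\tfrac{3Q}{2}\sinh(t)$ is actually not far below the target constant $2$, so one must verify that the $\delta^3$ correction stays within the available slack. There is no serious analytic obstacle; everything reduces to hyperbolic function identities and elementary inequalities, and could also in principle be cleaned up by working with the energy-conservation change of variables $dt = dw/p(w)$ as in the proof of \Cref{thm:corr tau}, although the crude bound suffices here.
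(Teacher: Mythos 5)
Your proposal is in essence correct and reaches the same conclusion, but it uses a genuinely different decomposition from the paper, and there is some sloppiness in the bookkeeping worth flagging.

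The paper's proof exploits one more piece of information that you leave unused: by the very definition of $\theta_+$ as the hitting time of level $\delta$ and the monotonicity $w'>0$ from Lemma~\ref{thm:corr apriori}, one has $w(t)\leq\delta$ on $[0,\theta_+]$. Combining this cap with $w(t)\leq\frac{3Q}{2}\sinh t$, the paper bounds $w(t)\leq\min\{\frac{3Q}{2}\sinh t,\delta\}$ and splits the integral at $\arcsinh(\delta Q^{-1})$; the first piece gives $\frac{3Q}{2}\big(\sqrt{1+\delta^2Q^{-2}}-1\big)\leq\frac{3\delta}{2}$ uniformly in $Q$ via $\sqrt{1+x^2}\leq 1+x$, and the second piece is just $2\delta^3$. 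This is cleaner: every step is a one-line inequality, no hyperbolic addition formula, and no hidden $Q$-dependence.

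Your route — dropping the $\min$ and integrating $\frac{3Q}{2}\sinh t$ all the way to $\theta_+$, then expanding $\cosh(\arcsinh(\delta/Q)+2\delta^2)$ — is a valid alternative, but it carries two small debts. First, the calculation error: with your choice $\sqrt{1+\delta^2/Q^2}\leq\delta/Q+Q/(2\delta)$ the leftover term after multiplying by $\tfrac{3Q}{2}$ is of size $O(Q^2/\delta)$, not the $O(Q\delta^4)$ you wrote; this matters because the former requires $Q\lesssim\delta$ to be negligible, which is not among the stated hypotheses (\Cref{ass:moments} only bounds $Q$ from \emph{below}). Second, and connected: your claim that \Cref{ass:moments} ``forces $Q\ll\delta$'' is not correct as written. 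Both issues disappear if you use the elementary bound $\sqrt{1+x^2}\leq 1+x$ (valid for all $x\geq 0$, no smallness needed); then the $\frac{3Q}{2}$ cancels exactly against the constant term and the residual is $6Q\delta^4+\frac{3\delta}{2}+\frac{9\delta^3}{2}$, which fits under $2(\delta+\delta^3)$ for $\delta$ small and any bounded $Q$. So the argument can be patched, but the paper's cap-at-$\delta$ device is what makes the estimate uniform in $Q$ with essentially no arithmetic.
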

\begin{proof}
By \Cref{thm:corr apriori,thm:corr tau}, we can now estimate
\begin{equation}
\begin{split}
\int_0^{\theta_+} w(t) \dt
&\leq \int_0^{\arcsinh(\delta Q\inv) + 2 \delta^2}
    \min\left\{ \frac{3Q}{2}\sinh(t), \delta \right\} \dt\\
&\leq \frac{3Q}{2} \int_0^{\arcsinh(\delta Q\inv)} \sinh(t) \dt
    + \int_0^{2 \delta^2} \delta \dt\\
&\leq \frac{3Q}{2} \sinh(\arcsinh(\delta Q\inv)) + 2\delta^3.
\qedhere
\end{split}
\end{equation}
\end{proof}

\begin{lemma}\label{thm:corr apriori theta}
Under \Cref{ass:moments,ass:psi} and $\beta$ large,
there exists $C > 0$ independent of $\beta$ such that
\[
\tau_+ \leq \theta_+ + C.
\]
\end{lemma}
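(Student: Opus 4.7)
The plan is to use an energy-conservation/bootstrap argument on the interval $[\theta_+, \tau_+]$, mirroring the strategy of \Cref{thm:corr tau} but now in the post-saddle regime. The key insight is that while the pre-saddle phase $[0,\theta_+]$ has length $\sim \log(1/Q)$ because the deterministic force $w - w^3$ is tiny near the saddle, once $w$ reaches $\delta$ this force is bounded below and drives $w$ through $[\delta, 1-\delta]$ in time uniform in $\beta$.

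Concretely, I would introduce the Hamiltonian $H(t) := \tfrac{1}{2} p(t)^2 + V(w(t))$ with $p = w'$ and $V(w) = -\tfrac{1}{2}w^2 + \tfrac{1}{4}w^4$, and differentiate along \eqref{eq:nlw zero mode} to obtain
\begin{equation*}
H'(t) = p(t) \bigl[ -3 w(t) A(t) + B(t) \bigr], \qquad
A(t) := \int_{\Tor^2} \wick{(Z+\psi)^2}(t,x) \dx, \quad
B(t) := \int_{\Tor^2} \wick{(Z+\psi)^3}(t,x) \dx.
\end{equation*}
Using \Cref{thm:corr apriori} (monotonicity of $w$, $0 \leq w \leq 1$, hence $|p|$ uniformly bounded as long as $H$ stays bounded), Cauchy--Schwarz in time, and the moment inputs from \Cref{ass:moments,ass:psi} expanded via the binomial formula $\wick{(Z+\psi)^k} = \sum_j \binom{k}{j}\wick{Z^{k-j}}\psi^j$, I would bootstrap to get
\begin{equation*}
\sup_{0 \leq t \leq T} \abs{H(t) - H(0)}
\lesssim T^{1/2}\bigl(\smallnorm{A}_{L^2_T} + \smallnorm{B}_{L^2_T}\bigr)
\lesssim (\log \beta)^{1/2}\, \beta^{-1 + O(\epsilon)}
=: \mathcal E_\beta,
\end{equation*}
which is $o(1)$ as $\beta \to \infty$.

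Combined with $H(0) = Q^2/2$ this yields $p(t)^2 = Q^2 + w(t)^2 - \tfrac{1}{2} w(t)^4 + O(\mathcal E_\beta)$. For $w \in [\delta, 1-\delta]$ the leading part satisfies $w^2 - w^4/2 \geq \delta^2\bigl(1 - (1-\delta)^2/2\bigr) \geq \delta^2/2$, so for $\beta$ large we have $p(t) \geq \delta/2$ throughout $[\theta_+, \tau_+]$. Since $w$ is strictly increasing (again by \Cref{thm:corr apriori}), the change of variables $\dt = \diff w / p(w)$ gives
\begin{equation*}
\tau_+ - \theta_+
= \int_\delta^{1-\delta} \frac{\diff w}{p(w)}
\leq \frac{2(1 - 2\delta)}{\delta}
=: C,
\end{equation*}
which is independent of $\beta$ (depending only on the fixed $\delta$), as required.

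The main technical obstacle will be the bound $\mathcal E_\beta = o(1)$. The pure chaos contributions $\wick{Z^k}$ in $A, B$ are directly controlled by \Cref{ass:moments}. The mixed contributions $\wick{Z^{k-j}} \psi^j$ require the paraproduct and resonant estimates of \Cref{thm:besov multiplication,thm:besov lp product}, combined with the two-dimensional Sobolev embedding $H^{1-\epsilon}(\Tor^2) \hookrightarrow \besovinfty^{-\epsilon}(\Tor^2)$ to meet the positivity threshold $\alpha + \beta > 0$ in the product estimates; the norm of $\psi$ arising in these estimates is then absorbed by \Cref{ass:psi}. The short integration window $T = 2\delta^{-1/2} \log \beta$ produces only a harmless $\sqrt{\log\beta}$ factor via Cauchy--Schwarz, which is crushed by $\beta^{-1 + O(\epsilon)}$ coming from the stochastic moments.
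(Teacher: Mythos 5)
Your proof is correct but takes a genuinely different route from the paper. The paper proves \Cref{thm:corr apriori theta} by the same method as the a priori estimate \Cref{thm:corr apriori}: it lower-bounds the twice-integrated form \eqref{eq:corr zero mode full} of the ODE, noting that on $[\theta_+, \tau_+]$ the integrand satisfies $w(r)[1-w(r)][1+w(r)] \geq \delta^2$ (since $w \in [\delta, 1-\delta]$ there), to deduce $w(t) \geq \tfrac{1}{2}\delta^2 (t - \theta_+)^2 - \delta/2$; hence $w$ overshoots $1-\delta$ after time $\bigO(1)$. You instead carry the Hamiltonian argument of \Cref{thm:corr tau} forward past the saddle: approximate conservation of $H = p^2/2 + V(w)$ gives a lower bound $p \gtrsim \delta$ uniformly on $\{w \in [\delta, 1-\delta]\}$, and the change of variables $\dt = \diff w / p$ finishes. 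Both are valid, and both ultimately rely on the same moment inputs. The trade-off: the paper's integral approach is more elementary and has no circularity to manage, while your Hamiltonian approach gives a cleaner unified picture of the pre- and post-saddle phases at the cost of the bootstrap you flag parenthetically — to bound $\sup_t |H(t) - H(0)|$ by $\mathcal E_\beta$ you must first bound $|p|$, which in turn is controlled only through $H$. This self-improving loop is standard and closes here (starting from, say, $H \leq 2$ on a maximal interval and noting $\mathcal E_\beta = o(1)$ preserves the strict inequality), but it is a non-trivial extra step that the paper's approach avoids entirely, and you should spell it out rather than leave it as a parenthetical.
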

\begin{proof}
We can start from \eqref{eq:corr zero mode full} and estimate it from below by
\begin{equation}
\begin{split}
w(t) &= \int_{\theta_+}^t \int_{\theta_+}^s [w(r) - w(r)^3] \dr \ds\\
    &\qquad + \int_0^t \int_0^s \left[-3 w(r) \int_{\Tor^2} \wick{(Z+\psi)^2} \dx
    + \int_{\Tor^2} \wick{(Z+\psi)^3} \dx \right] \dr \ds\\
&\geq \int_{\theta_+}^t \int_{\theta_+}^s w(r) [1 - w(r)] [1 + w(r)] \dr \ds - C \beta^{-1+3\epsilon} t\\
&\geq \frac{1}{2}\delta^2 (t-\theta_+)^2 - \frac \delta 2,
\end{split}
\end{equation}
where in the last line we used that $\beta$ is sufficiently large.
So for $w(t) \leq 1-\delta$ to be satisfied
we must have $\frac{1}{2}\delta^2 (t-\theta_+)^2 \leq 1-\delta/2$,
which yields $\tau_+ \leq \theta_+ + \sqrt{\frac{2 - \delta}{\delta^2}}$.
\end{proof}

\subsection{The oscillatory part}

Let us then consider the second part of the proof.
We show that \Cref{ass:moments} implies \Cref{ass:psi},
and that \Cref{ass:moments} holds with large probability.
For the first claim, we use a continuity argument.
We again split the transition into $\bigO(\log \beta)$ and $\bigO(1)$ parts
represented by $\theta_+$ and $\tau_+$.

The existence of $u$, and consequently of $\psi$, follows from an existing result:

\begin{theorem}\label{thm:corr oh-thomann}
\eqref{eq:NLW} has a mild solution $u \in C(\R_+;\; H^{-\epsilon}(\Tor^2))$
for almost all initial data sampled from the $(\phi^4_2 \times \text{white noise})$ measure.
\end{theorem}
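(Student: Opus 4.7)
The plan is to follow the Da~Prato--Debussche decomposition that is by now standard for cubic NLW with $\phi^4_2$ initial data, in the spirit of Oh--Thomann \cite{oh_invariant_2020}. First I would split $u = Z + \psi$, where $Z$ solves the linear wave equation \eqref{eq:corr linear wave} with initial data $(u_0,v_0)$ of law $(\phi^4_2 \times \text{white noise})$, and $\psi$ solves the remainder equation
\[
\partial_{tt}\psi - (1+\Laplace)\psi
= -\wick{Z^3} - 3\wick{Z^2}\psi - 3 Z\psi^2 - \psi^3,
\qquad (\psi,\partial_t\psi)|_{t=0}=(0,0).
\]
The desired regularity $u\in C(\R_+;H^{-\epsilon})$ follows from $Z\in C(\R_+;H^{-\epsilon})$ and $\psi\in C(\R_+;H^{1-\epsilon})$.

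Second, the stochastic terms $\wick{Z^k}$, $k=1,2,3$, are constructed as random elements of $C_t \besovinfty^{-\epsilon}_x(\Tor^2)$ via Gaussian chaos, exactly as in \Cref{thm:wick gff moments} but with wave-equation propagators replacing the heat semigroup. Since the $\phi^4_2$ measure is absolutely continuous with respect to the Gaussian free field (with covariance $(-1-\Laplace)\inv$ on the non-zero modes) by the results cited in \Cref{sec:intro previous}, the moment bounds for $\wick{Z^k}$ under the Gaussian law carry over to the $\phi^4_2$ law at the cost of a Radon--Nikodym factor.

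Third, I would run local well-posedness for the $\psi$-equation in $C_t H^{1-\epsilon}_x \cap C^1_t H^{-\epsilon}_x$ by Picard iteration based on Duhamel's formula for $\partial_{tt}-(1+\Laplace)$. The multiplication estimates of \Cref{thm:besov multiplication} make the products $\wick{Z^2}\psi$ and $Z\psi^2$ meaningful once $\psi$ has positive Sobolev regularity, and give a contraction on small time intervals whose length depends polynomially on $\smallnorm{\wick{Z^k}}_{\besovinfty^{-\epsilon}}$. For globalization I would invoke Bourgain's invariant measure argument: the truncated Wick-renormalized dynamics preserve the truncated $(\phi^4_2\times\text{white noise})$ measure, and a uniform-in-$N$ local theory for the remainder equation combined with a Borel--Cantelli argument yields global solutions on a full-measure set of initial data, with continuous dependence.

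The main obstacle is the weak smoothing of the wave propagator in space: each Duhamel integration only trades one derivative, so there is no parabolic regularization to absorb the low regularity of $\wick{Z^3}$. This forces one to take $\psi$ at regularity strictly less than $1$ (hence the $\epsilon$-loss), and to handle $\wick{Z^2}\psi$ at the critical scaling, where the product estimate in \Cref{thm:besov multiplication} is tight. Beyond this point the construction is routine, and it is precisely the reason the analogous three-dimensional statement is so much harder (it requires the machinery of \cite{bringmann_invariant_2022}).
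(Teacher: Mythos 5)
Your outline reconstructs the Oh--Thomann machinery (Da~Prato--Debussche splitting, stochastic estimates for $\wick{Z^k}$, local fixed point, Bourgain globalization), which is indeed the engine behind the result; the paper's proof is by contrast a brief citation of \cite[Theorem~1.5]{oh_invariant_2020} together with a one-line modification, and it is precisely that modification that your outline does not address. Oh--Thomann prove their theorem for the \emph{positive-mass} equation $\partial_{tt}u + (1-\Laplace)u = -u^3$, whereas here, with $m^2=1$, \eqref{eq:NLW} reads $\partial_{tt}u - (1+\Laplace)u = -u^3$: the linear operator has negative mass. You take $Z$ to solve \eqref{eq:corr linear wave} but with the \emph{full} initial data rather than its oscillatory projection, so the zero Fourier mode of your $Z$ satisfies $\partial_{tt}\hat Z(0) - \hat Z(0) = 0$ and grows like $\cosh t$. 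This is exactly the mode the paper's Section~\ref{sec:corr} projects away in \eqref{eq:corr linear wave} and tracks separately as $w$. With the zero mode left inside $Z$, the uniform-in-time bounds you assert for $\wick{Z^k}$ no longer hold, and $\psi = u - Z$ is forced to grow to compensate, which spoils the clean uniform local theory you need as input to the Borel--Cantelli step of Bourgain's argument.

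The paper's fix is to add and subtract a linear term: rewrite \eqref{eq:NLW} as $\partial_{tt}u + (1-\Laplace)u = -u^3 + (m^2+1)u$. Now the free propagator is the bounded positive-mass Klein--Gordon group, so $Z$ and $\wick{Z^k}$ are bounded uniformly on compacts, the extra $2u$ is a benign linear term in the Duhamel map, and the Oh--Thomann local theory and globalization argument go through verbatim (with invariance of the truncated measure supplied by \Cref{sec:2d partition}). With that substitution your outline is correct; without it the unstable zero mode is left unaccounted for, which is the one genuinely non-routine point in this step.
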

\begin{proof}
Although Oh and Thomann state this result \cite[Theorem~1.5]{oh_invariant_2020}
only in the case of positive mass,
the local well-posedness theory is straightforward to modify:
One substitutes
\begin{equation}
\begin{split}
\partial_{tt} u(x, t) - (m^2 + \Laplace) u(x, t) &= - u(x, t)^3\\
&\Downarrow\\
\partial_{tt} u(x, t) + (1 - \Laplace) u(x, t) &= - u(x, t)^3 + (m^2 + 1) u(x, t),
\end{split}
\end{equation}
and modifies the fixpoint argument accordingly.
Existence of the invariant measure follows from \Cref{sec:2d partition} in the present article,
and the globalization argument then proceeds as is.
(See also \cite[Section~4]{barashkov_invariance_2022} for more details
on the local solution theory.)
\end{proof}

Since this result is global in time,
it does not matter that we only consider solutions starting from the time they hit the saddle.
By writing the equation in Duhamel form,
we see that the nonlinear term $\psi$ satisfies a fixpoint condition $\psi = F[\psi]$.
We can then find a quantitative bound for the solution.
We first derive the bound up to time $\theta_+$:

\begin{lemma}\label{thm:corr psi bound}
Assume that \Cref{ass:moments} holds and that $\beta$ is large enough.
For any $T^\ast \leq 2 \delta^{-1/2} \log(\beta)$, the operator
\[
F[\psi](t) \coloneqq -\int_0^{\min(t, \theta_+)} \frac{\sin(\inormm\nabla (t-s))}{\inormm\nabla}
    \proj_\perp \wick{(w+Z+\psi)^3}(s) \ds
\]
maps $B(0, \beta^{-1/2+\epsilon})$ into $B(0, \tfrac 1 2 \beta^{-1/2+\epsilon})$
in the space $C({[0, T^\ast]};\; H^{1-\epsilon}(\Tor^2))$.
\end{lemma}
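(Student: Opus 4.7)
The plan is to apply the standard energy estimate for the Klein--Gordon propagator
\[
\Big\| \tfrac{\sin(\inormm\nabla (t-s))}{\inormm\nabla}\, g \Big\|_{H^{1-\epsilon}(\Tor^2)}
\lesssim \|g\|_{H^{-\epsilon}(\Tor^2)},
\]
which is valid on oscillatory modes since $\inormm k \simeq \inorm k$ uniformly for $|k| \ge 1$ (recall $L < 2\pi$). Combined with Minkowski's inequality in time, this reduces the required bound to controlling
\[
\int_0^{\min(t,\theta_+)} \|\proj_\perp \wick{(w+Z+\psi)^3}(s)\|_{H^{-\epsilon}}\, ds
\le \tfrac{1}{2}\beta^{-1/2+\epsilon}.
\]
By the Wick binomial formula (\Cref{thm:wick binomial}) applied only to the Gaussian part $Z$,
\[
\wick{(w+Z+\psi)^3} = (w+\psi)^3 + 3(w+\psi)^2 Z + 3(w+\psi)\wick{Z^2} + \wick{Z^3}.
\]

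I would bound each of the nine resulting terms, using \Cref{thm:besov multiplication,thm:besov lp product}, \Cref{ass:moments} (stochastic $L^2_T \besovinfty^{-\epsilon}$ bounds on the $\wick{Z^k}$), the hypothesis $\|\psi\|_{L^\infty_T H^{1-\epsilon}} \le \beta^{-1/2+\epsilon}$, and crucially the integral estimate $\int_0^{\theta_+} w \lesssim \delta$ from \Cref{thm:corr w integral}. The dominant term is $3w^2 \psi$: since $w$ lives in the zero Fourier mode,
\[
\int_0^{\theta_+} 3 w(s)^2 \|\psi(s)\|_{H^{-\epsilon}}\,ds
\le 3\delta\, \|\psi\|_{L^\infty H^{1-\epsilon}} \int_0^{\theta_+}\! w(s)\,ds
\lesssim \delta^2\, \beta^{-1/2+\epsilon}.
\]
All other contributions carry at least one more factor of $\beta^{-1/2+\epsilon}$. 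A typical representative bound is, via Cauchy--Schwarz in time together with $\int w^2\le\delta\int w$,
\[
\int_0^{\theta_+}\! w(s)\|\wick{Z^2}(s)\|_{H^{-\epsilon}}\,ds
\le \Big(\!\int_0^{\theta_+}\! w^2\Big)^{\!1/2} \|\wick{Z^2}\|_{L^2_T\besovinfty^{-\epsilon}}
\lesssim \delta\,\beta^{-1+2\epsilon}.
\]
For the mixed terms $\psi\wick{Z^k}$, the Besov paraproduct estimate $\|\psi\wick{Z^k}\|_{H^{-\epsilon}} \lesssim \|\psi\|_{H^{1-\epsilon}}\|\wick{Z^k}\|_{\besovinfty^{-\epsilon}}$ (valid since $1-\epsilon - \epsilon > 0$), combined with $T^{*1/2}\lesssim(\log\beta)^{1/2}$, yields acceptable powers of $\beta$. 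For the pure polynomial terms $w\psi^2$ and $\psi^3$ I would use \Cref{thm:besov lp product} and the two-dimensional Sobolev embedding $H^{1-\epsilon}\hookrightarrow L^4$.

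Collecting everything, the total is at most $C\delta^2 \beta^{-1/2+\epsilon}$ plus contributions of strictly smaller order; choosing $\delta$ small enough independently of $\beta$ produces the required factor $\tfrac{1}{2}$ for $\beta$ large. The main technical obstacle is bookkeeping the exponents so that the logarithmic factors $T^{*1/2}\lesssim(\log\beta)^{1/2}$ never meet the leading contribution: a naive $L^\infty$ bound $w\le\delta$ would multiply the $3w^2\psi$ term by $T^*\sim\log\beta$, which does not close. Replacing this by the sharper $\int_0^{\theta_+} w\lesssim \delta$ bound from \Cref{thm:corr w integral}, which is precisely the reason for introducing the intermediate hitting time $\theta_+$, is what makes the fixpoint argument close.
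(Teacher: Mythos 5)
Your proof follows essentially the same route as the paper: the $H^{-\epsilon}\to H^{1-\epsilon}$ energy bound for the propagator, Minkowski in time, the Wick binomial expansion into nine terms (with $\proj_\perp w^3=0$), and crucially the integral estimate $\int_0^{\theta_+} w \lesssim \delta$ from \Cref{thm:corr w integral} to tame the $w^2$-weighted contributions without paying a $\log\beta$ factor. One small imprecision: the $w^2 Z$ contribution is of the same order $\delta^2\beta^{-1/2+\epsilon}$ as $w^2\psi$, not carrying an extra factor of $\beta^{-1/2+\epsilon}$ as you claim (the paper groups both into its dominating line), but since Cauchy--Schwarz in time together with $\int_0^{\theta_+} w^4 \lesssim \delta^4$ and \Cref{ass:moments} handles it, the argument still closes.
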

\begin{proof}
Minkowski's integral inequality
and $H^{-\epsilon} \to H^{1-\epsilon}$ boundedness of the operator
$\proj_\perp \sin(\inormm\nabla (t-s)) / \inormm\nabla$ imply that
\begin{equation}\label{eq:corr fixpoint 1}
\norm{F[\psi](t)}_{H^{1-\epsilon}}
\lesssim \int_0^{\min(t, \theta_+)} \norm{\proj_\perp \wick{(w+Z+\psi)^3}(s)}_{H^{-\epsilon}} \ds.
\end{equation}
Let us then expand the trinomial.
Since $\proj_\perp w(s)^3$ vanishes, we have nine terms to estimate:
\begin{align}
&\mathrel{\phantom{+}} C \int_0^t \left[ \smallnorm{\wick{Z^3}(s)}_{H^{-\epsilon}}
    + w(s) \smallnorm{\wick{Z^2}(s)}_{H^{-\epsilon}} \right] \ds \label{eq:corr bounds 1}\\
&\mathrel+ C \int_0^t \big[
    \smallnorm{\wick{Z^2}(s)}_{\besovinfty^{-\epsilon}} \smallnorm{\psi(s)}_{H^{1-\epsilon}}
    + w(s) \smallnorm{Z(s)}_{\besovinfty^{-\epsilon}} \smallnorm{\psi(s)}_{H^{1-\epsilon}} \notag\\
&\hspace{4em} + \smallnorm{Z(s)}_{\besovinfty^{-\epsilon}} \smallnorm{\psi(s)}_{H^{1-\epsilon}}^2 \big] \ds
    \label{eq:corr bounds 2}\\
&\mathrel+ C \int_0^t \left[
    w(s) \smallnorm{\psi(s)}_{H^{1-\epsilon}}^2 + \smallnorm{\psi(s)}_{H^{1-\epsilon}}^3 \right] \ds
    \label{eq:corr bounds 3}\\
&\mathrel+ C \int_0^{\min(t, \theta_+)} \left[
    w(s)^2 \smallnorm{Z(s)}_{H^{-\epsilon}} + w(s)^2 \smallnorm{\psi(s)}_{H^{1-\epsilon}} \right] \ds.
    \label{eq:corr bounds 4}
\end{align}
The first line \eqref{eq:corr bounds 1} is bounded by $C \beta^{-1+3\epsilon}$
thanks to \Cref{ass:moments}.
Using also the assumption $\norm{\psi}_{L^\infty H^{1-\epsilon}} < \beta^{-1/2+\epsilon}$,
we see that $\eqref{eq:corr bounds 2} \leq C \beta^{-1+3\epsilon}$.
The $L^\infty$ bound also gives that $\eqref{eq:corr bounds 3} \leq C \beta^{-1+3\epsilon}$.

The last line is the dominating one, since here we get
\begin{equation}
\eqref{eq:corr bounds 4}
\leq C \int_0^{\min(t, \theta_+)} w(s)^2 \smallnorm{Z(s)}_{H^{-\epsilon}} \ds
    + C \delta \norm{\psi}_{L^\infty H^{1-\epsilon}} \int_0^t w(s) \ds.
\end{equation}
By \Cref{ass:moments} and \Cref{thm:corr w integral} we bound this by
$C \delta \beta^{-1/2+\epsilon}$.
By choosing $\delta$ small enough and $\beta$ large enough,
the sum of \eqref{eq:corr bounds 1}--\eqref{eq:corr bounds 4}
can be bounded by $\beta^{-1/2+\epsilon}/2$.
\end{proof}

\begin{remark}
\Cref{thm:corr psi bound} could also be viewed as the boundedness part of a fixpoint argument for $\psi$.
It is uncertain whether contractivity up to $\bigO(\log \beta)$ horizon times could be proved,
since one would need better control of $w$ as a function of $\psi$.
Because of this, we use the \emph{a priori} result \Cref{thm:corr oh-thomann} instead.
\end{remark}

Since $\psi = u - Z - w$ is continuous in time,
\Cref{thm:corr psi bound} implies that $\norm{\psi(t)}_{H^{1-\epsilon}}$
cannot violate \Cref{ass:psi} until after $w$ has already passed $\delta$.
We now extend this up to the time $\tau_+$ where $w$ hits $1-\delta$.

\begin{lemma}
Assume that \Cref{ass:moments} holds,
and that $\psi$ solves \eqref{eq:oscillatory nonlinear} with
$\norm{\psi(\theta_+)}_{H^{1-\epsilon}} \leq \beta^{-1/2+\epsilon}$.
Then for any $t \leq \tau_+$ we have
\begin{equation}
\norm{\psi(t)}_{H^{1-\delta}} \leq \beta^{-1/2+2\epsilon}.\label{eq:bound-oscillatory-short}
\end{equation}
\end{lemma}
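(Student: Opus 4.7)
My plan is to extend the fixed-point/Duhamel argument of \Cref{thm:corr psi bound} from the interval $[0,\theta_+]$ up to $\tau_+$, exploiting that by \Cref{thm:corr apriori theta} the additional interval has bounded length $\tau_+ - \theta_+ \leq C$ independent of $\beta$. I would work with the Duhamel representation based at $\theta_+$,
\begin{equation*}
\psi(t) = \cos(\inormm\nabla(t-\theta_+))\psi(\theta_+)
+ \frac{\sin(\inormm\nabla(t-\theta_+))}{\inormm\nabla}\partial_t\psi(\theta_+)
- \int_{\theta_+}^t \frac{\sin(\inormm\nabla(t-s))}{\inormm\nabla}\proj_\perp\wick{(w+Z+\psi)^3}(s)\,ds.
\end{equation*}
The previous lemma gives $\norm{\psi(\theta_+)}_{H^{1-\epsilon}} \leq \beta^{-1/2+\epsilon}$, and differentiating the Duhamel formula from $0$ and repeating the term-by-term bounds of \Cref{thm:corr psi bound} yields the matching estimate $\norm{\partial_t\psi(\theta_+)}_{H^{-\epsilon}} \lesssim \beta^{-1/2+\epsilon}$. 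Since the free wave propagator is an isometry on the energy space $H^{1-\epsilon}\times H^{-\epsilon}$, the first two terms contribute at most $C\beta^{-1/2+\epsilon}$.

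For the nonlinear integral over $[\theta_+,t]$ I would expand $(w+Z+\psi)^3$ and note that $\proj_\perp w^3 = 0$, since $w$ is constant in space. The eight remaining terms split into (i) purely $Z$-terms $Z^3$, $wZ^2$, $w^2 Z$ controlled by \Cref{ass:moments} and contributing $\bigO(\beta^{-1+3\epsilon})$ over a bounded time interval; (ii) terms linear in $\psi$, namely $w^2\psi$, $wZ\psi$, $Z^2\psi$; and (iii) higher-order terms $w\psi^2$, $Z\psi^2$, $\psi^3$ estimated using the a priori bound $\norm{\psi}_{L^\infty([\theta_+,\tau_+];H^{1-\epsilon})} \leq \beta^{-1/2+2\epsilon}$ one wishes to bootstrap.

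The main obstacle lies in the $w^2\psi$ term. Unlike on $[0,\theta_+]$, where smallness of $\int w$ proved in \Cref{thm:corr w integral} made the contribution of $w^2\psi$ of order $\delta\,\norm{\psi}_{L^\infty H^{1-\epsilon}}$, here $w$ is of order $1$ and only the length of the interval is small. The Minkowski/Duhamel bound gives instead
\begin{equation*}
\smallnorm{\textstyle\int_{\theta_+}^t \tfrac{\sin(\inormm\nabla(t-s))}{\inormm\nabla}\proj_\perp(w^2\psi)(s)\,ds}_{H^{1-\epsilon}}
\lesssim (\tau_+-\theta_+)\,\norm{\psi}_{L^\infty([\theta_+,\tau_+];H^{1-\epsilon})},
\end{equation*}
which, when added to the analogous contributions from the other linear-in-$\psi$ terms, produces a Grönwall-type estimate whose exponential factor $e^{C(\tau_+-\theta_+)}$ is bounded by a universal constant because $\tau_+-\theta_+\leq C$ by \Cref{thm:corr apriori theta}.

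Collecting everything, for $t\in[\theta_+,\tau_+]$ one obtains
\begin{equation*}
\norm{\psi(t)}_{H^{1-\epsilon}}
\leq C\beta^{-1/2+\epsilon} + C\beta^{-1+3\epsilon} + C\,\norm{\psi}_{L^\infty([\theta_+,t];H^{1-\epsilon})},
\end{equation*}
and a Grönwall inequality (or equivalently a contraction iteration on the short interval) yields $\norm{\psi(t)}_{H^{1-\epsilon}}\leq C'\beta^{-1/2+\epsilon}$. For $\beta$ large enough, $C' \leq \beta^{\epsilon}$, so the bootstrap bound $\beta^{-1/2+2\epsilon}$ is strictly preserved, and a standard continuity argument (the set of times in $[\theta_+,\tau_+]$ where \eqref{eq:bound-oscillatory-short} holds is open, closed and nonempty) completes the proof. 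The existence of $\psi$ on this interval is guaranteed by \Cref{thm:corr oh-thomann}.
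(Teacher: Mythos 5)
Your proposal is correct and rests on the same three pillars as the paper's proof: a Duhamel estimate, a Gr\"onwall bound exploiting that $\tau_+ - \theta_+ \leq C$ uniformly in $\beta$ (\Cref{thm:corr apriori theta}), and a continuity argument to close the bootstrap. The one technical deviation is the choice of Duhamel basepoint: you restart the Duhamel formula at $t = \theta_+$, which introduces free propagation of the pair $(\psi(\theta_+), \partial_t\psi(\theta_+))$ and hence forces you to separately establish $\norm{\partial_t\psi(\theta_+)}_{H^{-\epsilon}} \lesssim \beta^{-1/2+\epsilon}$. That extra bound is indeed available—differentiating the Duhamel formula replaces $\sin(\inormm\nabla\cdot)/\inormm\nabla$ by the bounded operator $\cos(\inormm\nabla\cdot)$, so the estimate of \Cref{thm:corr psi bound} applies verbatim—but the paper sidesteps it entirely by keeping the Duhamel formula based at $t=0$, where $\psi$ and $\partial_t\psi$ both vanish, and simply splitting the nonlinear integral as $\int_0^t = \int_0^{\theta_+} + \int_{\theta_+}^t$; the first piece is exactly the quantity already bounded in \Cref{thm:corr psi bound}. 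Both routes yield $\norm{\psi(t)}_{H^{1-\epsilon}} \leq C\beta^{-1/2+\epsilon} + C\int_{\theta_+}^t \norm{\psi(s)}_{H^{1-\epsilon}}\,\ds$ and hence the same conclusion; yours is merely a step longer.
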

\begin{proof}
We again repeat a continuity argument.
Assume that $\norm{\psi(t)}_{H^{1-\epsilon}} \leq \beta^{-1/2+2\epsilon}$
for all $\theta_+ \leq t \leq t_1$, where $t_1 \leq \tau_+$.
We can then again use the Duhamel formulation to estimate
\begin{equation}
\begin{split}
\norm{\psi(t)}_{H^{1-\epsilon}}
&= \eqref{eq:corr fixpoint 1} + \int_{\theta_+}^t \norm{\proj_\perp \wick{(w+Z+\psi)^3}(s)}_{H^{-\epsilon}} \ds\\
&\leq C \beta^{-1+3\epsilon} + C \beta^{-1/2+\epsilon}
    + C \int_{\theta_+}^{t} \norm{\psi(s)}_{H^{1-\epsilon}} \ds\\ 
&\leq C \beta^{-1/2+\epsilon} + C\int_{\theta_+}^{t} \norm{\psi(s)}_{H^{1-\epsilon}} \ds.
\end{split}
\end{equation}
Here we again used the rough estimate \Cref{thm:corr apriori} and $\delta \leq 1$.
Now Grönwall's inequality gives
\[
\sup_{\theta_+ \leq t \leq t_1} \norm{\psi(t)}_{H^{1-\epsilon}}
\leq \exp(C(\tau_+ - \theta_+)) \beta^{-1/2+\epsilon}
\leq \frac{1}{2} \beta^{-1/2+2\epsilon},\]
for $\beta$ large enough.
Thus the set where \eqref{eq:bound-oscillatory-short} is satisfied is open by continuity of $\psi$.
It is clearly closed, also by continuity of $\psi$.
Since it contains $t = \theta_+$ it is the whole interval $[\theta_+, \tau_+]$.  
\end{proof}

The estimate for $\psi$ still depends on a growth bound for the linear part $Z$
stated in \Cref{ass:moments}.
Its validity follows from the following stochastic estimate.

\begin{theorem}\label{thm:corr linear bounds}
The solution $Z$ to the linear wave equation \eqref{eq:corr linear wave} satisfies
\[
\E \smallnorm{\wick{Z^k}}_{L^p([0,T];\; B^{-\epsilon}_{p,p})}^p \leq C_{p,\epsilon} T \beta^{-kp/2}
\]
for $k=1,2,3$, all $T > 0$, and sufficiently large $p$.
\end{theorem}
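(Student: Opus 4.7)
The natural invariant measure for \eqref{eq:corr linear wave} is the Gaussian $\nu$ with covariance $\beta\inv \proj_\perp (-1-\Laplace)\inv$ on position and $\beta\inv \proj_\perp$ on velocity; since $L<2\pi$ makes $-1-\Laplace$ positive on oscillatory modes, this measure is well defined. The linear wave flow is Hamiltonian with quadratic Hamiltonian equal (up to the factor $\beta/2$) to the exponent of $\nu$, so $\nu$ is invariant. Equivalently, each Fourier mode evolves as
\[
\hat Z(k,t)=\cos(\omega_k t)\hat Z(k,0)+\frac{\sin(\omega_k t)}{\omega_k}\,\partial_t\hat Z(k,0),\qquad \omega_k^2=-1+C_L\abs{k}^2>0,
\]
which is a phase-space symplectic rotation preserving the Gaussian law. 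Under $\nu$ the distribution of $Z(t)$ is therefore independent of $t$, and the plan is to exploit this stationarity after switching from the $\phi^4_2$ initial law to $\nu$ by absolute continuity.

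The position marginal $\proj_\perp \mu_{\phi^4_2}$ is absolutely continuous with respect to the position marginal of $\nu$; call the density $f$. It equals an exponential $\phi^4$-type weight times the Radon--Nikodym derivative between the reference Gaussian of covariance $(2-\Laplace)\inv$ used in \Cref{sec:2d partition} and the $(-1-\Laplace)\inv$ Gaussian defining $\nu$. On oscillatory modes the latter derivative is bounded, because Feldman--H\'ajek applies in $d=2$: the difference of the two covariances is Hilbert--Schmidt. Combined with the fact that the estimates in \Cref{sec:2d partition} tolerate H\"older exponents slightly above one, this gives $\|f\|_{L^q(\nu)}\le C_q$ uniformly in $N$ for every $q\in(1,\infty)$. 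H\"older's inequality at each fixed $t$ then yields
\[
\E \smallnorm{\wick{Z^k(t)}}_{B^{-\epsilon}_{p,p}}^p
\le \|f\|_{L^q(\nu)}\Bigl(\E_\nu \smallnorm{\wick{Z^k(t)}}_{B^{-\epsilon}_{p,p}}^{pq'}\Bigr)^{1/q'},
\]
and invariance of $\nu$ makes the right-hand expectation equal to its value at $t=0$.

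For that Gaussian moment I would expand the Besov norm into its Littlewood--Paley sum. Each block $\Delta_j \wick{Z^k(0)}(x)$ lies in the $k$-th Wiener chaos of $\nu$, so Nelson's hypercontractivity bounds every $L^{pq'}$ moment by the $L^2$ moment; the Wick isometry $\E[\wick{Z^k(x)}\wick{Z^k(y)}]=k!(\E Z(x)Z(y))^k$ together with \Cref{thm:discrete convolution} then gives $\E_\nu |\Delta_j \wick{Z^k(0)}(x)|^2 \lesssim \beta^{-k}(1+j)^{k-1}$ in $d=2$ (the logarithmic factors coming from the near-critical two-dimensional convolution). Summing against $2^{-jpq'\epsilon}$ defeats the polynomial-in-$j$ factor and produces an overall bound of $\beta^{-kpq'/2}$ once $p$ is taken large enough; taking the $q'$-th root and integrating the time-uniform bound over $[0,T]$ yields the claimed $T\beta^{-kp/2}$. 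The only technically sensitive ingredient is the uniform-in-$N$ $L^q$-integrability of $f$; everything else is standard Gaussian calculus.
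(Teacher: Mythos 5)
Your argument takes a genuinely different route from the paper. The paper's own proof is a one-sentence sketch referring to \cite[Section~4.1]{barashkov_invariance_2022}: decompose $Z=\tilde Z+R$ where $\tilde Z$ is the linear flow started from the stationary Gaussian and $R$ is a more regular remainder (coming from the absolutely-continuous shift of $\phi^4_2$ relative to its reference Gaussian) that stays uniformly bounded in time, and then estimate $\wick{Z^k}$ by expanding the binomial. You instead keep $Z$ intact, introduce the invariant Gaussian $\nu$, and compare the two laws by a H\"older inequality plus stationarity under $\nu$. Both approaches ultimately rest on absolute continuity of $\phi^4_2$ with respect to a Gaussian; yours avoids the term-by-term expansion of $\wick{(\tilde Z+R)^k}$ at the price of needing an $L^q(\nu)$ bound on the Radon--Nikodym derivative, which is where the real work is concealed.

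That density bound is the crux, and the justification you offer is too thin. The description of $f$ as ``a $\phi^4$-type weight times a Feldman--H\'ajek density'' skips the step that actually produces the density on the oscillatory sector: the quartic interaction couples the zero mode to the oscillatory modes, so the marginal $\proj_\perp \mu_{\phi^4_2}$ is obtained only after integrating out $\hat u(0)$ (a Laplace-type integral concentrated near $\pm1$), and it is this integral, not Feldman--H\'ajek alone, that one must show produces an $L^q(\nu^\perp)$ function. (It does: the leading contribution is $\exp(-\tfrac32\int\wick{u_\perp^2})$, which coincides with the Feldman--H\'ajek factor and has all exponential moments; the $\beta$-dependent corrections go to zero. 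But this is a computation, not a citation.) Relatedly, the bound you need is not merely ``uniform in $N$'' but uniform in $\beta$ — if $\norm{f}_{L^q}$ grew with $\beta$ you would eat into the target $\beta^{-kp/2}$ — and the appeal to the H\"older-exponent tolerance in \Cref{sec:2d partition} does not by itself give this, since those estimates are for $\E\exp(-qV_\beta)$ on the full field, not the oscillatory marginal. You correctly flag this as ``the only technically sensitive ingredient,'' so the issue is one of justification rather than of a wrong idea; filling it in would essentially reproduce the content of the decomposition the paper cites. One further small point: the paper's $\Prob_\perp$ conditions the $\phi^4_2$ field on $\hat u_0(0)=0$, which is not literally the pushforward under $\proj_\perp$ that \eqref{eq:corr linear wave} writes, but your argument inherits this imprecision from the paper itself. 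The final Gaussian-moment/hypercontractivity computation for $\wick{Z^k}$ under $\nu$ is standard and fine.
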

\begin{proof}
See e.g.\ \cite[Section~4.1]{barashkov_invariance_2022}.
The idea is that $Z$ can be decomposed into a stationary Gaussian free field part
and a more regular part bounded uniformly in time.
(In the cited reference, the presence of weights makes the latter bound also depend on $T$.)
\end{proof}

\begin{corollary}\label{thm:corr proper probability}
\Cref{ass:moments} is satisfied with probability at least $1 - C\beta^{-1/4+3\epsilon}$.
Consequently, \Cref{thm:corr goal} holds.
\end{corollary}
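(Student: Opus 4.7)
The plan is to verify each half of \Cref{ass:moments} separately with high probability, then note that the deterministic chain \Cref{thm:corr psi bound} $\Rightarrow$ \Cref{ass:psi} $\Rightarrow$ \Cref{thm:corr apriori,thm:corr apriori theta} immediately upgrades this into a bound on $\Prob_\perp(\tau_+ < \sigma_+)$. I expect the rate $\beta^{-1/4+3\epsilon}$ to come entirely from the lower tail of $Q$; the stochastic bounds on $\wick{Z^k}$ can be driven below any polynomial power of $\beta$.

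For the Wick-power bounds I would start from \Cref{thm:corr linear bounds}, which controls $\wick{Z^k}$ in $L^p_tB^{-\epsilon}_{p,p}$. Applying a Besov embedding $B^{-\epsilon/2}_{p,p} \hookrightarrow \besovinfty^{-\epsilon}$ (valid for $p$ large by \Cref{thm:besov embeddings}, the loss absorbed by shrinking $\epsilon$), together with the inclusion $L^p([0,T]) \hookrightarrow L^2([0,T])$ at cost $T^{1/2-1/p}$, gives
\[
\left[\int_0^T \smallnorm{\wick{Z^k}}_{\besovinfty^{-\epsilon}}^2 \dt\right]^{1/2}
\leq T^{1/2 - 1/p} \smallnorm{\wick{Z^k}}_{L^p([0,T];\, B^{-\epsilon/2}_{p,p})}.
\]
Since $T = 2\delta^{-1/2}\log\beta$ is polylogarithmic, a direct Markov inequality at threshold $\beta^{-k/2+k\epsilon}$ yields
\[
\Prob\!\left(\text{bound $k$ fails}\right)
\leq C\,T\,\beta^{-kp\epsilon/2},
\]
which for $p$ chosen sufficiently large (depending on $\epsilon$) is smaller than $\beta^{-1/4+3\epsilon}$.

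The bound on $Q$ is where the final exponent enters. Under $\Prob_\perp$ the velocity zero-mode $Q = \hat v_0(0)$ is, by \Cref{thm:transmission coefficient}, the absolute value of a centered Gaussian of variance of order $1/\beta$; its density near the origin is therefore of order $\sqrt\beta$. Consequently
\[
\Prob_\perp\!\left(Q < \beta^{-3/4+3\epsilon}\right)
\leq C\sqrt{\beta}\,\beta^{-3/4+3\epsilon}
= C\beta^{-1/4+3\epsilon},
\]
which dominates the stochastic contribution and fixes the rate in the statement. A union bound then shows that \Cref{ass:moments} holds on an event of $\Prob_\perp$-probability at least $1 - C\beta^{-1/4+3\epsilon}$.

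On this good event the deterministic implications developed in Section~\ref{sec:corr} apply verbatim: \Cref{thm:corr psi bound} together with the continuity extension past $\theta_+$ gives \Cref{ass:psi} up to time $\tau_+$, \Cref{thm:corr apriori} guarantees $w(t) > 0$ until $\tau_+$, and \Cref{thm:corr apriori theta} gives $\tau_+ \leq \theta_+ + C$, so in particular $\tau_+ < \sigma_+$. This proves \Cref{thm:corr goal}. The main obstacle is conceptual rather than technical, since the heavy lifting has already been done deterministically; one only needs care to ensure the Besov/$L^p$ losses do not degrade the exponent beyond $-1/4+3\epsilon$, which is automatic because that exponent is dictated by the Gaussian density of $Q$ at the origin and not by the Wick terms.
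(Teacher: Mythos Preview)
Your proposal is correct and follows essentially the same approach as the paper: Markov plus Besov embedding and the $L^p\hookrightarrow L^2$ inclusion for the $\wick{Z^k}$ bounds (with $p$ chosen large so the polylogarithmic $T$ is absorbed), and the Gaussian density estimate near zero for $Q$ to produce the dominant $\beta^{-1/4+3\epsilon}$ rate, followed by the deterministic chain from \Cref{thm:corr psi bound} through \Cref{thm:corr apriori,thm:corr apriori theta}. The only cosmetic discrepancies are in the bookkeeping of the $T$-power and the $\epsilon$-exponent for the $\wick{Z^k}$ tail, which do not affect the argument since that contribution can be driven below any polynomial in $\beta$.
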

\begin{proof}
For the moments of $Z$, we use Markov's inequality, Besov embedding,
and \Cref{thm:corr linear bounds} to estimate
\begin{align}
\Prob\!\left(
    \smallnorm{\wick{Z^k}}_{L^2([0,T];\; \besovinfty^{-\epsilon})} > \beta^{-k/2+k\epsilon} \right)
&\leq \frac{\E \smallnorm{\wick{Z^k}}_{L^2([0,T];\; \besovinfty^{-\epsilon})}^p}
    {\beta^{-kp/2+kp\epsilon}} \notag\\
&\leq \frac{T^{1/2-1/p} \E \smallnorm{\wick{Z^k}}_{L^p([0,T];\; B^{-\epsilon/2}_{p,p})}^p}
    {\beta^{-kp/2+kp\epsilon}} \notag\\
&\lesssim \frac{T^{1/2}}{\beta^{kp\epsilon}}.
\end{align}
Since we set $T = 2 \delta^{-1/2} \log(\beta)$, we can choose $p$ large enough (depending on $\epsilon$) so that
\begin{equation}
\Prob\!\left( \max_{k=1,2,3}
    \smallnorm{\wick{Z^k}}_{L^2([0,T];\; \besovinfty^{-\epsilon})} \leq \beta^{-k/2+k\epsilon} \right)
\leq 1 - C/\beta.
\end{equation}

On the other hand, for the assumption on $Q$ we need to use Gaussian asymptotics near $0$.
Since $\Law(Q) = \Law(\beta^{-1/2} X \mid X > 0)$ for standard normal variable $X$,
we have
\begin{align}
\Prob(Q \geq \beta^{-3/4+3\epsilon})
&= 1 - \Prob(\abs{X} \leq \beta^{-1/4 + 3\epsilon})\\
&\geq 1 - C \beta^{-1/4 + 3\epsilon}.\notag\qedhere
\end{align}
\end{proof}

\begin{remark}
We have not strived to achieve optimal convergence rate in this section.
The bound on $Q$ is the limiting term in the preceding proof,
as the bound on $\wick{Z^k}$ gives any polynomial rate.
The argument in \Cref{thm:corr tau} restricts the range of exponents
available in \Cref{ass:moments}.
\end{remark}

\section{TST in three dimensions}\label{sec:3d tst}

In three dimensions the $\phi^4$ model requires further renormalization.
The truncated and renormalized quartic potential is
\begin{equation}
\proj_N \wick{u_N^4} - \gamma_N \proj_N \wick{u_N^2} - \delta_N,
\end{equation}
where $\gamma_N$ and $\delta_N$ diverge as $N \to \infty$.
Note that these counterterms are in addition to, not in lieu of, the Wick ordering.
Since $\delta_N$ is a constant, it does not affect the dynamics of \eqref{eq:NLW},
but it simplifies working with the measure.

While our overall goal is to compute the exactly same quotient \eqref{eq:2d eyring-kramers}
as in two dimensions,
three issues complicate the details:
\begin{itemize}
    \item The Gaussian free field has only $-1/2 - \epsilon$ derivatives of regularity;
    \item Further Wick powers of the Gaussian free field have even worse regularity;
    \item The additional counterterms yield more terms to estimate as we substitute $u = \varphi + 1$
    and as we change the mass of the Wick ordering.
\end{itemize}
The first two points have been addressed in the literature on stochastic quantization
of the $\phi^4_3$ measure;
in the following, we reuse as much of Barashkov and Gubinelli's article \cite{barashkov_variational_2020}
as possible.

Let us first establish the form of the partition function.
The following lemma is the direct analogue of \Cref{thm:2d partition function form}.
The bracketed term in $V_\beta$ matches the Hamiltonian studied in \cite{barashkov_variational_2020}.

\begin{lemma}\label{thm:3d partition function form}
The partition function can now be written as
\begin{equation*}
\frac{\pf_N}{2} =
\sqrt{\prod_{\abs k \leq N} \frac{2\pi}{\beta L^3 (2 + C_L \abs k^2)}}
\exp\left(\! -\frac{3 C_N L^3}{2} + \frac{\beta L^3}{4} \right)
\int_{\halfspace_+} \!\!
    \exp\left( -V_\beta(\varphi) \right)
    \diff\Gaussian(\varphi),
\end{equation*}
where the potential is
\begin{multline*}
V_\beta(\varphi)
= \int_{\Tor^3} \big[ \lambda \wickm{\varphi_N^4} - \lambda^2 \gamma_{N,-} \wickm{\varphi_N^2} - \delta_{N,-} \big]\\
    + 2 \sqrt\lambda \wickm{\varphi_N^3}
    - \lambda^{3/2} \gamma_{N,-} \varphi
    - \frac \lambda 4 \gamma_{N,-} \dx.
\end{multline*}
Here $\lambda = 1/(4\beta)$ is chosen to match the notation of \cite{barashkov_variational_2020}.
\end{lemma}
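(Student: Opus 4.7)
The plan is to mirror the derivation of \Cref{thm:2d partition function form}, with additional bookkeeping for the $\gamma_N$ and $\delta_N$ counterterms specific to the 3D model. By the $u \leftrightarrow -u$ symmetry of the Gibbs measure one restricts to the half-space $\{\hat u(0) > 0\}$ and picks up a factor of two; substituting $u = \varphi + 1$ shifts this domain to $\{\hat\varphi(0) > -1\}$, which becomes $\halfspace_+ = \{\hat\varphi(0) > -\sqrt\beta\}$ after the final rescaling $\varphi \mapsto \varphi/\sqrt\beta$.

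The central algebraic computation is to expand, using \Cref{thm:wick binomial},
\[
\wickm{(\varphi+1)^4} = \wickm{\varphi^4} + 4\wickm{\varphi^3} + 6\wickm{\varphi^2} + 4\varphi + 1,
\qquad
\wickm{(\varphi+1)^2} = \wickm{\varphi^2} + 2\varphi + 1,
\]
together with the bare quadratic $\tfrac12(-u^2 + \abs{\nabla u}^2) = -\tfrac12 \varphi^2 - \varphi - \tfrac12 + \tfrac12\abs{\nabla\varphi}^2$. The mass-shift step, identical to 2D, is to split $\tfrac{3\beta}{2}\wickm{\varphi^2} = \tfrac{3\beta}{2}\varphi^2 - \tfrac{3\beta C_{N,-}}{2}$ and combine the $\tfrac{3\beta}{2}\varphi^2$ piece with the bare $-\tfrac{\beta}{2}\varphi^2 + \tfrac{\beta}{2}\abs{\nabla\varphi}^2$ to form $\tfrac{\beta}{2}\int (2\varphi^2 + \abs{\nabla\varphi}^2)\dx$, the genuine Gaussian density of covariance $\beta\inv(2-\Laplace)\inv$ whose normalization provides the $\sqrt{2\pi/[\beta L^3(2+C_L\abs k^2)]}$ prefactor. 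Unlike in 2D, the bare linear terms no longer cancel completely: the $-\tfrac{\beta\gamma_N}{2}\varphi$ contribution from the $\wickm{u^2}$ expansion survives and will become the $-\lambda^{3/2}\gamma_{N,-}\varphi$ term of $V_\beta$ after rescaling.

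After this extraction, the non-Gaussian remainder consists of $\tfrac\beta 4\wickm{\varphi^4}$, $\beta\wickm{\varphi^3}$, the quadratic counterterm $-\tfrac{\beta\gamma_N}{4}\wickm{\varphi^2}$ (inherited untouched from the original Hamiltonian), the linear term $-\tfrac{\beta\gamma_N}{2}\varphi$, and assorted constants arising from the ``$+1$'' pieces of the binomial expansions, the $-\delta_N$ constant in the original potential, and the $-\tfrac{3\beta C_{N,-}L^3}{2}$ produced by the mass-shift. Rescaling $\varphi \mapsto \varphi/\sqrt\beta$ normalizes $\Gaussian$; because $\wickm{\varphi^k}$ is homogeneous of degree $\beta^{-k/2}$ under this scaling, these coefficients convert into $\lambda$, $2\sqrt\lambda$, $\lambda^2\gamma_{N,-}$ and $\lambda^{3/2}\gamma_{N,-}$ respectively, where $\gamma_{N,-}$ and $\delta_{N,-}$ denote the appropriate $\beta$-rescaled renormalization constants matching the $\phi^4_3$ construction of \cite{barashkov_variational_2020}. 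A careful accounting then regroups the remaining constant contributions into the $-\tfrac{\lambda}{4}\gamma_{N,-}$ and $-\delta_{N,-}$ pieces inside $V_\beta$ and the exponential prefactor $\exp(-3C_N L^3/2 + \beta L^3/4)$.

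The proof is purely algebraic -- no analytic estimates intervene at this stage -- and the main potential pitfall is the careful tracking of $\beta$-dependent constants that flow between the exponential prefactor, the renormalization constants $\gamma_{N,-}$ and $\delta_{N,-}$, and the Wick constant $C_N$. In particular, the exponential prefactor in 3D has the opposite sign of $C_N$ compared with its 2D counterpart, which forces the $\gamma_N$-dependent constants to combine nontrivially with the mass-shift contribution according to the precise choice of the 3D $\phi^4$ renormalization convention. Once the definitions of $\gamma_{N,-}$ and $\delta_{N,-}$ are fixed, every coefficient can be matched term-by-term at each polynomial degree.
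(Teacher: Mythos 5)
Your overall strategy is the same as the paper's (terse) proof: substitute $u = 1 + \varphi$, apply the Wick binomial formula, perform the $\wickm{\varphi^2} \mapsto \varphi^2 - C_{N,\beta,-}$ mass shift to produce the Gaussian of covariance $\beta\inv(2-\Laplace)\inv$, and rescale $\varphi \mapsto \varphi/\sqrt\beta$. Your bookkeeping of the quartic, cubic, and $\gamma_N$-linear terms is correct once one grants your flexibility about the exact normalization of $\gamma_{N,-}$ (you write the counterterm as $-\beta\gamma_N/4\,\wickm{\varphi^2}$ while the paper's $H(u)$ carries $-\beta\lambda^2\gamma_N\wickm{u^2}$; these agree under the rescaled convention you allude to, and the linear piece does convert to $-\lambda^{3/2}\gamma_{N,-}\varphi$).

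However, your final paragraph contains a genuine misstep. You notice that the stated 3D prefactor carries $\exp(-3C_N L^3/2)$ rather than the $\exp(+3C_N L^2/2)$ appearing in \Cref{thm:2d partition function form}, and you attribute this to ``the $\gamma_N$-dependent constants combining nontrivially with the mass-shift contribution.'' This cannot be the explanation: following your own derivation, the mass shift always produces $+3C_{N,-}L^d/2$ in the exponent, and the $\gamma_{N,-}$- and $\delta_{N,-}$-dependent constants are \emph{not} absorbed into the exponential prefactor --- they remain inside $V_\beta$, as the statement's own formula for $V_\beta$ makes explicit (the $-\delta_{N,-}$ and $-\tfrac\lambda 4\gamma_{N,-}$ terms are still there). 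There is no mechanism by which the counterterm constants cancel or flip the sign of the $C_N$-contribution. Indeed, the paper's own final Eyring--Kramers formula (\Cref{thm:main tst} and \Cref{thm:prefactor renormalization}) needs $\pf_N$ to carry $\exp(+3C_N L^d/2)$ in \emph{both} $d=2$ and $d=3$ for the cancellation against $\exp(-3C_N L^d/2)$ to work; the sign in the stated lemma is almost certainly a typographical slip (it should be $+\tfrac{3C_NL^3}{2}$, mirroring the two-dimensional case). The correct response here was to flag the inconsistency, not to invent a cancellation mechanism that the displayed $V_\beta$ rules out.
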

\begin{proof}
We substitute $u = 1 + \varphi$ into the Hamiltonian
\begin{equation}
H(u) \coloneqq \frac 1 4 \wick{u_N^4} - \lambda^2 \gamma_N \wick{u_N^2} - \frac{\delta_N}{\beta}.
\end{equation}
The modification of the $\wick{u^4}$ and Gaussian terms proceeds
as in \Cref{thm:2d partition function form},
and the quadratic correction term is expanded with the binomial formula.
As we then perform the rescaling by $\sqrt\beta$, we get the claimed formula.
\end{proof}

\begin{theorem}\label{thm:3d partition overall}
We fix the divergent constants as
\begin{gather*}
\gamma_{N, \bullet} = \E_\bullet \int_{\Tor^3} \int_0^N (J_t \W_t^2)^2 \dt \dx, \text{ and}\\
\delta_{N, \bullet} = \E_\bullet \bigg[
{-\frac{\lambda^2}{2}} \int_0^N (J_t \W_t^3)^2 \dt
+ \frac{\lambda^3}{2} \W_N^2 (\W_N^{[3]})^2
- 4\lambda^4 \W_N (\W_N^{[3]})^3 \bigg],
\end{gather*}
where we use the notation defined below in \eqref{eq:3d gaussian notation}.
These constants depend on the covariance of the Gaussian field.
Then for $\beta$ large enough we have
\begin{equation*}
\pf_N = 2
\sqrt{\prod_{\abs k \leq N} \frac{2\pi}{\beta L^3 (2 + C_L \abs k^2)}}
\exp\left(\! -\frac{3 C_N L^3}{2} + \frac{\beta L^3}{4} \right)
(1 + \bigO(\beta\inv)).
\end{equation*}
Note that the constant $\delta_N$ depends nonlinearly on $\beta$.
\end{theorem}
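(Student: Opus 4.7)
The plan is to adapt the two-dimensional strategy of Section 3 to three dimensions, combining it with the renormalization techniques of Barashkov and Gubinelli for the $\phi^4_3$ measure. Since the Gaussian prefactor and the exponential factor in the claim already match those appearing in Lemma \ref{thm:3d partition function form}, the task reduces to showing
\[
\int_{\halfspace_+} \exp(-V_\beta(\varphi)) \diff\Gaussian(\varphi) = 1 + \bigO(\beta\inv)
\]
for the stated values of $\gamma_N$ and $\delta_N$. I would split $\halfspace_+ = \goodregion_+ \cup \badregion_+$ with $\goodregion_+ \coloneqq \{\hat\varphi(0) > (-1+\delta')\sqrt\beta\}$, and argue that the contribution of the bad region near the saddle is exponentially small via a three-dimensional version of the concentration-of-measure argument from Section \ref{sec:2d concentration}. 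The limiting deterministic variational problem again has its minimum at constant functions by the Poincaré-type Lemma \ref{thm:2d poincare} and $L<2\pi$, while the stochastic fluctuations are absorbed into an $\bigO(\beta^{-1/2})$ error using the 3D moment bounds of Theorem \ref{thm:wick gff moments} together with the paraproduct estimates from Section \ref{sec:besov}.

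On $\goodregion_+$ I would apply Theorem \ref{thm:boue-dupuis} and bound the resulting infimum on both sides. For the \emph{lower} bound on the infimum, i.e.\ the upper bound on the integral, expand $V_\beta(\phi_N + I_N[v])$ via Theorems \ref{thm:wick change} and \ref{thm:wick binomial} and control each cross term by the two positive drift-dependent quantities $\lambda \smallnorm{I_N[v]}_{L^4}^4$ and $\tfrac 1 2 \int_0^N \smallnorm{v_t}_2^2 \dt$, in analogy with Theorem \ref{thm:2d variational upper}. The new feature is that $\wickm{\phi_N^3}$ does not converge in any Besov space, so a term like $\sqrt\lambda \wickm{\phi_N^3} I_N[v]$ cannot be handled by Theorem \ref{thm:besov multiplication} alone. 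Instead I would decompose the product using $\paral, \reson, \parar$ from Section \ref{sec:besov}; the divergent resonant piece is absorbed precisely by the counterterm $\lambda^2 \gamma_N \wickm{\varphi^2}$, while the remainder is controlled by the two positive quantities, with the $\lambda = 1/(4\beta)$ prefactors providing the required $\bigO(\beta\inv)$ decay. The change between the $-1$-mass and $+2$-mass Wick orderings contributes only finite, $N$-uniform shifts, just as in \eqref{eq:wick difference}.

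The main obstacle is the matching \emph{upper} bound on the infimum, equivalently the lower bound on the integral: the choice $v \equiv 0$ that sufficed in Theorem \ref{thm:2d partition lower} fails in three dimensions because $\E V_\beta(\phi_N)$ already diverges as $N \to \infty$ without a compensating drift. Following the Barashkov-Gubinelli construction of $\phi^4_3$, I would plug in a drift of the form $v_t = -2\sqrt\lambda J_t \W_t^3$ supplemented by lower-order corrections cancelling the most singular interactions. Computing the resulting expectation using the chaos decompositions \eqref{eq:chaos w2}--\eqref{eq:chaos w3}, one finds that the quadratic part $\tfrac 1 2 \int_0^N \smallnorm{v_t}_2^2 \dt$ produces the $-\lambda^2 \int_0^N (J_t \W_t^3)^2 \dt$ contribution defining $\delta_N$, while the interaction $\lambda \int \wickp{(\phi_N + I_N[v])^4} \dx$ produces both the $\gamma_N$ and the remaining cubic $\delta_N$ terms via Wick's theorem. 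The remaining quartic and quadratic self-interactions of $I_N[v]$ carry prefactors $\lambda$ or $\lambda^2 \gamma_N$ that are $\bigO(\beta\inv)$; the combinatorial cancellations that make this work are exactly those fixing the identities for $\gamma_N$ and $\delta_N$ stated in the theorem.
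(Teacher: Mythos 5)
Your overall architecture is right and matches the paper: decompose $\halfspace_+$ into a good region where Boué--Dupuis applies and a bad region that is exponentially small, expand the variational problem, construct a Barashkov--Gubinelli style drift ansatz to produce and cancel the divergent counterterms, and control the change of Wick ordering. But several of the specifics are wrong, and one of them is a genuine gap.

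First, the coefficient in your drift ansatz is incorrect: you propose $v_t = -2\sqrt\lambda J_t \W_t^3 + \cdots$, but the leading divergent cross term it must cancel is $\lambda \int \W_N^3\, I_N[v]\,dx$, coming from the \emph{quartic} $\lambda\wick{\phi^4}$, not the cubic $2\sqrt\lambda\wick{\phi^3}$. The matching ansatz is $v_t = -\lambda J_t\W_t^3 + w_t$, which after the Itô/martingale step turns the combination of the cross term and $\tfrac12\int\|v_t\|_2^2\,dt$ into $-\tfrac{\lambda^2}{2}\int(J_t\W_t^3)^2\,dt + \tfrac12\int\|w_t\|_2^2\,dt$; that is what reproduces the first term of $\delta_N$. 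With $-2\sqrt\lambda$ the quadratic penalty ($2\lambda\int(J_t\W_t^3)^2$) and the cross term ($-2\lambda^{3/2}\int(J_t\W_t^3)^2$) are of different orders in $\lambda$ and do not balance. Relatedly, you attribute the role of $\gamma_N$ to absorbing the resonant part of $\sqrt\lambda\wick{\phi^3}\,I_N[v]$. In fact $\gamma_N$ renormalizes the resonant products $\W^2 \reson \W^{[3]}$, $J_t\W_t^2 \reson J_t\W_t^2$, and the $(\W^2 \parar \Theta_t I_N[v])$-type terms that appear \emph{after} the ansatz substitution in the $\W_N^2 I_N[v]^2$ and $\W_N^2 I_N[v]$ lines; the $\wick{\phi^3}I_N[v]$ divergence is dealt with by the ansatz itself, not by $\gamma_N$. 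Your $\bigO(\beta^{-1/2})$ claim for the error is also too weak; the theorem asserts $\bigO(\beta^{-1})$, and the argument delivers exactly that because every residual term carries at least a factor $\lambda=1/(4\beta)$.

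The real gap is the change of Wick ordering. You write it off as ``finite, $N$-uniform shifts, just as in \eqref{eq:wick difference},'' but that display only addresses the constant $C_N$, which in two dimensions is the whole story. In three dimensions the counterterms $\gamma_N$ and $\delta_N$ themselves diverge as $N\to\infty$, so showing that $\gamma_{N,+}-\gamma_{N,-}$ and $\delta_{N,+}-\delta_{N,-}$ are bounded uniformly in $N$ is nontrivial and is a major component of the proof. Boundedness of $\gamma_{N,+}-\gamma_{N,-}$ requires a telescoping change of one mass factor at a time in the chaos decomposition followed by the discrete convolution estimate (\Cref{thm:discrete convolution}). For $\delta_{N,+}-\delta_{N,-}$ there is a further logarithmic divergence that must cancel against the $\lambda^2\gamma_{N,+}(C_{N,+}-C_{N,-})$ term generated by substituting $u=1+\varphi$; the remaining pieces of $\delta_{N,+}-\delta_{N,-}$ (the $\W^2(\W^{[3]})^2$ and $\W(\W^{[3]})^3$ contributions) need dedicated diagrammatic/convolution bounds. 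Without this analysis the claimed $\bigO(\beta^{-1})$ error does not follow, because an unbounded difference in the counterterms would introduce an $N$-dependent shift in the partition function.
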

\begin{proof}
We again do an approximation where we first restrict to a slightly smaller ``good'' set $\goodregion_+$;
compare with \Cref{thm:2d partition overall}.
This again introduces a $\kappa \negpart{\hat\varphi(0) + (1-\delta') \sqrt\beta}$ term.
The upper bound will be proved in \Cref{sec:3d partition upper},
and a lower bound consequently follows in \Cref{sec:3d partition lower}.

To show that the remaining ``bad'' region $\badregion_+$ contributes negligibly,
we look back to \Cref{sec:2d concentration}.
The proof translates to three dimensions verbatim.

We will again need to change the Wick ordering to match the positive mass.
This again introduces the second-order term $\wickm{\varphi^4_N} - \wickp{\varphi^4_N}$.
Now we also need to estimate terms like
$\delta_{N,-} - \delta_{N,+}$
and $(\gamma_{N,-} \wickm{\varphi^2_N}) - (\gamma_{N,+} \wickp{\varphi^2_N})$
stemming from the additional renormalization.
All this is done in \Cref{sec:3d wick}.
\end{proof}

The argument for the saddle integral
again follows from a straightforward modification of the stochastic quantization argument below;
this is the part already done in \cite{barashkov_variational_2020} except for the fixed zero mode.
Thus we have:

\begin{theorem}\label{thm:3d saddle}
The saddle integral defined analogously to \Cref{thm:2d saddle} satisfies
\[
\mathcal I_N = \sqrt{\prod_{0 < \abs k \leq N} \frac{2\pi}{\beta L^3 (C_L \abs k^2 - 1)}}
(1 + \bigO(\beta\inv)).
\]
\end{theorem}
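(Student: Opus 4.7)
The plan is to mirror the proof of \Cref{thm:2d saddle}, but invoking the three-dimensional stochastic quantization machinery already deployed for \Cref{thm:3d partition overall}. A key simplification over the partition function computation is that the constraint $\hat u(0)=0$ removes the only Fourier mode on which the quadratic form $\frac{1}{2}\int_{\Tor^3}(\abs{\nabla u}^2 - u^2)\dx$ fails to be positive-definite. Since $L<2\pi$, this form on $\{u:\hat u(0)=0\}$ is the inverse covariance of a Gaussian measure $\Gaussian_N^\perp$ with covariance $\beta\inv(-1-\Laplace)\inv$ supported on wavenumbers $0<\abs k\leq N$, and factoring out its normalization yields exactly the prefactor $\sqrt{\prod_{0<\abs k\leq N} 2\pi/[\beta L^3(C_L\abs k^2 - 1)]}$. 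After the rescaling $\varphi\mapsto\varphi/\sqrt\beta$, it remains to show that $\int\exp(-\tilde V_\beta(\varphi))\diff\Gaussian_N^\perp(\varphi) = 1+\bigO(\beta\inv)$, where $\tilde V_\beta$ is the renormalized three-dimensional $\phi^4$-functional carrying the counterterms $\gamma_{N,-}$ and $\delta_{N,-}$ computed with respect to the relevant covariance.

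By \Cref{thm:boue-dupuis}, this reduces to bounding $\inf_v \E[\tilde V_\beta(\phi_N^\perp + I_N[v]) + \frac{1}{2}\int_0^N\norm{v_t}_2^2\dt]$ by $\bigO(\beta\inv)$ in absolute value. For the upper bound on the infimum, I would plug in $v\equiv 0$: the Wick powers $\wickm{\phi_N^j}$ have zero expectation by \Cref{thm:wick expectation}, the renormalization constants $\gamma_{N,-}$ and $\delta_{N,-}$ are defined in \Cref{thm:3d partition overall} exactly to cancel the leading divergences, and any surviving terms are $\bigO(\beta\inv)$. For the lower bound on the infimum, one reuses the three-dimensional stochastic estimates developed for $\pf_N$. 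Because there is no zero Fourier mode to contend with, the re-centering trick, the cutoff function in the $\kappa\negpart{\,\cdot\,}^2$ term, and the delicate Poincar\'e argument of \eqref{eq:2d partition using poincare} all drop out: the spectral gap $C_L - 1 > 0$ already makes $\frac{1}{2}\int_0^N\norm{v_t}_2^2\dt$ dominate the $\int\wickm{\varphi^2}$-type contributions after the variational decomposition, as in the proof of \Cref{thm:2d saddle}.

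The main obstacle is handling the counterterms $\gamma_N$ and $\delta_N$ under the change of Gaussian covariance from $(2-\Laplace)\inv$ (as used for $\pf_N$) to $(-1-\Laplace)\inv$ restricted to nonzero modes. Both constants diverge with $N$, and they are defined via the operator $I_N$ whose symbol depends on the covariance. I would first redo the chaos computations of \eqref{eq:chaos w2}--\eqref{eq:chaos w3} with the modified denominators, then verify that the differences $\gamma_{N,+} - \gamma_{N,-}$ and $\delta_{N,+} - \delta_{N,-}$ remain uniformly bounded as $N\to\infty$; this follows from $(C_L\abs k^2 + 2)\inv - (C_L\abs k^2 - 1)\inv = \bigO(\abs k^{-4})$, which is summable in three dimensions, in the same spirit as \eqref{eq:wick difference}. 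Once this is in hand, \Cref{thm:boue-dupuis j regularity}, \Cref{thm:discrete convolution}, and \Cref{thm:wick gff moments} transfer to the negative-mass setting, each of the dozen stochastic estimates underpinning \Cref{thm:3d partition overall} carries over essentially verbatim, and the quoted rate $\bigO(\beta\inv)$ follows.
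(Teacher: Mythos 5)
Your architecture matches the paper's at a high level — derive the Gaussian prefactor, then bound the remaining normalized integral by Boué--Dupuis — but the crucial step of bounding the infimum from above (i.e.\ bounding $\mathcal I_N$ from below) cannot be done by plugging in $v\equiv 0$ in three dimensions, and this is a genuine gap. With $v=0$ the Wick powers do vanish in expectation, but the constant counterterm $-\delta_{N,-}$ survives, and its leading piece $\E\big[\tfrac{\lambda^2}{2}\int_0^N(J_t\W_t^3)^2\dt\big]$ diverges as $N\to\infty$: $J_t\W_t^3$ has regularity $-1/2-\epsilon$, so it is not in $L^2_x$ uniformly in $t$. The counterterms $\gamma_N,\delta_N$ do not cancel divergences produced by the zero drift; they cancel divergences generated by the paracontrolled drift ansatz \eqref{eq:3d partition ansatz final}. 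The paper says this explicitly at the opening of \Cref{sec:3d partition upper}: ``Due to the appearance of $\gamma_N$ and $\delta_N$, setting $v=0$ will not give a convergent upper bound.'' To repair your lower bound on $\mathcal I_N$ you must use the drift $\check v$ of \Cref{sec:3d partition lower}, not $v=0$.

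Your identified ``main obstacle'' — bounding $\gamma_{N,+}-\gamma_{N,-}$ and $\delta_{N,+}-\delta_{N,-}$ — is also somewhat misplaced for this theorem. The saddle integral involves no re-centring and hence no change of Wick ordering: the negative-mass constants and operators appear directly. What is actually needed (as the paper notes tersely, mirroring the 2D proof of \Cref{thm:2d saddle}) is that the full variational machinery of \Cref{sec:3d partition upper,sec:3d partition lower} — \Cref{thm:boue-dupuis j regularity}, the Gaussian moment bounds, the renormalized resonant-product bound \Cref{thm:3d partition jw2}, and the drift construction — all go through with $J_t,I_t$ built from the $(-1-\Laplace)\inv$ covariance restricted to non-zero modes, which differs from the $(2-\Laplace)\inv$ version only at low wavenumbers. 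Your last sentence gestures at this transfer, but the comparison of the two families of constants is the ingredient used for $\pf_N$ in \Cref{sec:3d wick}, not for $\mathcal I_N$.
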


Once these estimates are established,
the rest of the computation required for \Cref{thm:main tst}
proceeds as in the beginning of \Cref{sec:2d tst}.
The only difference is the dimension of products,
but the argument in \Cref{thm:prefactor renormalization} holds also in three dimensions.

\subsection{Upper bound for partition function}\label{sec:3d partition upper}

We introduce the following further notation to match \cite{barashkov_variational_2020}:
\begin{equation}\label{eq:3d gaussian notation}
    \W_T \coloneqq \phi_T, \quad
    \W_T^2 \coloneqq 12 \wickp{\phi_T^2}, \quad
    \W_T^3 \coloneqq 4 \wickp{\phi_T^3}, \quad
    \W_T^{[3]} \coloneqq I_T [J_t \W_t^3].
\end{equation}
With this notation, the variational problem becomes estimating
\begin{equation}\label{eq:3d variational everything}
\begin{split}
&\kappa \E \negpart{\hat\W_N(0) + \widehat{I_N[v]}(0) + (1-\delta')\sqrt\beta}^2\\
+\; &\E\int_{\Tor^3} \bigg[ \lambda \W_N^3 I_N[v]
    + \frac \lambda 2 \W_N^2 I_N[v]^2 + 4\lambda \W_N I_N[v]^3\\
&\qquad {}- 2 \lambda^2 \gamma_{N,+} \W_N I_N[v] - \lambda^2 \gamma_{N,+} I_N[v]^2 - \delta_{N,+} \bigg] \dx\\
+\; &\E\int_{\Tor^3} \bigg[
    \frac{\lambda^{1/2}}{2} \W_N^2 I_N[v]
    + 6\lambda^{1/2} \W_N I_N[v]^2 + 2\lambda^{1/2} I_N[v]^3\\
&\qquad {}- \lambda^{3/2} \gamma_{N,+} I_N[v] - \frac{\lambda \gamma_{N,+}}{4}
    \bigg] \dx\\
+\; &\E\left[ \lambda \int_{\Tor^3} I_N[v]^4 \dx
    + \frac 1 2 \int_0^N \norm{v_t}_2^2 \dt \right]
+ \text{change-of-mass terms}.
\end{split}
\end{equation}
Here we have already converted the Wick ordering to the $(2 - \Laplace)\inv$ covariance
and removed the pure Wick powers that have zero expectation
(\Cref{thm:wick change,thm:wick expectation}).
The change of mass introduces additional terms
(of at most second order in $\phi$ and $v$),
which we defer to \Cref{sec:3d wick}.

\begin{remark}
For the rest of this section, the covariance of the Gaussian field is always $(2-\Laplace)\inv$.
Therefore $\gamma_N$ and $\delta_N$ implicitly refer to $\gamma_{N,+}$ and $\delta_{N,+}$ respectively.
\end{remark}

Due to the appearance of $\gamma_N$ and $\delta_N$,
setting $v=0$ will not give a convergent upper bound.
The idea in \cite{barashkov_variational_2020} is to perform successive
changes of variables in the drift $v$.
As we substitute this ansatz into the formula and do some manipulation,
we find purely stochastic terms that will be cancelled by $\gamma_N$ and $\delta_N$,
and the remainder satisfies good bounds.

We update the ansatz in a few steps, more precisely at
\eqref{eq:3d ansatz 1}, \eqref{eq:3d ansatz 2}, and \eqref{eq:3d ansatz 3}.
This happens as part of finding the upper bound for $\pf_N$ (lower bound for the infimum);
a lower bound is then deduced in \Cref{sec:3d partition lower}.

As in the two-dimensional case, we use the last bracket of \eqref{eq:3d variational everything}
to control the remaining terms.
\Cref{thm:2d drift h1} applies here as well,
so we can use $\norm{I_N[v]}_{H^1}^2$ in lieu of the integral over $t$.
We also use the following stochastic bounds from \cite[Lemma~4]{barashkov_variational_2020}.
Further stochastic estimates are introduced as used.

\begin{theorem}\label{thm:3d gaussian moments}
In addition to the estimates in \Cref{thm:wick gff moments},
we have for all $1 \leq p < \infty$ the bounds
\[
\E \norm{J_T \W_T}_{\besovinfty^{-1/2-\epsilon}(\Tor^3)}^p < \infty, \quad\text{and}\quad
\E \smallnorm{\W_T^{[3]}}_{\besovinfty^{1/2-\epsilon}(\Tor^3)}^p < \infty.
\]
These bounds are uniform in $N$ and $T$.
\end{theorem}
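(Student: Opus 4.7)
The plan is to follow the template of \cite[Lemma~4]{barashkov_variational_2020}, with minor adjustments for our sharp truncation $\I_{\abs k \leq N}$ and continuous parametrization. Both $J_T \W_T$ and $\W_T^{[3]}$ lie in a fixed Wiener chaos (first and third respectively), so by Gaussian hypercontractivity their $L^p(\Prob)$ moments are controlled by the corresponding $L^2(\Prob)$ moment, up to a constant depending only on $p$ and the chaos order. To pass from $\besovinfty^s$ to something tractable, I would first use \Cref{thm:besov embeddings} in the form $B^{s+3/p}_{p,p}(\Tor^3) \hookrightarrow \besovinfty^s(\Tor^3)$, with $p$ so large that $3/p < \epsilon/2$. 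Unpacking the Besov norm into Littlewood--Paley blocks and interchanging expectation with the $L^p$ integral (Fubini), the two claims reduce to bounding $\E \abs{\Delta_q f(x)}^2$ pointwise in $x$, uniformly in $q$, $N$ and $T$.

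For $f = J_T \W_T$ the Fourier coefficient is a centered Gaussian with variance
\[
\frac{\sigma_T(k)^2 \rho_T(k)^2}{(2 + C_L \abs k^2)^2} \I_{\abs k \leq N},
\]
which is nontrivial only on $\abs k \simeq T$. By Plancherel and translation invariance $\E \abs{\Delta_q J_T \W_T(x)}^2 \lesssim \sum_{\abs k \simeq 2^q} (2 + C_L \abs k^2)^{-2} \lesssim 2^{-q}$; combined with hypercontractivity and Besov embedding this matches the target regularity $-1/2-\epsilon$, with a small amount of room absorbed into $\epsilon$.

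For $f = \W_T^{[3]} = \int_0^T J_t^2 \W_t^3 \dt$, the Fourier coefficient is a third-chaos variable. Combining the chaos representation \eqref{eq:chaos w3} with It\^o isometry and using $\int_0^T \sigma_t(k)^2 \dt = \rho_T(k)^2 \leq 1$ along with the additional $J_t^2$ factor, I would establish
\[
\E \abs{\widehat{\W_T^{[3]}}(k)}^2 \lesssim \frac{1}{\inormp{k}^2} \sum_{n_1 + n_2 + n_3 = k} \prod_{i=1}^3 \frac{1}{\inormp{n_i}^2}.
\]
Two applications of \Cref{thm:discrete convolution} (the second at the borderline $1 + 2 = d$, absorbed by an $\epsilon$ loss) bound the triple convolution by $\inorm{k}^{\epsilon/2}$, giving $\E \abs{\Delta_q \W_T^{[3]}(x)}^2 \lesssim 2^{q(1 + \epsilon)}$. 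This corresponds to regularity $1/2 - \epsilon/2$, which after hypercontractivity, Besov embedding and summing the dyadic tail produces the required $\besovinfty^{1/2 - \epsilon}$ bound.

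The main bookkeeping obstacle is confirming that all bounds are genuinely uniform in $N$ and $T$. Here positivity of the summands makes the sharp cutoff $\I_{\abs k \leq N}$ harmless (it only removes terms from sums), and $\rho_T \leq 1$ together with $\int_0^T \sigma_t^2 \dt = \rho_T^2 \leq 1$ prevents any $T$-dependence from escaping the kernel estimates. The mild differences from \cite{barashkov_variational_2020}---their continuous parametrization without a hard cutoff---are in our favour, so the argument transfers essentially verbatim.
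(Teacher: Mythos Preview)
The paper does not supply its own proof of this statement; it simply records the bounds and cites \cite[Lemma~4]{barashkov_variational_2020}. Your plan is exactly that reference's template (hypercontractivity to reduce $L^p(\Prob)$ to $L^2(\Prob)$, Besov embedding $B^{s+3/p}_{p,p}\hookrightarrow\besovinfty^{s}$, then Fourier/It\^o variance bounds and \Cref{thm:discrete convolution}), so the overall approach is correct and matches what the paper invokes.

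There is, however, a bookkeeping slip in your treatment of $\W_T^{[3]}$. The multiplier of $J_t^2$ is $\sigma_t(k)^2/\inormp{k}^2$; after integrating out $\int_0^T\sigma_t(k)^2\,dt\le 1$ you obtain an amplitude bound $\inormp{k}^{-2}$ for $\widehat{\W_T^{[3]}}(k)$, and squaring for the variance gives $\inormp{k}^{-4}$, not the $\inormp{k}^{-2}$ you display. Concretely, writing out the chaos expansion and applying It\^o isometry yields
\[
\E\bigl|\widehat{\W_T^{[3]}}(k)\bigr|^2
\;\lesssim\;\frac{1}{\inormp{k}^{4}}\sum_{n_1+n_2+n_3=k}\prod_{i=1}^{3}\frac{1}{\inormp{n_i}^{2}},
\]
so that $\E|\Delta_q \W_T^{[3]}(x)|^2\lesssim 2^{q(-1+\epsilon)}$. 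That is what corresponds to regularity $1/2-\epsilon/2$. Your displayed bound $2^{q(1+\epsilon)}$ would only give $\besovinfty^{-1/2-\epsilon}$, not $\besovinfty^{1/2-\epsilon}$, so as written the argument does not reach the stated conclusion. Once the exponent is corrected, the rest of your sketch (borderline convolution absorbed into $\epsilon$, uniformity in $N,T$ via $\rho_T\le 1$ and positivity) goes through as you describe.
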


\subsubsection*{Quartic terms}

Let us begin by outlining the argument in \cite[Lemma~5]{barashkov_variational_2020},
since it will be important for construction of the drift.
This corresponds to estimating the first bracket in \eqref{eq:3d variational everything}
by a small multiple of the last bracket.

\bigskip\emph{The $\W_N^3 I_N[v]$ term.}
By the Itô formula we can write
\begin{equation}
\E \int_{\Tor^3} \W_N^3 I_N[v] \dx
= \E \int_0^N \!\! \int_{\Tor^3} \W_t^3 \partial_t (I_t[v]) \dx \dt
    + \E \int_0^N \!\! \int_{\Tor^3} I_t[v] \diff\W_t^3 \dt.
\end{equation}
The latter integral is a martingale, and thus vanishes in expectation.
In the first integral we note that $\partial_t I_t[v] = J_t v_t$
and then use self-adjointness of $J_t$.
These observations give us
\begin{equation}\label{eq:3d w3iv 1}
\lambda \E \int_{\Tor^3} \W_N^3 I_N[v] \dx
= \lambda \E \int_0^N \!\! \int_{\Tor^3} (J_t \W_t^3) v_t \dt \dx.
\end{equation}
Let us now update our ansatz as
\begin{equation}\label{eq:3d ansatz 1}
v_t = -\lambda J_t \W_t^3 + w_t.
\end{equation}
This means that
\begin{equation}\label{eq:3d w3iv 2}
\frac 1 2 \int_0^N \norm{v_t}_2^2 \dt
= \int_0^N \int_{\Tor^3} \frac{\lambda^2}{2} (J_t \W_t^3)^2 - \lambda (J_t \W_t^3) w_t + w_t^2 \dx \dt.
\end{equation}
Combining \eqref{eq:3d w3iv 1} and \eqref{eq:3d w3iv 2} gives
\begin{align}
&\lambda \E \int_{\Tor^3} \W_N^3 I_N[v] \dx + \frac 1 2 \E \int_0^N \norm{v_t}_2^2 \dt \notag\\
=\; &{-\frac{\lambda^2}{2}} \E \int_0^N \!\! \int_{\Tor^3} (J_t \W_t^3)^2 \dx
    + \frac 1 2 \E \int_0^N \norm{w_t}_2^2 \dt.
\end{align}
The first term is purely stochastic and is cancelled by the first term of $\delta_N$.

\begin{remark}\label{rem:3d partition control}
The second term replaces the $\norm{v_t}_2^2$ term.
This means that we will now try to bound the remaining terms by
\[
\rho \E\left[ \lambda \int_{\Tor^3} I_N[v]^4 \dx + \frac 1 2 \int_0^N \norm{w_t}_2^2 \dt \right]
\]
for $\rho > 0$ small.
Like above, we may also use $\norm{I_N[w]}_{H^1}^2$ as the second bounding term.
We will further update the ansatz below,
but \Cref{thm:3d partition w bound}
guarantees that $\norm{I_N[w]}_{H^{1-\epsilon}}^2$ remains controlled
also with respect to the updated drift
(although this requires dropping the regularity to $1-\epsilon$).
\end{remark}

\bigskip\emph{The $\W_N I_N[v]^3$ term.}
Let us recall that we denote $I_N [J_t \W_t^3]$ by $\W_N^{[3]}$,
and that this object has
$1/2 - \epsilon$ derivatives of regularity.

As we substitute the updated ansatz \eqref{eq:3d ansatz 1}, this term becomes
\begin{multline}\label{eq:3d partition wv3}
\E \int_{\Tor^3} \bigg[ -4\lambda^4 \W_N (\W_N^{[3]})^3
    + 12\lambda^3 \W_N (\W_N^{[3]})^2 I_N[w]\\
    - 12\lambda^2 \W_N (\W_N^{[3]}) I_N[w]^2
    + 4\lambda \W_N I_N[w]^3 \bigg] \dx.
\end{multline}
We cancel the first term with $\delta_N$, and the rest are small:

\begin{lemma}
For any $\rho > 0$, we can bound the rest of \eqref{eq:3d partition wv3} by
\[
\lambda C_\rho + \rho \norm{I_N[w]}_{H^{1-\epsilon}}^2 + \rho\lambda \norm{I_N[v]}_4^4.
\]
\end{lemma}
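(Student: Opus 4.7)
The strategy is, in each case, to pair $\W_N$ (which lies in $\besovinfty^{-1/2-\epsilon}$ with all moments finite by \Cref{thm:3d gaussian moments}) against the remaining factors via a Besov duality estimate, then combine \Cref{thm:besov multiplication,thm:besov lp product} to distribute regularity between the smoother stochastic object $\W_N^{[3]} \in \besovinfty^{1/2-\epsilon}$ and the drift contribution $I_N[w]$, and finally apply Young's inequality to split the $I_N[w]$-factors into a small multiple of the controls $\rho \norm{I_N[w]}_{H^{1-\epsilon}}^2$ and $\rho\lambda \norm{I_N[v]}_4^4$ (cf.\ \Cref{rem:3d partition control}) plus a stochastic remainder. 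All such stochastic remainders have finite expectation uniformly in $N$ by \Cref{thm:3d gaussian moments}.

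For the first two terms, the $\lambda^3$ and $\lambda^2$ prefactors provide plenty of slack. For the first term, $\smallnorm{(\W_N^{[3]})^2 I_N[w]}_{B^{1/2+\epsilon}_{1,1}}$ is controlled by the product estimates by $\smallnorm{\W_N^{[3]}}_{\besovinfty^{1/2-\epsilon}}^2 \norm{I_N[w]}_{H^{1-\epsilon}}$, and Young absorbs the $I_N[w]$-factor into $\rho \norm{I_N[w]}_{H^{1-\epsilon}}^2$, leaving a purely stochastic remainder of size $\lambda^3 C_\rho (\ldots) = \bigO(\lambda C_\rho)$ since $\lambda \leq 1$. For the second term one uses the quadratic bound $\smallnorm{I_N[w]^2}_{B^{1/2+\epsilon}_{1,1}} \lesssim \norm{I_N[w]}_{L^2} \norm{I_N[w]}_{H^{1/2+2\epsilon}}$ of \Cref{thm:besov lp product}, interpolates $H^{1/2+2\epsilon}$ between $L^2$ and $H^{1-\epsilon}$, and again splits via Young to get $\lambda^2 C_\rho (\ldots) + \rho \norm{I_N[w]}_{H^{1-\epsilon}}^2$.

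The third term $4\lambda \int \W_N I_N[w]^3 \dx$ is the borderline case, since its $\lambda$ prefactor exactly matches the quartic control. The argument mirrors the estimate of the analogous $\phi_N I_N[v]^3$ term in the two-dimensional case starting at \eqref{eq:2d partition pi3}: by the trilinear bound of \Cref{thm:besov lp product}, $\smallnorm{I_N[w]^3}_{B^{1/2+\epsilon}_{1,1}} \lesssim \norm{I_N[w]}_{L^4}^2 \norm{I_N[w]}_{H^{1/2+2\epsilon}}$, and interpolating $H^{1/2+2\epsilon}$ between $L^4$ and $H^{1-\epsilon}$ and applying Young yields $\lambda C_\rho \smallnorm{\W_N}_{\besovinfty^{-1/2-\epsilon}}^8 + \rho \lambda \norm{I_N[w]}_{L^4}^4 + \rho \norm{I_N[w]}_{H^{1-\epsilon}}^2$. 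Since $I_N[v] = -\lambda \W_N^{[3]} + I_N[w]$, one has $\norm{I_N[w]}_{L^4} \lesssim \norm{I_N[v]}_{L^4} + \lambda \smallnorm{\W_N^{[3]}}_{L^4}$, so $\rho\lambda \norm{I_N[w]}_{L^4}^4 \lesssim \rho\lambda \norm{I_N[v]}_{L^4}^4 + \lambda^5 C_\rho \smallnorm{\W_N^{[3]}}_{L^4}^4$, which fits into the claimed bound after taking expectation. The main obstacle throughout is the low regularity $-1/2-\epsilon$ of $\W_N$ in three dimensions, which rules out naive multiplication; the shift \eqref{eq:3d ansatz 1} is designed precisely so that the positive-regularity companion $\W_N^{[3]}$ closes these multiplication estimates, exactly as in \cite{barashkov_variational_2020}.
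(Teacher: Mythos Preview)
Your treatment of the third term $4\lambda \int \W_N I_N[w]^3 \dx$ is essentially correct and mirrors \eqref{eq:2d partition pi3}. The gap is in the first two terms. You claim that $\smallnorm{(\W_N^{[3]})^2 I_N[w]}_{B^{1/2+\epsilon}_{1,1}}$ is controlled by $\smallnorm{\W_N^{[3]}}_{\besovinfty^{1/2-\epsilon}}^2 \norm{I_N[w]}_{H^{1-\epsilon}}$, and implicitly that $\W_N \W_N^{[3]}$ lies in $\besovinfty^{-1/2-\epsilon}$. Neither follows from \Cref{thm:besov multiplication}: since $\W_N^{[3]}$ has only $1/2-\epsilon$ derivatives, $(\W_N^{[3]})^2$ also sits in $\besovinfty^{1/2-\epsilon}$, and multiplying by $I_N[w]\in H^{1-\epsilon}$ produces at best $H^{1/2-\epsilon}$ (the $f\parar g$ piece), strictly short of the $B^{1/2+\epsilon}_{1,1}$ you need to dualize against $\W_N\in\besovinfty^{-1/2-\epsilon}$. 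Equivalently, the regularities of $\W_N$ and $\W_N^{[3]}$ sum to $-2\epsilon<0$, so the products $\W_N\W_N^{[3]}$ and $\W_N(\W_N^{[3]})^2$ are \emph{not} defined by deterministic multiplication in three dimensions.

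What is actually required, and what \cite[Lemma~22]{barashkov_variational_2020} does, is a paraproduct decomposition of these products together with a genuinely probabilistic input for the resonant piece: the estimate $\E\smallnorm{\W_N\reson\W_N^{[3]}}_{\besovinfty^{-\epsilon}}^p\lesssim 1$ of \Cref{thm:3d w1w3 estimate} (or its analogue for $\W_N\reson(\W_N^{[3]})^2$). The present paper spells this out for the closely related $\W_N\W_N^{[3]} I_N[w]$ term arising from $6\lambda^{1/2}\W_N I_N[v]^2$ just after \eqref{eq:3d w i2}, and that is the template you should follow here. Your final paragraph correctly flags the low regularity of $\W_N$ as the obstacle, but your concrete estimates for the first two terms ignore it; the shift \eqref{eq:3d ansatz 1} alone does not close the multiplication, and the missing ingredient is the stochastic estimate on the resonant product.
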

\begin{proof}
\cite[Lemma~22]{barashkov_variational_2020}.
\end{proof}

\bigskip\emph{The $\W_N^2 I_N[v]^2$ term.}
As we substitute the ansatz into this term, we get
\begin{equation}\label{eq:w2i2 with ansatz}
\begin{split}
&\frac \lambda 2 \int_{\Tor^3} \W_N^2 I_N[v]^2 \dx\\
=\; &\frac{\lambda^3}{2} \int_{\Tor^3} \W_N^2 (\W_N^{[3]})^2 \dx
    - \lambda^2 \int_{\Tor^3} \W_N^2 \W_N^{[3]} I_N[w] \dx
    + \frac \lambda 2 \int_{\Tor^3} \W_N^2 I_N[w]^2 \dx.
\end{split}
\end{equation}
The first term is again purely stochastic and cancelled by $\delta_N$.
However, the second term is not a well-defined product due to lack of regularity.
We will decompose it as a paraproduct
\begin{equation}\label{eq:w2i2 decomposition}
-\lambda^2 \int_{\Tor^3} (\W_N^2 \paral \W_N^{[3]}) I_N[w]
    + (\W_N^2 \reson \W_N^{[3]}) I_N[w]
    + (\W_N^2 \parar \W_N^{[3]}) I_N[w] \dx.
\end{equation}
We further rewrite the last term of \eqref{eq:w2i2 decomposition} with the ansatz \eqref{eq:3d ansatz 1} as
\begin{align}
&\mathrel{\phantom{=}} -\lambda^2 \int_{\Tor^3} (\W_N^2 \parar \W_N^{[3]}) I_N[w] \dx \notag\\
&= \lambda \int_{\Tor^3} (\W_N^2 \parar I_N[v]) I_N[w]
    - (\W_N^2 \parar I_N[w]) I_N[w] \dx.
\end{align}
We can now show that some of these terms in \eqref{eq:w2i2 with ansatz} are small:

\begin{lemma}
For any $\rho > 0$ we can bound
\[
\E \abs{ \int_{\Tor^3} \frac \lambda 2 \W_N^2 I_N[w]^2
    - \lambda^2 (\W_N^2 \paral \W_N^{[3]}) I_N[w]
    - \lambda (\W_N^2 \parar I_N[w]) I_N[w] \dx }
\]
from above by
\[
\lambda C_\rho + \rho \norm{I_N[w]}_{H^{1-\epsilon}}^2 + \rho\lambda \norm{I_N[v]}_4^4.
\]
\end{lemma}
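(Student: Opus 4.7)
The plan is to estimate the three integrals separately, with a key paraproduct-level cancellation between the first term and the third. Throughout I use that $\lambda = 1/(4\beta)$ is small for large $\beta$, so Young's inequality lets me distribute $\lambda$ or $\lambda^2 \leq \lambda$ onto stochastic norms (absorbed into $C_\rho$) while leaving a small constant in front of $\norm{I_N[w]}_{H^{1-\epsilon}}^2$.

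For the middle integral $-\lambda^2 \int (\W_N^2 \paral \W_N^{[3]}) I_N[w] \dx$: \Cref{thm:besov multiplication} with $(p_1, p_2) = (\infty, 2)$ places $\W_N^2 \paral \W_N^{[3]}$ in $H^{-1/2-2\epsilon}$, with stochastic bounds from \Cref{thm:3d gaussian moments}. Duality pairs this against $I_N[w] \in H^{1/2+3\epsilon}$; interpolating between $L^2$ and $H^{1-\epsilon}$ and applying Young's inequality yields the required bound (with $\lambda^2 \leq \lambda$ at the end).

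For the first integral, I decompose $I_N[w]^2 = 2 (I_N[w] \paral I_N[w]) + (I_N[w] \reson I_N[w])$, using that $I_N[w] \parar I_N[w] = I_N[w] \paral I_N[w]$. The resonant piece lies in $B^{2-2\epsilon}_{1,2}$ by \Cref{thm:besov multiplication} (the regularity sum $2-2\epsilon$ is positive), so it pairs harmlessly with $\W_N^2 \in \besovinfty^{-1-\epsilon}$ and is absorbed by Young. The paraproduct piece $\lambda \int \W_N^2 (I_N[w] \paral I_N[w]) \dx$ is grouped with the third term $-\lambda \int (I_N[w] \paral \W_N^2) I_N[w] \dx$ (writing $\W_N^2 \parar I_N[w] = I_N[w] \paral \W_N^2$). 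Neither is individually well-defined in the limit $N \to \infty$: both encode the ill-defined resonance $\W_N^2 \reson I_N[w]$, whose regularity sum $-2\epsilon$ is negative. Their precise combination, thanks to the matching coefficient $\lambda$, is exactly the setting of the ``paraproducts are almost adjoint'' principle \Cref{thm:paraproduct almost adjoint}, which yields a trilinear commutator of regularity $\gtrsim 1 - 3\epsilon > 0$; pairing with the external $I_N[w] \in H^{1-\epsilon}$ then gives a bound by $C \norm{\W_N^2}_{\besovinfty^{-1-\epsilon}} \norm{I_N[w]}_{H^{1-\epsilon}}^2$, again absorbed by Young.

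The main obstacle is precisely this paraproduct cancellation. The matching of coefficients is non-incidental: the $\tfrac{\lambda}{2}$ in front of $\W_N^2 I_N[w]^2$ times the factor of $2$ produced by $I_N[w] \parar I_N[w] = I_N[w] \paral I_N[w]$ exactly balances the $-\lambda$ coefficient of the third term. The correct form of \Cref{thm:paraproduct almost adjoint} must supply a commutator of positive regularity; once that is in hand, the rest (Besov multiplication, duality, interpolation, Young) is routine, and the $L^4$ slack on the right-hand side of the lemma is available if needed via $\norm{I_N[w]}_{L^4} \lesssim \norm{I_N[v]}_{L^4} + \lambda \norm{\W_N^{[3]}}_{L^4}$.
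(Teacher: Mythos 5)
Your proposal follows the same strategy as the paper, which cites \cite[Lemma~19, Proposition~10]{barashkov_variational_2020}: decompose into paraproducts, identify the ill-defined resonant pieces, observe that they cancel between the first and third integrands because of the matched coefficient, and bound the remainder by Besov multiplication, duality, interpolation, and Young. The only organizational difference is that you apply the paraproduct decomposition to $I_N[w]^2$ and then pair against $\W_N^2$, whereas the cited reference first decomposes $(\W_N^2 I_N[w])$ and then multiplies by $I_N[w]$; both lead to the same trilinear commutator and the same cancellation. One place where the write-up is too compressed: for the resonant piece (and likewise for the commutator remainder) you reach a bound of the form $\lambda\,\smallnorm{\W_N^2}_{\besovinfty^{-1-\epsilon}} \norm{I_N[w]}_{H^{1-\epsilon}}^2$ and say it is ``absorbed by Young,'' but since $\smallnorm{\W_N^2}_{\besovinfty^{-1-\epsilon}}$ is a random variable with no deterministic bound, you cannot split $\lambda X Y^2 \leq \rho Y^2 + \dots$ directly; one must first interpolate, trading part of the $H^{1-\epsilon}$ regularity of $I_N[w]$ for $L^4$ (via \Cref{thm:besov lp product} or \Cref{thm:besov interpolation}), so that the factor $\lambda$ appears to a higher power and can be distributed onto the stochastic norm and the $\rho\lambda\norm{I_N[v]}_4^4$ slack. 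You do note this slack at the end, so the idea is there, but it is the essential step that makes the estimate close rather than an optional refinement.
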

\begin{proof}
\cite[Lemma~19]{barashkov_variational_2020}.
In fact the proof gives a much stronger power of $\lambda$.
There is one more preparatory step,
which is decomposing $(\W_N^2 I_N[w]) I_N[w]$ as a paraproduct to be merged with the last term.
The trilinear operator involved is defined in
\cite[Proposition~10]{barashkov_variational_2020}.
\end{proof}

This lemma leaves us to estimate
\begin{equation}\label{eq:w2i2 before renorm}
-\lambda^2 \int_{\Tor^3} (\W_N^2 \reson \W_N^{[3]}) I_N[w] \dx
+ \lambda \int_{\Tor^3} (\W_N^2 \parar I_N[v]) I_N[w] \dx.
\end{equation}
We need to use the additional renormalization terms to counter these terms.
Let us first plug the ansatz into the counterterm
\begin{equation}
-2 \lambda^2 \int_{\Tor^3} \gamma_N \W_N I_N[v] \dx
= 2 \lambda^3 \int_{\Tor^3} \gamma_N \W_N \W_N^{[3]} \dx
    - 2 \lambda^2 \int_{\Tor^3} \gamma_N \W_N I_N[w] \dx.
\end{equation}
The first term has zero expectation,
since $\gamma_N$ is a finite constant (when $N$ is finite)
and the diagram
\begin{center}
\begin{tikzpicture}
    \begin{scope}
        \coordinate (A1) at (0,0.5);
        \filldraw (0,0) -- (A1) circle[whitenoise];
    \end{scope}
    \begin{scope}[xshift=2cm, yshift=0.5cm]
        \coordinate (B1) at (-0.5, 0.5);
        \coordinate (B2) at (0, 0.5);
        \coordinate (B3) at (0.5, 0.5);
        \filldraw (0,0) -- (B1) circle[whitenoise];
        \filldraw (0,0) -- (B2) circle[whitenoise];
        \filldraw (0,0) -- (B3) circle[whitenoise];
        \draw (0,-0.5) -- (0,0);
    \end{scope}
\end{tikzpicture}
\end{center}
of $\W_N \W_N^{[3]}$ cannot be contracted.
The second term is combined with the resonant product as follows:

\begin{lemma}
We can estimate
\[
\lambda^2 \E \abs{ \int_{\Tor^3} \left( (\W_N^2 \reson \W_N^{[3]}) + 2 \gamma_N \W_N \right) I_N[w] \dx }
\lesssim \lambda^2 C_\rho + \rho \norm{I_N[w]}_{H^{1-\epsilon}}^2.
\]
\end{lemma}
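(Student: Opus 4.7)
The plan is to isolate the divergent part of the resonant product via chaos decomposition and show that the counterterm $2\gamma_N \W_N$ is precisely what is needed to absorb it. I would start from the Wiener chaos expansions of $\W_N^2$ and $\W_N^{[3]}$ (using \Cref{thm:chaos w2 w3} together with $\W_N^{[3]} = I_N[J_t \W_t^3]$). The product $\W_N^2 \reson \W_N^{[3]}$ then splits into components of chaos orders $5$, $3$, and $1$: the order-$1$ piece arises from contracting two of the three noises of $\W_N^{[3]}$ with the two noises of $\W_N^2$ and, by Wick's theorem, equals a deterministic kernel times $\W_N$.

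The key combinatorial step is to show that this order-$1$ piece equals $-2\gamma_N \W_N$ up to an error that is bounded in $\besovinfty^{1/2-\epsilon}$ uniformly in $N$. After summing over the six pairings of noises, the resulting kernel is a double time integral of products of $\sigma_{t_i}(n_i)/\inormc{n_i}$ factors, and the resonant cut-off $\abs{k - \ell} \leq 1$ forces the ``free'' noise and the output Fourier mode to be nearly aligned. The diagonal part of this kernel collapses, via self-adjointness of $J_t$ and the identity $\int_0^T \sigma_t^2\, dt = \rho_T^2$, to $\gamma_N = \E \int_0^N \int_{\Tor^3} (J_t \W_t^2)^2 \dx \dt$ times $\W_N$, with a combinatorial factor of $2$; the off-diagonal remainder is controlled by Gaussian hypercontractivity and the discrete convolution bound \Cref{thm:discrete convolution}.

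With this cancellation in place, I would uniformly bound $(\W_N^2 \reson \W_N^{[3]}) + 2\gamma_N \W_N$ in $\besovinfty^{-\epsilon}$: the remaining order-$3$ and order-$5$ chaos components are routine via hypercontractivity and \Cref{thm:discrete convolution}, and the order-$1$ remainder has strictly positive regularity from the previous step. The duality bound in \Cref{thm:besov multiplication}, combined with the embedding $H^{1-\epsilon} \hookrightarrow B^{\epsilon}_{1,1}$ valid on $\Tor^3$ for small $\epsilon$, then yields
\[
\abs{ \int_{\Tor^3} \bigl( (\W_N^2 \reson \W_N^{[3]}) + 2 \gamma_N \W_N \bigr) I_N[w] \dx }
\lesssim \smallnorm{(\W_N^2 \reson \W_N^{[3]}) + 2 \gamma_N \W_N}_{\besovinfty^{-\epsilon}} \norm{I_N[w]}_{H^{1-\epsilon}}.
\]
A weighted Young inequality with parameter $\rho$, absorbing the uniformly bounded expectation of the squared stochastic norm into $C_\rho$, gives the stated estimate.

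The main obstacle is the first-chaos cancellation: the double Itô integral coming from the chaos structure of $\W_N^{[3]}$ must be identified, up to a uniformly regular remainder, with the single time integral defining $\gamma_N$, with the resonant Fourier cut-off playing an essential role in producing the diagonal. This is a direct analogue of the renormalization of similar resonant products in \cite[Proposition~11]{barashkov_variational_2020}, and I would follow the same combinatorial bookkeeping.
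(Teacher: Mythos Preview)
Your proposal is correct and is essentially the content of the cited result: the paper's own ``proof'' is a one-line reference to \cite[Lemma~24]{barashkov_variational_2020}, and what you sketch---chaos decomposition of $\W_N^2 \reson \W_N^{[3]}$, identification of the first-chaos component with $-2\gamma_N \W_N$ up to a uniformly regular remainder, stochastic bounds on the higher-chaos pieces via hypercontractivity and \Cref{thm:discrete convolution}, then duality and Young---is precisely how that lemma is established in the cited work.

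One minor caution: the regularity you claim for the renormalized object, $\besovinfty^{-\epsilon}$, is slightly optimistic for the chaos-$3$ component by naive power counting; the usual stochastic estimates place it closer to $\besovinfty^{-1/2-\epsilon}$. This does not affect your argument, since the embedding $H^{1-\epsilon} \hookrightarrow B^{1/2+2\epsilon}_{1,1}$ on $\Tor^3$ still holds for $\epsilon$ small, and the duality--Young step goes through unchanged with the weaker regularity.
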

\begin{proof}
\cite[Lemma~24]{barashkov_variational_2020}.
\end{proof}

Before we can control the $(\W_N^2 \parar I_N[v]) I_N[w]$ term,
we will need to introduce another smoothing operator.
We use it to split the drift into a stochastically controlled part and a smoother part.

\begin{definition}
The operator $\Theta_t \colon L^2 \to L^2$ satisfies the following properties:
\begin{itemize}
    \item It is a Fourier multiplier with symbol in $C^\infty(\R^3)$, bounded uniformly in $t$;
    \item The composed operator $\Theta_t J_s$ vanishes whenever $s \geq t$;
    \item When $t > T_0$, the operator $1 - \Theta_t$ localizes frequency
        to an annulus of inner and outer radii proportional to $t$.
\end{itemize}
\end{definition}

The construction of $\Theta_t$ is presented in \cite[Eq.~19]{barashkov_variational_2020}.
An immediate consequence of the definition is that
\begin{equation}\label{eq:3d theta operator identity}
\Theta_t I_t[v] = \int_0^t \Theta_t J_s v_s \ds = \int_0^N \Theta_t J_s v_s \ds = \Theta_t I_N[v].
\end{equation}
This property simplifies working with the Itô formula,
whereas the spectral localization of $1 - \Theta_t$ gives good bounds
on regularized latter term of \eqref{eq:w2i2 before renorm}:

\begin{lemma}
For any $\rho > 0$, there exists $\delta > 0$ such that
\[
\begin{split}
&\mathrel{\phantom{\lesssim}} \lambda \int_{\Tor^3} (\W_N^2 \parar (1 - \Theta_N) I_N[v]) I_N[w] \dx\\
&\lesssim N^{-\delta} \left[
    \lambda C_\rho + \rho\lambda \norm{I_N[v]}_4^4 + \rho \norm{I_N[w]}_{H^{1-\epsilon}}^2 \right].
\end{split}
\]
\end{lemma}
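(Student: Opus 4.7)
The plan is to extract the decaying factor $N^{-\delta}$ from the Bernstein-type spectral gain afforded by the multiplier $1-\Theta_N$. Since $1-\Theta_N$ has symbol supported in the annulus $\abs{\xi}\sim N$ for $N\geq T_0$, both $h \coloneqq (1-\Theta_N)I_N[v]$ and the paraproduct $\W_N^2 \parar h$ are Fourier-localized near frequencies of magnitude $N$. This localization lets one trade fractional derivatives for negative powers of $N$ in precisely those factors that appear in the pairing.

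First I would apply the paraproduct estimate of \Cref{thm:besov multiplication} with $h$ as the low-frequency factor (positive regularity) and $\W_N^2\in\besovinfty^{-1-\epsilon}$ as the high-frequency factor (negative regularity), yielding
\[
\norm{\W_N^2 \parar h}_{H^{-1-2\epsilon}} \lesssim \norm{h}_{H^{1-\delta}}\,\norm{\W_N^2}_{\besovinfty^{-1-\epsilon}}.
\]
The spectral localization of $h$ then provides the Bernstein bound $\norm{h}_{H^{1-\delta}} \lesssim N^{-\delta}\norm{h}_{H^1}$, which together with \Cref{thm:2d drift h1} extracts the desired factor $N^{-\delta}$. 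Next, since $\W_N^2 \parar h$ is itself Fourier-localized near $\abs{\xi}\sim N$, pairing with $I_N[w]$ reduces to pairing against the annular piece of $I_N[w]$, for which Bernstein gives $\norm{I_N[w]_{\sim N}}_{H^{1+2\epsilon}} \lesssim N^{3\epsilon}\norm{I_N[w]}_{H^{1-\epsilon}}$. Taking $\epsilon$ small relative to $\delta$ absorbs this growth, so combining the above via duality gives
\[
\abs[\Big]{\int_{\Tor^3}(\W_N^2 \parar h)\,I_N[w]\dx}
\lesssim N^{-\delta/2}\,\norm{\W_N^2}_{\besovinfty^{-1-\epsilon}}\,\Bigl(\int_0^N\norm{v_t}_2^2\dt\Bigr)^{1/2}\norm{I_N[w]}_{H^{1-\epsilon}}.
\]

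Finally, after multiplying by $\lambda$ and taking expectation, I would use the uniform moment bound on $\norm{\W_N^2}_{\besovinfty^{-1-\epsilon}}$ from \Cref{thm:3d gaussian moments} to convert the stochastic factor into a deterministic constant, and then apply Young's inequality to distribute the remainder over the controlling quantities $\lambda\norm{I_N[v]}_4^4$ and $\norm{I_N[w]}_{H^{1-\epsilon}}^2$. The replacement of $\int_0^N\norm{v_t}_2^2\dt$ by a combination of the two controlling quantities is handled through the ansatz $v_t = -\lambda J_t \W_t^3 + w_t$ and the stochastic bound on $J_t\W_t^3$, exactly as in the preceding parts of \Cref{sec:3d partition upper}; the stochastic remainder contributes to the $\lambda C_\rho$ term.

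The main obstacle is the tightness of the regularity budget: the sum of the regularities of $\W_N^2$ and $I_N[w]$ falls short of the duality threshold by $2\epsilon$, so the decaying factor $N^{-\delta}$ must be produced entirely by the spectral localization of $1-\Theta_N$, with essentially no slack at either endpoint. This forces $\delta$ to be small and demands a careful book-keeping of Besov indices at each step, since every power of $N$ accumulated by successive Bernstein applications must be absorbed before the gain from $1-\Theta_N$ is consumed.
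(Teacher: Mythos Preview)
Your Bernstein strategy is the right heuristic, but the step where you pass through $\bigl(\int_0^N \norm{v_t}_2^2 \dt\bigr)^{1/2}$ via \Cref{thm:2d drift h1} and then ``convert back'' through the ansatz is a genuine gap. There is no uniform stochastic bound on $J_t\W_t^3$ in $L^2_t L^2_x$: the quantity $\E\int_0^N \norm{J_t\W_t^3}_2^2\dt$ is exactly the divergent sunset term that had to be \emph{cancelled} by the first piece of $\delta_N$ (see the display after \eqref{eq:3d w3iv 2}), so the phrase ``exactly as in the preceding parts'' is misleading --- there it was absorbed by a counterterm, not bounded. Even if you argue that the $N^{-\delta}$ prefactor beats this logarithmic divergence, the other half of the ansatz leaves you with $\int_0^N\norm{w_t}_2^2\dt$ on the right-hand side, and \Cref{thm:2d drift h1} only gives $\norm{I_N[w]}_{H^{1-\epsilon}}^2\leq\int_0^N\norm{w_t}_2^2\dt$, not the reverse. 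You would therefore prove a strictly weaker inequality than the one stated, and this matters: after the next ansatz update \eqref{eq:3d ansatz 2} the quantity $\int_0^N\norm{w_t}_2^2\dt$ is no longer directly available as a control. A related symptom is that in your paraproduct estimate the positive regularity you place on $h$ is wasted (since $\min(0,\alpha)=0$), so your chain is really $\norm{h}_{L^2}\lesssim N^{-1}\norm{I_N[v]}_{H^1}$ --- all the gain sits on an $H^1$ norm of $I_N[v]$ that you cannot close on.

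The fix is to insert the ansatz \emph{before} the Bernstein step rather than after: write $(1-\Theta_N)I_N[v] = -\lambda(1-\Theta_N)\W_N^{[3]} + (1-\Theta_N)I_N[w]$ and extract the $N^{-\delta}$ gain from the intrinsic regularity of each piece. Since $\W_N^{[3]}\in\besovinfty^{1/2-\epsilon}$ and $I_N[w]\in H^{1-\epsilon}$, the annular support of $1-\Theta_N$ gives
\[
\norm{(1-\Theta_N)\W_N^{[3]}}_{L^2}\lesssim N^{-1/2+2\epsilon}\smallnorm{\W_N^{[3]}}_{\besovinfty^{1/2-\epsilon}},
\qquad
\norm{(1-\Theta_N)I_N[w]}_{L^2}\lesssim N^{-1+\epsilon}\norm{I_N[w]}_{H^{1-\epsilon}},
\]
which comfortably absorbs the $N^{O(\epsilon)}$ loss coming from the duality pairing and lands directly on the stated controlling quantities. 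This is the route behind \cite[Lemma~20]{barashkov_variational_2020}, which the paper simply cites.
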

\begin{proof}
\cite[Lemma~20]{barashkov_variational_2020}.
\end{proof}

That leaves us to consider
\begin{equation}\label{eq:3d partition paracontrol}
\lambda \int_{\Tor^3} (\W_N^2 \parar \Theta_N I_N[v]) I_N[w] \dx.
\end{equation}
We again use the Itô formula.
With some abuse of notation,
\begin{equation}
\lambda \int_0^N \int_{\Tor^3} ((\partial_t \W_t^2) \parar \Theta_t I_t[v]) I_t[w] \dx \dt
\end{equation}
is a martingale and thus has a vanishing expectation.
On the other hand, by \eqref{eq:3d theta operator identity} we can write
\begin{equation}
\lambda \int_0^N \!\! \int_{\Tor^3} (\W_t^2 \parar \partial_t (\Theta_t I_t[v])) I_t[w] \dx \dt
= \lambda \int_0^N \!\! \int_{\Tor^3} (\W_t^2 \parar (\partial_t \Theta_t) I_N[v]) I_t[w] \dx \dt,
\end{equation}
and this is bounded in expectation:

\begin{lemma}
For any $\rho > 0$ we have
\[
\lambda \E \abs{\int_0^N \!\! \int_{\Tor^3} (\W_t^2 \parar (\partial_t \Theta_t) I_N[v]) I_t[w] \dx \dt}
\leq \lambda C_\rho + \rho \norm{I_N[w]}_{H^{1-\epsilon}}^2 + \rho\lambda \norm{I_N[v]}_4^4.
\]
\end{lemma}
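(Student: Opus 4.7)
The plan is to bound the integrand pointwise in $t$ via a paraproduct duality estimate, integrate in time by Cauchy--Schwarz together with the stochastic moment bounds for $\W_t^2$, and close by Young's inequality and interpolation.

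First, because the notation $\W_t^2 \parar g$ places $\W_t^2$ on the high-frequency side, Theorem~\ref{thm:besov multiplication} gives pointwise in $t$
\[
\smallnorm{\W_t^2 \parar g}_{H^{-1-\epsilon/2}}
\lesssim \smallnorm{\W_t^2}_{\besovinfty^{-1-\epsilon}} \norm{g}_{B^{\epsilon/2}_{2,2}},
\]
so pairing in duality against $I_t[w] \in H^{1+\epsilon/2}$ (a slight strengthening of $H^{1-\epsilon}$, affordable up to a small power of $\lambda$) controls the spatial integral.

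Second, I would exploit the specific structure of $\partial_t \Theta_t$: since $\Theta_t$ is a smooth frequency cutoff at scale $t$, the symbol of $\partial_t \Theta_t$ is supported in an annulus at scale $t$ with amplitude $\bigO(\inorm t^{-1})$. Combined with Besov embedding (Theorem~\ref{thm:besov embeddings}) this yields
\[
\norm{(\partial_t \Theta_t) I_N[v]}_{B^{\epsilon/2}_{2,2}}
\lesssim \inorm t^{-1+\epsilon} \norm{I_N[v]}_{H^{\epsilon}}.
\]
Cauchy--Schwarz in $t$ then separates the stochastic factor from the drift:
\[
\int_0^N \smallnorm{\W_t^2}_{\besovinfty^{-1-\epsilon}} \inorm t^{-1+\epsilon} \norm{I_N[v]}_{H^\epsilon} \norm{I_t[w]}_{H^{1+\epsilon/2}} \dt
\lesssim \norm{I_N[v]}_{H^\epsilon} \mathcal S_N^{1/2} \mathcal W_N^{1/2},
\]
where $\mathcal S_N = \int_0^N \smallnorm{\W_t^2}_{\besovinfty^{-1-\epsilon}}^2 \inorm t^{-2+2\epsilon} \dt$ and $\mathcal W_N = \int_0^N \norm{I_t[w]}_{H^{1+\epsilon/2}}^2 \dt$. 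By Theorem~\ref{thm:3d gaussian moments} one has $\E \mathcal S_N \lesssim 1$ uniformly in $N$, while the martingale structure of $I_t[w]$ together with Lemma~\ref{thm:2d drift h1} and Lemma~\ref{thm:boue-dupuis j regularity} converts $\mathcal W_N$ into a multiple of $\smallnorm{I_N[w]}_{H^{1-\epsilon}}^2$ at the cost of a slow-growing power of $N$ that is absorbed by the gain $\lambda^\delta$.

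Third, interpolation (Theorem~\ref{thm:besov interpolation}) gives $\norm{I_N[v]}_{H^\epsilon} \lesssim \norm{I_N[v]}_{L^4}^\theta \norm{I_N[v]}_{H^1}^{1-\theta}$ for a suitable $\theta \in (0,1)$ in dimension three, and Young's inequality then splits the resulting product into the three desired contributions: the deterministic stochastic moments and polynomial $N$-factors go into $\lambda C_\rho$; the $\norm{I_N[v]}_{H^1}$ factor is absorbed into $\rho \norm{I_N[w]}_{H^{1-\epsilon}}^2$ after substituting the ansatz \eqref{eq:3d ansatz 1} and using the quartic control as in Remark~\ref{rem:3d partition control}; and the $\norm{I_N[v]}_{L^4}^\theta$ factor contributes to $\rho\lambda \norm{I_N[v]}_4^4$.

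The main obstacle is extracting a full power of $\lambda$ in front of $C_\rho$ rather than merely $\lambda^{1/2}$: the naive application of Cauchy--Schwarz gives the wrong exponent because of the single $\lambda$ in front of the term being bounded. To upgrade this one uses a refined stochastic estimate on $\W_t^2$ based on its chaos decomposition \eqref{eq:chaos w2} and Lemma~\ref{thm:discrete convolution}, which provides the extra $\lambda^{1/2+\delta}$ needed, analogous to the refinements used in the proofs of the preceding two lemmas in \cite{barashkov_variational_2020}. Care is also required to ensure the time weight $\inorm t^{-1+\epsilon}$ is genuinely integrable against the stochastic moments without losing the gain from $\partial_t \Theta_t$.
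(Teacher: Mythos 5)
There are genuine gaps in your argument, centered on how the time integral is handled.

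The key mechanism in the paper's proof (\cite[Lemma~21]{barashkov_variational_2020}) is that $\partial_t \Theta_t$, viewed as an operator $B^{s+\delta}_{p,r} \to B^s_{p,r}$, has norm bounded by $C\inorm t^{-1-\delta}$: gaining $\delta$ derivatives costs a factor $\inorm t^{-\delta}$ \emph{on top} of the natural $\inorm t^{-1}$. This produces the finite measure $\dt/\inorm t^{1+\delta}$, which collapses the time integral by bounding the integrand pointwise in $t$ by its sup. Your version of this estimate, $\norm{(\partial_t \Theta_t) I_N[v]}_{B^{\epsilon/2}_{2,2}} \lesssim \inorm t^{-1+\epsilon}\norm{I_N[v]}_{H^\epsilon}$, has the wrong sign on the exponent; you lose $\epsilon/2$ derivatives in passing from $H^\epsilon$ to $B^{\epsilon/2}_{2,2}$, so the weight should improve, not degrade, relative to $\inorm t^{-1}$. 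Instead of exploiting this, you patch the non-integrability by Cauchy--Schwarz in $t$, which creates two problems you cannot resolve: you are led to $\mathcal W_N = \int_0^N\norm{I_t[w]}_{H^{1+\epsilon/2}}^2\dt$, which (a) demands $H^{1+\epsilon/2}$ regularity of $I_t[w]$ whereas only $H^1$ (and eventually $H^{1-\epsilon}$ via \Cref{thm:3d partition w bound}) is controlled, and (b) introduces a factor of order $N$ upon converting $\int_0^N \norm{I_t[w]}^2\dt$ to $\norm{I_N[w]}^2$. You suggest absorbing this power of $N$ "by the gain $\lambda^\delta$", but $\lambda = 1/(4\beta)$ has no dependence on $N$, and the required estimate must be uniform in $N$; that absorption is impossible.

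Two secondary confusions. First, your final paragraph worries about upgrading $\lambda^{1/2}$ to $\lambda$, but no such upgrade is needed: the prefactor $\lambda$ is already present on the left-hand side of the statement, and the split $\lambda ab \leq (\lambda^2/2\rho)a^2 + (\rho/2)b^2$ followed by a further Young on the $\norm{I_N[v]}_{L^4}$ contribution yields the three terms on the right with their stated $\lambda$ weights directly. Second, your claim that "$\norm{I_N[v]}_{H^1}$ is absorbed into $\rho\norm{I_N[w]}_{H^{1-\epsilon}}^2$ after substituting the ansatz" does not work: with $v_t = -\lambda J_t \W_t^3 + w_t$ one has $I_N[v] = -\lambda\W_N^{[3]} + I_N[w]$, and $\W_N^{[3]}$ only has $1/2-\epsilon$ derivatives, so $\norm{I_N[v]}_{H^1}$ is not bounded by $\norm{I_N[w]}_{H^{1-\epsilon}}$ plus controlled stochastic terms. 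The only legitimate controlling quantities are $\norm{I_N[w]}_{H^{1-\epsilon}}^2$ and $\lambda\norm{I_N[v]}_{L^4}^4$; interpolation should route the $I_N[v]$ factor into $L^4$, not into $H^1$.
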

\begin{proof}
\cite[Lemma~21]{barashkov_variational_2020}.
Essentially, the construction of $\Theta_t$ implies that $\partial_t \Theta_t$
has $B^{s+\delta}_{p,r} \to B^{s}_{p,r}$ operator norm bounded by $C \inorm t^{-1-\delta}$.
This estimate lets us get rid of the ``time'' integral
as $\dt / \inorm t^{1+\delta}$ is a finite measure.
In spatial variables the estimation is then similar to previous lemmas.
\end{proof}

Now the final term remaining of \eqref{eq:3d partition paracontrol} is
\begin{equation}\label{eq:3d partition paracontrol 2}
\lambda \int_0^N \!\! \int_{\Tor^3} (\W_t^2 \parar \Theta_t I_N[v]) J_t w_t \dx,
\end{equation}
where we used $\partial_t I_t[w] = J_t w_t$.
We will update the ansatz again as in \eqref{eq:3d ansatz 1}.
We set
\begin{equation}\label{eq:3d ansatz 2}
w_t = -\lambda J_t (\W_t^2 \parar \Theta_t I_N[v]) + h_t,
\end{equation}
and use the quadratic $\norm{w_t}_2^2$ term to cancel \eqref{eq:3d partition paracontrol 2},
leaving us with
\begin{equation}\label{eq:3d jw2 parar theta}
-\frac{\lambda^2}{2} \int_0^N \!\! \int_{\Tor^3} [J_t (\W_t^2 \parar \Theta_t I_N[v])]^2 \dx \dt
+ \frac 1 2 \int_0^N \norm{h_t}_2^2 \dt.
\end{equation}
We will show below in \Cref{thm:3d partition w bound}
that bounds that use $\norm{I_N[w]}_{H^{1-\epsilon}}^2$ still remain valid
despite the updated ansatz.

The expression is now set up for using the $-\lambda^2 \gamma_N I_N[v]^2$ counterterm.
In it, we again decompose $I_N[v] = \Theta_N I_N[v] + (1-\Theta_N) I_N[v]$.
We will only use the $-\lambda^2 \gamma_N (\Theta_N I_N[v])^2$ part for renormalization,
and show the rest to be small.
We write
\begin{multline}\label{eq:3d gamma theta renorm}
\lambda^2 \int_{\Tor^3} \gamma_N (\Theta_N I_N[v])^2 \dx
= \lambda^2 \int_{\Tor^3} \int_0^N (\partial_t \gamma_t) (\Theta_t I_N[v])^2 \dt \dx\\
+ 2\lambda^2 \int_{\Tor^3} \int_0^N \gamma_t \, ((\partial_t \Theta_t) I_N[v]) \, \Theta_t I_N[v] \dt \dx.
\end{multline}

\begin{lemma}
For any $\rho > 0$ we can estimate the combination of \eqref{eq:3d jw2 parar theta}
with the first part of \eqref{eq:3d gamma theta renorm},
\[
\E \abs{\lambda^2 \int_0^N \!\! \int_{\Tor^3} \frac 1 2 [J_t (\W_t^2 \parar \Theta_t I_N[v])]^2
    + (\partial_t \gamma_t) (\Theta_t I_N[v])^2 \dx \dt},
\]
from above by
\[
\lambda^2 C_\rho + \rho \norm{I_N[w]}_{H^{1-\epsilon}}^2 + \rho\lambda \norm{I_N[v]}_4^4.
\]
\end{lemma}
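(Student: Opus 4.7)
The plan is a renormalization-plus-paracontrol argument in the spirit of the corresponding lemma in \cite{barashkov_variational_2020}. Both terms $\tfrac{1}{2}[J_t(\W_t^2\parar\Theta_t I_N[v])]^2$ and $(\partial_t\gamma_t)(\Theta_t I_N[v])^2$ diverge as $N\to\infty$ individually, but their combination is finite because the variance of the random object matches the deterministic counterterm exactly, thanks to the definition $\partial_t\gamma_t = \E(J_t\W_t^2)^2$ (pointwise, by translation invariance of $\W_t^2$).

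First I would apply Itô's formula (equivalently, a conditional Wick decomposition with respect to the filtration generated by the Brownian motions $B^k$) to the random variable $X_t \coloneqq J_t(\W_t^2\parar\Theta_t I_N[v])$, writing $X_t^2 = \wickp{X_t^2} + \mathbb E_{\phi}[X_t^2]$, where $\mathbb E_\phi$ denotes the conditional expectation given the low-frequency factor $\Theta_t I_N[v]$. Using the chaos decomposition \eqref{eq:chaos w2} of $\W_t^2$, together with the frequency support of $\Theta_t$ (which places $\Theta_t I_N[v]$ on strictly lower frequencies than those selected by $J_t$), the conditional variance evaluates pointwise to
\[
\mathbb E_\phi[X_t^2](x) = -2(\partial_t\gamma_t)(\Theta_t I_N[v])^2(x) + \mathcal R_t(v)(x),
\]
with a commutator defect $\mathcal R_t$ quantifying the discrepancy between the paraproduct $\W_t^2\parar\Theta_t I_N[v]$ and the formal pointwise product $\W_t^2\cdot\Theta_t I_N[v]$.

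The second step is to show that $\tfrac{\lambda^2}{2}\int_0^N\!\int_{\Tor^3}\mathcal R_t\,\dx\dt$ is bounded uniformly in $N$. Because $1-\Theta_t$ localizes to an annulus of radius $\sim\inorm t$, the off-diagonal dyadic interactions contributing to $\mathcal R_t$ are confined to a geometrically summable neighbourhood of the near-diagonal block; the stochastic estimates of \Cref{thm:3d gaussian moments} together with the discrete convolution bound \Cref{thm:discrete convolution} then give a bound of order $\lambda^2 C$. Combined with the $+(\partial_t\gamma_t)(\Theta_t I_N[v])^2$ counterterm, the leading $-2(\partial_t\gamma_t)(\Theta_t I_N[v])^2/2$ from the variance cancels, leaving only the remainder $\mathcal R_t$ and the Wick-ordered fluctuation $\tfrac 1 2 \wickp{X_t^2}$.

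The third step is controlling the Wick-ordered fluctuation $\tfrac 1 2 \wickp{X_t^2}$. I would write it as a trilinear expression in $\W_t^2$ and $\Theta_t I_N[v]$, dualize against the remaining factors using the paraproduct and resonant estimates of \Cref{thm:besov multiplication} and the interpolation of \Cref{thm:besov interpolation}, and absorb the resulting $\norm{\Theta_t I_N[v]}_{L^4}^{\theta}\norm{\Theta_t I_N[v]}_{H^{1-\epsilon}}^{1-\theta}$ factors into $\rho\lambda\norm{I_N[v]}_4^4$ and $\rho\norm{I_N[w]}_{H^{1-\epsilon}}^2$ via Young's inequality, the coefficient $\lambda^2$ giving ample room. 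The main obstacle is the identification of $\mathcal R_t$ in the second step: showing that the paraproduct localization (as opposed to the full product, which is not defined owing to the $-1-\epsilon$ regularity of $\W_t^2$) perturbs the variance only by an $N$-uniform amount requires a delicate commutator estimate coupling the frequency support of $\Theta_t$ with the explicit chaos decomposition \eqref{eq:chaos w2} of $\W_t^2$ and the smoothing regularity of $J_t$ from \Cref{thm:boue-dupuis j regularity}.
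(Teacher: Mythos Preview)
The paper's own proof is simply a citation to \cite[Lemma~24]{barashkov_variational_2020}, so there is no detailed argument to compare against line by line. Your overall strategy---that the deterministic counterterm $(\partial_t\gamma_t)(\Theta_t I_N[v])^2$ cancels the ``variance'' part of $\tfrac12[J_t(\W_t^2\parar\Theta_t I_N[v])]^2$, leaving a controllable fluctuation and commutator remainder---is the right idea and matches what happens in \cite{barashkov_variational_2020}.

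There is, however, a genuine issue with your framing. You propose to write $X_t^2=\wickp{X_t^2}+\E_\phi[X_t^2]$ with $\E_\phi$ the ``conditional expectation given the low-frequency factor $\Theta_t I_N[v]$''. This is problematic: the drift $v\in\mathbb H_a$ is an arbitrary progressively measurable process, so $\Theta_t I_N[v]$ is \emph{not} independent of $\W_t^2$ and does not generate a sigma-algebra on which such a conditioning behaves as you describe. The decomposition you want is not probabilistic but algebraic: one treats $f\coloneqq\Theta_t I_N[v]$ as a placeholder and exploits the bilinear structure of the map $(\W_t^2,\W_t^2)\mapsto[J_t(\W_t^2\parar f)]^2$ pathwise.

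Concretely, the route taken in \cite{barashkov_variational_2020} (and mirrored for the cubic analogue later in this section of the paper) is purely analytic: first move $J_t$ inside the paraproduct via a commutator bound (cf.\ \cite[Proposition~A.10]{barashkov_variational_2020}), then use the almost-adjoint relation of \Cref{thm:paraproduct almost adjoint} together with a trilinear commutator to reduce the square to $(J_t\W_t^2\reson J_t\W_t^2)(\Theta_t I_N[v])^2$ plus remainders bounded in terms of $\norm{I_N[v]}_4$ and $\norm{I_N[w]}_{H^{1-\epsilon}}$. The renormalized object $J_t\W_t^2\reson J_t\W_t^2+2\partial_t\gamma_t$ is then handled by the stochastic estimate of \Cref{thm:3d partition jw2}. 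This avoids any conditioning on $v$ and makes the uniformity in $N$ and in the drift transparent. Your steps two and three essentially describe the remainder analysis correctly, but the clean way to carry them out is through these paraproduct commutators rather than an It\^o/conditional-Wick decomposition.
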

\begin{proof}
\cite[Lemma~24]{barashkov_variational_2020}.
\end{proof}

The remaining parts of the counterterm are small:

\begin{lemma}
For any $\rho > 0$,
\begin{multline*}
\lambda^2 \E \bigg| \int_{\Tor^3} \gamma_N ((1-\Theta_N) I_N[v])^2
    + 2\gamma_N (\Theta_N I_N[v]) \, ((1-\Theta_N) I_N[v])\\
    + 2 \int_0^N \gamma_t ((\partial_t \Theta_t) I_N[v]) \, (\Theta_t I_N[v]) \dt \dx \bigg|
\end{multline*}
is bounded from above by
\[
\lambda^2 C_\rho + \rho \norm{I_N[w]}_{H^{1-\epsilon}}^2 + \rho\lambda \norm{I_N[v]}_4^4.
\]
\end{lemma}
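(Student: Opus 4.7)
The plan is to treat the three summands separately, in each case exploiting the spectral localization properties of $\Theta_t$ together with the at most logarithmic growth of $\gamma_N$. Two facts drive the argument. First, $1-\Theta_N$ is Fourier-localized to frequencies comparable to $N$, so for any $s > 0$
\[
\smallnorm{(1-\Theta_N) f}_{L^2} \lesssim N^{-s} \smallnorm{f}_{H^s}.
\]
Second, as already used in the preceding lemma, $\partial_t \Theta_t$ has $B^{\alpha+\delta}_{p,r} \to B^\alpha_{p,r}$ operator norm bounded by $\inorm{t}^{-1-\delta}$, while $\Theta_t$ is uniformly $L^2$-bounded. Finally, $\gamma_N \lesssim \log N$, so in particular $\int_0^\infty \gamma_t \inorm{t}^{-1-\delta} \dt < \infty$.

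For the first summand, the localization estimate with $s = 1-\epsilon$, combined with the decomposition $I_N[v] = -\lambda \W_N^{[3]} + I_N[w]$ coming from \eqref{eq:3d ansatz 1}, yields
\[
\lambda^2 \gamma_N \smallnorm{(1-\Theta_N) I_N[v]}_{L^2}^2
\lesssim \lambda^2 N^{-2+2\epsilon} \log N \bigl( \lambda^2 \smallnorm{\W_N^{[3]}}_{H^{1-\epsilon}}^2 + \smallnorm{I_N[w]}_{H^{1-\epsilon}}^2 \bigr).
\]
The vanishing prefactor means that, after taking expectation and applying \Cref{thm:3d gaussian moments}, the stochastic contribution is absorbed into $\lambda^2 C_\rho$ while the deterministic one becomes a small multiple of $\smallnorm{I_N[w]}_{H^{1-\epsilon}}^2$ for $N$ large.

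For the cross term, Cauchy--Schwarz and Young's inequality give, for any $\eta > 0$,
\[
2 \lambda^2 \gamma_N \bigl| \dual{\Theta_N I_N[v], (1-\Theta_N) I_N[v]}_{L^2} \bigr|
\leq \eta \lambda^2 \gamma_N \smallnorm{\Theta_N I_N[v]}_{L^2}^2 + \eta\inv \lambda^2 \gamma_N \smallnorm{(1-\Theta_N) I_N[v]}_{L^2}^2.
\]
The second term is treated exactly as above; the first, after decomposing $I_N[v]$, is structurally identical to the $\gamma_N (\Theta_N I_N[v])^2$ contribution already handled in the preceding lemma via \cite[Lemma~24]{barashkov_variational_2020}. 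For the time-integral term, pairing the two factors in $L^2$ and invoking the operator bound for $\partial_t \Theta_t$,
\[
\lambda^2 \Bigl| \int_0^N \gamma_t \!\! \int_{\Tor^3} ((\partial_t \Theta_t) I_N[v]) \, (\Theta_t I_N[v]) \dx \dt \Bigr|
\lesssim \lambda^2 \smallnorm{I_N[v]}_{H^{1-\epsilon}}^2 \int_0^N \!\! \frac{\gamma_t}{\inorm{t}^{1+\delta}} \dt,
\]
and the time integral is uniformly bounded in $N$; decomposing $I_N[v]$ as before closes the estimate.

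The only genuine subtlety is tracking the regularity of $I_N[v]$ through both ansatz changes \eqref{eq:3d ansatz 1}--\eqref{eq:3d ansatz 2}, which requires \Cref{thm:3d partition w bound} to guarantee that $\smallnorm{I_N[w]}_{H^{1-\epsilon}}^2$ remains controlled by the final drift $h$. Once this is in hand, the remaining manipulations are direct analogues of those in \cite[Lemmas~20--24]{barashkov_variational_2020}, and in particular the $\smallnorm{I_N[v]}_4^4$ slot in the target bound is not even required here.
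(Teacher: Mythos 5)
The overall strategy --- exploit the spectral localization of $1-\Theta_N$ and $\partial_t\Theta_t$ to beat the logarithmic growth $\gamma_N\lesssim\log N$ --- is the right one, and your treatment of the first summand and of the $\partial_t\Theta_t$ time-integral term is sound in spirit (the paper itself defers to \cite[Lemma~23]{barashkov_variational_2020}, and this is essentially the mechanism used there). However, your handling of the cross term has a genuine gap.

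You split
\[
2\lambda^2\gamma_N\bigl|\dual{\Theta_N I_N[v],\,(1-\Theta_N)I_N[v]}\bigr|
\leq \eta\,\lambda^2\gamma_N\smallnorm{\Theta_N I_N[v]}_{L^2}^2 + \eta\inv\lambda^2\gamma_N\smallnorm{(1-\Theta_N)I_N[v]}_{L^2}^2
\]
and then claim the first piece is handled as in the preceding lemma. It is not. The preceding lemma bounds the \emph{combination}
$\frac12[J_t(\W_t^2\paral\Theta_t I_N[v])]^2 + (\partial_t\gamma_t)(\Theta_t I_N[v])^2$, in which the two summands cancel each other's divergences; the quantity $\gamma_N\smallnorm{\Theta_N I_N[v]}_{L^2}^2$ by itself is of size $\sim(\log N)\smallnorm{I_N[v]}_{L^2}^2$ and genuinely diverges, so it cannot be absorbed into $\lambda^2 C_\rho + \rho\smallnorm{I_N[w]}_{H^{1-\epsilon}}^2 + \rho\lambda\smallnorm{I_N[v]}_4^4$ uniformly in $N$ --- and that cancellation has already been spent. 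The error is that Young's inequality decouples the two factors, separating the vanishing quantity $\smallnorm{(1-\Theta_N)I_N[v]}_{L^2}$ from the growing prefactor $\gamma_N$. The fix is to keep the product: Cauchy--Schwarz gives
\[
\gamma_N\smallnorm{\Theta_N I_N[v]}_{L^2}\smallnorm{(1-\Theta_N)I_N[v]}_{L^2}
\lesssim \gamma_N N^{-s}\,\smallnorm{I_N[v]}_{L^2}\smallnorm{I_N[v]}_{H^s},
\]
and only now, with $\gamma_N N^{-s}\lesssim 1$ uniformly in $N$, apply Young's inequality and the ansatz decomposition.

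Two smaller points. First, in the first summand you invoke the Bernstein gain with $s=1-\epsilon$ and then control $\smallnorm{\W_N^{[3]}}_{H^{1-\epsilon}}$ via \Cref{thm:3d gaussian moments}; but that theorem only controls $\W_N^{[3]}$ in $\besovinfty^{1/2-\epsilon}$, hence roughly $H^{1/2-2\epsilon}$, and the $H^{1-\epsilon}$ norm grows with $N$. This is not fatal --- you should simply take the smaller exponent $s=1/2-2\epsilon$ (or distribute different $s$ to different pieces of the ansatz), since any fixed $s>0$ defeats the logarithm --- but as written the stochastic factor is not finite. Second, the closing remark that the $\smallnorm{I_N[v]}_4^4$ slot is ``not even required'' is dubious once the ansatz decomposition and \Cref{thm:3d partition w bound} are unwound, so it is safer not to claim it.
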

\begin{proof}
\cite[Lemma~23]{barashkov_variational_2020}.
\end{proof}

\subsubsection*{Cubic terms}

Let us then estimate the second bracket in \eqref{eq:3d variational everything}.
This bracket contains two renormalizing terms.
Importantly, we may no longer cancel purely stochastic terms with $\delta_N$, as we will need to be able to compare
$\delta_N^+$ and $\delta_N^-$ in \Cref{sec:3d wick}.
Recall that $\delta_N^-$ is used in the context of the saddle integral,
where there are no cubic terms to estimate.

\bigskip\emph{The $\W_N I_N[v]^2$ term.}
This term is easiest of the three since $\W_N$ is not too irregular.
As we put the first ansatz \eqref{eq:3d ansatz 1} into $I_N[v]$, we get
\begin{equation}\label{eq:3d w i2}
\E \int_{\Tor^3} 6\lambda^{5/2} \W_N (\W_N^{[3]})^2
    - 12 \lambda^{3/2} \W_N \W_N^{[3]} I_N[w] + 6 \lambda^{1/2} \W_N I_N[w]^2 \dx.
\end{equation}
The first term vanishes by Wick's theorem since the corresponding diagram
\begin{center}
\begin{tikzpicture}
    \begin{scope}
        \coordinate (A1) at (0,0.5);
        \filldraw (0,0) -- (A1) circle[whitenoise];
    \end{scope}
    \begin{scope}[xshift=-2cm, yshift=0.5cm]
        \coordinate (B1) at (-0.5, 0.5);
        \coordinate (B2) at (0, 0.5);
        \coordinate (B3) at (0.5, 0.5);
        \filldraw (0,0) -- (B1) circle[whitenoise];
        \filldraw (0,0) -- (B2) circle[whitenoise];
        \filldraw (0,0) -- (B3) circle[whitenoise];
        \draw (0,-0.5) -- (0,0);
    \end{scope}
    \begin{scope}[xshift=2cm, yshift=0.5cm]
        \coordinate (C1) at (-0.5, 0.5);
        \coordinate (C2) at (0, 0.5);
        \coordinate (C3) at (0.5, 0.5);
        \filldraw (0,0) -- (C1) circle[whitenoise];
        \filldraw (0,0) -- (C2) circle[whitenoise];
        \filldraw (0,0) -- (C3) circle[whitenoise];
        \draw (0,-0.5) -- (0,0);
    \end{scope}
\end{tikzpicture}
\end{center}
cannot be contracted.

The second term needs to be estimated as a paraproduct decomposition.
First, we consider
\begin{equation}
\begin{split}
&\mathrel{\phantom{\lesssim}} \lambda^{3/2} \E \int_{\Tor^3} (\W_N \paral \W_N^{[3]}) I_N[w] \dx\\
&\lesssim \lambda^{3/2} \E \smallnorm{\W_N \paral \W_N^{[3]}}_{H^{-2\epsilon}}
    \norm{I_N[w]}_{H^{1-\epsilon}}\\
&\lesssim \lambda^{3/2} \E \norm{\W_N}_{H^{-1/2-\epsilon}} \smallnorm{\W_N^{[3]}}_{H^{1/2-\epsilon}}
    \norm{I_N[w]}_{H^{1-\epsilon}},
\end{split}
\end{equation}
which is again finished with Young's inequality.
The prefactor contains $\beta^{-3/2}$.
Similarly, $\W_N \parar \W_N^{[3]}$ has regularity $-1/2-\epsilon$,
which is enough for the product with $I_N[w]$ to make sense.
This leaves us with $(\W_N \reson \W_N^{[3]}) I_N[w]$.
It is, however, straightforward due to the following stochastic estimate:

\begin{lemma}\label{thm:3d w1w3 estimate}
We have the uniform-in-$N$ estimate
\[
\E \smallnorm{\W_N \reson \W_N^{[3]}}_{\besovinfty^{-\epsilon}}^p \lesssim 1.
\]
\end{lemma}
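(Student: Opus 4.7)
Since $\W_N \reson \W_N^{[3]}$ is a polynomial of degree four in the underlying Brownian motions $(B_t^k)$, it lies in the direct sum of the Wiener chaoses of order at most four. Gaussian hypercontractivity (Nelson's estimate) then equates its $L^p(\Prob)$ norm with its $L^2(\Prob)$ norm up to a constant depending only on $p$. Combined with Bernstein's inequality and the Besov embedding $B^{-\epsilon/2}_{q,q}(\Tor^3) \hookrightarrow \besovinfty^{-\epsilon}(\Tor^3)$ for $q$ sufficiently large, this reduces the claim to the uniform-in-$N$ second-moment bound
\begin{equation*}
\sup_{x \in \Tor^3}\, \sup_{k \geq -1} 2^{k\epsilon/2}\bigl(\E |\Delta_k(\W_N \reson \W_N^{[3]})(x)|^2\bigr)^{1/2} \lesssim 1.
\end{equation*}

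Writing $\widehat{\W_N^{[3]}}(k) = \int_0^N \sigma_t(k)^2\inormc{k}^{-2}\,\widehat{\W_t^3}(k)\dt$ together with the explicit formula \eqref{eq:chaos w3} exhibits $\W_N^{[3]}$ as an iterated stochastic integral in the third Wiener chaos, with an extra $\inormc{k}^{-2}$ gain coming from the two applications of $J_t$. The Wiener product formula then decomposes $\W_N \cdot \W_N^{[3]}$ into contributions in the fourth and second chaoses, and the resonant truncation $|j-\ell|\leq 1$ preserves this chaos decomposition block-by-block. For the fourth-chaos piece no Brownian pairing occurs: the Wiener--Itô isometry reduces the second moment of its $\Delta_k$ to a quadruple sum over $m + n_1 + n_2 + n_3 = k$ with integrand bounded by $\inormc{k}^{-4}\inormc{m}^{-2}\prod_{i=1}^3\inormc{n_i}^{-2}$, and three applications of \Cref{thm:discrete convolution} yield decay in $\inorm{k}$ strictly better than required.

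The heart of the argument is the second-chaos piece, obtained by contracting the single Brownian factor of $\W_N$ with one of the three factors of $\W_N^3$. This contraction introduces a partial sum $\sum_m \rho_t(m)^2\inormc{m}^{-2}$ over the contracted wavenumber $m$, which without any restriction is exactly the logarithmically divergent constant $\gamma_N$ that must otherwise be removed as a counterterm in the pointwise product $\W_N \cdot \W_N^{[3]}$. The resonance condition $|j-\ell|\leq 1$, however, forces $m$ into a single dyadic shell of scale $\sim 2^k$, on which the partial sum is finite of order $\bigO(2^k)$; combined with the $\inormc{k}^{-4}$ factor inherited from the two copies of $J_t$ in $\W_N^{[3]}$ and one more application of \Cref{thm:discrete convolution} on the remaining two-fold Wiener integral, one obtains the required per-block bound. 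The main obstacle is precisely this bookkeeping --- isolating the finite resonant contribution from its unrestricted divergent counterpart --- and it is parallel to the analysis of analogous objects carried out in \cite[Proposition~10 and Lemma~24]{barashkov_variational_2020}, to which I would appeal for the fine combinatorial details.
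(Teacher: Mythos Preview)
Your overall strategy --- hypercontractivity, Besov embedding, Wiener chaos decomposition into a fourth- and a second-chaos contribution, then second-moment bounds via \Cref{thm:discrete convolution} --- is exactly the route taken in the references the paper cites (\cite[Lemma~25]{barashkov_variational_2020} and \cite{mourrat_construction_2017}); the paper itself gives no argument beyond that citation. However, the execution of your sketch contains two substantive errors, and in fact you have the roles of the two chaos pieces reversed.

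\textbf{Fourth chaos.} Your bound on the integrand, $\inormc{k}^{-4}\inormc{m}^{-2}\prod_i\inormc{n_i}^{-2}$ with $k=m+n_1+n_2+n_3$ the output frequency, is a valid inequality (resonance gives $|n_1+n_2+n_3|\gtrsim|k|$), but after pulling out $\inormc{k}^{-4}$ the remaining sum $\sum_{m+n_1+n_2+n_3=k}\inormc{m}^{-2}\prod_i\inormc{n_i}^{-2}$ \emph{diverges} in $d=3$: your ``three applications of \Cref{thm:discrete convolution}'' hit the forbidden endpoint $\alpha+\beta=d$ already at the second step. The correct argument retains the factor $\inormc{n_1+n_2+n_3}^{-4}$ \emph{together with} the resonance constraint $|m|\sim|n_1+n_2+n_3|$; one then sums over the dyadic scale $j$ of $m$ (with $j$ ranging over all $j\gtrsim k$, not a single shell) to obtain a geometrically convergent bound. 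This is precisely where the resonant restriction does real work --- it is the fourth chaos, not the second, that is ``the heart of the argument.''

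\textbf{Second chaos.} Your description of the contraction is wrong on two counts. First, the sum $\sum_m\rho_t(m)^2\inormc{m}^{-2}$ is the Wick constant $C_t$ (linearly divergent in $d=3$), not the logarithmically divergent $\gamma_N$. Second, and more importantly, you have dropped the factor $\inormc{n_1+n_2+n_3}^{-2}$ coming from the two copies of $J_t$ in $\W_N^{[3]}$; after the contraction $n_1=-m$ this becomes $\inormc{-m+n_2+n_3}^{-2}$, so the $m$-sum is actually $\sum_m\inormc{m}^{-2}\inormc{-m+p}^{-2}\lesssim\inormc{p}^{-1}$ with $p=n_2+n_3$, which converges \emph{without} any resonance restriction. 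Finally, the resonance condition does \emph{not} force $m$ into a single shell of scale $2^k$: it forces $|m|\sim|n|\gtrsim 2^k$, a half-line of scales. The second-chaos piece is thus the routine part.
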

\begin{proof}
This is stated in \cite[Lemma~25]{barashkov_variational_2020},
and is a variant of other results there.
It is also discussed in \cite{mourrat_construction_2017}.
\end{proof}

In the third term of \eqref{eq:3d w i2}, we only need to use \Cref{thm:besov lp product} and Young to estimate
\begin{equation}
\begin{split}
&\mathrel{\phantom{\lesssim}} \lambda^{1/2} \E \int_{\Tor^3} \W_N I_N[w]^2 \dx\\
&\lesssim \lambda^{1/2} \E \norm{\W_N}_{\besovinfty^{-1/2-\epsilon}}
    \smallnorm{I_N[w]^2}_{B_{1,1}^{1/2+2\epsilon}}\\
&\lesssim \lambda^{1/2} \E \norm{\W_N}_{\besovinfty^{-1/2-\epsilon}}
    \norm{I_N[w]}_{L^2} \norm{I_N[w]}_{H^{1-\epsilon}}\\
&\leq \lambda C_\rho \E \norm{\W_N}_{\besovinfty^{-1/2-\epsilon}}^4
    + \lambda C_\rho \norm{I_N[w]}_{L^4}^4 + \rho \norm{I_N[w]}_{H^{1-\epsilon}}^2.
\end{split}
\end{equation}

\bigskip\emph{The $\W_N^2 I_N[v]$ term.}
Here we substitute the full ansatz to get
\begin{equation}
\frac 1 2 \E \int_{\Tor^3} \left[ -\lambda^{3/2} \W_N^2 \W_N^{[3]}
    - \lambda^{3/2} \W_N^2 I_N \left(J_t (\W_t^2 \parar \Theta_t I_N[v])\right)
    + \lambda^{1/2} \W_N^2 I_N[h] \right] \dx.
\end{equation}
The first term vanishes by an application of Wick's theorem since
\begin{center}
\begin{tikzpicture}
    \begin{scope}
        \coordinate (B1) at (-0.3, 0.5);
        \coordinate (B2) at (0.3, 0.5);
        \filldraw (0,0) -- (B1) circle[whitenoise];
        \filldraw (0,0) -- (B2) circle[whitenoise];
    \end{scope}
    \begin{scope}[xshift=2cm, yshift=0.5cm]
        \coordinate (C1) at (-0.5, 0.5);
        \coordinate (C2) at (0, 0.5);
        \coordinate (C3) at (0.5, 0.5);
        \filldraw (0,0) -- (C1) circle[whitenoise];
        \filldraw (0,0) -- (C2) circle[whitenoise];
        \filldraw (0,0) -- (C3) circle[whitenoise];
        \draw (0,-0.5) -- (0,0);
    \end{scope}
\end{tikzpicture}
\end{center}
cannot be contracted.
The remaining two terms correspond to the two renormalization terms still left.

We use the definition of $I_T$ to write the middle term as
\begin{equation}\label{eq:3d w2 iv before commutation}
-\frac{\lambda^{3/2}}{2} \E \int_0^N \int_{\Tor^3}
 (J_t \W_t^2) J_t(\W_t^2 \parar \Theta_t I_N[v]) \dx \dt.
\end{equation}
We can move $J_t$ inside the paraproduct by \cite[Proposition A.10]{barashkov_variational_2020},
and the error is bounded by
\begin{equation}
\begin{split}
&\mathrel{\phantom{\lesssim}}
   \norm{ J_t (\W_t^2 \parar \Theta_t I_N(v))- ((J_t \W_t^2) \parar \Theta_t I_N(v))}_{H^{1-2\delta}}\\
&\lesssim \frac{1}{t^{1/2+\delta}}\smallnorm{\W^2_t}_{\mathcal{C}^{-1-\delta}} \norm{I_N(v)}_{H^1},
\end{split}
\end{equation}
which implies 
\begin{align}
&\mathrel{\phantom{\lesssim}} \abs{\frac{\lambda^{3/2}}{2} \E \int_0^N \int_{\Tor^3}
    (J_t \W_t^2) (J_t(\W_t^2 \parar \Theta_t I_N[v])-(J_t\W_t^2 \parar \Theta_t I_N[v]) \dx \dt} \notag\\
&\lesssim \int_0^N \frac{1}{t^{1+\delta}}\smallnorm{\W^2_t}^2_{\mathcal{C}^{-1-\delta}} \norm{I_N(v)}_{H^1} \dt.
\end{align}
Now it remains to study the term 
\begin{equation}
\frac{\lambda^{3/2}}{2} \E \int_0^N \int_{\Tor^3} (J_t \W_t^2)(J_t\W_t^2 \parar \Theta_t I_N[v]).
\end{equation}
This is set up the following commutator estimate that states that
paraproduct and resonant product are ``almost adjoint'' to each other.

\begin{lemma}\label{thm:paraproduct almost adjoint}
Assume that $\alpha + \gamma < 0$ and $\alpha + \beta + \gamma = 0$,
and let $p_1, \ldots, p_3$ and $r_1, r_2$ be Hölder conjugates.
Then
\[
\int_{\Tor^d} (f \parar g) h \dx
= \int_{\Tor^d} (f \reson h) g \dx + R(f, g, h),
\]
where the residual term satisfies
\[
\abs{R(f, g, h)} \lesssim
    \norm{f}_{B^\alpha_{p_1, r_1}} \norm{g}_{B^\beta_{p_2, \infty}} \norm{h}_{B^\gamma_{p_3, r_2}}.
\]
\end{lemma}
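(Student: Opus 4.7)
My plan is to expand both integrals using the Littlewood--Paley decomposition and isolate the residual as a sum of trilinear pieces in which the three dyadic scales are comparable. Writing $f_\ell \coloneqq \Delta_\ell f$ and analogously $g_j, h_m$, and $S_{\ell-2} g \coloneqq \sum_{j \leq \ell-2} g_j$, the paraproduct expansion gives $f \parar g = \sum_\ell f_\ell \, S_{\ell-2} g$. A Fourier-support argument shows that $f_\ell (S_{\ell-2} g)$ is spectrally supported in an annulus of radius $\sim 2^\ell$, so testing against $h$ only picks up $h_m$ with $|m - \ell|$ at most some absolute constant $N_0$. Writing $\tilde S_\ell h \coloneqq \sum_{|m-\ell| \leq N_0} h_m$ and $\tilde h_\ell \coloneqq \sum_{|m-\ell| \leq 1} h_m$, I obtain
\[
\int_{\Tor^d} (f \parar g) h \dx = \sum_\ell \int_{\Tor^d} f_\ell (S_{\ell-2} g)(\tilde S_\ell h) \dx,
\quad
\int_{\Tor^d} (f \reson h) g \dx = \sum_\ell \int_{\Tor^d} f_\ell \, \tilde h_\ell \, g \dx.
\]

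Subtracting and rearranging algebraically yields
\[
R = \sum_\ell \int_{\Tor^d} f_\ell (S_{\ell-2} g)(\tilde S_\ell h - \tilde h_\ell) \dx
    - \sum_\ell \int_{\Tor^d} f_\ell \, \tilde h_\ell \!\! \sum_{j \geq \ell-1} \! g_j \dx.
\]
Further Fourier-support analysis restricts the index $j$ in each summand to an $\bigO(1)$ window of $\ell$: in the first term one has $|m - \ell| \leq N_0$ with $|m - \ell| > 1$, forcing $j \geq \ell - N_0'$ by frequency balance, while in the second term $|m - \ell| \leq 1$ and $j \geq \ell - 1$, with the freeness of $j$ on the upper side again cut off by Fourier support. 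Hence every summand $\int f_\ell g_j h_m \dx$ in $R$ has all three indices pairwise within $\bigO(1)$. For such a diagonal triple, Hölder's inequality and Besov weights give
\[
\Big|\!\int f_\ell g_j h_m \dx \Big|
\leq 2^{-\ell\alpha - j\beta - m\gamma}
\bigl(2^{\ell\alpha}\norm{f_\ell}_{p_1}\bigr)
\bigl(2^{j\beta}\norm{g_j}_{p_2}\bigr)
\bigl(2^{m\gamma}\norm{h_m}_{p_3}\bigr),
\]
and the exponent $\ell\alpha + j\beta + m\gamma = \ell(\alpha+\beta+\gamma) + \bigO(1) = \bigO(1)$ by the hypothesis $\alpha + \beta + \gamma = 0$, making the prefactor uniformly bounded.

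To close the sum I handle $g$ via $\sup_j(2^{j\beta}\norm{g_j}_{p_2}) \leq \norm{g}_{B^\beta_{p_2,\infty}}$, which absorbs the $\bigO(1)$-size window of $j$ trivially. The remaining double sum over $\ell$ and $m$ with $|m-\ell| \leq N_0$ splits into a finite sum over shifts $i = m - \ell$; for each $i$, Hölder's inequality in $\ell^{r_1}$--$\ell^{r_2}$ applied to the sequences $\bigl(2^{\ell\alpha}\norm{f_\ell}_{p_1}\bigr)_\ell$ and $\bigl(2^{(\ell+i)\gamma}\norm{h_{\ell+i}}_{p_3}\bigr)_\ell$ produces $\norm{f}_{B^\alpha_{p_1,r_1}} \norm{h}_{B^\gamma_{p_3, r_2}}$. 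The main obstacle is the frequency-support bookkeeping in the first step: one must check that the smooth (non-sharp) dyadic cutoffs still yield a residual genuinely concentrated on the ``diagonal'' in index space, which determines the admissible value of $N_0$. The hypothesis $\alpha + \gamma < 0$ itself is not invoked in the estimate but is what makes the lemma meaningful: it ensures $\beta = -(\alpha+\gamma) > 0$ so that $g \in B^\beta_{p_2,\infty}$ lies in a dual space of the \emph{a priori} ill-defined resonant product $f \reson h$, and this lemma precisely gives the rigorous pairing.
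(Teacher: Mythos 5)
Your argument is correct and is the standard proof of the almost-adjoint property for paraproducts. The paper itself does not give a proof at all: it simply cites \cite[Lemma~A.6]{gubinelli_semilinear_2020} ``with minor modification to use arbitrary $p_i$.'' You have therefore supplied from scratch what the paper delegates to a reference, and the structure you use (expand both sides in Littlewood--Paley blocks, cancel the common diagonal, observe that the remaining triples $(\ell,j,m)$ are pairwise $\bigO(1)$-close, and close with Hölder across $\ell^{r_1}$, $\ell^\infty$, $\ell^{r_2}$) is exactly the one behind that cited lemma.

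Two points are worth making explicit if you were to write this out in full. First, the claim that $|m-\ell|\geq 2$ together with $j\leq \ell-2$ forces $j\geq \ell - N_0'$ genuinely relies on the disjointness of $\operatorname{supp}\widehat{\Delta_\ell}$ and $\operatorname{supp}\widehat{\Delta_m}$ for $|m-\ell|\geq 2$ in the standard choice of cut-off (support in $B(0,8/3)\setminus B(0,3/4)$, so e.g.\ $8/3 < 3 = (3/4)\cdot 2^2$). With that in hand, $\abs{\xi_1 + \xi_3}\gtrsim 2^\ell$ for $\xi_1\in\operatorname{supp}\widehat{\Delta_\ell f}$, $\xi_3\in\operatorname{supp}\widehat{\Delta_m h}$, so the constraint $\xi_1+\xi_2+\xi_3=0$ indeed pins $j$ to within a constant of $\ell$; your worry about ``smooth (non-sharp) cutoffs'' is thus resolved by the explicit support choice in the paper's definition. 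Second, your closing remark correctly identifies that $\alpha+\gamma<0$ is not used in the quantitative bound on $R$; it is exactly the regime in which $f\reson h$ lacks an unconditional Besov bound of its own, and the lemma then serves to interpret the pairing $\int (f\reson h)g\dx$ (with $\beta = -(\alpha+\gamma) > 0$) rather than to improve an already-valid estimate.
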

\begin{proof}
\cite[Lemma~A.6]{gubinelli_semilinear_2020},
with minor modification to use arbitrary $p_i$.
\end{proof}

So now \eqref{eq:3d w2 iv before commutation} can be estimated with
\begin{equation}\label{eq:3d w2 iv after commutation}
-\frac{\lambda^{3/2}}{2} \E \int_0^N \!\! \int_{\Tor^3}
    (J_t \W_t^2 \reson J_t \W_t^2) (\Theta_t I_N[v]) \dx \dt,
\end{equation}
where the estimation error is bounded by
\begin{equation}
\lambda^{3/2} \E \int_0^N 
    \smallnorm{J_t \W_t^2}^2_{\besovinfty^{-\epsilon}}
    \norm{\Theta_t I_N[v]}_{H^{3\epsilon}} \frac{\dt}{\inorm t^{1+\epsilon}},
\end{equation}
which can be estimated as usual by Young.
Now we renormalize \eqref{eq:3d w2 iv after commutation} with the $\lambda^{3/2} \gamma_N I_N[v]$ term.
We introduce $\Theta_N$ in there and differentiate in time:
\begin{equation}\label{eq:3d cubic gamma theta diff}
\lambda^{3/2} \gamma_N \Theta_N I_N[v]
= \lambda^{3/2} \int_0^N  (\partial_t \gamma_t) \Theta_t I_N[v]
    + \gamma_t (\partial_t \Theta_t) I_N[v] \dt.
\end{equation}
We use the $\lambda^{3/2} (\partial_t \gamma_t) \Theta_t I_N[v]$ part
to renormalize \eqref{eq:3d w2 iv after commutation}.
By duality we are to estimate
\begin{equation}
\frac{\lambda^{3/2}}{2} \E \int_0^N
    \smallnorm{J_t \W_t^2 \reson J_t \W_t^2 + 2 \partial_t \gamma_t}_{H^{-\epsilon}}
    \smallnorm{\Theta_t I_N[v]}_{H^\epsilon} \dt.
\end{equation}
Recall that $\Theta_t$ is bounded uniformly in $t$,
so we can move $\norm{I_N[v]}_{H^\epsilon}$ outside the integral.
Therefore the expression is bounded by Young and the following lemma:

\begin{lemma}\label{thm:3d partition jw2}
For any $\epsilon > 0$ and $p, q, r \in (1, \infty)$ we have
\[
\E \left[ \int_0^\infty \norm{J_t \W_t^2 \reson J_t \W_t^2 + 2\partial_t \gamma_t}_{B^{-\epsilon}_{p,r}} \dt \right]^q
\lesssim_{\epsilon, p, q, r} 1.
\]
This estimate is uniform in the implicit truncation $N$.
\end{lemma}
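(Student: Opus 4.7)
The strategy is to perform a Wiener chaos decomposition of the centered resonant product, bound each component via Itô isometry and Gaussian hypercontractivity, and then invoke Minkowski's integral inequality to handle the time integral.

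First I would use the second-chaos representation of $\W_t^2$ in \eqref{eq:chaos w2} to view $J_t \W_t^2$ as a Fourier multiplier applied to a second-chaos field. The resonant product $J_t \W_t^2 \reson J_t \W_t^2$ then decomposes into Wiener chaoses of orders $0$, $2$, and $4$; denote the components by $Y_t^{(0)} + Y_t^{(2)} + Y_t^{(4)}$. Because $J_t \W_t^2$ is stationary and the paraproducts $\paral, \parar$ are Fourier-supported away from the origin, the spatial mean of the product is entirely captured by the resonant part, so $Y_t^{(0)}$ is a spatial constant computable from the covariance of $J_t \W_t^2$. A direct comparison with the definition of $\gamma_t$ shows that the constant $2\partial_t \gamma_t$ is precisely tuned to cancel this divergent zeroth-chaos piece, so that $Y_t^{(0)} + 2\partial_t \gamma_t$ contributes nothing and the task reduces to bounding $Y_t^{(2)}$ and $Y_t^{(4)}$.

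For each $j \in \{2, 4\}$, I would compute $\E |\widehat{Y_t^{(j)}}(k)|^2$ via Itô isometry. This yields a sum over wavenumber tuples $(n_1, n_2, n_3, n_4)$ with $n_1 + n_2 + n_3 + n_4 = k$, pair-contracted according to the chaos order, of products of weights $\sigma_s(n_i)/\inormp{n_i}$ arising from the chaos expansion of $J_s \W_s^2$. Two constraints cut this sum down: the resonant-product localization forces the two factor-frequencies $n_1 + n_2$ and $n_3 + n_4$ to be dyadically comparable, and $\sigma_t$ concentrates each $|n_i|$ near $\inorm t$. Combined with the $\inormp{n_i}^{-1}$ weights, this produces a decisive $\inorm t^{-2-2\delta}$ prefactor (after trading a small power of $\inorm t$ for Besov regularity), and the remaining lattice convolutions are bounded by iterated applications of \Cref{thm:discrete convolution}, yielding
\begin{equation*}
\E \bigl| \widehat{Y_t^{(j)}}(k) \bigr|^2 \lesssim \inorm t^{-2-2\delta}\, \inorm k^{-\alpha_j}
\end{equation*}
for some $\alpha_j > 2\epsilon$ and arbitrarily small $\delta > 0$.

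Since each $Y_t^{(j)}$ lives in a fixed Wiener chaos, Gaussian hypercontractivity upgrades the $L^2(\Prob)$ bound to $L^q(\Prob)$ at the cost of a constant depending only on $q$ and the chaos order. Assembling the Fourier-mode bounds via the Littlewood--Paley characterization of $B^{-\epsilon}_{p,r}$ gives $\bigl(\E \smallnorm{Y_t^{(j)}}_{B^{-\epsilon}_{p,r}}^q\bigr)^{1/q} \lesssim \inorm t^{-1-\delta'}$ for some $\delta' > 0$. Minkowski's integral inequality then converts this pointwise-in-$t$ estimate into the advertised bound on the $L^q(\Prob)$ norm of the time integral, since $\inorm t^{-1-\delta'}$ is integrable on $\R_+$. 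Uniformity in $N$ is automatic: the truncation $\I_{|n_i| \leq N}$ only restricts the lattice sums. The main obstacle is the second-moment computation: the interplay between the resonant-product localization (which links the two factor-frequencies) and the $\sigma_t$-localization (which fixes their scale near $\inorm t$) produces a four-fold lattice convolution whose structure must be dissected carefully to extract the decisive $\inorm t^{-2-2\delta}$ prefactor. Once this delicate accounting is done, hypercontractivity and Minkowski are routine.
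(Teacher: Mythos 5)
The paper proves this lemma by citing \cite[Lemma~25]{barashkov_variational_2020} (with a remark correcting a sign there) rather than giving an in-paper argument, so your proposal is being compared against that external reference. Your strategy --- chaos-decompose $J_t\W^2_t\reson J_t\W^2_t$, identify the zeroth-chaos (deterministic) part with $-2\partial_t\gamma_t$, bound the second- and fourth-chaos parts in $L^2(\Prob)$ by It\^o isometry together with \Cref{thm:discrete convolution}, upgrade to $L^q(\Prob)$ by Gaussian hypercontractivity, and integrate in $t$ via Minkowski --- is exactly the standard method, and is what the cited lemma does. The mechanism you describe for the zeroth-chaos cancellation is also correct: since $J_t\W^2_t$ is stationary and $\E[\widehat{J_t\W^2_t}(k_1)\,\widehat{J_t\W^2_t}(k_2)]$ vanishes unless $k_1+k_2=0$, the expectation of the full product sits entirely in the resonant piece and is a spatial constant, which is (by design) proportional to $\partial_t\gamma_t$.

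Two places where a written-out version needs more than your outline gives. First, the cancellation $Y^{(0)}_t + 2\partial_t\gamma_t \equiv 0$ must be \emph{exact}, not merely of matching order, and this hinges on the sign and normalization of $\gamma_t$: the paper itself flags a sign error in the published reference, and its two displayed formulas for $\gamma_N$ (\Cref{thm:3d partition overall} versus the proof of \Cref{thm:3d wick gamma difference}) differ by a factor of $-\tfrac12$ and by whether $\smallabs{\Tor^3}$ is normalized to one; so any self-contained proof has to fix these bookkeeping conventions before the cancellation claim can be asserted. Second, the second-moment estimate $\E\smallabs{\widehat{Y^{(j)}_t}(k)}^2 \lesssim \inorm{t}^{-2-2\delta}\inorm{k}^{-\alpha_j}$ is the entire content of the lemma and you do not carry it out. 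Two details there deserve care: the contraction count differs between $j=4$ (no cross-contractions, a single It\^o isometry) and $j=2$ (one cross-contraction between the two copies of $J_t\W^2_t$ before isometrizing), so the lattice-sum structure and the power counting are not the same; and the threshold $\alpha_j > 2\epsilon$ you quote is not the right condition in three dimensions --- for $B^{-\epsilon}_{2,2}$ one needs $\alpha_j > 3 - 2\epsilon$ (with $p$-dependent variants), because the Littlewood--Paley block sum contributes a $2^{m(3-\alpha_j)}$ factor. None of this changes the soundness of the strategy, but these are precisely the computations the cited lemma has to, and does, nail down.
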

\begin{proof}
\cite[Lemma~25]{barashkov_variational_2020}.
There is a sign error in the published version of the cited article;
the correct sign is used in e.g.\ Lemma~24 there.
\end{proof}

There are still some remaining pieces of the renormalization term.
However, they are small:

\begin{lemma}
We can estimate the second part of \eqref{eq:3d cubic gamma theta diff} by
\begin{equation}
\lambda^{3/2} \E \abs{ \int_0^N \!\! \int_{\Tor^3} \gamma_t (\partial_t \Theta_t) I_N[v] \dx \dt }
\lesssim \lambda^{3/2} \E \norm{I_N[v]}_{L^4}^4,
\end{equation}
and when $N$ is large enough,
\begin{equation}
\lambda^{3/2} \E \int_{\Tor^3} \gamma_N (1 - \Theta_t) I_N[v] \dx = 0.
\end{equation}
\end{lemma}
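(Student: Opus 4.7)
The strategy is to exploit that $\gamma_t$ is a deterministic scalar constant in $x$, so that both expressions reduce to the zero Fourier mode of $I_N[v]$ multiplied by the value of the symbol (or its time derivative) of $\Theta_t$ evaluated at $k = 0$. Write $m_t(k)$ for the Fourier symbol of $\Theta_t$. The spectral localization property of $1 - \Theta_t$ recalled in its definition says that, once $t > T_0$, the symbol $1 - m_t$ is supported in an annulus $\abs{k} \simeq t$; in particular $m_t(0) = 1$ for all $t \geq T_0$.

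For the identity, pulling the scalar $\gamma_N$ out of the spatial integral and Fourier-inverting gives
\begin{equation*}
\int_{\Tor^3} \gamma_N (1 - \Theta_N) I_N[v] \dx = L^3 \gamma_N \bigl(1 - m_N(0)\bigr) \widehat{I_N[v]}(0),
\end{equation*}
which vanishes once $N \geq T_0$. For the inequality, the analogous reduction reads
\begin{equation*}
\int_{\Tor^3} \gamma_t (\partial_t \Theta_t) I_N[v] \dx = L^3 \gamma_t\, \partial_t m_t(0)\, \widehat{I_N[v]}(0).
\end{equation*}
Because $m_t(0) \equiv 1$ for $t \geq T_0$, the scalar function $t \mapsto \partial_t m_t(0)$ is supported in the compact interval $[0, T_0]$, on which both it and $\gamma_t$ are uniformly bounded. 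After performing the $t$-integral, this leaves
\begin{equation*}
\lambda^{3/2} \E \abs{\int_0^N \!\! \int_{\Tor^3} \gamma_t (\partial_t \Theta_t) I_N[v] \dx \dt}
\lesssim \lambda^{3/2} \E \abs{\widehat{I_N[v]}(0)}
\lesssim \lambda^{3/2} \E \norm{I_N[v]}_{L^4},
\end{equation*}
where the last step uses Hölder on $\Tor^3$. Young's inequality $a \leq C_\rho + \rho a^4$ combined with the smallness of $\lambda$ then absorbs this into $\lambda C_\rho + \rho \lambda \E \norm{I_N[v]}_{L^4}^4$, matching the form in which the bound is used in the subsequent estimates.

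The only point requiring care is the observation $m_t(0) \equiv 1$ for $t \geq T_0$, which is an immediate corollary of the spectral localization of $1 - \Theta_t$ recorded in the definition of $\Theta_t$. Everything else is Fourier analysis on $\Tor^3$ combined with Young's inequality; no paraproduct machinery is needed here, precisely because $\gamma_t$ is merely a scalar and the multipliers $\partial_t \Theta_t$ and $1 - \Theta_N$ act very simply on the zero mode.
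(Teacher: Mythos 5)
Your proof is correct and follows the same approach as the paper: reduce the spatial integral to the zero Fourier mode (since $\gamma_t$ is a deterministic scalar), observe that the symbols of $\partial_t \Theta_t$ and $1-\Theta_N$ vanish at $k=0$ once $t$ (resp.\ $N$) is past the fixed threshold, then bound $\gamma_t$ on the resulting compact $t$-interval and close with H\"older and Young. The only cosmetic difference is that the paper invokes the growth bound $\abs{\gamma_t}\lesssim 1+\log\inorm t$ rather than ``uniform boundedness on a compact interval''; the conclusions agree. Both you and the paper actually obtain the weaker form $\lambda^{3/2}(1+\norm{I_N[v]}_{L^4}^4)$ rather than $\lambda^{3/2}\norm{I_N[v]}_{L^4}^4$ verbatim, which is the form used downstream, so this is a harmless imprecision in the lemma's displayed statement rather than a gap in your argument.
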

\begin{proof}
It suffices to note that $\partial_t \Theta_t$ and $(1-\Theta_t)$ are supported
away from the origin when $t$ is large enough.
This means that the spatial integrals vanish for some $t \geq t_0$.

It is shown in \cite[Lemma~25]{barashkov_variational_2020} that
$\abs{\gamma_t} \lesssim 1 + \log\inorm t$.
Finally, $\partial_t \Theta_t$ is an uniformly bounded operator by construction.
These facts let us bound the first integral by
\[
C \lambda^{3/2} t_0 (1 + \log\langle t_0 \rangle) \int_{\Tor^3} I_N[v] \dx
\lesssim \lambda^{3/2} (1 + \norm{I_N[v]}_{L^4}^4).
\qedhere
\]
\end{proof}

\bigskip%
This leaves us with the $\W_N^2 I_N[h]$ term.
Here we repeat the argument of \eqref{eq:3d ansatz 1}:
first use the Itô formula and self-adjointness to write
\begin{equation}\label{eq:3d before ansatz 3}
\frac{\lambda^{1/2}}{2} \E \int_{\Tor^3} \W_N^2 I_N[h] \dx
= \frac{\lambda^{1/2}}{2} \E \int_0^N \!\! \int_{\Tor^3} (J_t \W_t^2) h_t \dx \dt,
\end{equation}
and then update the ansatz as
\begin{equation}\label{eq:3d ansatz 3}
h_t = -\frac{\lambda^{1/2}}{2} J_t \W_t^2 + g_t.
\end{equation}
As we repeat the earlier argument, we find that
\begin{align}
&\frac{\lambda^{1/2}}{2} \E \int_{\Tor^3} \W_N^2 I_N[h] \dx
    + \frac 1 2 \int_0^N \norm{h_t}_2^2 \dt \notag\\
=\; &-\frac \lambda 8 \E \int_0^N \!\! \int_{\Tor^3} (J_t \W_t^2)^2 \dx \dt
    + \frac 1 2 \int_0^N \norm{g_t}_2^2 \dt.
\end{align}
We can replace $(J_t \W_t^2)^2$ by the resonant product $J_t \W_t^2 \reson J_t \W_t^2$,
since the paraproducts do not contribute to the zero mode.
We then use the $-\lambda \gamma_N / 4$ renormalization term.
By the bounded domain, we can use duality (\Cref{thm:besov multiplication}) to estimate
\begin{align*}
&\frac \lambda 8 \E \int_0^N \!\!
    \int_{\Tor^3} \abs{ J_t \W_t^2 \reson J_t \W_t^2 + 2\partial_t \gamma_t } \dx \dt \notag\\
\lesssim\; &\lambda \norm{1}_{H^{\epsilon}}^2
    + \lambda \E\left[ \int_0^N \norm{J_t \W_t^2 \reson J_t \W_t^2 + 2\partial_t \gamma_t}_{H^{-\epsilon}}
            \dt \right]^2.
\end{align*}
This is again bounded by \Cref{thm:3d partition jw2}, which finishes the proof of this term.
It only remains to verify that previous estimates are still valid with the updated ansatz.
This corresponds to \cite[Eq.~(52)]{barashkov_variational_2020}.

\begin{lemma}\label{thm:3d partition w bound}
We have
\[
\E \norm{I_N[w]}_{H^{1-\epsilon}}^2
\lesssim \lambda + \lambda \norm{I_N[v]}_{L^4}^4 + \int_0^N \norm{g_t}_2^2 \dt.
\]
\end{lemma}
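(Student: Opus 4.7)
Combining the three ansatzes \eqref{eq:3d ansatz 1}, \eqref{eq:3d ansatz 2}, \eqref{eq:3d ansatz 3}, the drift decomposes as
\[
w_t = -\lambda J_t(\W_t^2 \paral \Theta_t I_N[v]) - \tfrac{\lambda^{1/2}}{2} J_t \W_t^2 + g_t,
\]
so applying $I_N$ and the triangle inequality in $H^{1-\epsilon}$ yields three pieces which I would bound separately. The proof follows that of \cite[Lemma~5, Eq.~(52)]{barashkov_variational_2020}; let me sketch the scheme for each piece.

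\emph{Drift piece.} For $I_N[g]$ the argument of \Cref{thm:2d drift h1} applies verbatim to give $\norm{I_N[g]}_{H^1}^2 \leq \int_0^N \norm{g_t}_{L^2}^2 \dt$, which a fortiori controls the weaker $H^{1-\epsilon}$ norm.

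\emph{Purely stochastic piece.} For $\tfrac{\lambda^{1/2}}{2} \int_0^N J_t^2 \W_t^2 \dt$ I would compute the second moment Fourier-wise. Cauchy--Schwarz in $t$ against the measure $\sigma_t(k)^2\dt$ gives
\[
\bigg| \int_0^N \frac{\sigma_t(k)^2}{\inormc k^2} \widehat{\W_t^2}(k) \dt \bigg|^2 \leq \frac{\rho_N(k)^2}{\inormc k^4} \int_0^N \sigma_t(k)^2 \abs{\widehat{\W_t^2}(k)}^2 \dt.
\]
The chaos representation \eqref{eq:chaos w2} combined with the Itô isometry for double stochastic integrals and \Cref{thm:discrete convolution} (with $\alpha=\beta=2$, $d=3$) yields $\E\abs{\widehat{\W_t^2}(k)}^2 \lesssim \inorm k^{-1}$. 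Thus
\[
\E \bigg\| \tfrac{\lambda^{1/2}}{2} \int_0^N J_t^2 \W_t^2 \dt \bigg\|_{H^{1-\epsilon}}^2 \lesssim \lambda \sum_{k \in \Z^3} \inorm k^{2-2\epsilon-4-1} = \lambda \sum_{k \in \Z^3} \inorm k^{-3-2\epsilon} \lesssim \lambda,
\]
where convergence of the sum in $d=3$ requires $\epsilon > 0$. This is precisely why $H^{1-\epsilon}$ appears in the statement (the $H^1$ bound would fail logarithmically).

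\emph{Paraproduct piece.} For $\lambda \int_0^N J_t^2(\W_t^2 \paral \Theta_t I_N[v]) \dt$ I would use Minkowski in time and then apply \Cref{thm:boue-dupuis j regularity} twice (first with $r=1-\epsilon$ and $s=\epsilon$ on the outer $J_t$, then with appropriately chosen $s$ on the inner $J_t$), together with the paraproduct estimate of \Cref{thm:besov multiplication} applied with $\alpha = -1-\epsilon$, $\beta = 2\epsilon$, $p_1=\infty$, $p_2=2$, giving
\[
\norm{\W_t^2 \paral \Theta_t I_N[v]}_{H^{-1+\epsilon}} \lesssim \norm{\W_t^2}_{\besovinfty^{-1-\epsilon}} \norm{\Theta_t I_N[v]}_{H^{2\epsilon}}.
\]
The uniform boundedness of $\Theta_t$ and \Cref{thm:wick gff moments} allow the stochastic factor to be absorbed into a constant in expectation. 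A Young inequality of the form $\lambda A \cdot B \leq \rho \lambda^{-1} A^2 + \rho^{-1}\lambda^3 B^2$ (applied carefully so the $\lambda$-powers land correctly) splits the result into a purely stochastic contribution of order $\lambda$ and a term involving $\norm{I_N[v]}_{H^{2\epsilon}}^{\,4}$, which via Besov interpolation and Sobolev embedding on $\Tor^3$ reduces to $\norm{I_N[v]}_{L^4}^{\,4}$ up to acceptable losses.

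\emph{Main obstacle.} The delicate step is the paraproduct piece: one must simultaneously arrange (i) that the exponent in $\inorm t^{-1-2\epsilon}$ from the $J_t^2$ bound is strictly integrable, (ii) that the paraproduct lands in a space $H^{-1+\epsilon'}$ compatible with this exponent, and (iii) that the remaining control on $\Theta_t I_N[v]$ can be exchanged for $\norm{I_N[v]}_{L^4}$ since the $\norm{v_t}_{L^2}^2$ integral has already been spent in absorbing the previous terms of \eqref{eq:3d variational everything}. The first two points are resolved by the interplay between \Cref{thm:boue-dupuis j regularity} and the paraproduct rule; the third is where one must exploit the extra regularity of the paraproduct (via the fact that $\Theta_t$ only sees low frequencies relative to $t$) to close the estimate in a manner compatible with the Young budget $\rho\lambda \norm{I_N[v]}_{L^4}^{\,4}$ carried from \Cref{rem:3d partition control}.
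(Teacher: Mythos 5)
Your decomposition of $I_N[w]$ into three pieces is the right structure, and two of the three pieces are handled correctly: the $I_N[g]$ piece via \Cref{thm:2d drift h1} is exactly as the paper does it, and your Fourier-side computation of $\E\norm{\lambda^{1/2} I_N[J_t\W_t^2]}_{H^{1-\epsilon}}^2 \lesssim \lambda$ via Cauchy--Schwarz, Itô isometry and \Cref{thm:discrete convolution} is a correct, more explicit alternative to the paper's appeal to \cite[Lemma~4]{barashkov_variational_2020}.

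The paraproduct piece, however, has a genuine gap. Setting aside the paraproduct direction (you write $\W_t^2 \paral \Theta_t I_N[v]$ throughout, but the ansatz \eqref{eq:3d ansatz 2} has $\parar$, i.e.\ $\W_t^2$ in high frequencies; this matters because which factor carries the negative regularity is what determines the paraproduct bound), the real problem is the parameter choice in \Cref{thm:besov multiplication}. You take $p_1=\infty$, $p_2=2$ and arrive at $\norm{\W_t^2}_{\besovinfty^{-1-\epsilon}}\norm{\Theta_t I_N[v]}_{H^{2\epsilon}}$, but $\norm{\Theta_t I_N[v]}_{H^{2\epsilon}}$ \emph{cannot} be reduced to $\norm{I_N[v]}_{L^4}$ by Besov interpolation or Sobolev embedding on $\Tor^3$ — that direction would gain regularity, which is impossible. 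The fallback you gesture at (Bernstein via the frequency localisation of $\Theta_t$) also fails quantitatively: Bernstein gives $\norm{\Theta_t I_N[v]}_{H^{2\epsilon}}\lesssim t^{2\epsilon}\norm{I_N[v]}_{L^4}$, and after applying \Cref{thm:boue-dupuis j regularity} with $s=2\epsilon$ (needed to land in $H^{-1+\epsilon}$) the $t$-integral becomes $\int_0^N t^{4\epsilon}\inorm t^{-1-4\epsilon}\dt\simeq\int_0^N\inorm t^{-1}\dt$, which diverges logarithmically in $N$; the Bernstein loss exactly eats the $J_t$-smoothing gain.

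The paper avoids this by applying the paraproduct estimate with $p_1=p_2=4$ and $\alpha=0$: one takes $\Theta_t I_N[v]$ in $B^0_{4,\infty}\supset L^4$ and $\W_t^2$ in $B^{-1-\epsilon/2}_{4,2}$, landing in the \emph{slightly less regular} target $H^{-1-\epsilon/2}$ rather than $H^{-1+\epsilon}$. Then \Cref{thm:boue-dupuis j regularity} needs only $s=\epsilon/2>0$, so the weight is $\inorm t^{-1-\epsilon}$, the $t$-integral converges, no Bernstein factor appears, and the $\norm{I_N[v]}_{L^4}^2$ emerges directly. To close the lemma you would then apply Young's inequality to $\lambda^2\norm{I_N[v]}_{L^4}^2\cdot(\text{stochastic})\leq\lambda^3 C+\lambda\norm{I_N[v]}_{L^4}^4$.
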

\begin{proof}
The left-hand side is expanded as
\begin{equation}
\norm{\lambda I_N [J_t (\W_t^2 \parar \Theta_t I_N[v])]
    + \frac{\sqrt\lambda}{2} I_N[J_t \W_t^2] - I_N[h]}_{H^{1-\epsilon}}^2.
\end{equation}
By Young's inequality this is bounded by
\begin{equation}
C\norm{\lambda I_N [J_t (\W_t^2 \parar \Theta_t I_N[v])]}_{H^{1-\epsilon}}^2
+ C\norm{\sqrt\lambda I_N [J_t \W_t^2]}_{H^{1-\epsilon}}^2
+ C\norm{I_N[h]}_{H^{1-\epsilon}}^2.
\end{equation}
Note that the prefactor does not matter since this lemma will always be used with
$\rho \norm{I_N[w]}_{H^{1-\epsilon}}^2$ terms where $\rho$ is small.
We can then apply \Cref{thm:2d drift h1} to each term.
The first term then equals
\begin{equation}
2\lambda^2 \int_0^N \norm{J_t (\W_t^2 \parar \Theta_t I_N[v])}_{H^{-\epsilon}}^2 \dt
\lesssim \lambda^2 \int_0^N \frac{1}{\inorm t^{1+\epsilon}}
    \norm{\W_t^2 \parar \Theta_t I_N[v]}_{H^{-1-\epsilon/2}}^2 \dt
\end{equation}
by \Cref{thm:boue-dupuis j regularity}.
Now the paraproduct bound (\Cref{thm:besov multiplication})
and $L^p$ embedding (\Cref{thm:besov embeddings}) give that this is bounded by
\begin{equation}
\lambda^2 \norm{I_N[v]}_{L^4}^2
    \int_0^N \frac{1}{\inorm t^{1+2\epsilon}} \norm{\W_t^2}_{B^{-1-\epsilon/2}_{4,2}}^2,
\end{equation}
and it remains to apply Young's inequality
and the probabilistic bound \cite[Lemma~4]{barashkov_variational_2020}.
We then apply the same argument to the second term.
\end{proof}

\bigskip\emph{The $I_N[v]^3$ term.}
The estimation of this term parallels the two-dimensional case,
although the ansatz complicates matters slightly.
We will again split $I_N[v]$ into its mean $I_N^\circ[v]$ and oscillatory part $I_N^\perp[v]$.

The argument for $I_N[v]^2 I_N^\circ[v]$ is exactly as in two dimensions;
see \eqref{eq:2d kappa fixed}.

Let us then substitute the final ansatz \eqref{eq:3d ansatz 3} into $I_N^\perp[v]$ to get
\begin{align}
&2\lambda^{1/2} \E \int_{\Tor^3} I_N[v]^2 I_N^\perp[v] \dx \notag\\
=\; &2\E \int_{\Tor^3} \bigg[ -\lambda^{3/2} I_N[v]^2 I_N^\perp [J_t \W_t^3]
    - \lambda^{3/2} I_N[v]^2 I_N^\perp[J_t (\W_t^2 \parar \Theta_t I_N[v])] \notag\\
    &\quad - \lambda I_N[v]^2 I_N^\perp[J_t \W_t^2]
    + \lambda^{1/2} I_N[v]^2 I_N^\perp[g] \bigg] \dx.
\end{align}
All but the last term can be estimated with Young as before;
recall that projection operators are bounded so we may replace $I_N^\perp$ by $I_N$.
Thanks to the factors of $\lambda$ and $\lambda^{3/2}$, the bound is of form
\begin{equation}
\lambda\rho \norm{I_N[v]}_{L^4}^4
+ \lambda C_\rho \E\left[ \smallnorm{\W_N^{[3]}}_{L^2}^2
    + \norm{I_N J_t(\W_t^2 \parar \Theta_t I_N[v])}_{L^2}^2
    + \smallnorm{\W_t^{[2]}}_{L^2}^2 \right].
\end{equation}
The final term is then estimated with Poincaré's inequality
exactly as in \eqref{eq:2d partition using poincare}.
Note that $2\sqrt\lambda = 1/\sqrt\beta$ in the notation of \Cref{sec:2d partition}.

\subsubsection*{Conclusion}

All together, we have now showed that
\begin{equation}
\eqref{eq:3d variational everything}
\geq -\lambda C_\rho
    + (1-\rho) \E \left[ \lambda \int_{\Tor^3} I_N[v]^4 \dx + \frac 1 2 \int_0^N \norm{g_t}_2^2 \dt \right].
\label{eq:3d upper conclusion} 
\end{equation}
Since the bracketed term is non-negative and we can choose $\rho < 1$,
this shows that $-\lambda C_\rho = -C_\rho / 4\beta$ is a lower bound for the variational problem.
As in \Cref{thm:2d variational upper}, the result also holds with a small $q > 1$ factor.
Consequently,
\begin{equation}
\int_{\goodregion_+} \exp(-q \kappa \negpart{\hat\phi(0) + (1-\delta') \sqrt\beta}^2)
    \exp(-q V_\beta (\phi)) \diff\Gaussian(\phi)
\leq \exp\left(\frac{C_\rho}{4\beta}\right),
\end{equation}
and this finishes the upper bound of \Cref{thm:3d partition overall}.

\subsection{Lower bound for partition function}\label{sec:3d partition lower}

As a counterpart to \eqref{eq:3d upper conclusion}, an upper bound for the variational problem is
\begin{equation}
\eqref{eq:3d variational everything}
\leq \lambda C_\rho
    + (1+\rho) \E \left[ \lambda \int_{\Tor^3} I_N[v]^4 \dx + \frac 1 2 \int_0^N \norm{g_t}_2^2 \dt \right].
\end{equation}
We need to choose a drift $v$ such that this expression is finite and of order $\lambda$.
The following argument is based on \cite[Section~5]{barashkov_variational_2020},
but somewhat simplified since our bounds may freely depend on domain size.
Another minor modification is due to the $J_t \W^2_t$ term coming from \eqref{eq:3d ansatz 3}.

As part of the previous section, we have decomposed the drift as
\begin{equation}\label{eq:3d partition ansatz final}
v_t = -\lambda J_t \W_t^3 - \lambda J_t (\W_t^2 \parar \Theta_t I_N[v])
    - \frac{\lambda^{1/2}}{2} J_t \W_t^2 + g_t.
\end{equation}
We will first introduce one more regularizing operator.
Its usefulness is due to the following two lemmas
that let us control the ansatz.

\begin{definition}
Let $\chi$ be a smoothed indicator of $B(0, 1) \subset \R$.
We define $\mathcal X_t$ to be a Fourier multiplier with symbol
\[
\xi \mapsto \chi\!\left( \frac{\abs\xi}{(1 + \norm{\W_t^2}_{\besovinfty^{-1-\epsilon}})^{1/\epsilon}} \right).
\]
That is, $\mathcal X_t$ restricts frequencies to a ball
whose radius rapidly increases with the norm of $\W_t^2$.
\end{definition}

\begin{lemma}\label{thm:3d lower x high}
We have
\[
\norm{(1-\mathcal X_t) \W_t^2}_{\besovinfty^{-1-3\epsilon}} \lesssim 1
\]
uniformly in $t$.
\end{lemma}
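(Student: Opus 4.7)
The plan is to exploit that $1-\mathcal X_t$ is a high-frequency projector that annihilates all Fourier modes of magnitude less than $R_t \coloneqq (1+\smallnorm{\W_t^2}_{\besovinfty^{-1-\epsilon}})^{1/\epsilon}$, while costing only a smoothness loss from exponent $-1-\epsilon$ to $-1-3\epsilon$. Because the radius $R_t$ grows rapidly (like a large power of the $\besovinfty^{-1-\epsilon}$ norm itself), the gap of $2\epsilon$ in regularity is more than enough to absorb any growth of $\smallnorm{\W_t^2}_{\besovinfty^{-1-\epsilon}}$.

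Concretely, I would write the $\besovinfty^{-1-3\epsilon}$ norm as a supremum over Littlewood--Paley blocks and note that $\Delta_k(1-\mathcal X_t)\W_t^2 \equiv 0$ whenever the frequency annulus of $\Delta_k$ sits inside $B(0, R_t)$, i.e., whenever $2^k \lesssim R_t$. On the surviving blocks $2^k \gtrsim R_t$, the smoothness of $\chi$ makes $1-\mathcal X_t$ a $t$-uniformly bounded Fourier multiplier, so $\smallnorm{\Delta_k(1-\mathcal X_t)\W_t^2}_{L^\infty} \lesssim \smallnorm{\Delta_k \W_t^2}_{L^\infty}$. Then I would split the weight as
\[
2^{k(-1-3\epsilon)} = 2^{-2\epsilon k} \cdot 2^{k(-1-\epsilon)} \lesssim R_t^{-2\epsilon} \cdot 2^{k(-1-\epsilon)}
\]
on these blocks, and conclude
\[
\norm{(1-\mathcal X_t)\W_t^2}_{\besovinfty^{-1-3\epsilon}}
\lesssim R_t^{-2\epsilon} \smallnorm{\W_t^2}_{\besovinfty^{-1-\epsilon}}
= \frac{\smallnorm{\W_t^2}_{\besovinfty^{-1-\epsilon}}}{(1+\smallnorm{\W_t^2}_{\besovinfty^{-1-\epsilon}})^2}
\lesssim 1,
\]
where the last bound is the elementary estimate $x/(1+x)^2 \leq 1/4$ on $[0, \infty)$. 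This is uniform in $t$ and in the realization of the Gaussian field.

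The only mildly delicate step is justifying that the implicit constant in $\smallnorm{\Delta_k(1-\mathcal X_t) f}_{L^\infty} \lesssim \smallnorm{\Delta_k f}_{L^\infty}$ does not degrade with $t$. However, since the symbol $1-\chi(\abs\xi/R_t)$ depends on $\xi$ only through the dimensionless ratio $\abs\xi/R_t$, a change of variables shows that all scale-invariant seminorms of the symbol are bounded by those of $\chi$ alone; a standard Mikhlin-type multiplier estimate (or a direct convolution-kernel computation using that the inverse Fourier transform of $1-\chi(\cdot/R_t)$ is an $L^1$-normalized approximate identity at scale $R_t\inv$) then gives the required uniform $L^\infty \to L^\infty$ bound on each block. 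With this in hand the rest is the direct computation above.
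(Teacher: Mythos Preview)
Your argument is correct and follows the same idea as the paper's proof: use that $(1-\mathcal X_t)\W_t^2$ is Fourier-supported at frequencies $\gtrsim R_t$, apply a Bernstein-type estimate to trade the regularity gap for a factor $R_t^{-c\epsilon}$, and observe that the specific choice of $R_t$ makes this factor absorb the $\besovinfty^{-1-\epsilon}$ norm of $\W_t^2$. The only technical difference is in how uniform boundedness of $1-\mathcal X_t$ is handled: the paper avoids $L^\infty$ multiplier theory by passing through $L^p$ for large finite $p$ and then Besov-embedding back (at the cost of one $\epsilon$), obtaining the bound $\smallnorm{\W_t^2}_{\besovinfty^{-1-\epsilon}}\big/(1+\smallnorm{\W_t^2}_{\besovinfty^{-1-\epsilon}})$; you instead argue directly that the convolution kernel of $\mathcal X_t$ has $L^1$ norm independent of $R_t$, which saves that $\epsilon$ and gives the sharper factor $(1+\smallnorm{\W_t^2}_{\besovinfty^{-1-\epsilon}})^{-2}$. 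Either way the final quotient is trivially $\lesssim 1$, so the sharpening is cosmetic; your route is slightly more direct, the paper's slightly more robust in that it uses only the black-box statement that smooth radial projectors are uniformly bounded on $L^p$, $1\le p<\infty$.
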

\begin{proof}
Let us recall that if $f$ is Fourier-supported outside $B(0, R)$ and $\delta > 0$,
then Bernstein's inequality states that
\begin{equation}
\norm{f}_{B^s_{p,r}} \lesssim R^{-\delta} \norm{f}_{B^{s+\delta}_{p,r}}.
\end{equation}
We also use the uniform boundedness of projection operators (on $L^p$, $1 \leq p < \infty$)
together with Besov embedding to see that $1 - \mathcal X_t$ is bounded
from $\besovinfty^{-1-2\epsilon}$ to $\besovinfty^{-1-\epsilon}$.
This gives us the bound
\begin{equation}
\norm{(1-\mathcal X_t) \W_t^2}_{\besovinfty^{-1-3\epsilon}}
\lesssim \frac{\smallnorm{\W_t^2}_{\besovinfty^{-1-\epsilon}}}
    {1 + \smallnorm{\W_t^2}_{\besovinfty^{-1-\epsilon}}}.
\end{equation}
Thus we are left with a quotient bounded by $1$.
\end{proof}

\begin{lemma}\label{thm:3d lower x low}
We have
\[
\norm{\mathcal X_t \W_t^2}_{\besovinfty^{-1+\epsilon}}
\lesssim 1 + \norm{\W_t^2}_{\besovinfty^{-1-\epsilon}}^2.
\]
\end{lemma}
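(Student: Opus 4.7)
The plan is to apply Bernstein's inequality in the opposite direction to the previous lemma: instead of using smallness of the radius to gain regularity for free, we use the explicit Fourier localization of $\mathcal X_t \W_t^2$ to a ball of radius
\[
R_t \coloneqq (1 + \smallnorm{\W_t^2}_{\besovinfty^{-1-\epsilon}})^{1/\epsilon}
\]
to pay a controlled price for the $2\epsilon$ derivatives we are asked to produce.

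First I would note that $\mathcal X_t$ is a smooth Fourier multiplier with symbol uniformly bounded by $1$, and is therefore bounded on $\besovinfty^{-1-\epsilon}$ uniformly in $t$; this is the same kind of projection-operator remark used in the proof of \Cref{thm:3d lower x high}. In particular,
\[
\smallnorm{\mathcal X_t \W_t^2}_{\besovinfty^{-1-\epsilon}} \lesssim \smallnorm{\W_t^2}_{\besovinfty^{-1-\epsilon}}.
\]
Next, since $\mathcal X_t \W_t^2$ has Fourier support in $B(0, R_t)$, only Littlewood–Paley blocks with $2^k \lesssim R_t$ contribute, and Bernstein's inequality gives
\[
\smallnorm{\mathcal X_t \W_t^2}_{\besovinfty^{-1+\epsilon}}
= \sup_k 2^{k(-1+\epsilon)} \norm{\Delta_k \mathcal X_t \W_t^2}_{L^\infty}
\lesssim R_t^{2\epsilon}\, \smallnorm{\mathcal X_t \W_t^2}_{\besovinfty^{-1-\epsilon}}.
\]

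Combining the two steps and substituting $R_t^{2\epsilon} = (1 + \smallnorm{\W_t^2}_{\besovinfty^{-1-\epsilon}})^2$ yields the desired bound, up to absorbing a single power of $\smallnorm{\W_t^2}_{\besovinfty^{-1-\epsilon}}$ into the claimed quadratic right-hand side (after possibly reducing $\epsilon$, which is permitted by the convention in \Cref{sec:notation} that $\epsilon$ may vary). No real obstacle appears; the only subtlety is book-keeping of the exponent in the definition of $R_t$, which is chosen precisely so that the Bernstein loss $R_t^{2\epsilon}$ exactly matches the budget allowed by the stated estimate.
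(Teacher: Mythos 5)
Your approach is the same as the paper's: uniform boundedness of $\mathcal X_t$ followed by the converse Bernstein inequality for Fourier support in $B(0,R_t)$. You correctly compute the Bernstein loss as $R_t^{2\epsilon} = (1 + \smallnorm{\W_t^2}_{\besovinfty^{-1-\epsilon}})^2$, since gaining $2\epsilon$ derivatives from $\besovinfty^{-1-\epsilon}$ to $\besovinfty^{-1+\epsilon}$ costs $R_t^{2\epsilon}$. (The paper's own write-up only records a single factor $(1+\smallnorm{\W_t^2}_{\besovinfty^{-1-\epsilon}})$, which is a slip.)

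However, the concluding absorption step does not go through as stated. With $x \coloneqq \smallnorm{\W_t^2}_{\besovinfty^{-1-\epsilon}}$, your computation yields $(1+x)^2 x \simeq x + x^2 + x^3$, which is not $\lesssim 1 + x^2$: for large $x$ the ratio diverges. Reducing the $\epsilon$ in the target space to some $\epsilon' < \epsilon$ (while keeping $\epsilon$ in the definition of $R_t$) changes the Bernstein exponent to $(\epsilon+\epsilon')/\epsilon \in (1,2)$ and thus the bound to $(1+x)^{1+\epsilon'/\epsilon} x \lesssim 1 + x^{2+\epsilon'/\epsilon}$; this approaches $1+x^2$ only in the degenerate limit $\epsilon' \to 0$, which is not usable because the downstream $\dt/\inorm{t}^{1+2\epsilon'}$ integral must converge. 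So the honest conclusion from this argument is $\norm{\mathcal X_t \W_t^2}_{\besovinfty^{-1+\epsilon}} \lesssim 1 + \smallnorm{\W_t^2}_{\besovinfty^{-1-\epsilon}}^3$ (or, with the target regularity lowered slightly, $1 + x^{2+\delta}$ for any $\delta > 0$). This weaker bound is perfectly adequate for the subsequent use, where only finite Gaussian moments of some polynomial power of $x$ are needed; alternatively the discrepancy can be repaired by redefining $\mathcal X_t$ with exponent $1/(2\epsilon)$ in place of $1/\epsilon$. You should state which of these routes you are taking rather than assert, without justification, that the extra power "exactly matches the budget."
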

\begin{proof}
A converse version of Bernstein's inequality states that
if $f$ is Fourier-supported inside $B(0, R)$, then
\begin{equation}
\norm{f}_{B^{s+\delta}_{p,r}} \lesssim R^{\delta} \norm{f}_{B^s_{p,r}}.
\end{equation}
In our case this yields
\begin{equation}
\norm{\mathcal X_t \W_t^2}_{\besovinfty^{-1+\epsilon}}
\lesssim (1 + \norm{\W_t^2}_{\besovinfty^{-1-\epsilon}})
    \norm{\W_t^2}_{\besovinfty^{-1-\epsilon}}.
\end{equation}
We again used uniform boundedness of $\mathcal X_t$.
\end{proof}

Let us then define $\check v$ as the solution to the equation
\begin{equation}
\check v_t = -\lambda J_t \W_t^3
    - \frac{\lambda^{1/2}}{2} J_t \W_t^2
    - \lambda J_t ((1-\mathcal X_t) \W_t^2 \parar \Theta_t I_t[\check v]).
\end{equation}
For all $t \geq 0$, $1 \leq p \leq \infty$, and almost all realizations of the random field,
a solution exists in $B^{-1/2-3\epsilon}_{p,p}$ by Banach's fixed-point theorem,
at least assuming $\lambda$ small enough.
The proof is a modification of the following lemma,
which gives an $L^4$ estimate:

\begin{lemma}
The $L^4$ norm of this drift satisfies uniformly in $N$ the estimate
\[
\E \norm{I_N[\check v]}_{L^4}^4 \lesssim \lambda^2.
\]
\end{lemma}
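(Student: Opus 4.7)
By construction of $\check v$,
\begin{equation*}
I_N[\check v] = -\lambda \W_N^{[3]} - \tfrac{\lambda^{1/2}}{2} I_N[J_t \W_t^2] - \lambda \, \mathcal R_N,
\end{equation*}
where $\mathcal R_N \coloneqq I_N\!\left[J_t((1-\mathcal X_t)\W_t^2 \parar \Theta_t I_t[\check v])\right]$. The plan is to bound the $L^4$-norm of each of the three summands and close a fixed-point inequality for the last. For the two purely stochastic terms I would proceed as follows. For $\W_N^{[3]}$, the Besov bound in \Cref{thm:3d gaussian moments} combined with the embedding $\besovinfty^{1/2-\epsilon}(\Tor^3) \hookrightarrow L^4(\Tor^3)$ gives $\E \smallnorm{\W_N^{[3]}}_{L^4}^4 \lesssim 1$ uniformly in $N$, contributing $\bigO(\lambda^4)$. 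For $I_N[J_t \W_t^2]$ I would use the second-chaos representation of \Cref{thm:chaos w2 w3}, apply \Cref{thm:boue-dupuis j regularity} twice to exploit the smoothing of $J_t J_t$ at rate $\inorm t^{-1}$, and sum the resulting kernels via the Itô isometry and \Cref{thm:discrete convolution}, yielding $\E \smallnorm{I_N[J_t \W_t^2]}_{L^4}^4 \lesssim 1$; the prefactor $\lambda^{1/2}$ then produces the dominant $\lambda^2$ contribution.

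For the paraproduct remainder $\mathcal R_N$, the decisive input is \Cref{thm:3d lower x high}, which provides the \emph{deterministic} uniform bound $\smallnorm{(1-\mathcal X_t)\W_t^2}_{\besovinfty^{-1-3\epsilon}} \lesssim 1$. Combined with the paraproduct estimate of \Cref{thm:besov multiplication} and the uniform boundedness of $\Theta_t$, the paraproduct sits in a negative Besov space with norm $\lesssim \smallnorm{I_t[\check v]}_{H^{1-\epsilon}}$. The two factors of $J_t$ implicit in $I_N \circ J_t$ then buy roughly two derivatives at the cost of an $\inorm t^{-1-\delta}$ weight that is integrable in $t$, so after the three-dimensional Sobolev embedding $H^{1-\epsilon}(\Tor^3) \hookrightarrow L^4(\Tor^3)$ one obtains a linear bound $\smallnorm{\mathcal R_N}_{L^4} \lesssim \sup_{t \leq N} \smallnorm{I_t[\check v]}_{L^4}$ with a deterministic constant. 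Taking fourth powers and expectations produces an inequality of the form
\begin{equation*}
\E \smallnorm{I_N[\check v]}_{L^4}^4 \leq C_1 \lambda^2 + C_2 \lambda^4 \, \E \smallnorm{I_N[\check v]}_{L^4}^4,
\end{equation*}
whose second term can be absorbed for $\lambda$ small, i.e.\ $\beta$ large.

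The main technical obstacle is the paraproduct estimate in three dimensions: the regularizer $\mathcal X_t$ has been tailored precisely so that \Cref{thm:3d lower x high} supplies a constant free of randomness, which is exactly what allows the fixed-point argument to close uniformly in $\omega$. A secondary point is that absorption of the self-referential term requires $\E \smallnorm{I_N[\check v]}_{L^4}^4 < \infty$ as an a priori input; this follows from the pointwise-in-$\omega$ Banach fixed-point construction for $\check v$ mentioned just before the lemma statement, together with moment bounds on the stochastic data.
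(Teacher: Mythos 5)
Your decomposition of $I_N[\check v]$, your identification of \Cref{thm:3d lower x high} as the crucial deterministic input, and your observation that $I_N \circ J_t$ produces a $J_t^2$ smoothing with time-integrable weight are all in line with the paper's argument. The paper works throughout in $B^{1/2-2\epsilon}_{p,p}$ (with $p$ large) and embeds into $L^4$ only at the end, which saves some fussing with $L^4$-compatible Besov indices, but your direct route is essentially equivalent.

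The place where the write-up has a genuine gap is the closure of the self-referential term. You claim
\begin{equation*}
\norm{\mathcal R_N}_{L^4} \lesssim \sup_{t \leq N} \norm{I_t[\check v]}_{L^4},
\end{equation*}
and then state that taking fourth powers and expectations yields
\begin{equation*}
\E \norm{I_N[\check v]}_{L^4}^4 \leq C_1 \lambda^2 + C_2 \lambda^4 \, \E \norm{I_N[\check v]}_{L^4}^4.
\end{equation*}
But the fourth power of your $\mathcal R_N$ bound controls $\E \sup_{t \leq N} \norm{I_t[\check v]}_{L^4}^4$, not $\E \norm{I_N[\check v]}_{L^4}^4$: there is no monotonicity of $t \mapsto \norm{I_t[\check v]}_{L^4}$ that would let you replace the supremum by the endpoint. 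The stated absorption therefore does not follow from what precedes it. The fix is to close the estimate \emph{pointwise in $\omega$} before taking expectations. The paper does this with Grönwall: writing $\norm{I_N[\check v]}_{B^{1/2-2\epsilon}_{p,p}} \leq a_N + \lambda \int_0^N \norm{I_t[\check v]}_{B^{1/2-2\epsilon}_{p,p}} \inorm t^{-1-\epsilon} \dt$ and noting that the kernel $\lambda \inorm t^{-1-\epsilon}$ has bounded integral, so the Grönwall factor $\exp(C\lambda)$ is a uniformly bounded constant. (An alternative correct version of your argument is to take $\sup_{s \leq N}$ on both sides and absorb $\lambda \sup_{s \leq N} \norm{I_s[\check v]}_{L^4}$ into the left, which is Grönwall in disguise; but you must do this for the pathwise quantity, not the expectation.) This also has the cosmetic advantage of not requiring $\lambda$ small for this particular step, though smallness of $\lambda$ is already assumed elsewhere for the fixed-point construction of $\check v$.

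A second, minor point: once the Grönwall bound \eqref{eq:3d lower it bound} is in hand, the fourth moment is controlled by a single stochastic bound on $\int_0^N \norm{J_t(\W_t^3 + \W_t^2)}_{B^{-1/2-\epsilon}_{p,p}} \inorm t^{-1/2-\epsilon} \dt$, cited from \cite[Lemma~6]{barashkov_variational_2020}; your separate treatment of the $\W_N^{[3]}$ and $I_N[J_t\W_t^2]$ contributions (with $\lambda^4$ and $\lambda^2$ respectively) is correct and slightly more informative, but you should be explicit that the dominant $\lambda^2$ comes from the $\W_t^2$ term and that the paper simply crudely bounds $\lambda \leq \lambda^{1/2}$ to treat them together.
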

\begin{proof}
By Minkowski's integral inequality, we may write
\begin{multline}
\norm{I_N[\check v]}_{B^{1/2-2\epsilon}_{p,p}}
\leq \int_0^N \norm{\lambda J_t^2 \W_t^3 + \frac{\lambda^{1/2}}{2} J_t^2 \W_t^2
        }_{B^{1/2-2\epsilon}_{p,p}} \dt\\
    + \lambda \int_0^N \norm{J_t^2 ((1-\mathcal X_t) \W_t^2 \parar \Theta_t I_t[\check v])
        }_{B^{1/2-2\epsilon}_{p,p}} \dt.
\end{multline}
By \Cref{thm:boue-dupuis j regularity} the right-hand side is bounded by
\begin{multline}
\lambda^{1/2} \int_0^N \norm{J_t (\W_t^3 + \W_t^2)
    }_{B^{-1/2-\epsilon}_{p,p}} \frac{\dt}{\inorm t^{1/2+\epsilon}}\\
+ \lambda \int_0^N \norm{(1-\mathcal X_t) \W_t^2 \parar \Theta_t I_t[\check v]
    }_{B^{-3/2-\epsilon}_{p,p}} \frac{\dt}{\inorm t^{1+\epsilon}}.
\end{multline}
The second term is bounded by
\begin{equation}
\lambda \int_0^N \norm{I_t[\check v]}_{B^{1/2-2\epsilon}_{p,p}} \frac{\dt}{\inorm t^{1+\epsilon}}
\end{equation}
by \Cref{thm:3d lower x high},
and then Grönwall's inequality gives the bound
\begin{equation}\label{eq:3d lower it bound}
\norm{I_N[\check v]}_{B^{1/2-2\epsilon}_{p,p}}
\lesssim \lambda^{1/2} \int_0^N \norm{J_t (\W_t^3 + \W_t^2)
    }_{B^{-1/2-\epsilon}_{p,p}} \frac{\dt}{\inorm t^{1/2+\epsilon}}.
\end{equation}
The fourth power of this expression has finite expectation due to
the end of \cite[Lemma~6]{barashkov_variational_2020},
and the result then follows from $L^p$ embedding (\Cref{thm:besov embeddings}).
\end{proof}

As we plug $\check v$ into \eqref{eq:3d partition ansatz final},
the remainder is
\begin{equation}
\check g_t = \lambda J_t (\mathcal X_t \W_t^2 \parar \Theta_t I_N[\check v]).
\end{equation}
Its $L^2$ norm also satisfies a suitable bound:

\begin{lemma}
We have the uniform-in-$N$ bound
\[
\E \int_0^N \norm{\check g_t}_2^2 \dt \lesssim \lambda^3.
\]
\end{lemma}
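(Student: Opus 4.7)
The plan is to expand the $L^2$ norm, apply the smoothing estimate of \Cref{thm:boue-dupuis j regularity} to move regularity off $J_t$, then control the paraproduct using the low-frequency bound from \Cref{thm:3d lower x low}, and finally close the estimate using the bound on $\norm{I_N[\check v]}$ already established.

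First, write
\[
\norm{\check g_t}_{L^2}^2 = \lambda^2 \norm{J_t(\mathcal X_t \W_t^2 \parar \Theta_t I_N[\check v])}_{L^2}^2,
\]
and apply \Cref{thm:boue-dupuis j regularity} with $r = 0$ and a small parameter $s > 0$ (say $s = 1/2 - 2\epsilon$) to obtain
\[
\norm{\check g_t}_{L^2}^2
\lesssim \frac{\lambda^2}{\inorm t^{1+2s}} \norm{\mathcal X_t \W_t^2 \parar \Theta_t I_N[\check v]}_{H^{-1/2-\epsilon}}^2.
\]
By the paraproduct estimate in \Cref{thm:besov multiplication} (with $p_1 = \infty$, $p_2 = 2$), this paraproduct norm is bounded by $\norm{\mathcal X_t \W_t^2}_{\besovinfty^{-1+\epsilon}} \norm{\Theta_t I_N[\check v]}_{H^{1/2-2\epsilon}}$, and \Cref{thm:3d lower x low} gives $\norm{\mathcal X_t \W_t^2}_{\besovinfty^{-1+\epsilon}} \lesssim 1 + \norm{\W_t^2}_{\besovinfty^{-1-\epsilon}}^2$. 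Using uniform boundedness of $\Theta_t$ and Besov embedding, this reduces matters to bounding $\norm{I_N[\check v]}_{B^{1/2-2\epsilon}_{p,p}}^2$ for $p$ large.

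For the latter, we reuse inequality \eqref{eq:3d lower it bound} from the previous lemma: $\norm{I_N[\check v]}_{B^{1/2-2\epsilon}_{p,p}}$ is bounded, almost surely, by
\[
\lambda^{1/2} \int_0^N \norm{J_s(\W_s^3 + \W_s^2)}_{B^{-1/2-\epsilon}_{p,p}} \frac{\ds}{\inorm s^{1/2+\epsilon}} \eqqcolon \lambda^{1/2} \mathcal K,
\]
where $\E \mathcal K^q \lesssim 1$ for every finite $q$ by the stochastic bounds of \cite[Lemma~6]{barashkov_variational_2020}. Crucially, this right-hand side does not depend on $t$, so after integrating in $t$ we get
\[
\E \int_0^N \norm{\check g_t}_2^2 \dt
\lesssim \lambda^3 \, \E \left[ \mathcal K^2 \int_0^N \frac{(1 + \norm{\W_t^2}_{\besovinfty^{-1-\epsilon}}^2)^2}{\inorm t^{1+2s}} \dt \right].
\]
Applying Hölder's inequality to separate $\mathcal K$ from the $\W^2$ factor, invoking \Cref{thm:3d gaussian moments} to control moments of $\norm{\W_t^2}_{\besovinfty^{-1-\epsilon}}$ uniformly in $t$, and noting that $\inorm t^{-1-2s}$ is integrable on $\R_+$, all expectations are finite uniformly in $N$, yielding the claim.

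The main obstacle is purely bookkeeping: making sure the regularity exponents balance across the paraproduct estimate, the $J_t$ regularization, and the $B^{1/2-2\epsilon}_{p,p}$ bound on $\check v$, while simultaneously keeping enough $t$-decay to integrate against $\dt / \inorm t^{1+2s}$. The benefit of the spectral cutoff $\mathcal X_t$ is exactly to gain the $\epsilon$ of extra regularity on $\W_t^2$ needed to make the paraproduct land in a sufficiently smooth Besov space, at the price of the power of $\norm{\W_t^2}$ in \Cref{thm:3d lower x low}, which is harmless under expectation.
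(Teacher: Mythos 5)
Your overall strategy matches the paper's, but there is a regularity bookkeeping error in the paraproduct step that, as written, breaks the chain of estimates.

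In the expression $\mathcal X_t \W_t^2 \parar \Theta_t I_N[\check v]$, the convention in the paper (and in the definition of $\parar$) makes $\mathcal X_t \W_t^2$ the \emph{high}-frequency factor and $\Theta_t I_N[\check v]$ the \emph{low}-frequency factor. The paraproduct estimate of \Cref{thm:besov multiplication} gives regularity $\min(0,\alpha)+\beta$, where $\alpha$ is the regularity of the low-frequency factor; since $I_N[\check v]$ has positive regularity, $\min(0,\alpha)=0$ and the paraproduct inherits the regularity $\beta=-1+\epsilon$ of $\mathcal X_t\W_t^2$. It does not land in $H^{-1/2-\epsilon}$. You appear to have swapped the roles of the two factors: taking $p_1=\infty$ for $\mathcal X_t\W_t^2$ treats it as the low-frequency factor, which is backwards. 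Consequently, your choice $s=1/2-2\epsilon$ in \Cref{thm:boue-dupuis j regularity}, which requires the paraproduct to live in $H^{-1/2-2\epsilon}$, is not available.

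The fix is simple and brings you back to the paper's argument: take $s=\epsilon$ in \Cref{thm:boue-dupuis j regularity}, which gives the weight $\inorm t^{-1-2\epsilon}$ (still integrable) and asks for the paraproduct in $H^{-1+\epsilon}$, which is what the paraproduct estimate delivers. The drift then appears only through $\norm{I_N[\check v]}_{L^2}$; combined with \Cref{thm:3d lower x low}, Young's inequality, the moment bounds of \cite[Lemma~6]{barashkov_variational_2020}, and the $L^4$ bound of the preceding lemma this gives the stated $\lambda^3$ rate. The rest of your sketch (the role of $\mathcal X_t$, the almost-sure bound on $I_N[\check v]$ via the auxiliary integral, Hölder in place of the paper's Young) is fine.
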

\begin{proof}
Let us first use \Cref{thm:boue-dupuis j regularity} to estimate
\begin{align}
&\lambda^2 \E \int_0^N
    \norm{J_t (\mathcal X_t \W_t^2 \parar \Theta_t I_N[\check v])}_2^2 \dt \notag\\
\lesssim\; &\lambda^2 \E \int_0^N
    \norm{\mathcal X_t \W_t^2 \parar \Theta_t I_N[\check v]}_{H^{-1+\epsilon}}^2
    \frac{\dt}{\inorm t^{1+2\epsilon}}.
\end{align}
By a paraproduct estimate, this is bounded by
\begin{equation}
\lambda^2 \E \int_0^N
    \norm{\mathcal X_t \W_t^2}_{\besovinfty^{-1+\epsilon}}^2
    \norm{I_N[\check v]}_{L^2}^2
    \frac{\dt}{\inorm t^{1+2\epsilon}},
\end{equation}
and by \Cref{thm:3d lower x low} and Young's inequality we can estimate it by
\begin{equation}
\lambda^2 \E \int_0^N
    \left[ \lambda (1 + \norm{\mathcal X_t \W_t^2}_{\besovinfty^{-1-\epsilon}}^2)^4
    + \lambda\inv \norm{I_N[\check v]}_{L^2}^4 \right]
    \frac{\dt}{\inorm t^{1+2\epsilon}}.
\end{equation}
Now $\smallnorm{\mathcal X_t \W_t^2}_{\besovinfty^{-1-\epsilon}}^8$
has finite expectation by \cite[Lemma~6]{barashkov_variational_2020},
whereas $\E \smallnorm{I_N[\check v]}_{L^2}^4$ is of order $\lambda^2$ by \eqref{eq:3d lower it bound}.
\end{proof}

These estimates show that our choice of drift $\check v_t$, $\check g_t$ in \eqref{eq:3d partition ansatz final}
satisfies the necessary upper bound for the infimum.
This proves the lower bound of \Cref{thm:3d partition overall}.

\subsection{Change of Wick ordering}\label{sec:3d wick}

For the computations in \Cref{sec:3d partition upper},
we needed to change the Wick ordering to match the $(2-\Laplace)\inv$ covariance
of the translated field.
The change of mass also changes the divergent renormalization terms.
We will need to verify that this difference is of order $\beta\inv$.

In the following, we emphasize the different masses with subscripts like $\gamma_+$ and $\gamma_-$.
On the other hand, we do not show the dependence on $N$ where it is not relevant.

Let us consider the potential in \Cref{thm:3d partition function form}.
We change the Wick ordering with \Cref{thm:wick change}
and write $\gamma_-$ as $\gamma_+ + (\gamma_- - \gamma_+)$.
The quartic term
\begin{equation}
\lambda \wickm{\phi^4} - \lambda^2 \gamma_- \wickm{\phi^2} - \delta_-
\end{equation}
then becomes
\begin{equation}\label{eq:3d wick quartic}
\begin{split}
&\bigg[ \lambda \wickp{\phi^4} - \lambda^2 \gamma_+ \wickp{\phi^2} - \delta_+ \bigg]\\
{}+{} &3\lambda (C_- - C_+)^2
    - 6\lambda (C_- - C_+) \wickp{\phi^2} - \lambda^2 (\gamma_- - \gamma_+) \wickp{\phi^2}\\
{}-{} &(\delta_- - \delta_+) + \lambda^2 \gamma_- (C_- - C_+).
\end{split}
\end{equation}
The term in brackets was studied in the preceding sections.
Correspondingly, the renormalized cubic term
\begin{equation}
2 \sqrt\lambda \wickm{\phi^3} - \lambda^{3/2} \gamma_- \phi - \frac \lambda 4 \gamma_-
\end{equation}
becomes
\begin{equation}\label{eq:3d wick cubic}
\begin{split}
&\bigg[ 2 \sqrt\lambda \wickp{\phi^3} - \lambda^{3/2} \gamma_+ \phi - \frac \lambda 4 \gamma_+ \bigg] \\
{}-{} &6\sqrt\lambda (C_- - C_+) \phi
- \lambda^{3/2} (\gamma_- - \gamma_+) \phi
- \frac \lambda 4 (\gamma_- - \gamma_+),
\end{split}
\end{equation}
where the term in brackets was previously analyzed.

The differences in \eqref{eq:3d wick cubic} and the second line of \eqref{eq:3d wick quartic}
are all uniformly bounded,
and therefore the lines contribute a factor of order $\lambda$.
This is formalized in the two lemmas below.

\begin{lemma}\label{thm:3d wick const difference}
$C_{N,+} - C_{N,-}$ is bounded uniformly in $N$.
\end{lemma}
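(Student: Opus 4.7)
The plan is to compute the difference $C_{N,+} - C_{N,-}$ explicitly and reduce it to a convergent lattice sum, exactly as was done in the two-dimensional case in equation \eqref{eq:wick difference}. Since for $T \geq N$ the cutoff $\rho_T(k)$ equals $1$ on the support of the sum, and we have rescaled to $\beta = 1$, the constants take the form
\begin{equation*}
C_{N,+} = \frac{1}{L^3} \sum_{\abs{k} \leq N} \frac{1}{2 + C_L \abs k^2},
\qquad
C_{N,-} = \frac{1}{L^3} \sum_{0 < \abs k \leq N} \frac{1}{-1 + C_L \abs k^2}.
\end{equation*}
Note the exclusion of $k = 0$ in $C_{N,-}$, which arises because the negative-mass Gaussian field is only defined on oscillatory modes when $L < 2\pi$.

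First I would isolate the zero-mode contribution, which is the fixed constant $(2L^3)^{-1}$ coming from $C_{N,+}$. For the remaining non-zero modes, I would combine the two fractions to obtain
\begin{equation*}
C_{N,+} - C_{N,-}
= \frac{1}{2L^3}
- \frac{3}{L^3} \sum_{0 < \abs k \leq N}
    \frac{1}{(C_L \abs k^2 + 2)(C_L \abs k^2 - 1)}.
\end{equation*}
The summand is bounded by $C \inorm k^{-4}$, uniformly in $k \neq 0$.

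The final step is to observe that in dimension $d = 3$, the sum $\sum_{k \in \Z^3 \setminus \{0\}} \inorm k^{-4}$ converges: by comparing to the integral $\int_1^\infty r^2 \cdot r^{-4} \dr < \infty$, or directly by \Cref{thm:discrete convolution} with $\alpha = \beta = 2$ evaluated at $n = 0$ (which requires $\alpha + \beta = 4 > 3 = d$). This gives the uniform-in-$N$ bound and completes the proof.

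There is no real obstacle here --- the argument is identical to the two-dimensional computation in \eqref{eq:wick difference}, and the only dimension-dependent issue (convergence of $\sum \inorm k^{-4}$) is comfortably satisfied in $d = 3$. The lemma would in fact fail only starting from $d \geq 4$, which lies outside the regime of the $\phi^4$ construction anyway.
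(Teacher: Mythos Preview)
Your proof is correct and follows essentially the same approach as the paper, which simply notes that the two-dimensional computation \eqref{eq:wick difference} remains finite as a three-dimensional sum. You have just made the details more explicit.
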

\begin{proof}
It suffices to note that \eqref{eq:wick difference} is finite also as a three-dimensional sum.
\end{proof}

\begin{lemma}\label{thm:3d wick gamma difference}
$\gamma_{N,+} - \gamma_{N,-}$ is bounded uniformly in $N$.
\end{lemma}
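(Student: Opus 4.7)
The strategy is to compute $\gamma_{N,\pm}$ explicitly in Fourier space and bound $\gamma_{N,+} - \gamma_{N,-}$ term by term, exploiting the fast decay
\begin{equation*}
a_+(\xi) - a_-(\xi) = \frac{-3}{(2 + C_L|\xi|^2)(-1 + C_L|\xi|^2)} = O(\inorm{\xi}^{-4}),
\qquad a_\pm(\xi) := \frac{1}{\pm 1 + C_L|\xi|^2},
\end{equation*}
relative to $a_\pm(\xi) = O(\inorm{\xi}^{-2})$. By translation invariance, substituting the chaos expansion~\eqref{eq:chaos w2} of $\widehat{\W_t^2}$ into $\int_{\Tor^3} \E (J_t \W_t^2)^2\, dx$ and applying It\^o's isometry to the double iterated Wiener integral yields
\begin{equation*}
\gamma_{N,\pm} = c \int_0^N \sum_{k \in \mathcal A_\pm} \sigma_t(k)^2 a_\pm(k) \sum_{\substack{n_1 + n_2 = k \\ n_1, n_2 \in \mathcal A_\pm}} \rho_t(n_1)^2 \rho_t(n_2)^2 a_\pm(n_1) a_\pm(n_2)\, dt,
\end{equation*}
where $c > 0$ is a combinatorial constant independent of the mass, and $\mathcal A_+ = \{|k| \leq N\}$, $\mathcal A_- = \{0 < |k| \leq N\}$.

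Split $\gamma_{N,+} - \gamma_{N,-}$ into a bulk sum over $(k, n_1, n_2) \in (\Z^3 \setminus\{0\})^3$ (present in both formulas) plus finitely many boundary sums where at least one of the three indices vanishes (present only in $\gamma_{N,+}$). In the bulk, telescope
\begin{equation*}
a_+(k)a_+(n_1)a_+(n_2) - a_-(k)a_-(n_1)a_-(n_2) = \sum_{j=1}^3 R_j,
\end{equation*}
so that each $R_j$ carries exactly one factor of size $O(\inorm{\cdot}^{-4})$ and two of size $O(\inorm{\cdot}^{-2})$. Discharge the time integral via $\rho_t \leq 1$ together with $\int_0^N \sigma_t(k)^2\, dt = \rho_N(k)^2 \leq 1$, reducing the task to three deterministic lattice sums. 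The representative one is $\sum_{k\neq 0} \inorm{k}^{-4} \sum_{n_1 + n_2 = k,\; n_i \neq 0} \inorm{n_1}^{-2}\inorm{n_2}^{-2}$; by \Cref{thm:discrete convolution} with $\alpha = \beta = 2 < 3$ and $\alpha + \beta > 3$, the inner sum is $\lesssim \inorm{k}^{-1}$, yielding $\sum_{k \neq 0} \inorm k^{-5} < \infty$ in $\Z^3$. The two variants with the $\inorm{\cdot}^{-4}$ weight on $n_1$ or $n_2$ are even easier since $\sum_n \inorm n^{-4}$ is absolutely convergent outright. The boundary contributions are finite sums of the form $a_+(0) \sum_{n \neq 0} a_+(n)^2$, again finite by convergence of $\sum_n \inorm n^{-4}$.

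The main bookkeeping obstacle is tracking the combinatorial constant $c$ through the It\^o computation and isolating the boundary terms from the bulk; once the Fourier formula is in hand, every remaining estimate reduces to \Cref{thm:discrete convolution} and the explicit $\inorm{\cdot}^{-4}$ decay of $a_+ - a_-$.
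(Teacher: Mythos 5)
The proposal is correct and takes essentially the same approach as the paper: both compute $\gamma_{N,\pm}$ in Fourier space via the chaos decomposition of $\W_t^2$ and It\^o isometry, separate the bulk (all indices nonzero) from the zero-mode boundary terms that appear only in the positive-mass sum, telescope the mass change one factor at a time to produce a single $O(\inorm{\cdot}^{-4})$ weight in each summand, and conclude by \Cref{thm:discrete convolution}.

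One very small remark: your representation with $\rho_t(n_1)^2\rho_t(n_2)^2$ in place of the paper's iterated $\sigma^2$-integrals amounts to symmetrizing over the two orderings of $(t_1,t_2)$, which only changes the combinatorial constant $c$; and your ``even easier'' claim for the cases where the $\inorm{\cdot}^{-4}$ weight sits on $n_1$ or $n_2$ is fine in substance but is most cleanly justified by the same discrete-convolution step (the inner two-variable sum is $O(1)$ uniformly, then $\sum\inorm{n_1}^{-4}<\infty$), rather than by the convergence of $\sum\inorm n^{-4}$ alone.
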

\begin{proof}
Let us recall that $\gamma_N$ was defined as
\begin{equation}
\gamma_N = -\frac 1 2 \E \int_{\Tor^3} \int_0^N (J_t \W_t^2)^2 \dt \dx.
\end{equation}
From \eqref{eq:chaos w2}, the chaos decomposition of the integrand is
\begin{equation}
\widehat{J_t \W^2_t}(k) = 24 \sum_{n_1 + n_2 = k}
\int_0^N \! \int_0^{t_1} \frac{\sigma_{t}(k) \sigma_{t_1}(n_1) \sigma_{t_2}(n_2)}
        {\inormc{k} \inormc{n_1} \inormc{n_2}}
    \diff B^{n_2}_{t_2} \diff B^{n_1}_{t_1},
\end{equation}
where the sum is over $\smallabs{n_1}, \smallabs{n_2} \leq N$.
Itô isometry implies
\begin{equation}
\E \widehat{J_t \W^2_t}(k)^2 = 24^2 \sum_{n_1 + n_2 = k}
\int_0^N \! \int_0^{t_1} \frac{\sigma_{t}(k)^2 \sigma_{t_1}(n_1)^2 \sigma_{t_2}(n_2)^2}
        {\inormc{k}^2 \inormc{n_1}^2 \inormc{n_2}^2}
    \dt_2 \dt_1.
\end{equation}
As all the summands are positive,
we may now let $n_1$ and $n_2$ range over all $\Z^3$;
this is necessary for uniformity in $N$.
In the negative-mass case, we still assume $n_1$ and $n_2$ to be non-zero.
We can then integrate over $t$ and use Plancherel's identity to write
\begin{equation}\label{eq:3d wick w2 chaos final}
\E \int_0^N \!\! \int_{\Tor^3} (J_t \W_t^2)^2 \dx \dt
= 24^2 \sum_{n_1, n_2}
\int_0^N \!\!\! \int_0^N \!\! \int_0^{t_1} \!\! 
\frac{\sigma_{t}(n_{12})^2 \sigma_{t_1}(n_1)^2 \sigma_{t_2}(n_2)^2}
    {\inormc{n_{12}}^2 \inormc{n_1}^2 \inormc{n_2}^2}
    \dt_2 \dt_1 \dt.
\end{equation}
Here we abbreviated $n_{12} = n_1 + n_2$.

Let us then compare the expressions for $\gamma_+$ and $\gamma_-$.
We first note that if $n_1 = 0$ (or vice versa $n_2 = 0$),
then the summands in $\gamma_+$ are $\inormp{n_2}^{-4}$.
Therefore we only need to consider the contribution of $n_1, n_2 \neq 0$.
Moreover, the integrals are independent of mass and bounded by $1$,
so we may disregard them.

We are left with a telescoping sum where we change one mass at a time:
\begin{multline}
\gamma_+ - \gamma_- \simeq \bigg[
    \sum_{n_1, n_2 \neq 0}
        \frac{1}{\inormp{n_{12}}^2 \inormp{n_1}^2}
        \left( \frac{1}{\inormp{n_2}^2} - \frac{1}{\inormm{n_2}^2} \right)\\
    + \sum_{n_1, n_2 \neq 0}
        \frac{1}{\inormp{n_{12}}^2 \inormm{n_2}^2}
        \left( \frac{1}{\inormp{n_1}^2} - \frac{1}{\inormm{n_1}^2} \right)\\
    + \sum_{n_1, n_2 \neq 0}
        \frac{1}{\inormm{n_1}^2 \inormm{n_2}^2}
        \left( \frac{1}{\inormp{n_{12}}^2} - \frac{1}{\inormm{n_{12}}^2} \right)
\bigg].
\end{multline}
The first term can be written as
\begin{equation}
\sum_{n_1, n_2 \neq 0}
    \frac{-3}{\inormp{n_{12}}^2 \inormp{n_1}^2 \inormp{n_2}^2 \inormm{n_2}^2}.
\end{equation}
Now it remains to note that $\inormp{n_2}^2 \simeq \inormm{n_2}^2$, and that
\begin{equation}
\sum_{n_2 \neq 0} \frac{3}{\inormp{n_2}^4}
    \sum_{\substack{n_1 \neq 0, n_{12} \in \Z^3\\ n_1 + n_{12} = n_2}}
        \frac{1}{\inormp{n_{12}}^2 \inormp{n_1}^2}
\lesssim \sum_{n_2 \neq 0} \frac{3}{\inormp{n_2}^5}
< \infty
\end{equation}
by \Cref{thm:discrete convolution}.
The bounds for the other two summands are identical.
\end{proof}

With these two estimates, it follows that the prefactors of $\phi$ and $\wickp{\phi^2}$
in \eqref{eq:3d wick quartic} and \eqref{eq:3d wick cubic} are bounded.
In the variational formulation we can estimate these terms as follows:

\begin{lemma}
We have for any $\rho > 0$ the estimates
\begin{align*}
\sqrt\lambda \abs{\E \int_{\Tor^3} (\W + I_N[v]) \dx}
&\leq \lambda C_\rho + \rho \norm{I_N[v]}_{H^{1-\epsilon}}^2,\\
\lambda \abs{\E \int_{\Tor^3} \wickp{(\W + I_N[v])^2} \dx}
&\leq \lambda C_\rho + \lambda\rho\norm{I_N[v]}_4^4 + \lambda \norm{I_N[w]}_{H^{1-\epsilon}}^2.
\end{align*}
\end{lemma}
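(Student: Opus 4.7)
The plan is to expand both quantities via the binomial identity for Wick powers (\Cref{thm:wick binomial}), discard the summands vanishing in expectation by virtue of \Cref{thm:wick expectation}, and bound the remaining mixed terms by combining Besov duality (\Cref{thm:besov multiplication}) with the Gaussian moment estimates of \Cref{thm:3d gaussian moments} and the usual Young inequality. Both bounds are of the ``easy'' type, involving at most linear stochastic objects, so no paraproduct surgery is needed.

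For the first inequality, observe that $\W = \phi$ has vanishing expectation, so only $\sqrt\lambda \,\E \int_{\Tor^3} I_N[v] \dx$ survives. Cauchy--Schwarz against the constant function on the finite-volume domain gives
\[
\abs{\textstyle\int_{\Tor^3} I_N[v] \dx} \leq \smallabs{\Tor^3}^{1/2} \norm{I_N[v]}_{L^2} \leq \smallabs{\Tor^3}^{1/2} \norm{I_N[v]}_{H^{1-\epsilon}},
\]
and Young's inequality $\sqrt\lambda\,x \leq \lambda C_\rho + \rho x^2$ closes the estimate.

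For the second inequality, expand $\wickp{(\W + I_N[v])^2} = \wickp{\W^2} + 2\W I_N[v] + I_N[v]^2$. The first term has zero expectation under~$\Gaussian$. The pure drift term is $\lambda \norm{I_N[v]}_{L^2}^2 \lesssim \lambda \norm{I_N[v]}_{L^4}^2 \leq \lambda C_\rho + \rho\lambda \norm{I_N[v]}_{L^4}^4$ by Cauchy--Schwarz on the bounded domain plus Young. For the cross term, pair $\W \in H^{-1/2-\epsilon}$ against $I_N[v] \in H^{1/2+\epsilon}$ and use \Cref{thm:3d gaussian moments}:
\[
2\lambda \,\E \!\int_{\Tor^3} \!\W I_N[v] \dx
\lesssim \lambda \,\E \big[\norm{\W}_{H^{-1/2-\epsilon}} \norm{I_N[v]}_{H^{1/2+\epsilon}}\big]
\leq \lambda C_\rho + \rho\lambda \,\E \norm{I_N[v]}_{H^{1-\epsilon}}^2,
\]
where in the last step we used $H^{1-\epsilon} \hookrightarrow H^{1/2+\epsilon}$ for $\epsilon$ small.

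To convert the last quantity into the stated $\lambda \norm{I_N[w]}_{H^{1-\epsilon}}^2$, substitute the ansatz \eqref{eq:3d partition ansatz final} decomposing $v_t$ into stochastic contributions $-\lambda J_t \W_t^3 - \tfrac{\sqrt\lambda}{2} J_t \W_t^2 - \lambda J_t(\W_t^2 \paral \Theta_t I_N[v])$ and the remainder controlled by $w$. The smoothing estimate \Cref{thm:boue-dupuis j regularity} together with \Cref{thm:3d gaussian moments} shows that the purely stochastic pieces contribute only $C_\rho \lambda$, while the paraproduct piece can be absorbed into $\rho\lambda \norm{I_N[v]}_{L^4}^4$ exactly as in the proof of \Cref{thm:3d partition w bound}. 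The only mildly delicate point is this final bookkeeping step; everything else is a routine duality-and-Young argument, and the $\lambda$ powers match trivially thanks to the explicit prefactor $\sqrt\lambda$ or $\lambda$ in front of each term.
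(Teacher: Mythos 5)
Your treatment of the cross term $2\lambda\,\E\!\int_{\Tor^3}\W\,I_N[v]\dx$ has a genuine gap. You pair $\W\in H^{-1/2-\epsilon}$ against $I_N[v]\in H^{1/2+\epsilon}$, but after the ansatz substitution the drift $I_N[v]$ contains the stochastic piece $-\lambda\W_N^{[3]}$, which by \Cref{thm:3d gaussian moments} lives only in $\besovinfty^{1/2-\epsilon}$ and hence, on the torus, in $H^{1/2-\epsilon'}$ but \emph{not} in $H^{1/2+\epsilon}$ uniformly in $N$. Concretely, $\E\norm{\W_N^{[3]}}_{H^{1/2+\epsilon}}^2$ (and a fortiori $\E\norm{I_N[v]}_{H^{1-\epsilon}}^2$) diverges as $N\to\infty$, so the intermediate bound $\rho\lambda\,\E\norm{I_N[v]}_{H^{1-\epsilon}}^2$ you arrive at is not controllable, and the proposed ``bookkeeping'' via \Cref{thm:boue-dupuis j regularity} cannot repair it: the stochastic pieces do not contribute a bounded $C_\rho\lambda$, they contribute a quantity that grows with $N$. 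This is precisely the signature of the fact that the product $\W_N\W_N^{[3]}$ is ill-defined in the limit — the regularities sum to $(-1/2-\epsilon)+(1/2-\epsilon)=-2\epsilon<0$, so neither naive duality nor \Cref{thm:besov multiplication} applies. Your opening claim that ``no paraproduct surgery is needed'' is therefore not correct for the second inequality.

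The paper avoids this by substituting the ansatz \eqref{eq:3d ansatz 1} \emph{before} any duality pairing, so the cross term splits as $-\lambda^2\W_N\W_N^{[3]}+\lambda\W_N I_N[w]$. The term $\W_N I_N[w]$ is then handled by the very duality argument you propose, but now against $I_N[w]\in H^{1-\epsilon}$, which genuinely is available as a controlling term. For the remaining ill-defined product, the key observation is that only the zero Fourier mode survives the spatial integral, and paraproducts never touch the zero mode; hence one needs only the resonant product $\W_N\reson\W_N^{[3]}$, which by the dedicated stochastic cancellation estimate \Cref{thm:3d w1w3 estimate} lies in $\besovinfty^{-\epsilon}$ with bounded moments, and the $\lambda^2$ prefactor then delivers the rate. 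Your treatment of the first inequality and of the pure drift term $I_N[v]^2$ (estimating $\lambda\norm{I_N[v]}_{L^2}^2$ by $\lambda C_\rho+\rho\lambda\norm{I_N[v]}_{L^4}^4$) is fine and marginally simpler than the paper's route, but without this resonant-product step the cross-term estimate fails.
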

\begin{proof}
We only show the details for the second estimate.
As before, $\E \W^2 = 0$.
We then need to control $2 \W I_N[v] + I_N[v]^2$
with the terms from \Cref{rem:3d partition control}.
As we substitute the initial ansatz \eqref{eq:3d ansatz 1}, we get
\begin{equation}
-\lambda \E \int_{\Tor^3} 2\lambda \W_N \W^{[3]}_N - 2\W_N I_N[w]
    + (\lambda \W_N^{[3]} + I_N[w])^2 \dx.
\end{equation}
The regularity of the first product is technically not good enough,
but since paraproducts do not contribute to the zero mode,
we only need to consider the resonant product.
We can then use duality and \Cref{thm:3d w1w3 estimate} to estimate
\begin{equation}
\lambda^2 \E \smallnorm{\W_N \reson \W^{[3]}_N}_1
\lesssim \lambda^2 \norm{1}_{B_{1,1}^{\epsilon}}
    \E \smallnorm{\W_N \reson \W^{[3]}_N}_{\besovinfty^{-\epsilon}}.
\end{equation}
This stochastic term has finite expectation by \Cref{thm:3d w1w3 estimate}.
Duality also gives
\begin{equation}
\lambda \E \norm{\W_N I_N[w]}_1
\leq \lambda \E \left[ C_\rho \norm{\W_N}_{H^{-1/2-\epsilon}}^2
    + \rho \norm{I_N[w]}_{H^{1-\epsilon}}^2 \right].
\end{equation}
The square term is then estimated by
\begin{equation}
\lambda \E \bigg[ \lambda^2 \smallnorm{\W^{[3]}_N}_{L^2}^2
+ \lambda (\smallnorm{\W^{[3]}_N}_{L^2}^2 + \smallnorm{I_N[w]}_{L^2}^2)
+ \norm{I_N[w]}_{L^2}^2
\bigg].
\end{equation}
All the $L^2$ norms are bounded by the relevant Besov norms.
The factor $\lambda$ ensures that the prefactor of $I_N[w]$ is small.
\end{proof}

This leaves us with the last line of \eqref{eq:3d wick quartic}.
Both terms on this line diverge logarithmically in $N$,
but it turns out that the divergences cancel each other.
The $\lambda^2 \gamma_+ (C_+ - C_-)$ counterterm cancels the leading term of $\delta_{N,+} - \delta_{N,-}$,
and the other terms of $\delta_{N,+} - \delta_{N,-}$ vanish rapidly enough as $\beta \to \infty$.
We formalize this in the following series of lemmas.

\begin{lemma}
We can estimate
\[
\abs{-\frac{\lambda^2}{2} \int_{\Tor^3} \int_0^N (J_{t,+} \W_t^3)^2 - (J_{t,-} \W_t^3)^2 \dt \dx
- \lambda^2 \gamma_{N,+} (C_{N,+} - C_{N,-})}
\lesssim \lambda^2
\]
uniformly in $N$.
\end{lemma}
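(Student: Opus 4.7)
The plan is to use Wiener chaos to convert both sides of the inequality into explicit wavenumber sums, then identify and cancel their logarithmic divergences. By \eqref{eq:chaos w3}, Itô isometry, and Plancherel one has
\begin{equation*}
\E_\bullet \!\int_0^N \! \int_{\Tor^3} (J_{t,\bullet}\W_t^3)^2 \dt \dx
= c \sum_{\substack{n_1, n_2, n_3 \in \Z^3 \\ |n_i|, |n_1+n_2+n_3| \leq N}}
\frac{\Phi(n_1, n_2, n_3)}{\inormc{n_1+n_2+n_3}^2 \prod_{i=1}^{3} \inormc{n_i}^2},
\end{equation*}
where $c$ is a combinatorial constant and $\Phi$ is a symmetric, mass-independent, uniformly bounded time factor obtained from nested $\sigma$-integrals, analogous to \eqref{eq:3d wick w2 chaos final} in \Cref{thm:3d wick gamma difference}. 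Calling this quantity $I_N^\bullet$, it suffices to bound $(I_N^+ - I_N^-) + 2\gamma_{N,+}(C_{N,+} - C_{N,-})$ uniformly in $N$.

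I would first telescope $I_N^+ - I_N^-$ across its four mass factors, using $\inormp{n}^{-2} - \inormm{n}^{-2} = -3/(\inormp{n}^2\inormm{n}^2)$, to obtain four pieces $T_0, T_1, T_2, T_3$ corresponding to replacing $\inormc{k}^{-2}$ (for $T_0$, with $k = n_1+n_2+n_3$) or some $\inormc{n_i}^{-2}$ (for $T_1, T_2, T_3$) by the much smaller $-3/(\inormp{\cdot}^2\inormm{\cdot}^2)$. For the internal terms $T_1, T_2, T_3$, the extra $\inorm{n_i}^{-2}$ decay is enough for absolute convergence uniformly in $N$: iteratively applying \Cref{thm:discrete convolution}, summing out first the two remaining internal momenta and then the external $k$, yields an $O(1)$ bound.

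The remaining piece $T_0$ diverges logarithmically in $N$, the divergence arising from the UV subregion where two internal momenta grow and nearly cancel---precisely the sunset subdivergence that $\gamma_{N,+}$ captures. The plan is to rewrite $\gamma_{N,+}$ similarly via \eqref{eq:chaos w2} as $c' \sum_{n_1+n_2=\ell} \Psi(\ell, n_1, n_2)/(\inormp{\ell}^2\inormp{n_1}^2\inormp{n_2}^2)$, and to use $C_{N,+} - C_{N,-} = -3\sum_{k \ne 0} \inormp{k}^{-2}\inormm{k}^{-2} + O(1)$ from \Cref{thm:3d wick const difference}. Substituting $\ell := n_1+n_2$ and $n_3 := k-\ell$ in $T_0$ places both quantities into a common form involving $\Sigma_N(\ell)/(\inormp{k}^2\inormm{k}^2)$, where $\Sigma_N(\ell) := \sum_{n_1+n_2=\ell}\inormp{n_1}^{-2}\inormp{n_2}^{-2}$ is the truncated two-propagator convolution. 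After matching combinatorial constants, the combination $T_0 + 2\gamma_{N,+}(C_{N,+}-C_{N,-})$ reduces to
\begin{equation*}
c \sum_{k, \ell} \frac{\Sigma_N(\ell)}{\inormp{k}^2\inormm{k}^2}
\bigg[\frac{\Phi(k, \ell, \cdot)}{\inormp{k-\ell}^2} - \frac{\Psi(\ell, \cdot)}{\inormp{\ell}^2}\bigg] + O(1),
\end{equation*}
and in the UV regime $|\ell| \gg |k|$ driving the divergence we have $\inormp{k-\ell}^2 \approx \inormp{\ell}^2$ and $\Phi \approx \Psi$, so the bracket vanishes as $O(|k|/\inorm{\ell}^3)$; a final application of \Cref{thm:discrete convolution} then bounds the residual uniformly in $N$.

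The main obstacle is this last matching: verifying that the two time factors $\Phi$ (a four-fold nested $\sigma$-integral) and $\Psi$ (three-fold) do agree asymptotically in the sunset subregion, so that the leading logarithms cancel rather than combine additively. This is a careful bookkeeping of nested $\sigma$-integrals in the spirit of BPHZ subtraction and of the computation in \Cref{thm:3d wick gamma difference}, but it requires no conceptually new ingredient beyond \Cref{thm:discrete convolution} and the structural properties of the $\sigma_t$ already used in the paper.
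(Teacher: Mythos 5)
Your overall strategy — expand both sides in Wiener chaos, telescope the mass change, and cancel the divergent piece against $\gamma_{N,+}(C_{N,+}-C_{N,-})$ — matches the paper's. However, you misidentify which telescoping piece carries the divergence, and this derails the rest of the argument. The paper shows that the divergent piece is $\delta_3$, the one where the extra decay $\inormp{n_3}^{-2}\inormm{n_3}^{-2}$ sits on the \emph{smallest internal} momentum $n_3$, while your $T_0$ (external $k=n_{123}$) and also $\delta_1,\delta_2$ are convergent. The reason is the nested-time constraint hidden in $I(n_1,n_2,n_3)$: since $\sigma_{t_i}$ is supported on $\inorm{n_i}\simeq t_i$ and $t\geq t_1\geq t_2\geq t_3$, one has $\inorm{n_{123}}\gtrsim\inorm{n_1}\gtrsim\inorm{n_2}\gtrsim\inorm{n_3}$. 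This \emph{excludes} the region you invoke for $T_0$ ("two internal momenta grow and nearly cancel while $n_3,n_{123}$ stay bounded"), because bounded $n_{123}$ contradicts $\inorm{n_{123}}\gtrsim\inorm{n_1}\sim N$. The actual divergent regime is $\inorm{n_1}\sim\inorm{n_2}\sim\inorm{n_{123}}\sim N$ with $\inorm{n_3}$ bounded; a dyadic count shows this gives $\log N$ precisely in $\delta_3$ (extra decay on the already-small $n_3$ does not help), while extra decay on $n_{123}$, $n_1$, or $n_2$ — all living at scale $N$ — removes the logarithm. So your claim that all of $T_1,T_2,T_3$ converge by iterated application of \Cref{thm:discrete convolution} fails for $T_3$: the naive convolution count you propose actually lands on a borderline sum $\sum_k\inormp{k}^{-3}$.

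Because you aim the renormalization at $T_0$, the proposed matching of "$\Phi/\inormp{k-\ell}^2$ against $\Psi/\inormp{\ell}^2$ in the regime $\abs\ell\gg\abs{k}$" is the wrong comparison. In the paper, the cancellation is $\delta_3-\gamma_{N,+}(C_{N,+}-C_{N,-})$, and the residual is controlled by two explicit differences: $A_1$, coming from $\sigma_t(n_{12})^2-\sigma_t(n_{123})^2$, bounded via $\abs{n_3}/\inorm t^2$ together with the support constraints; and $A_2$, coming from $\inormp{n_{12}}^{-2}-\inormp{n_{123}}^{-2}$, where the cross term vanishes by a symmetry in $n_3$ and the remaining $\abs{n_3}^2$ contribution is summable by \Cref{thm:discrete convolution}. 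You also skip two preliminary reductions that the paper needs: matching the $t_3$-integration domains between the $\gamma$-side (where $t_3$ is free) and the $\delta'$-side (where $t_3\leq t_2$), using $\abs{n_3}\gtrsim\abs{n_2}$ on the mismatch region; and verifying that the $n_i=0$ contributions present only in the positive-mass sums either match $\gamma_{+}(C_+-C_-)$ exactly or are $O(1)$. These are not merely technical — without the first, the time-ordering constraint that determines which $\delta_j$ diverges is not even well-posed for the comparison.
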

\begin{proof}
For brevity, we will denote the leading $(J_t \W^3_t)^2$ term of $\delta$ by $\delta'$ within this proof.

Let us first write the chaos decomposition of $\gamma_+ (C_+ - C_-)$.
Multiplying \eqref{eq:3d wick w2 chaos final} by $(C_{N,+} - C_{N,-})$
from \Cref{thm:3d wick const difference},
the sum coefficients become
\begin{equation}\label{eq:3d wick main lemma gamma sum}
-\frac{24^2 \lambda^2}{2} \sum_{n_1, n_2, n_3}
\frac{\inormm{n_3}^2 - \I_{n_3 \neq 0} \inormp{n_3}^2}
    {\inormp{n_{12}}^2 \inormp{n_1}^2 \inormp{n_2}^2 \inormp{n_3}^2 \inormm{n_3}^2},
\end{equation}
and the frequency-dependent integral part is
\begin{equation}\label{eq:3d wick main lemma gamma integrals}
\int_0^N \!\!\! \int_0^t \!\! \int_0^{t_1} \!\!\! \int_0^N
    \sigma_{t}(n_{12})^2 \sigma_{t_1}(n_1)^2 \sigma_{t_2}(n_2)^2 \sigma_{t_3}(n_3)^2
    \dt_3 \dt_2 \dt_1 \dt.
\end{equation}
Let us then decompose $\delta'_N$.
By \eqref{eq:chaos w3}, we can write
\begin{equation}
\widehat{J_t \W^3_t}(k) = 24 \sum_{\substack{n_1 + n_2\\ + n_3 = k}}
    \int_0^t \! \int_0^{t_1} \!\! \int_0^{t_2}
    \frac{\sigma_{t}(k)}{\inormc{k}}
    \prod_{i=1}^3 \frac{\sigma_{t_i}(n_i)}{\inormc{n_i}}
        \diff B^{n_3}_{t_3} \diff B^{n_2}_{t_2} \diff B^{n_1}_{t_1}.
\end{equation}
Again by Itô isometry and Plancherel, this implies
\begin{multline}\label{eq:3d wick main lemma delta}
-\frac{\lambda^2}{2} \E \int_{\Tor^d} \int_0^N (J_t \W^3_t)^2 \dt \dx
= -\frac{24^2 \lambda^2}{2} \!\sum_{n_1, n_2, n_3}\!
    \frac{1}{\inormc{n_{123}}^2}
    \prod_{i=1}^3 \frac{1}{\inormc{n_i}^2} \\
    \int_0^N \!\! \int_0^t \! \int_0^{t_1} \!\! \int_0^{t_2}
        \sigma_{t}(n_{123})^2 \prod_{i=1}^3 \sigma_{t_i}(n_i)^2
        \dt_3 \dt_2 \dt_1 \dt.
\end{multline}
We again abbreviate $n_{123} = n_1 + n_2 + n_3$,
and in the negative-mass case sum over non-zero frequencies only.

The integrals
\eqref{eq:3d wick main lemma gamma integrals} and \eqref{eq:3d wick main lemma delta}
differ in the region $t_3 \in {[{t_2}, {N}]}$.
However, in this case we have $\smallabs{n_3} \gtrsim \smallabs{n_2}$
by the support of $\sigma$.
Then we can move $1-\epsilon$ orders of decay from $n_3$ to $n_2$ and estimate
\begin{equation}
\eqref{eq:3d wick main lemma gamma sum}
\lesssim \sum_{n_1, n_2, n_3}
\frac{1}
    {\inormp{n_{12}}^2 \inormp{n_1}^2 \inormp{n_2}^{3-\epsilon} \inormp{n_3}^{3+\epsilon}},
\end{equation}
which is summable by \Cref{thm:discrete convolution}.
Therefore we can restrict both integrals to $t_3 \leq t_2$ in the following.

We also note that if $n_3 = 0$, then $n_{12} = n_{123}$ and the coefficients of $\gamma_+ (C_+ - C_-)$
and $\delta'_+$ match exactly (and again, $\delta'_-$ only contains terms with $n_3 \neq 0$).
With an argument similar to that in \Cref{thm:3d wick gamma difference},
we see that we can also restrict to $n_1, n_2 \neq 0$.

Now we are ready to repeat the telescoping sum argument on $\delta'_N$.
For convenience we set $n_{0}=n_{123}$.
We can write
\begin{align}
    \delta'_{+}-\delta'_{-}&=\sum_{j=0}^3 \delta_{j}.\\
    \delta_{j}&= \sum_{n_1,n_3,n_3} I(n_1,n_2,n_3)
        \Bigg(\prod_{i=0}^j \frac{1}{\inormp{n_i}^2} \Bigg)
        \frac{3}{\inormp{n_j}^2\inormm{n_j}^2}
        \Bigg(\prod_{i=j+1}^3\frac{1}{\inormm{n_i}}\Bigg), \notag
\end{align}
where we have denoted
\begin{equation}
I(n_1,n_2,n_3)=\int_0^N \!\! \int_0^t \! \int_0^{t_1} \!\! \int_0^{t_2}
    \sigma_{t}(n_{123})^2 \prod_{i=1}^3 \sigma_{t_i}(n_i)^2
    \dt_3 \dt_2 \dt_1 \dt.
\end{equation}
Let us first treat $\delta_3$.
It turns out to be divergent, so we have to combine it with $\gamma_{+}(C_+ - C_-)$
expanded in \eqref{eq:3d wick main lemma gamma sum}.
We can write
\begin{equation}\label{eq:3d wick delta3 renorm}
    \delta_3 - \gamma_{+}(C_+ - C_-) = -\frac{24^2 \lambda^2}{2} (A_1 + A_2),
\end{equation}
where
\begin{equation}
\begin{split}
A_1 &= \sum_{n_1, n_2, n_3 \neq 0}\int_0^N \!\!\! \int_0^t \!\! \int_0^{t_1} \!\!\! \int_0^{t_2}
    (\sigma_{t}(n_{12})^2 -\sigma_{t}(n_{123})^2) \prod_{i=1}^3 \sigma_{t_i}(n_i)^2
    \dt_3 \dt_2 \dt_1 \dt \\
&\hspace{4em}\times \frac{\inormm{n_3}^2 - \I_{n_3 \neq 0} \inormp{n_3}^2}
    {\inormp{n_{12}}^2 \inormp{n_1}^2 \inormp{n_2}^2 \inormp{n_3}^2 \inormm{n_3}^2},\\
&\lesssim \sum_{n_1, n_2, n_3}\int_0^N \!\!\! \int_0^t \!\! \int_0^{t_1} \!\!\! \int_0^{t_2}
    \left(\frac{\abs{n_3}}{\inorm t^2}\right) \prod_{i=1}^3 \sigma_{t_i}(n_i)^2
    \dt_3 \dt_2 \dt_1 \dt \\
&\hspace{4em}\times \frac{3}
    {\inormp{n_{12}}^2 \inormp{n_1}^2 \inormp{n_2}^2 \inormp{n_3}^2 \inormm{n_3}^2}.
\end{split}
\end{equation}
Now using the fact that $t_2,t_3\lesssim t$ we can bound this by 
\begin{equation}
\sum_{n_1, n_2} \frac{C_\epsilon}{\inormp{n_{12}}^2 \inormp{n_1}^2 \inormp{n_2}^{3-\epsilon}}
\lesssim \sum_{n_{12}} \frac{1}{\inormp{n_{12}}^4},
\end{equation}
where we applied \Cref{thm:discrete convolution} to get the last inequality.

The remaining term $A_2$ is given by
\begin{equation}
\sum_{n_1,n_2,n_3} I(n_1,n_2,n_3) \frac{3(\inormp{n_{12}} - \inormp{n_{123}})}
    {\inormp{n_{123}}^2 \inormp{n_{12}}^2 \inormp{n_1}^2 \inormp{n_2}^2 \inormp{n_3}^2 \inormm{n_3}^2}.
\end{equation}
Now $\inormp{n_{123}}^2 - \inormp{n_{12}}^2$ can be written
as $\frac 1 2 (n_{12} \cdot n_{123}) + \abs{n_3}^2$.
The inner product vanishes by symmetry of the sum,
and for the second term we use $\smallabs{n_3} \simeq \inorm{n_3}$
and repeated application of \Cref{thm:discrete convolution}:
\begin{equation}
\begin{split}
&\mathrel{\phantom{\lesssim}} \sum_{n_1, n_2, n_3 \neq 0}
    \frac{1}{\inormp{n_{123}}^2 \inormp{n_{12}}^2 \inormp{n_1}^2 \inormp{n_2}^2 \inormp{n_3}^2}\\
&\lesssim \sum_{n_1, n_2 \neq 0}
    \frac{1}{\inormp{n_{12}}^3 \inormp{n_1}^2 \inormp{n_2}^2}\\
&\lesssim \sum_{n_{12} \neq 0}
    \frac{1}{\inormp{n_{12}}^4}.
\end{split}
\end{equation}

The coefficient $-24^2 \lambda^2 / 2$ appearing in \eqref{eq:3d wick delta3 renorm}
gives the final convergence rate $\lambda^2$.
It also appears in the other telescoping sum terms.

Let us then bound $\delta_2$.
This introduces a problem: we have a factor of $\inorm{n_2}^{-4}$,
but \Cref{thm:discrete convolution} only lets us take advantage of $-3+\epsilon$ orders of decay.%
\footnote{This issue is discussed above Lemma~4.2 in \cite{mourrat_construction_2017}.
However, our argument is more related to \cite[Lemma~4.6]{chandra_phase_2022},
where the issue appears due to presence of $J_t^2$ instead of $J_{t,+} - J_{t,-}$.}
We now use the assumption that $t_2 \geq t_3$
and thus $\smallabs{n_2} \gtrsim \smallabs{n_3}$.
We can thus bound the difference by
\begin{align}\label{eq:3d wick moved decay}
\sum_{n_1, n_2, n_3 \neq 0} \frac{1}
{\inorm{n_{123}}^2 \inorm{n_1}^2 \inorm{n_2}^{3+\epsilon} \inorm{n_3}^{3-\epsilon}}
&\lesssim \sum_{n_1, n_2 \neq 0} \frac{1}
{\inorm{n_1}^2 \inorm{n_2}^{3+\epsilon} \inorm{n_{12}}^{2-\epsilon}} \notag\\
&\lesssim \sum_{n_2 \neq 0} \frac{1}
{\inorm{n_2}^{3+\epsilon} \inorm{n_2}^{1-\epsilon}}.
\end{align}
The same argument can be repeated for $\inormp{n_1}$ and $\inormp{n_{123}}$,
which will give bounds on $\delta_0$ and $\delta_1$.
This finishes the proof.
\end{proof}

The remaining two terms of $\delta_+ - \delta_-$ then follow from similar stochastic estimates.
We only sketch the proofs, since the details are not relevant to the discussion.

\begin{lemma}\label{thm:3d wick delta w2 w32}
We can estimate
\[
\abs{\frac{\lambda^3}{2} \E \int_{\Tor^3} \W_{N,+}^{2} (\W_{N,+}^{[3]})^2
    - \W_{N,-}^{2} (\W_{N,-}^{[3]})^2 \dx}
\lesssim \lambda^3
\]
uniformly in $N$.
\end{lemma}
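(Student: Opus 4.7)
The strategy will mirror the proof of the preceding estimate for the $\delta'$ cancellation, so the plan is to expand the stochastic object into its chaos decomposition and then telescope through the masses.

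First, I would write out $\W^2$ and $\W^{[3]} = I_N[J_t \W^3_t]$ via their chaos representations from \Cref{thm:chaos w2 w3} and the definition in \eqref{eq:3d gaussian notation}. The product $\W^2 (\W^{[3]})^2$ then becomes an $8$-leaf stochastic object made of a chaos-$2$ tree together with two chaos-$3$ trees, each of the latter carrying an extra $J_t$ at every leaf and an outer $I_N$ on the trunk.

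Second, I would compute the expectation after integrating over $\Tor^3$ using Wick's theorem. The only admissible pairings are the ones in which the two leaves of $\W^2$ go to distinct $\W^{[3]}$-factors, since a pair inside a single tree is forbidden; the remaining four $\W^{[3]}$-leaves must then pair across the two trees. In every resulting diagram Plancherel and It\^o isometry turn the expectation into a sum of the schematic form
\[
\sum_{n_1,\ldots, n_4 \in \Z^3} \frac{\mathcal T_+(n_1,\ldots,n_4)}{\prod_{i=1}^{8} \inormp{k_i(n)}^2},
\]
where each $k_i(n)$ is one of the $n_j$ or a signed partial sum thereof (coming from the leaf $J_t$'s, the trunk $I_N$'s, and the spatial integration forcing $\sum n_j = 0$), and $\mathcal T_+$ collects the integrals of the $\sigma_t(k_i)^2$ over the appropriate ordered time-simplex.

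Third, I would compare the positive- and negative-mass expressions by a telescoping sum, replacing $\inormp{\bullet}^{-2}$ by $\inormm{\bullet}^{-2}$ one propagator at a time, exactly as in \Cref{thm:3d wick gamma difference} and the preceding lemma. Each telescoped summand picks up an extra factor $\inormp{n}^{-2}\inormm{n}^{-2} \simeq \inorm{n}^{-4}$ on one momentum. The contribution of terms with some $n_i = 0$ (excluded in the negative-mass sum) is handled as in \Cref{thm:3d wick gamma difference} by noting that these lie on a lower-dimensional sublattice and are therefore trivially finite.

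The main obstacle, as before, will be that the raw power counting on the remaining momentum sums is only marginally convergent. The remedy, mirroring \eqref{eq:3d wick moved decay}, is to exploit the $\sigma$-supports: whenever the nested time integration enforces an ordering $t_i \geq t_j$, one has $\smallabs{n_i} \gtrsim \smallabs{n_j}$, which lets me shift a small amount of decay from $n_j$ to $n_i$ and obtain strict summability after repeated application of \Cref{thm:discrete convolution}. The overall prefactor $\lambda^3/2$ then yields the stated bound $\lesssim \lambda^3$, uniformly in $N$.
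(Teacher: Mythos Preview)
Your approach matches the paper's: Wick contraction to a single diagram type, a telescoping comparison of the masses, and repeated application of \Cref{thm:discrete convolution}. Two small corrections: the contracted sum has six propagators (the four leaf pairs plus the two trunk factors $\inormc{n_{134}}^{-2}$ and $\inormc{n_{234}}^{-2}$ coming from $I_N[J_t\,\cdot\,]$ on each $\W^{[3]}$), not eight, and the $J_t$ in $\W^{[3]}$ sits on the trunk rather than on every leaf; moreover, here the extra $\inorm{n}^{-2}$ gained from each telescoping step already makes the sums convergent by straight iteration of \Cref{thm:discrete convolution}, so the time-ordering trick of \eqref{eq:3d wick moved decay} is not needed.
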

\begin{proof}
Let us first simplify the expression for $\E [\W_{N}^{2} (\W_{N}^{[3]})^2]$.
Up to the order of dots, there is only one way to contract the corresponding diagram:
\begin{center}
\begin{tikzpicture}
    \begin{scope}
        \coordinate (A1) at (-0.3,0.5);
        \coordinate (A2) at (0.3,0.5);
        \filldraw (0,0) -- (A1) circle[whitenoise];
        \filldraw (0,0) -- (A2) circle[whitenoise];
    \end{scope}
    \begin{scope}[xshift=-2cm, yshift=0.5cm]
        \coordinate (B1) at (-0.5, 0.5);
        \coordinate (B2) at (0, 0.5);
        \coordinate (B3) at (0.5, 0.5);
        \filldraw (0,0) -- (B1) circle[whitenoise];
        \filldraw (0,0) -- (B2) circle[whitenoise];
        \filldraw (0,0) -- (B3) circle[whitenoise];
        \draw (0,-0.5) -- (0,0);
    \end{scope}
    \begin{scope}[xshift=2cm, yshift=0.5cm]
        \coordinate (C1) at (-0.5, 0.5);
        \coordinate (C2) at (0, 0.5);
        \coordinate (C3) at (0.5, 0.5);
        \filldraw (0,0) -- (C1) circle[whitenoise];
        \filldraw (0,0) -- (C2) circle[whitenoise];
        \filldraw (0,0) -- (C3) circle[whitenoise];
        \draw (0,-0.5) -- (0,0);
    \end{scope}

    \draw[contract] (A1) parabola[bend pos=0.5] bend +(0,0.4) (B3);
    \draw[contract] (A2) parabola[bend pos=0.5] bend +(0,0.4) (C1);
    \draw[contract] (B1) parabola[bend pos=0.5] bend +(0,0.8) (C3);
    \draw[contract] (B2) parabola[bend pos=0.5] bend +(0,0.5) (C2);
\end{tikzpicture}
\end{center}
Therefore
\begin{multline}
\E [\W_{N}^{2} (\W_{N}^{[3]})^2]
= C \sum_{n_1, \ldots, n_4} \frac{1}{\inormc{n_{134}}^2 \inormc{n_{234}}^2
    \inormc{n_1}^2 \inormc{n_2}^2 \inormc{n_3}^2 \inormc{n_4}^2}
    \iiint \!\!\! \iiint \cdots,
\end{multline}
where $C$ is a combinatorial constant.
We omit the integral part since it is independent of the mass
and not used in what follows.
Thus we are left with two cases:
\begin{itemize}
    \item Comparison of different masses when all wavenumbers are non-zero;
    \item Additional positive-mass terms when some of the wavenumbers are zero.
\end{itemize}
The latter case is straightforward to verify with \Cref{thm:discrete convolution}.
Let us present here only the subcase $n_1 = 0$ (which is symmetric with $n_2 = 0$):
\begin{align}
\sum_{n_2, n_3, n_4} \frac{1}{\inormp{n_{34}}^2 \inormp{n_{234}}^2
    \inormp{n_2}^2 \inormp{n_3}^2 \inormp{n_4}^2}
&\leq \sum_{n_3, n_4} \frac{1}{\inormp{n_{34}}^3 \inormp{n_3}^2 \inormp{n_4}^2} \notag\\
&= \sum_{n_{34}, n_4} \frac{1}{\inormp{n_{34}}^3 \inormp{n_{34} - n_4}^2 \inormp{n_4}^2} \notag\\
&\leq \sum_{n_{34}} \frac{1}{\inormp{n_{34}}^4}.
\end{align}

For the change of mass, we again use a telescoping argument.
Let us start with the positive-mass expression and first change $\inormp{n_1}$ to $\inormm{n_1}$,
leading us to estimate
\begin{align}
&\sum_{n_1, \ldots, n_4 \neq 0} \frac{3}{\inormp{n_{134}}^2 \inormp{n_{234}}^2
    \inormp{n_1}^2 \inormm{n_1}^2 \inormp{n_2}^2 \inormp{n_3}^2 \inormp{n_4}^2} \notag\\
\lesssim\; &\sum_{n_1, n_2, n_3, n_{34}}
    \frac{1}{\inormp{n_{134}}^2 \inormp{n_{234}}^2
    \inormp{n_1}^4 \inormp{n_2}^2 \inormp{n_3}^2 \inormp{n_{34} - n_3}^2} \notag\\
\lesssim\; &\sum_{n_1, n_2, n_{34}}
    \frac{1}{\inormp{n_{134}}^2 \inormp{n_{234}}^2
    \inormp{n_1}^4 \inormp{n_2}^2 \inormp{n_{34}}^1} \notag\\
\lesssim\; &\sum_{n_1, n_{34}}
    \frac{1}{\inormp{n_{134}}^2 \inormp{n_1}^4 \inormp{n_{34}}^2} \notag\\
\lesssim\; &\sum_{n_1}
    \frac{1}{\inormp{n_1}^5}.
\end{align}
The change of $\inormc{n_2}$ is symmetric.
The changes of $\inormp{n_3}$ to $\inormm{n_3}$ (resp.\ of $n_4$)
and $\inormp{n_{134}}$ to $\inormm{n_{134}}$ (resp.\ of $n_{234}$)
follow by minor modifications to the order of application of \Cref{thm:discrete convolution}.
In particular, the final sums are always over inverse fifth powers.
Hence we get from $\E [\W_{N,+}^{2} (\W_{N,+}^{[3]})^2]$ to $\E [\W_{N,-}^{2} (\W_{N,-}^{[3]})^2]$
in bounded steps.
\end{proof}

\begin{lemma}\label{thm:3d wick delta w w33}
The $\lambda^4 \W_N (\W_N^{[3]})^3$ term of $\delta_N$
has finite expectation regardless of the mass,
and therefore it vanishes at rate $\beta^{-4}$. 
\end{lemma}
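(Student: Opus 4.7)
The plan is to expand $\E \int_{\Tor^3} \W_N (\W_N^{[3]})^3 \dx$ via Wick's theorem and bound the resulting lattice sum. Recall that $\W_N = \phi_N$ lies in the first Wiener chaos, while each $\W_N^{[3]} = I_N[J_\cdot \W_\cdot^3]$ lies in the third chaos. Moreover the composition $I_N \circ J_\cdot$ in $\W_N^{[3]}$ produces a factor of $\sigma_t^2/\inormc{k}^2$, which after integration contributes $\inormc{n_{\mathrm{tree}}}^{-2}$ for each tree, in addition to the usual factor $\inormc{n_i}^{-1}$ for each of its three internal dots.

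First I would carry out the combinatorics. The single dot of $\W_N$ must be contracted with a dot of one of the three $\W_N^{[3]}$'s, say tree $A$; the remaining two dots of $A$ together with the three dots of trees $B$ and $C$ must then be paired across distinct trees. Solving the resulting linear system of cardinalities shows that every admissible diagram has exactly one $A\!\leftrightarrow\!B$ pairing, one $A\!\leftrightarrow\!C$ pairing and two $B\!\leftrightarrow\!C$ pairings, so up to combinatorial multiplicity there is a single diagram to estimate. Parametrising the free momenta by $n_0$ (the momentum of $\W_N$) and four loop momenta $k_1,k_2,k_5,k_6 \in \Z^3$ and using the Itô isometry, $|\E \int \W_N (\W_N^{[3]})^3 \dx|$ is bounded by
\begin{equation*}
\sum_{n_0,k_1,k_2,k_5,k_6}
\frac{1}{\inormc{n_A}^2 \inormc{n_B}^2 \inormc{n_C}^2 \inormc{n_0}^2 \inormc{k_1}^2 \inormc{k_2}^2 \inormc{k_5}^2 \inormc{k_6}^2},
\end{equation*}
where $n_A = k_1+k_2-n_0$, $n_B = -k_1+k_5+k_6$ and $n_C = -k_2-k_5-k_6$, together with finite time integrals that are bounded uniformly in $N$.

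Then I would bound this lattice sum uniformly in $N$ and in the mass of the Gaussian field: since the only region that could produce a divergence is $|k| \to \infty$, where the mass is irrelevant, the same estimate handles the positive-mass and negative-mass cases simultaneously. This convergence step is the main obstacle, because naive independent applications of \Cref{thm:discrete convolution} to each variable are marginal in three dimensions. The plan is to sum in a carefully chosen order, first integrating out $(k_5,k_6)$ to produce an $\inormc{k_5+k_6}^{-1}$ factor, and then handling the remaining three-factor sums by a combination of \Cref{thm:discrete convolution} and Cauchy--Schwarz so as to exploit the connectedness of the contraction graph. This argument parallels the treatment of $\E \W_N^2 (\W_N^{[3]})^2$ in \Cref{thm:3d wick delta w2 w32} and the bounds on related diagrams in \cite{barashkov_variational_2020,chandra_phase_2022,mourrat_construction_2017}.

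Once the expectation is shown to be bounded uniformly in $N$ and in the mass, the prefactor $\lambda^4 = (4\beta)^{-4}$ immediately yields the claimed $\beta^{-4}$ decay, which is far better than the $\beta^{-1}$ required to close the Wick-change bookkeeping of \Cref{sec:3d wick}.
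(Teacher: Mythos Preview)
Your setup matches the paper's: Wick's theorem forces essentially one contraction pattern, and the expectation reduces to an eight-factor lattice sum with five free three-dimensional momenta, homogeneous of total degree $-16$ in $15$ dimensions. That part is fine, and your displayed sum agrees (up to relabelling and signs) with the paper's
\[
\sum_{n_1,\dots,n_5}
\inormc{n_1}^{-2}\inormc{n_2}^{-2}\inormc{n_3}^{-2}\inormc{n_4}^{-2}\inormc{n_5}^{-2}
\inormc{n_{123}}^{-2}\inormc{n_{245}}^{-2}\inormc{n_{345}}^{-2}.
\]

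The gap is in the convergence step. You bound the time integrals by constants and then attack the pure lattice sum. But if you follow your own recipe---sum out $(k_5,k_6)$ to get $\inormc{n_6}^{-1}$ with $n_6=k_5+k_6$, then sum $n_0$ against $n_A$, etc.---iterated applications of \Cref{thm:discrete convolution} land on
\[
\sum_{n_3,n_6}\inormc{n_3}^{-2}\inormc{n_6}^{-2}\inormc{n_{36}}^{-2},
\]
which is logarithmically divergent in three dimensions. Your appeal to ``Cauchy--Schwarz so as to exploit the connectedness'' is not spelled out, and the obvious splittings lose too much. The overall sum does converge by power counting, but you have not supplied an argument that closes.

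The paper avoids this by \emph{not} discarding the time integrals. In $\W_N^{[3]}=I_N[J_\cdot\W_\cdot^3]$ the support of $\sigma_t$ forces the root frequency $n_{123}$ to dominate each leaf, in particular $\smallabs{n_{123}}\gtrsim\smallabs{n_2}$; this lets one trade $\inormc{n_{123}}^{-2}\inormc{n_2}^{-2}$ for $\inormc{n_{123}}^{-2+\epsilon}\inormc{n_2}^{-2-\epsilon}$ before bounding the time integrals. With that extra $\epsilon$ on $n_2$, the same chain of discrete convolutions terminates in $\sum_{n_6}\inormc{n_6}^{-3-\epsilon}<\infty$. You should retain this frequency-ordering information from the $\sigma_t$ supports rather than bounding the time integrals away at the outset.
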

\begin{proof}
Like above, there is essentially only one way to contract the diagram
\begin{center}
\begin{tikzpicture}
    \begin{scope}
        \coordinate (A1) at (0,0.5);
        \filldraw (0,0) -- (A1) circle[whitenoise];
    \end{scope}
    \begin{scope}[xshift=2cm, yshift=0.5cm]
        \coordinate (B1) at (-0.5, 0.5);
        \coordinate (B2) at (0, 0.5);
        \coordinate (B3) at (0.5, 0.5);
        \filldraw (0,0) -- (B1) circle[whitenoise];
        \filldraw (0,0) -- (B2) circle[whitenoise];
        \filldraw (0,0) -- (B3) circle[whitenoise];
        \draw (0,-0.5) -- (0,0);
    \end{scope}
    \begin{scope}[xshift=4cm, yshift=0.5cm]
        \coordinate (C1) at (-0.5, 0.5);
        \coordinate (C2) at (0, 0.5);
        \coordinate (C3) at (0.5, 0.5);
        \filldraw (0,0) -- (C1) circle[whitenoise];
        \filldraw (0,0) -- (C2) circle[whitenoise];
        \filldraw (0,0) -- (C3) circle[whitenoise];
        \draw (0,-0.5) -- (0,0);
    \end{scope}
    \begin{scope}[xshift=6cm, yshift=0.5cm]
        \coordinate (D1) at (-0.5, 0.5);
        \coordinate (D2) at (0, 0.5);
        \coordinate (D3) at (0.5, 0.5);
        \filldraw (0,0) -- (D1) circle[whitenoise];
        \filldraw (0,0) -- (D2) circle[whitenoise];
        \filldraw (0,0) -- (D3) circle[whitenoise];
        \draw (0,-0.5) -- (0,0);
    \end{scope}

    \draw[contract] (A1) parabola[bend pos=0.6] bend +(0,0.4) (B1);
    \draw[contract] (B2) parabola[bend pos=0.5] bend +(0,1.0) (D3);
    \draw[contract] (B3) parabola[bend pos=0.5] bend +(0,0.3) (C1);
    \draw[contract] (C2) parabola[bend pos=0.5] bend +(0,0.5) (D2);
    \draw[contract] (C3) parabola[bend pos=0.5] bend +(0,0.3) (D1);
\end{tikzpicture}
\end{center}
This gives rise to the sum
\begin{equation}
\sum_{n_1, \dots, n_5} \frac{1}{
    \inormc{n_1}^2 \inormc{n_2}^{2+\epsilon} \inormc{n_3}^2 \inormc{n_4}^2 \inormc{n_5}^2
    \inormc{n_{123}}^{2-\epsilon} \inormc{n_{245}}^2 \inormc{n_{345}}^2}.
\end{equation}
As in \eqref{eq:3d wick moved decay}, we used the property $\abs{n_{123}} \geq \abs{n_2}$
to move some decay between the two terms.
We define $n_6 = n_4 + n_5$ and sum over $n_1$ and $n_5$ with \Cref{thm:discrete convolution} to get
\begin{equation}
\sum_{n_2, n_3, n_6} \frac{1}{
    \inormc{n_2}^{2+\epsilon} \inormc{n_3}^2 \inormc{n_6}
    \inormc{n_{23}}^{1-\epsilon} \inormc{n_{26}}^2 \inormc{n_{36}}^2}.
\end{equation}
Here the presence of the $n_{23}$ term would complicate further usage of \Cref{thm:discrete convolution},
but we can crudely bound $\inormc{n_{23}}^{-1+\epsilon} \lesssim 1$.
We can then freely sum over $n_2$ and $n_3$ to get the final bound
\begin{equation}
\sum_{n_{45}} \frac{1}{\inormc{n_{45}}^{3+\epsilon}} \lesssim 1.
\end{equation}
As before, in the negative-mass case we sum over non-zero indices only.
\end{proof}

We have now proved that the difference of $\delta_N$ counterterms with different masses vanishes
with sufficiently fast rate.
This was the final remaining difference to estimate.

\bibliographystyle{plainnat}
\bibliography{phi43-metastab-ref}

\end{document}